\documentclass[11pt, reqno]{article}
\usepackage{amsmath,amssymb,amsfonts,amsthm}
\usepackage{color}

\usepackage[colorlinks=true,linkcolor=blue,citecolor=blue,urlcolor=blue,pdfborder={0 0 0}]{hyperref}
\usepackage{cleveref}

\usepackage{caption}
\usepackage{thmtools}

\usepackage{mathrsfs}

\crefname{theorem}{Theorem}{Theorems}
\crefname{thm}{Theorem}{Theorems}
\crefname{lemma}{Lemma}{Lemmas}
\crefname{lem}{Lemma}{Lemmas}
\crefname{remark}{Remark}{Remarks}
\crefname{prop}{Proposition}{Propositions}
\crefname{defn}{Definition}{Definitions}
\crefname{corollary}{Corollary}{Corollaries}
\crefname{conjecture}{Conjecture}{Conjectures}
\crefname{question}{Question}{Questions}
\crefname{chapter}{Chapter}{Chapters}
\crefname{section}{Section}{Sections}
\crefname{figure}{Figure}{Figures}

\theoremstyle{plain}
\newtheorem{thm}{Theorem}[section]

\newtheorem{lemma}[thm]{Lemma}

\newtheorem{corollary}[thm]{Corollary}

\newtheorem{prop}[thm]{Proposition}
\newtheorem{conjecture}[thm]{Conjecture}

\newtheorem{question}[thm]{Question}
\theoremstyle{definition}

\theoremstyle{remark}
\newtheorem{remark}[thm]{Remark}

\numberwithin{equation}{section}

%% Special
\renewcommand{\P}{\mathbb P}
\newcommand{\E}{\mathbb E}

\newcommand{\R}{\mathbb R}
\newcommand{\Z}{\mathbb Z}

%%Mathcal
% \newcommand{\c}{\mathcal}

\newcommand{\cO}{\mathcal O}

%% Mathscr

\newcommand{\sA}{\mathscr A}
\newcommand{\sB}{\mathscr B}

%% Mathbb

%% Mathfrak

%% USF Macros

%% Unimodular Macros

\usepackage[margin=1in]{geometry}
\usepackage{extarrows}
\usepackage{commath}
\usepackage{mathtools}
\newcommand{\eps}{\varepsilon}
\usepackage{bbm}
\usepackage{setspace}
\setstretch{1.1}
\usepackage{enumitem}
\usepackage{tikz}

\usepackage{cite}

\newcommand{\Aut}{\operatorname{Aut}}

\newcommand{\bP}{\mathbf P}
\newcommand{\bE}{\mathbf E}

\newcommand{\lrDini}{\left(\frac{d}{dp}\right)_{\hspace{-0.2em}+}\!}

\newcommand{\urDini}{\left(\frac{d}{dp}\right)^{\hspace{-0.2em}+}\!}

\newcommand{\stab}{\operatorname{Stab}}

\newcommand{\opleq}{\preccurlyeq}
% Frac with custom spacing between num/denom and bar

% \newcommand{\myasymmetricfrac}[4][0pt]{\genfrac{}{}{}{}{\raisebox{#1}{$#3$}}{\raisebox{-#2}{$#4$}}}

% \usepackage{showlabels}

\title{\textsc{Percolation on Hyperbolic Graphs}}

\author{Tom Hutchcroft\footnote{Statistical Laboratory, DPMMS, University of Cambridge. Email: \href{mailto:t.hutchcroft@maths.cam.ac.uk}{t.hutchcroft@maths.cam.ac.uk}}}
% \date{}

\begin{document}

\maketitle

\setstretch{1.1}

\begin{abstract}
We prove that Bernoulli bond percolation on any nonamenable,  Gromov hyperbolic, quasi-transitive graph has a phase in which there are infinitely many infinite clusters, verifying a well-known conjecture of Benjamini and Schramm (1996) under the additional assumption of hyperbolicity. In other words, we show that $p_c<p_u$ for any such graph. Our proof also yields that the triangle condition $\nabla_{p_c}<\infty$ holds at criticality on any such graph, which is known to imply that several critical exponents exist and take their mean-field values. 
This gives the first 
family of examples of one-ended groups all of whose Cayley graphs are proven to have mean-field critical exponents for percolation.

% As a part of our proof, we introduce a new critical parameter $p_{2\to 2}$ such that $p_c\leq p_{2\to 2}\leq p_u$ and $\nabla_p<\infty$ for all $p<p_{2\to 2}$, and derive a generally applicable necessary and sufficient condition for the strict inequality $p_c<p_{2\to 2}$.
\end{abstract}
\setstretch{1.13}

\section{Introduction}

% Let $G$ be a connected, locally finite  graph.
 In \textbf{Bernoulli bond percolation}, the edges of a connected, locally finite graph $G$ are chosen to be either \textbf{open} or \textbf{closed} independently at random, with probability $p$ of being open. The subgraph of $G$ obtained by deleting all closed edges and retaining all open edges is denoted by $G[p]$. The connected components of $G[p]$ are referred to as \textbf{clusters}.
The \textbf{critical parameter} is defined to be
\[
p_c=p_c(G)=\inf\Bigl\{ p\in [0,1] : G[p] \text{ contains an infinite cluster almost surely}\Bigr\},
\]
and the \textbf{uniqueness threshold} is defined to be
\[
p_u=p_u(G)=\inf\Bigl\{ p\in [0,1] : G[p] \text{ contains a unique infinite cluster almost surely}\Bigr\}.
\]
Questions of central interest concern the equality or inequality of these two values of $p$ and the behaviour of percolation at and near $p_c$ and $p_u$. These questions were traditionally studied primarily on Euclidean lattices such as the hypercubic lattice $\Z^d$,
for which Aizenman, Kesten, and Newman \cite{MR901151} proved that $\Z^d[p]$ has at most one infinite cluster almost surely for every $p$, and hence that $p_c(\Z^d) =p_u(\Z^d)$ for every $d\geq 1$. A very short proof of the same result was later obtained by Burton and Keane \cite{burton1989density}. For further background on percolation, we refer the reader to \cite{grimmett2010percolation,1707.00520,heydenreich2015progress,LP:book}.

 The following conjecture was made in the highly influential paper of Benjamini and Schramm \cite{bperc96}, who proposed a systematic study of percolation on \emph{quasi-transitive} graphs, that is, graphs for which the action of the automorphism group on the vertex set  has at most finitely many orbits.

\begin{conjecture}[Benjamini and Schramm 1996]
\label{conj:pcpu}
Let $G$ be a connected, locally finite, quasi-transitive graph. Then $p_c(G)<p_u(G)$ if and only if $G$ is nonamenable. 
\end{conjecture}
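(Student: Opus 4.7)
The plan is to prove both directions of the biconditional separately, since they have very different flavours. The easier direction is that amenability implies $p_c = p_u$, which is essentially a Burton--Keane type argument that has been carried out in the literature and which one should be able to reproduce. The harder direction is that nonamenability implies $p_c < p_u$, and this is the substantive content of the Benjamini--Schramm conjecture; it remains open in full generality and my attack will be geometric, ultimately relying on structural features (such as Gromov hyperbolicity) to convert nonamenability into enough ``room at infinity'' to support infinitely many infinite clusters.

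For the direction that amenable quasi-transitive graphs satisfy $p_c(G) = p_u(G)$, I would fix a Følner exhaustion $\{F_n\}$ and imitate the Burton--Keane argument. Suppose $p > p_c$ and that $G[p]$ had infinitely many infinite clusters with positive probability. By insertion/deletion tolerance combined with quasi-transitivity and the Borel--Cantelli lemma, one produces trifurcation points at each vertex with uniformly positive probability; summing over $F_n$ yields order $|F_n|$ trifurcations inside $F_n$. But each trifurcation contributes at least one distinct point of $\partial F_n$ (once one carefully chooses disjoint witness paths to infinity), and $|\partial F_n|/|F_n| \to 0$ by amenability — a contradiction. Thus uniqueness holds throughout $(p_c,1]$, giving $p_u \leq p_c$, and the reverse inequality is trivial. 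A small amount of additional care is needed to handle nonunimodular quasi-transitive graphs, where one must work with a Haar-weighted version of the argument.

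For the direction that nonamenable quasi-transitive graphs satisfy $p_c(G) < p_u(G)$, the plan is to exploit the positive Cheeger constant $h(G) > 0$ to construct the nonuniqueness phase. My approach would proceed in three steps. First, establish sharp control on the critical two-point function, aiming for the triangle condition $\nabla_{p_c} < \infty$; the natural tool is a comparison between $\tau_{p_c}(x,y)$ and the random walk Green's function, using that on a nonamenable graph the spectral radius is strictly less than $1$ so the Green's function decays exponentially. Second, use the triangle condition together with mean-field style differential inequalities to show that at $p$ slightly larger than $p_c$, the expected cluster volume at a vertex is of order $(p - p_c)^{-1}$ while typical clusters are still ``thin'' in a precise sense. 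Third, exhibit a scale $r = r(p)$ and a collection of boundary regions so that with uniformly positive probability the cluster of the origin reaches a given region without intersecting any other; if such regions can be chosen to be asymptotically independent, a second moment or Borel--Cantelli argument produces infinitely many infinite clusters at $p$, forcing $p < p_u$ and hence $p_c < p_u$.

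The main obstacle is the third step, and this is where I would invoke additional geometric structure. In a general nonamenable quasi-transitive graph it is not at all clear how to produce enough ``separated'' regions at infinity — this is exactly why the conjecture is open. Under the working hypothesis of Gromov hyperbolicity, however, the Gromov boundary $\partial_\infty G$ is a compact metric space on which the automorphism group acts by homeomorphisms with strong mixing properties, and balls in $G$ project to small pieces of $\partial_\infty G$; one can then partition $\partial_\infty G$ into many disjoint shadows and argue that clusters localising near different shadows must, by thin-triangle geometry, belong to different connected components of $G[p]$. Without such a boundary theory I do not see how to rule out the possibility that the unique infinite cluster at $p_c^+$ simply ``fattens'' as $p$ increases without ever splitting, and bypassing this obstacle in the fully general quasi-transitive setting seems to require a genuinely new idea.
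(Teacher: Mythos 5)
You correctly note that the statement is a conjecture, not a theorem. The paper does not claim to prove it in general; it proves only the hyperbolic case (\cref{thm:pcpu}), and you are right to restrict attention to that. Your sketch of the amenable direction (Burton--Keane, as extended by Gandolfi, Keane, and Newman to the quasi-transitive setting) is the standard argument and matches what the paper cites.

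For the nonamenable direction, even restricted to hyperbolic graphs, there is a concrete gap in your first step. There is no comparison on a general nonamenable or even Gromov hyperbolic quasi-transitive graph between the critical two-point function $\tau_{p_c}$ and the random walk Green's function; if such a comparison were available, the triangle condition would follow immediately from exponential decay of the Green's function and the problem would collapse. That kind of Ornstein--Zernike estimate is what lace expansions deliver in perturbative high-dimensional regimes, not what nonamenability or hyperbolicity gives you a priori. The paper's mechanism is different: it introduces $p_{2\to2}$, the supremum of $p$ for which $\|T_p\|_{2\to 2}<\infty$, observes that $p_c\leq p_{2\to2}\leq p_u$ and that $\nabla_p<\infty$ for all $p<p_{2\to2}$, and then proves $p_c<p_{2\to2}$ via the criterion of \cref{prop:criterion}, which amounts to showing that the Cheeger constant of the sub-Markov operator $\overline{\chi}_p^{-1}T_p$ tends to $1$ fast enough as $p\uparrow p_c$. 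The susceptibility bound $\overline{\chi}_p\lesssim(p_c-p)^{-1}$ is an \emph{input} proven first (in \cref{thm:susceptibility} by a half-space Russo argument below $p_c$), not a consequence of the triangle condition as your second step has it; and the isoperimetric estimate comes from a hyperbolic Magic Lemma (\cref{prop:GromovMagic}) and a ``roughly branching'' set construction, not from separated boundary shadows and a second-moment argument. Your third step resembles the Lalley and Benjamini--Schramm planar constructions, which were previously known to work only for $\mathbb{H}^2$; the paper's indirect operator-theoretic route is exactly what makes $\mathbb{H}^d$ for $d>2$ accessible, and I do not see how to carry out your version without planarity or some substitute ``nice boundary action'' that you would have to establish separately.
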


Here, a graph $G$ is said to be \textbf{nonamenable} if its \textbf{Cheeger constant}
\vspace{-0.5em}
\[
h(G) = \inf\left\{\frac{|\partial_E K|}{\sum_{v\in K} \deg(v)} : K \subseteq V \text{ finite} \right\}
\]
is positive, 
where $\partial_E K$ denotes the set of edges with exactly one end-point in $K$. We say that $G$ is \textbf{amenable} if it is not nonamenable, i.e., if its Cheeger constant is zero.

 The proof of Burton and Keane was generalised by Gandolfi, Keane, and Newman \cite{gandolfi1992uniqueness} to show that $p_c(G)=p_u(G)$ for every amenable quasi-transitive graph $G$, so that only the `if' direction of \cref{conj:pcpu} remains to be settled. 
H\"aggstr\"omm, Peres, and Schonmann \cite{MR1676835,MR1676831,HPS99} proved that if $G$ is quasi-transitive then $G[p]$ has a unique infinite cluster almost surely for every $p>p_u$. Thus, since critical percolation on any quasi-transitive graph of exponential growth has no infinite clusters almost surely \cite{BLPS99b,timar2006percolation,Hutchcroft2016944}, a quasi-transitive graph $G$ has $p_c(G)<p_u(G)$ if and only if there exists some $p$ such that $G[p]$ has infinitely many infinite clusters almost surely. See \cite{Haggstrom06} for a survey of progress on \cref{conj:pcpu} and related problems.

A further folk conjecture is that critical percolation on any nonamenable quasi-transitive graph satisfies the \emph{triangle condition}, a sufficient condition for mean-field critical behaviour that was introduced by Aizenman and Newman \cite{MR762034} and proven to hold on high-dimensional Euclidean lattices by Hara and Slade \cite{MR1043524} (see also \cite{fitzner2015nearest}). We let $\tau_p(u,v)$ be the \textbf{two-point function}, i.e., the probability that $u$ and $v$ are connected in $G[p]$, and define the \textbf{triangle diagram} to be
\[
\nabla_p = \sup_{v\in V} \sum_{x,y\in V} \tau_p(v,x)\tau_p(x,y)\tau_p(y,v).
\]
The condition $\nabla_{p_c}<\infty$ is known as the \textbf{triangle condition}, and is known to imply that several critical exponents describing the behaviour of percolation at and near $p_c$ take their mean-field values, see \cref{corollary:criticalexponents} below.

\begin{conjecture}
\label{conj:triangle}
Let $G$ be a connected, locally finite, nonamenable, quasi-transitive graph. Then $\nabla_{p_c}<\infty$.
\end{conjecture}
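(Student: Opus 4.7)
My plan is to derive the triangle condition by establishing exponential decay of the two-point function \emph{uniformly up to and including} $p=p_c$, and then summing over triangles. The geometric input is nonamenability, which manifests through the spectral radius bound $\rho(G)<1$ for simple random walk on $G$.

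First, I would reduce to the unimodular transitive case: for Cayley graphs unimodularity is automatic, and for general quasi-transitive $G$ one can pass to a unimodular cover so that the mass transport principle is available. The standard subcritical machinery --- the Simon--Lieb inequality together with the sharpness results of Aizenman--Barsky--Menshikov and Duminil-Copin--Tassion --- then gives exponential decay $\tau_p(u,v) \leq C e^{-c(p)\,d(u,v)}$ for every $p<p_c$, with $c(p)>0$. The heart of the problem is to prevent $c(p)$ from collapsing to zero as $p \uparrow p_c$.

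Second, I would exploit nonamenability via a random-walk comparison. The Cheeger inequality shows that simple random walk has spectral radius $\rho=\rho(G)<1$, so the $n$-step transition probabilities decay at rate $\rho^n$. The aim is to dominate the two-point function pointwise by a subcritical Green's function, roughly
\[
\tau_p(u,v) \leq K \sum_{n\geq 0} (c p)^n p_n(u,v),
\]
via a BK-type inequality applied to shortest open self-avoiding paths, where $c$ is related to the maximum degree. Combined with the bound $p_c\leq 1/(h(G)+1)$ of Benjamini--Schramm and strengthenings thereof, one hopes to secure $c\,p_c<1/\rho$, which would force uniform exponential decay of $\tau_{p_c}$ at a rate strictly better than the exponential growth rate of $G$.

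Third, given $\tau_{p_c}(u,v)\leq C\lambda^{d(u,v)}$ with $\lambda$ small enough, quasi-transitivity and direct summation yield
\[
\nabla_{p_c} \leq C^3 \sup_{v} \sum_{x,y} \lambda^{d(v,x) + d(x,y) + d(y,v)},
\]
which is essentially the convolution of three copies of a subcritical Green's function on $G$ and is therefore finite. The main obstacle is the comparison in the second step: naive open-path counting gives $p_c\leq 1/\rho$ in some settings but typically not the strict inequality needed to obtain exponential decay \emph{at} criticality. Closing this gap without extra geometric structure (such as hyperbolicity or planarity) is the deep and essentially open part of the problem; a resolution would likely require a new inequality relating percolation two-point functions to spectral quantities that is sensitive to the full Cheeger constant, not only to the maximum degree.
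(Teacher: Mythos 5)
First, a point of orientation: the statement labelled \texttt{conj:triangle} is a \emph{conjecture} in the paper, not a theorem. The paper proves it only under the additional assumption of Gromov hyperbolicity (\cref{thm:triangle}); the general nonamenable case remains open, and indeed your own proposal concedes as much in its final paragraph. So you have not proved the statement, and neither does the paper.

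That said, it is worth comparing your plan to the route the paper actually takes in the hyperbolic case, because they are quite different and your strategy aims at a considerably stronger conclusion than the one that is actually proved. The paper reduces the triangle condition to the operator-norm statement $p_c<p_{2\to2}$ (since $\nabla_p\leq\|T_p\|_{2\to2}^3$), gives a necessary and sufficient criterion for $p_c<p_{2\to2}$ in terms of the susceptibility growth rate \eqref{eq:intro1} and the ``isoperimetric'' condition \eqref{eq:intro2}, and then verifies those two conditions geometrically via a hyperbolic version of the Magic Lemma. Nowhere does the paper establish, or need, pointwise exponential decay of $\tau_{p_c}$ in the graph distance; the $2\to2$ norm gives an averaged diagrammatic bound which is robust to the two-point function being large along sparse directions. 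By contrast, you propose to prove a \emph{uniform} bound $\tau_{p_c}(u,v)\leq C\lambda^{d(u,v)}$ with $\lambda$ small enough to beat the volume growth. That is a much stronger mean-field statement, and to my knowledge it is not established on any nonamenable quasi-transitive graph that is not a tree, even in the regimes (high Cheeger constant, hyperbolicity, nonunimodularity) where the triangle condition is known. The closest the paper comes is \cref{prop:p2to2kappa}, which bounds $\inf_{d(u,v)\leq n}\tau_p(u,v)$ at $p\leq p_{2\to2}$; that is an infimum bound and cannot be upgraded to a supremum bound by anything in the paper.

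There are also two concrete problems with the plan as written. The bound
\[
\tau_p(u,v)\leq K\sum_{n\geq0}(cp)^n p_n(u,v)
\]
is essentially the observation that $\tau_p(u,v)\leq\sum_n N_n(u,v)p^n$ with $N_n$ the number of self-avoiding walks, rewritten through $N_n(u,v)\leq M^n p_n(u,v)$. This converges only when $Mp\rho(G)<1$, and there is no known inequality that places $p_c$ below $1/(M\rho(G))$; the Benjamini--Schramm bound $p_c\leq 1/(h(G)+1)$ involves the Cheeger constant, not the spectral radius and degree in the required combination, and will not in general be strong enough. You say this yourself, but it is worth emphasising that without an entirely new inequality the second step fails at $p_c$; it is not simply a technical refinement. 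Second, the reduction ``pass to a unimodular cover'' is not a legitimate step: there is no natural unimodular quasi-transitive graph covering a given nonunimodular one, and percolation behaviour does not transfer along covers in any useful way. The paper instead treats the nonunimodular case by an entirely separate argument (\cref{thm:nonunimodular}, via the tilted susceptibility machinery of the earlier nonunimodular paper) and uses the mass-transport principle directly in the unimodular case. In short: your diagnosis of where the difficulty lies is correct, but the proposed machinery does not approach it, and is also aiming at a conclusion strictly stronger than any result, conditional or otherwise, in the paper.
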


The principal result of this paper is to establish \cref{conj:pcpu,conj:triangle} under the assumption that the graph in question is \emph{Gromov hyperbolic}. This is a geometric condition, which can be interpreted, very roughly, as meaning that the graph is negatively curved at large scales. We prove our theorems under the additional assumption of \emph{unimodularity}, the nonunimodular case having already been treated in~\cite{Hutchcroftnonunimodularperc}. These results apply in particular to lattices in $\mathbb{H}^d$ for $d\geq2$, for which \cref{thm:pcpu} was previously known only for $d=2$ and \cref{thm:triangle} is new for all $d\geq 2$. Gromov hyperbolicity is invariant under rough isometry \cite[Theorem 22.2]{Woess}, and \cref{thm:triangle} gives the first family of examples of one-ended finitely generated groups \emph{all} of whose Cayley graphs have mean-field critical exponents for percolation. 

\begin{thm}
\label{thm:pcpu}
Let $G$ be a connected, locally finite,  nonamenable, Gromov hyperbolic, quasi-transitive graph. Then $p_c(G)<p_u(G)$.
\end{thm}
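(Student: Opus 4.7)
The plan is to establish a quantitative decay estimate on the critical two-point function $\tau_{p_c}(u,v)$, from which both $\nabla_{p_c}<\infty$ and $p_c<p_u$ will follow. The strategy breaks into four phases: (i) a volume-tail estimate for critical clusters coming from nonamenability; (ii) a spatial decay bound for $\tau_{p_c}$ using Gromov hyperbolicity; (iii) derivation of the triangle condition $\nabla_{p_c}<\infty$ by a ``median-vertex'' decomposition; and (iv) deduction of $p_c<p_u$ from mean-field scaling provided by the triangle condition, combined with Lyons--Schramm indistinguishability.

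For (i), I would combine the Aizenman--Barsky--Menshikov / Duminil-Copin--Tassion sharpness of the subcritical phase with the cluster-isoperimetric fact that in a nonamenable graph every finite cluster $K$ has open edge-boundary of order $|K|$ that must all be closed, giving a polynomial tail bound $\mathbb{P}_{p_c}(|K_v|\geq n)\leq Cn^{-\alpha}$ for some $\alpha>0$. For (ii), the exponential volume growth of $G$ must be dominated by the decay of $\tau_{p_c}$; this is where Gromov hyperbolicity is essential. By the thin-triangles property, any open path from $u$ to $v$ must remain within a bounded-width tube around a geodesic $[u,v]$, so a union bound over possible entry points into this tube, weighted by the cluster-tail bound from (i), should yield $\tau_{p_c}(u,v)\leq F(d(u,v))$ with $F$ decaying fast enough to beat the volume growth. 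For (iii), I would partition $\nabla_{p_c}=\sum_{x,y}\tau_{p_c}(o,x)\tau_{p_c}(x,y)\tau_{p_c}(y,o)$ according to the median vertex $m=m(o,x,y)$ of the hyperbolic triangle: by $\delta$-hyperbolicity, $d(o,x)+d(x,y)+d(y,o)=2\bigl(d(o,m)+d(m,x)+d(m,y)\bigr)+O(\delta)$, so after summing over $m$ using quasi-transitivity the triangle sum factorises, up to a bounded multiplicative constant, into three convergent one-dimensional sums in distance from a fixed base vertex. For (iv), the triangle condition should give $\theta(p)=\Theta(p-p_c)$ as $p\downarrow p_c$ via the Aizenman--Newman differential inequalities, so that $\theta$ is continuous at $p_c$; combined with the fact that by Lyons--Schramm indistinguishability the limiting two-point function $\tau_p(u,v)$ tends to $\theta(p)^2$ as $d(u,v)\to\infty$ whenever $p>p_u$, the identity $p_c=p_u$ would contradict the decay bound from (ii), forcing $p_c<p_u$.

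The main obstacle will be phase (ii), the conversion of a cluster-volume tail bound into a sharp spatial decay of $\tau_{p_c}$. The a priori exponential volume growth of $G$ and the exponential tail of $|K_v|$ have rates that nominally tie, and only the tree-like geometry encoded by hyperbolicity (via thin triangles and the Morse property) breaks the balance in favour of decay. Making this precise will likely require a multi-scale argument carefully using Gromov products and the bounded divergence of geodesics in $G$, together with quasi-transitivity to average out root-dependence. Once the spatial decay bound is in hand, the triangle-sum bound through the median-vertex decomposition and the deduction of $p_c<p_u$ should follow relatively routinely from standard mean-field machinery.
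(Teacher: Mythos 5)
Your proposal takes a fundamentally different route from the paper and, as written, contains two gaps, one of which is fatal.

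The fatal gap is in phase (ii). You assert that ``by the thin-triangles property, any open path from $u$ to $v$ must remain within a bounded-width tube around a geodesic $[u,v]$.'' This is not true. The Rips thin-triangles property and the Morse lemma constrain \emph{geodesics} and \emph{quasi-geodesics}; an open path in a percolation configuration is neither, and a percolation cluster joining $u$ to $v$ can wander arbitrarily far from the geodesic $[u,v]$, even in a tree. There is no a priori reason for the union bound you propose to be over a set of polynomial size, and indeed this is precisely the difficulty the paper has to work around. The paper deliberately avoids any attempt to establish a pointwise spatial decay of $\tau_{p_c}$. Instead it introduces the operator-theoretic parameter $p_{2\to 2}$ and shows $p_c<p_{2\to2}\leq p_u$ by verifying a criterion involving the Cheeger constant $\iota(T_p)$ of the normalized two-point matrix; this only requires showing that \emph{distant half-spaces contribute little to the susceptibility} (\cref{prop:hyperplanedecay}), an averaged statement that is proven by constructing ``roughly branching'' sets of vertices near a nested sequence of hyperplanes (\cref{lem:hyperplanesbranching}) and applying a multiplicativity/FKG argument (\cref{lem:branchingbound}), rather than by controlling the geometry of individual open paths. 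The passage from ``small contribution from distant half-spaces'' to the isoperimetric bound then goes through the hyperbolic Magic Lemma (\cref{prop:GromovMagic}), which localises finite sets into one or two distant half-spaces as seen from a typical point.

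Phase (i) is also not justified as stated. The observation that, by nonamenability, a finite cluster of volume $n$ must have $\Theta(n)$ closed boundary edges gives $\mathbf P_p(K_v=K)\leq (1-p)^{h\,\mathrm{vol}(K)}$ for each fixed shape $K$, but the number of connected shapes of volume $n$ containing $v$ grows exponentially in $n$, so this does not yield a tail bound on $|K_v|$ at $p_c$ by itself. The tail bound $\mathbf P_{p_c}(|K_v|\geq n)\asymp n^{-1/2}$ (\cref{exponent:volume}) is a \emph{consequence} of the triangle condition, so relying on it at phase (i) would be circular. In the paper, the only mean-field input needed \emph{before} the triangle condition is established is the susceptibility bound $\overline{\chi}_p\lesssim (p_c-p)^{-1}$ (\cref{thm:susceptibility}), which is proven by a direct differential inequality exploiting \cref{cor:emptyhalfspace} (a constant fraction of any finite connected set lies near a supporting half-space), not by any cluster-tail estimate. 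If you want to salvage something in the spirit of your outline, this differential-inequality route is the right replacement for your phase (i), and replacing the pointwise decay in (ii)--(iii) by an averaged isoperimetric statement about $T_p$ is the key missing idea.
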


\begin{thm}
\label{thm:triangle}
Let $G$ be a connected, locally finite,  nonamenable, Gromov hyperbolic, quasi-transitive graph. Then $\nabla_{p_c}<\infty$.
\end{thm}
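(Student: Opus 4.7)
I would prove both theorems simultaneously by establishing that the two-point function operator $T_p\colon \ell^2(V) \to \ell^2(V)$, defined by $(T_p f)(u) = \sum_{v\in V} \tau_p(u,v) f(v)$, satisfies $\|T_{p_c}\|_{2\to 2} < \infty$. This implies \cref{thm:triangle} immediately, since for every vertex $v$ one has the identity
\[
\nabla_p(v) \;=\; \sum_{x,y} \tau_p(v,x)\tau_p(x,y)\tau_p(y,v) \;=\; \langle \delta_v, T_p^{\,3}\delta_v\rangle \;\leq\; \|T_p\|_{2\to 2}^{3}.
\]
It also implies \cref{thm:pcpu} via the known inequality $p_{2\to 2} \leq p_u$ on any nonamenable unimodular transitive graph, where $p_{2\to 2} := \sup\{p : \|T_p\|_{2\to 2} < \infty\}$. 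So the plan is to prove the strict inequality $p_c < p_{2\to 2}$.

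By sharpness of the phase transition (Aizenman--Barsky, Duminil-Copin--Tassion), the two-point function decays exponentially in graph distance for every $p < p_c$, and hence $T_p$ is already trace-class in that regime; this gives $p_{2\to 2} \geq p_c$. To upgrade to strict inequality I would establish a Russo-type differential inequality controlling the growth of $\|T_p\|_{2\to 2}$ in $p$ on $[0,p_{2\to 2})$, of the rough shape
\[
\urDini \|T_p\|_{2\to 2}^{-1} \;\geq\; -\,C\,\Phi(p),
\]
with a geometric quantity $\Phi(p)$ that can in turn be bounded by a power of $\|T_p\|_{2\to 2}$ strictly smaller than what would force a finite-time blow-up. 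Integrating such an inequality then yields $p_{2\to 2} > p_c$.

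The derivation of the differential inequality would use Russo's formula and the BK inequality to bound the $p$-derivative of the quadratic form $\langle f, T_p f\rangle$ by a sum over edges of a triangle-diagram-type expression, with the mass transport principle (via unimodularity) converting these edge sums into a single-vertex average. The key step, and where Gromov hyperbolicity enters, is bounding the resulting diagrammatic sum by a manageable function of $\|T_p\|_{2\to 2}$. The thin-triangles property together with the action of $\Aut(G)$ on the Gromov boundary should force geodesic triangles in $G$ to be almost degenerate, so that long open paths emerging from a common vertex rapidly fellow-travel a common geodesic before branching off; this geometric rigidity is what makes the triangle diagram summable.

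The main obstacle is the absence of the tilting-by-modular-function technique that handled the nonunimodular case in~\cite{Hutchcroftnonunimodularperc}: without a nontrivial modular function, there is no canonical "asymmetric" mass transport producing the required geometric input. The expected resolution is to extract the asymmetry from the hyperbolic boundary structure itself, for instance by sampling a random geodesic ray to the Gromov boundary (with respect to a suitable conformal or Patterson--Sullivan type measure) and using it to define a modified mass transport whose large-scale behavior plays the role of the modular function. Making this substitute precise and quantitative, and matching its decay against the exponential volume growth of hyperbolic balls, is the principal technical challenge.
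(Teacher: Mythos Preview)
Your reduction is correct and matches the paper: both theorems follow from the strict inequality $p_c<p_{2\to2}$, and indeed $\nabla_p\le\|T_p\|_{2\to2}^3$ and $p_{2\to2}\le p_u$ are exactly the observations the paper makes. Where your proposal diverges from the paper, and where it stops being a proof, is the mechanism for obtaining $p_c<p_{2\to2}$.

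The paper does \emph{not} run a differential inequality directly on $\|T_p\|_{2\to2}$. Instead it observes (via a coupling/BK argument, \cref{lem:operatorbound} and \cref{cor:l2AizBar}) that $\|T_p\|_{2\to2}\ge c/(p_{2\to2}-p)$, and separately bounds $\|T_p\|_{2\to2}$ from above by Cheeger's inequality applied to the substochastic matrix $\overline\chi_p^{-1}T_p$:
\[
\|T_p\|_{2\to2}\le \overline\chi_p\sqrt{1-\iota(T_p)^2}.
\]
This reduces $p_c<p_{2\to2}$ to two separate subcritical estimates: (i) $(p_c-p)\overline\chi_p$ stays bounded, and (ii) $\iota(T_p)\to1$, i.e.\ $\sup_K |K|^{-1}\overline\chi_p^{-1}\sum_{u,v\in K}\tau_p(u,v)\to0$. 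Both are statements about the $1\to1$ norm (susceptibility), not the $2\to2$ norm; the operator theory enters only through Cheeger.

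The geometric input is also quite different from what you sketch. There is no Patterson--Sullivan measure, no random ray, and no ``asymmetric mass transport'' mimicking a modular function. The engine is a hyperbolic version of the Benjamini--Schramm \emph{Magic Lemma}: for any finite $A\subset V$, from the viewpoint of most $v\in A$, all but boundedly many points of $A$ lie in one or two half-spaces at distance $\ge\eps^{-1}$ from $v$. A corollary gives (i) via a standard Russo/finite-energy differential inequality for $\chi_p$ (half of the cluster sits in a half-space whose translate is disjoint, so clusters can be glued). For (ii), one must show that a distant half-space contributes $o(\chi_p)$ to the susceptibility; this is done by showing that the preimages of a stack of parallel hyperplanes form a \emph{roughly branching} set (compositions of suitably chosen automorphisms send the root into disjoint half-spaces), which forces $\sum_{u\in K}\tau_p(v_0,u)\le C$ uniformly in $p\le p_c$ by a supermultiplicativity argument. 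The Magic Lemma then converts this into the required isoperimetric statement.

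Your outline names the right target and the right general toolbox (Russo, BK, mass transport, hyperbolic geometry), but the steps ``bound the diagrammatic sum by a manageable function of $\|T_p\|_{2\to2}$'' and ``extract asymmetry via a boundary measure'' are placeholders rather than arguments; nothing you wrote would, as stated, produce the crucial estimate that a half-space far from $v$ contributes $O(\chi_p/\text{distance})$ to $\mathbf E_p|K_v\cap H|$. That estimate, and the Magic Lemma that makes it useful, are the heart of the proof and are absent from your proposal.
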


Here, a graph is said to be \textbf{Gromov hyperbolic} \cite{Gromov81,Gromov87} if it satisfies the \textbf{Rips thin triangles property},
% \footnote{Not to be confused with the triangle condition for percolation introduced above, which is unrelated.},
meaning that there exists a constant $C$ such that for any three vertices $u,v,w$ of $G$ and any three geodesics $[u,v]$, $[v,w]$ and $[w,u]$ between them, every point in the geodesic $[u,v]$ is contained in the union of the $C$-neighbourhoods of the geodesics $[v,w]$ and $[w,u]$. For example, every tree is hyperbolic, as it satisfies the Rips thin triangles property with constant $C=0$.  A finitely generated group is said to be (Gromov) hyperbolic if one (and hence all) of its Cayley graphs are Gromov hyperbolic. Every infinite, quasi-transitive Gromov hyperbolic graph is either nonamenable or rough-isometric to $\Z$. Bonk and Schramm \cite{MR1771428} proved that a bounded degree graph is Gromov hyperbolic if and only if it admits a rough-isometric embedding into real hyperbolic space $\mathbb{H}^d$ for some $d\geq 1$, a result that will be used extensively throughout this paper.

Many finitely generated groups and quasi-transitive graphs are Gromov hyperbolic. Examples include lattices in $\mathbb{H}^d$; random groups below the collapse transition \cite{MR2205306,MR3551187,MR3692901,MR3239613,MR1995802}; small cancellation $1/6$ groups \cite[\S 0.2A]{Gromov87}; fundamental groups of compact Riemannian manifolds of negative sectional curvature \cite[Chapter 3]{ghys1990groupes}; 
quasi-transitive graphs rough-isometric to simply connected Riemannian manifolds of sectional curvature bounded from above by a negative constant \cite[Chapter 3]{ghys1990groupes}; quasi-transitive CAT$(-k)$ graphs for $k>0$ \cite[Chapter 3]{ghys1990groupes}; and quasi-transitive, nonamenable, simply connected planar maps \cite{MR3658330}. Many surveys and monographs on hyperbolic groups have been written, and we refer the reader to e.g.\ \cite{Gromov87,ghys1990groupes,MR2243589} for further background. The specific background on hyperbolic geometry needed for the proofs of this paper is reviewed in \cref{sec:geometry}.

We remark that finitely generated hyperbolic groups are always finitely presented \cite[Chapter 4]{ghys1990groupes}, and it follows from the work of Babson and Benjamini \cite{MR1622785} that their Cayley graphs have $p_u<1$ if and only if they are one-ended. Other properties of percolation on lattices in $\mathbb{H}^d$ have been investigated in \cite{MR1873136,MR2986821,1804.05948}.

% , so that \cref{thm:pcpu,thm:triangle} cannot be used to treat e.g.\ the product $T\times \Z$ of a regular tree with $\Z$.

\medskip

Previous progress on \cref{conj:pcpu,conj:triangle} can  be briefly summarised as follows. First, several works \cite{MR1833805,MR1756965,MR3005730,MR3572426} have established \emph{perturbative criteria} under which $p_c<p_u$ and $\nabla_{p_c}<\infty$. In these papers, the assumption of nonamenability is replaced by a stronger quantitative assumption, for example that the Cheeger constant is large, under which it can be shown that $p_c<p_u$ and $\nabla_{p_c}<\infty$ by combinatorial methods. In particular, Pak and Smirnova-Nagnibeda~\cite{MR1756965} proved that every nonamenable finitely generated group has a Cayley graph for which $p_c<p_u$ (see also \cite{MR3352259}). Papers that apply perturbative techniques to study specific examples, including some specific examples of hyperbolic lattices, include \cite{tykesson2007number,1303.5624,yamamoto2017upper}. 

Let us now discuss \emph{non-perturbative} results. Benjamini and Schramm \cite{BS00} showed that $p_c<p_u$ for every \emph{planar} nonamenable quasi-transitive graph (see also \cite{unimodular2}), generalizing earlier work of Lalley \cite{MR1614583}. The later work of Gaboriau \cite{MR2221157} and Lyons \cite{MR1757952,MR3009109} used the ergodic-theoretic notion of \emph{cost} to prove that $p_c<p_u$ on any quasi-transitive graph admitting non-constant harmonic Dirichlet functions, a class that includes all those examples treated by \cite{MR1614583,BS00}. This property is invariant under rough isometry, and was until now the only condition (other than the conjecturally equivalent property of having cost $>1$) under which a finitely generated group was known to have $p_c<p_u$ for \emph{all} of its Cayley graphs. In particular, this result applies to every quasi-transitive graph rough isometric to $\mathbb{H}^2$, but does not apply to higher dimensional hyperbolic lattices \cite[Theorem 9.18]{LP:book}. Kozma \cite{kozma2011percolation} proved that $\nabla_{p_c}<\infty$ for the product of two three-regular trees, the first time that the triangle condition had been established by non-perturbative methods in a non-trivial example. (This example was recently revisited in \cite{1712.04911}.) Schonmann \cite{MR1888869} proved, without verifying the triangle condition, that several mean-field exponents hold 
on every transitive nonamenable planar graph and every infinitely ended, unimodular  transitive  graph. Similar results for certain specific lattices in $\mathbb{H}^3$ were proved by Madras and Wu \cite{madras2010}.  Very recently, we established that $p_c<p_u$ and $\nabla_{p_c}<\infty$ for every graph whose automorphism group has a \emph{quasi-transitive nonunimodular subgroup} \cite{Hutchcroftnonunimodularperc}. This  was until now the only setting in which both $p_c<p_u$ and $\nabla_{p_c}<\infty$ were established under non-perturbative hypotheses.
 In a different direction, in \cite{1710.03003} counterexamples were  constructed to show that the natural generalization of \cref{conj:pcpu} to \emph{unimodular random rooted graphs} is \emph{false}.

 The class of examples treated in this paper is mostly disjoint from the class treated in \cite{Hutchcroftnonunimodularperc}, and the methods we employ here are also very different to those of that paper. 
 Indeed, it follows from \cite[Theorem H]{de2015characterizing} that every hyperbolic group has a Cayley graph whose  automorphism group is discrete, and consequently does not have any nonunimodular subgroups.

\medskip

\cref{thm:triangle} has the following consequences regarding percolation at and near $p_c$. These consequences were established for the hypercubic lattice in the papers \cite{aizenman1987sharpness,MR762034,MR1127713,MR923855,MR2551766,MR2748397}. A detailed overview of how to adapt these proofs to the general quasi-transitive setting is given in \cite[\S 7]{Hutchcroftnonunimodularperc}. We write $\asymp$ for an equality that holds up to multiplication by a function that is bounded between two positive constants in the vicinity of the relevant limit point. 

\begin{corollary}[Mean-field critical exponents]
\label{corollary:criticalexponents}
Let $G$ be a connected, locally finite,  nonamenable, Gromov hyperbolic, quasi-transitive graph. Then the following hold for each $v\in V$.
% \vspace{-0.5em}
\begingroup
\addtolength{\jot}{0.5em}
\begin{align}
\chi_p(v) &\asymp (p_c-p)^{-1}  &p &\nearrow p_c
\label{exponent:susceptibility}
\\
\chi_{p}^{(k+1)}(v)/\chi_{p}^{(k)}(v) &\asymp (p_c-p)^{-2} &k\geq 1,\, p &\nearrow p_c
\label{exponent:gap}
\\
\bP_{p}\left(|K_v|=\infty\right) &\asymp p-p_c &p &\searrow p_c
\label{exponent:theta}
\\
% M_{p_c,h}(v) &\asymp h^{1/2} &h&\searrow 0\\
\bP_{p_c}\left( |K_v| \geq n\right) & \asymp n^{-1/2} &n&\nearrow \infty 
\label{exponent:volume}
% \\
% \bP_{p_c}\left( \operatorname{rad}(K_v) \geq n\right) & \asymp n^{-1} &n&\nearrow \infty
% \label{exponent:radius}
\\
\bP_{p_c}\left( \operatorname{rad}_\mathrm{int}(K_v) \geq n\right) & \asymp n^{-1} &n&\nearrow \infty.
\label{exponent:intradius}
\end{align}
\endgroup
\end{corollary}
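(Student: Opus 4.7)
The plan is to deduce the corollary directly from \cref{thm:triangle} by invoking the general implications of the triangle condition for percolation. The original proofs of these implications were given for the hypercubic lattice in the references cited just above the statement, and the verification that the arguments transcribe to the general quasi-transitive setting has already been carried out in \cite[\S 7]{Hutchcroftnonunimodularperc}. My job is therefore essentially a bookkeeping one: since \cref{thm:triangle} supplies $\nabla_{p_c}<\infty$, I just need to check that each of the five asymptotics is covered by the quoted references.

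Concretely, I would proceed as follows. The susceptibility asymptotic \eqref{exponent:susceptibility} follows from the Aizenman--Newman differential inequality $\frac{d}{dp}\chi_p \leq C\chi_p^2$ combined with the mean-field lower bound $\chi_p \geq c(p_c-p)^{-1}$, which is a consequence of sharpness of the phase transition (valid for every quasi-transitive graph by Aizenman--Barsky--Menshikov and its subsequent refinements). The gap exponents \eqref{exponent:gap} follow from Nguyen's inductive argument applied to the higher susceptibilities $\chi_p^{(k)}$. The percolation probability \eqref{exponent:theta} follows from the Aizenman--Barsky and Barsky--Aizenman differential inequalities for $\theta_p$. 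The critical volume tail \eqref{exponent:volume} is obtained via the Aizenman--Newman tree-graph bounds together with the refinement used to prove the matching upper bound. Finally, the intrinsic-radius tail \eqref{exponent:intradius} follows from the Kozma--Nachmias approach, adapted to the quasi-transitive setting in \cite[\S 7]{Hutchcroftnonunimodularperc}.

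The main (and essentially only) obstacle is verifying that each of these arguments uses only those features of the underlying graph that are preserved under passage from $\Z^d$ to a general quasi-transitive graph: the BK--Reimer and Harris--FKG inequalities, Simon--Lieb-type inequalities, standard differential inequalities for $\chi_p$ and $\theta_p$, and sharpness of the phase transition. All of these tools are available unconditionally on any connected, locally finite, quasi-transitive graph. Quasi-transitivity (rather than transitivity) is handled by taking suprema or infima over the finitely many orbit representatives, which is precisely why the asymptotics in the statement hold for each $v \in V$ with constants that may be chosen uniformly. In particular, no further properties of hyperbolic graphs beyond those already used to prove \cref{thm:triangle} enter the argument, so the corollary reduces to citing \cite[\S 7]{Hutchcroftnonunimodularperc} together with the underlying references from $\Z^d$.
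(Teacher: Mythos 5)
Your proposal is correct and follows exactly the paper's route: the paper likewise treats \cref{corollary:criticalexponents} as an immediate consequence of \cref{thm:triangle} combined with the cited references for the hypercubic lattice and the adaptation to general quasi-transitive graphs worked out in \cite[\S 7]{Hutchcroftnonunimodularperc}, with no further hyperbolicity input. One small slip in your bookkeeping for \eqref{exponent:susceptibility}: the always-valid inequality $\lrDini \chi_p \leq C\chi_p^2$ together with sharpness ($\chi_{p_c}=\infty$) is what yields the \emph{lower} bound $\chi_p \geq c(p_c-p)^{-1}$, whereas the matching \emph{upper} bound is what actually uses $\nabla_{p_c}<\infty$ (via the reverse inequality $\lrDini\chi_p \gtrsim \chi_p^2$), so your parenthetical attribution has the two directions swapped, though this does not affect the validity of the overall argument.
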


Here, we define the \textbf{susceptibility} $\chi_p(v)$ to be the expected volume of the cluster at $v$, and define $\chi_p^{(k)}(v)$ to be the $k$th moment of the volume of the cluster at $v$.  The implicit constants in \eqref{exponent:gap} may depend on $k$. We denote the cluster at $v$ by $K_v$, writing $|K_v|$ for its volume and rad$_\mathrm{int}(K_v)$ for its \textbf{intrinsic radius}, i.e., the maximum distance  between $v$ and some other point in $K_v$ as measured by the graph distance in $G[p]$. We write $\bP_p$ and $\bE_p$ for probabilities and expectations taken with respect to the law of $G[p]$. 
We remark that the susceptibility upper bound of \eqref{exponent:susceptibility} is proven as an intermediate step in the proofs of \cref{thm:pcpu,thm:triangle}.
Further applications of our techniques to the computation of critical exponents for percolation on hyperbolic graphs, including the computation of the extrinsic radius exponent, are given in the companion paper \cite{Hutp2to2}.

Finally, we remark that the following corollary of  \cref{thm:pcpu} is implied by the work of Lyons, Peres, and Schramm \cite{LPS06}. We refer the reader to that paper and to \cite[Chapter 11]{LP:book} for background on minimal spanning forests.

\begin{corollary}
\label{cor:MSF}
Let $G$ be a connected, locally finite,  nonamenable, Gromov hyperbolic, quasi-transitive graph.  
 Then the free and wired minimal spanning forests of $G$ are distinct.
\end{corollary}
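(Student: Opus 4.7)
The plan is to derive \cref{cor:MSF} as an essentially immediate consequence of \cref{thm:pcpu} combined with the work of Lyons, Peres, and Schramm \cite{LPS06} on minimal spanning forests; no new technical work is required beyond what has already been developed in the paper. First, \cref{thm:pcpu} applied to our $G$ gives $p_c(G) < p_u(G)$. Combined with the Häggström--Peres--Schonmann monotonicity of uniqueness cited in the introduction, this implies that for every $p \in (p_c, p_u)$ the percolation configuration $G[p]$ has infinitely many infinite clusters almost surely, so in particular the set of parameters $p$ at which $G[p]$ fails to have a unique infinite cluster contains a nondegenerate interval and hence has positive Lebesgue measure.

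Lyons, Peres, and Schramm \cite{LPS06} (see also \cite[Chapter 11]{LP:book}) established a tight quantitative connection between the gap $\FMSF \setminus \WMSF$ and the set of Bernoulli parameters at which there are multiple infinite clusters. One consequence of their analysis is that for any connected, locally finite, quasi-transitive graph, the strict inequality $p_c < p_u$ forces $\FMSF \neq \WMSF$. Applying this implication to the graphs covered by \cref{thm:pcpu} immediately yields the corollary.

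There is no genuine obstacle: both ingredients are already in place, and the hypotheses of the Lyons--Peres--Schramm theorem are strictly weaker than those imposed here. The only verification required is that the precise form of the result in \cite{LPS06} we are invoking really does assert the implication $p_c < p_u \Rightarrow \FMSF \neq \WMSF$ in the quasi-transitive setting; this follows from their characterisation of the extra edges of the WMSF relative to the FMSF in terms of the simultaneous coupling of Bernoulli percolations across all $p \in [0,1]$.
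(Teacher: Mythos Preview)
Your proposal is correct and matches the paper's own treatment: the paper does not give an independent proof of \cref{cor:MSF} but simply states that it follows from \cref{thm:pcpu} together with the results of Lyons, Peres, and Schramm \cite{LPS06}, which is exactly what you do. One small slip: in your final sentence you refer to ``the extra edges of the WMSF relative to the FMSF'', but since $\WMSF \subseteq \FMSF$ the relevant gap is $\FMSF \setminus \WMSF$, as you correctly wrote earlier.
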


% Given any finite set of vertices $A$ in $G$:
% \begin{enumerate}
% 	\item At least a constant proportion of the vertices in $A$ lie in the boundary of the convex hull of $A$.
% 	\item From the perspective of a uniformly random point of $A$, most of $A$ is contained in either one or two far-away half-spaces.
% \end{enumerate}

\subsection{Organisation and overview}
\label{subsec:overview}

The proof of our main theorems has two parts. The first part, which is contained in \cref{sec:l2}, applies to arbitrary quasi-transitive graphs. In that part of the paper, we introduce a new critical parameter $p_{2\to 2}$, defined to be the supremal value of $p$ for which the matrix $T_p$ defined by $T_p(u,v)=\tau_p(u,v)$ is bounded as a linear operator from $L^2(V)$ to $L^2(V)$. We observe that $p_c\leq p_{2\to 2}\leq p_u$ and that $\nabla_p<\infty$ for all $p<p_{2\to 2}$, and derive a generally applicable necessary and sufficient condition for the strict inequality $p_c<p_{2\to 2}$.
In particular, we show that a quasi-transitive graph has $p_c<p_{2\to 2}$ if and only if
\vspace{-0.75em}
\begin{align}
\vspace{-1em}
\limsup_{p\uparrow p_c}\, (p_c-p)\overline{\chi}_p &< \infty
\label{eq:intro1}
\intertext{and\vspace{-0.75em}}
\vspace{-3em}
 \lim_{p \uparrow p_c} \sup\left\{ \frac{\sum_{u,v\in K}\tau_p(u,v)}{\overline{\chi}_p |K|} : K \subseteq V \text{ finite}\right\} &=0,
 \label{eq:intro2}
\end{align}
where $\overline{\chi}_p=\sup_{v\in V} \chi_p(v)$. We also prove some related results concerning the similarly defined critical parameters $p_{q\to q}$ for $q\in [1,\infty]$. Finally, we apply the results of \cite{Hutchcroftnonunimodularperc} to prove that $p_c<p_{2\to 2}$ in the nonunimodular setting.

The second part of the paper spans \cref{sec:geometry,sec:Magic,sec:mainproof}, and is specific to the Gromov hyperbolic setting. In that part of the paper, following a review of relevant background and the proof of a few preliminary geometric facts in \cref{sec:geometry}, we verify that \eqref{eq:intro1} and \eqref{eq:intro2} hold under the hypotheses of \cref{thm:pcpu,thm:triangle}. 
The starting point for this analysis was the observation by Benjamini \cite{benjamini2016self} that in any nonamenable Gromov hyperbolic graph, a constant fraction of any finite set of vertices lie near the boundary of the convex hull of the set, a fact he deduced from related work of Benjamini and Eldan \cite{MR2970060} (similar observations appeared earlier in \cite{madras2010}). 
In \cref{subsec:susceptibility}, we apply a variation on this observation to establish a differential inequality for the susceptibility which implies that \eqref{eq:intro1} holds under the hypotheses of \cref{thm:pcpu,thm:triangle}.

In \cref{sec:Magic}, we apply the so-called \emph{Magic Lemma} of Benjamini and Schramm \cite[Lemma 2.3]{BeSc}
to prove a refinement of this observation, which, roughly speaking, states that for any finite set of vertices in a Gromov hyperbolic graph, from the perspective of most vertices in the set, most of the set is contained in either one or two distant half-spaces. In
\cref{subsec:criticaldecay}, we apply this geometric fact to prove that \eqref{eq:intro2} holds under the hypotheses of \cref{thm:pcpu,thm:triangle}, completing the proof of our main theorems. To do this, we use a mixture of probabilistic and geometric techniques to show that a distant half-space can only contribute a small amount to the susceptibility, which yields \eqref{eq:intro2} when combined with the aforementioned consequence of the Magic Lemma.

Finally, in \cref{sec:closing} we conclude the paper with some remarks, conjectures, and open problems. In particular, we remark there that the proof given in \cite{1712.04911} of the estimate known as \emph{Schramm's Lemma} shows that the same estimate continues to hold at $p_{2\to 2}$, and consequently that there cannot be a unique infinite cluster at $p_{2\to 2}$.
% \subsection{Organisation}

% \begin{enumerate}
% 	\item
% In \cref{sec:l2}, we introduce the critical parameter $p_{2\to2}$, prove a necessary and sufficient condition for $p_c<p_{2\to2}$, prove that $p_{2\to 2} \leq p_\mathrm{exp}$, and note that the results of \cite{Hutchcroftnonunimodularperc} imply that $p_c(G)<p_{2\to 2}(G)$ when $G$ has a quasi-transitive nonunimodular subgroup.
% % , which we expect to be of further use in future. 
% 	\item In \cref{sec:geometry}, we review relevant background and prove some preliminary  facts about the geometry of Gromov hyperbolic graphs that will be used in \cref{sec:Magic,sec:mainproof}.
% 	\item In \cref{sec:Magic}, we study the the geometry of arbitrary finite sets of vertices in Gromov hyperbolic graphs as viewed from a typical point within the set. 
% 	\item In \cref{sec:mainproof}, we apply the tools developed in the preceding sections to prove our main results, \cref{thm:pcpu,thm:triangle,thm:pell2}.
% 	\item In \cref{sec:closing} we conclude the paper with some remarks, conjectures, and open problems. In particular, we prove there that the estimate known as \emph{Schramm's Lemma} continues to hold at $p_{2\to 2}$, and consequently that there cannot be a unique infinite cluster at $p_{2\to 2}$.
% \end{enumerate}

\section{An operator-theoretic perspective on percolation}
\label{sec:l2}

In this section, we develop an `operator-theoretic perspective' on percolation. 
 % He wrote there that ``\emph{Operator theory has enhanced the research of random
% walk significantly ..., and one might hope that by analogy it would do the same for percolation,
% but this has yet to happen.}''
% }.
 In particular, we introduce a new critical parameter $p_{2\to2}$ which satisfies $p_c \leq p_{2\to2} \leq  p_u$ and $\nabla_p <\infty$ for all $p<p_{2\to2}$. This allows us to state \cref{thm:pell2}, which strengthens both \cref{thm:pcpu,thm:triangle}. We also give a sufficient condition for $p_c<p_{2\to2}$ which will be applied to Gromov hyperbolic graphs in the remainder of the paper. The approach taken in this section  was inspired in part by Gady Kozma, who advocated the application of operator-theoretic techniques to percolation in \cite{MR2779397}.

Let $G=(V,E)$ be a connected, locally finite graph, and let $\R^V$ be the space of real-valued functions on $V$. 
For each matrix $M \in [-\infty,\infty]^{V^2}$, we define $\mathscr{D}(M) \subseteq \R^V$ to be the set of $f\in \R^V$ such that $\sum_{v\in V} |f(v)| |M(u,v)| <\infty$ for every $u\in V$, so that $M$ defines a linear operator from $\mathscr{D}(M)$ to $\R^{V^2}$. 
 % which we will also denote by $M$.
Recall that for each $q,q'\in [1,\infty]$ the $q \to q'$ norm of $M$ is defined by $\|M\|_{q \to q'} =\infty$ if $L^q(V) \nsubseteq \mathscr{D}(M)$, and otherwise by
\[
\|M\|_{q \to q'} = \sup\left\{ \frac{\|Mf\|_{q'}}{\|f\|_q} : f \in L^q(V), f\neq 0 \right\}.
\]

Now consider the matrix $T_p$ whose entries are given by the two-point function $T_p(u,v)=\tau_p(u,v)$. Since $\tau_p(x,y)=\tau_p(y,x)$ for every $x,y\in V$, the matrix $T_p$ is symmetric and its associated operator is self-adjoint\footnote{It is an observation of Aizenman and Newman \cite{MR762034} that $T_p$ is positive definite in the sense that $\langle T_p f, f\rangle >0$ for every non-zero $f\in L^2(V)$. We shall not use this property here.}.
% If we consider $T_p$ as an operator from $L^1(V)$ to $L^1(V)$ or from $L^\infty(V)$ to $L^\infty(V)$, then its operator norm is precisely equal to the susceptibility:
The $1\to 1$ and $\infty\to\infty$ norms of $T_p$ are given precisely by the susceptibility:
\[\| T_p \|_{1\to1} = \| T_p \|_{\infty\to\infty} = \sup_{v\in V} \sum_{v\in V} \tau_p(v,u) = \sup_{v\in V} \chi_p(v)=: \overline{\chi}_p.\]
It follows by sharpness of the phase transition \cite{aizenman1987sharpness,antunovic2008sharpness,duminil2015new} that if $G$ is quasi-transitive then
$\|T_p\|_{1\to1}<\infty$ if and only if $p<p_c$.
On the other hand, we can also consider the $q\to q$ norm of $T_p$ for other $q\in [1,\infty]$, and define $p_{q\to q}$ to be the critical value associated to the finiteness of $\|T_p\|_{q\to q}$, that is,
\[p_{q\to q} = p_{q\to q}(G) = \sup\Bigl\{p\in [0,1]: \|T_p\|_{q\to q} <\infty\Bigr\}.\]
Since $T_p$ is symmetric we have that 
\begin{equation}
\label{eq:conjugate}
\|T_p\|_{q\to q} = \|T_p\|_{\frac{q}{q-1}\to \frac{q}{q-1}}
\end{equation}
for every $q\in [1,\infty]$. Moreover, it follows from the Riesz-Thorin Theorem that $\log \| T_p \|_{1/q\to1/q}$ is a convex function of $q\in [0,1]$. Together, these facts imply that $\|T_p\|_{q\to q}$ is a decreasing function of $q$ on $[1,2]$ and an increasing function of $q$ on $[2,\infty]$, so that $p_{q\to q}$ is an increasing function of $q$ on $[1,2]$ and a decreasing function of $q$ on $[2,\infty]$. 
% \[
% \|T_p\|_{q\to q} \leq \sqrt{\|T_p\|_{1\to1}\|T_p\|_{\infty\to\infty}} =\overline{\chi}_p,
% \]
In particular, \[p_c(G)=p_{1\to 1}(G)=p_{\infty\to\infty}(G)\leq p_{q\to q}(G)\leq p_{2\to 2}(G)\] for every quasi-transitive graph $G$ and $q\in [1,\infty]$. 
We will be particularly interested in the critical value $p_{2\to 2}(G)$, which by the above discussion is equal to $\sup_{q\in [1,\infty]}p_{q\to q}(G)$. 

If $G$ is quasi-transitive and $G[p]$ has a unique infinite cluster, then the two-point function $\tau_p(u,v)\geq \mathbf{P}_p(u\to\infty)\mathbf{P}_p(v\to\infty)$ is bounded below by a positive constant, and it follows that $p_{q\to q}(G)\leq p_u(G)$ whenever $q\in [1,\infty]$ and $G$ is infinite and quasi-transitive. (In \cref{sec:closing}, we prove the stronger statement that $G[p_{2\to 2}]$ does not have a unique infinite cluster a.s.\ when $G$ is nonamenable and quasi-transitive.) Thus, the following theorem, which is proven in \cref{sec:mainproof}, strengthens \cref{thm:pcpu}. The difficult part of the theorem is that $p_c(G)<p_{2\to 2}(G)$; we shall see in \cref{prop:2to2givesqtoq} that this implies that $p_c(G)<p_{q\to q}(G)$ for every $q \in (1,\infty)$. The dependence of $p_{q\to q}$ on $q$ is further investigated in \cite{Hutp2to2}.

\begin{thm}
\label{thm:pell2}
Let $G$ be a connected, locally finite, quasi-transitive, nonamenable, Gromov hyperbolic graph. Then $p_c(G)<p_{q\to q}(G)$ for every $q\in (1,\infty)$. 
\end{thm}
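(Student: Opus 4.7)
The plan is to prove the stronger statement $p_c(G) < p_{2 \to 2}(G)$, after which the conclusion for all $q \in (1,\infty)$ follows from \cref{prop:2to2givesqtoq}, a general proposition applicable to arbitrary quasi-transitive graphs that does not use hyperbolicity. All of the hyperbolic geometry enters in the first step, which by the criterion established earlier in this section reduces to verifying conditions \eqref{eq:intro1} and \eqref{eq:intro2} under the hypotheses of the theorem.

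To verify condition \eqref{eq:intro1}, namely that $(p_c - p)\overline{\chi}_p$ is bounded as $p \uparrow p_c$, I would establish a differential inequality of mean-field type --- roughly $\frac{d}{dp}\overline{\chi}_p \gtrsim \overline{\chi}_p^2$ on $[0, p_c)$ --- from which \eqref{eq:intro1} follows by integration. The key geometric input is the observation of Benjamini \cite{benjamini2016self}, based on the work of Benjamini and Eldan \cite{MR2970060}, that in any nonamenable Gromov hyperbolic quasi-transitive graph a positive fraction of any finite set of vertices lies within bounded distance of the boundary of the convex hull of that set. Applied cluster-by-cluster to $K_v$ under $\bP_p$ and combined with Russo's formula, this should supply enough pivotal edges to give the desired quadratic lower bound on the derivative of $\chi_p(v)$.

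To verify condition \eqref{eq:intro2}, the main tool is the Magic Lemma of Benjamini and Schramm \cite[Lemma 2.3]{BeSc}, applied inside the Bonk--Schramm embedding of $G$ into a real hyperbolic space $\mathbb{H}^d$. This should yield the structural statement already advertised: for any finite $K \subseteq V$, from the viewpoint of most $v \in K$, most of $K$ sits inside at most two half-spaces whose bounding hyperbolic hyperplanes are at substantial distance from $v$. It then remains, by a mixture of probabilistic and geometric arguments, to bound the contribution $\sum_{u \in H} \tau_p(v,u)$ of such a distant half-space $H$ to $\chi_p(v)$. The plan is to show, using that every open path from $v$ into $H$ must cross a narrow separating region whose size is dictated by hyperbolic geometry, that this contribution is $o(\overline{\chi}_p)$ uniformly as $p \uparrow p_c$. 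Combining this with the Magic Lemma decomposition will then give $\sum_{u,v \in K} \tau_p(u,v) = o(\overline{\chi}_p |K|)$, which is \eqref{eq:intro2}.

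The main obstacle I anticipate lies in condition \eqref{eq:intro2}: one needs not just an $O(1)$ bound on the distant-half-space contribution, but a genuine vanishing fraction of $\overline{\chi}_p$, which itself diverges at $p_c$. Extracting this additional factor of $1/\overline{\chi}_p$ from the hyperbolic separation --- without circularly assuming the mean-field behaviour one is trying to establish --- is where the real work of the argument must be concentrated. By contrast, once the convex-hull-boundary fact is available, condition \eqref{eq:intro1} reduces to a clean application of a differential inequality.
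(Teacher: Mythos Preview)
Your overall strategy matches the paper's: reduce to $p_c<p_{2\to2}$ via \cref{prop:2to2givesqtoq}, then verify the criterion of \cref{prop:criterion} by establishing \eqref{eq:intro1} and \eqref{eq:intro2}, using respectively the convex-hull-boundary observation with a differential inequality, and the Magic Lemma together with a distant-half-space estimate. Your identification of the main difficulty --- extracting a genuinely vanishing fraction of $\overline{\chi}_p$ from the half-space contribution --- is also accurate; this is exactly the content of \cref{prop:hyperplanedecay}.

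There is, however, one structural omission. The paper's arguments for both \eqref{eq:intro1} and \eqref{eq:intro2} (\cref{thm:susceptibility,prop:hypiTp}) are stated and proved only under the additional hypothesis of \emph{unimodularity}: the mass-transport principle is used essentially, both in the proof of \cref{lem:susceptibilityhalfspace} and in the derivative computation \eqref{eq:derivative}, and \cref{prop:unimodularnofixed} (which guarantees the absence of the fixed-set property, needed for \cref{lem:movinghalfspaces}) also requires unimodularity. The nonunimodular case is handled separately and directly by \cref{thm:nonunimodular}, which invokes the prior work \cite{Hutchcroftnonunimodularperc}. Your sketch does not mention this case split, and as written the differential-inequality and half-space arguments you describe would not go through without mass transport. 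This is not a fatal gap --- the missing case is already covered --- but you should make the split explicit.

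A minor point: you describe \eqref{eq:intro1} as ``a clean application of a differential inequality'' once the convex-hull fact is in hand, but the paper's argument (proof of \cref{thm:susceptibility}) requires a non-trivial finite-energy modification step to pass from the geometric input to a lower bound on pivotals; it is not entirely automatic.
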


% In the remainder of this section, we relate the $2\to 2$ norm to the growth rate of diagramatic sums, derive a sufficient condition for $p_c<p_{2\to2}$ that will be applied in the proof of \cref{thm:pell2}.

We now briefly discuss the relationship between the $2\to 2$ norm and diagramatic sums. 
Recall that the \textbf{$n$th polygon diagram} at $v$ is defined to be
\begin{equation*}
T_p^n(v,v)
= \langle T_p^n \mathbbm{1}_v,\mathbbm{1}_v\rangle= 
\sum_{x_1,x_2,\ldots,x_{n-1}\in V} \tau_p(v,x_1)\tau_p(x_1,x_2)\cdots \tau_p(x_{n-2},x_{n-1})\tau_p(x_{n-1},v),
\end{equation*}
so that $\nabla_p= \sup_{v\in V} T^3_p(v,v)$. (Note that $T_p^n$ is always well-defined as an element of $[0,\infty]^{V^2}$.) 
It follows by the Cauchy-Schwarz inequality and the symmetry of $T_p$ that
\begin{equation}
\label{eq:diagrambound}
T_p^n(v,v)\leq \|T_p^n\|_{2\to 2} = \|T_p\|_{2\to 2}^n
\end{equation}
for every $v\in V$ and $n\geq 1$, so that in particular $\nabla_p<\infty$ for all $p<p_{2\to 2}$. 
The next proposition implies that $\|T_p\|_{2\to2}$ is in fact \emph{equal} to the exponential growth rate of the polygon diagrams.

\begin{prop}
\label{lem:diagrams}
Let $M \in [0,\infty]^{V^2}$ be a non-negative symmetric matrix. Then
\[
\|M\|_{2\to2}=\sup_{v\in V, n\geq 0} \left[
% \bigl\langle T_p^n \mathbbm{1}_v,\mathbbm{1}_v\bigr\rangle^{1/n}
M^n(v,v)\right]^{1/n}.
% = \lim_{n\to \infty} \bigl\langle T_p^n \mathbbm{1}_u,\mathbbm{1}_u\bigr\rangle^{1/n}
\]
% and $u\in V$. 
\end{prop}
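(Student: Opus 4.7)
The easy direction, $\sup_{v,n}[M^n(v,v)]^{1/n} \leq \|M\|_{2\to2}$, is essentially contained in \eqref{eq:diagrambound}: for any $v\in V$ and $n\geq 1$, Cauchy--Schwarz and the submultiplicativity of operator norms give
\[
M^n(v,v) = \langle M^n \mathbbm{1}_v,\mathbbm{1}_v\rangle \leq \|M^n \mathbbm{1}_v\|_2 \|\mathbbm{1}_v\|_2 \leq \|M^n\|_{2\to 2}\leq \|M\|_{2\to 2}^n,
\]
so I can assume for the rest of the proof that $\|M\|_{2\to 2} < \infty$; otherwise I will verify separately via the truncation argument below that the supremum on the right is also infinite.

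For the reverse inequality, my plan is to first handle the finite case and then reduce the general case to it by truncation. When $V$ is finite, $M$ is a real symmetric matrix, and by the finite-dimensional spectral theorem its operator norm $\|M\|_{2\to 2}$ equals the largest modulus $|\lambda_*|$ among its eigenvalues. Since all eigenvalues are real, the trace satisfies
\[
\sum_{v\in V} M^{2n}(v,v) = \operatorname{tr}(M^{2n}) = \sum_i \lambda_i^{2n} \geq \lambda_*^{2n} = \|M\|_{2\to 2}^{2n},
\]
so some vertex $v_n$ has $M^{2n}(v_n,v_n)\geq \|M\|_{2\to 2}^{2n}/|V|$. Raising to the $1/(2n)$ power and sending $n\to\infty$ kills the factor $|V|^{-1/(2n)}$ and yields $\sup_{v,n}[M^n(v,v)]^{1/n} \geq \|M\|_{2\to 2}$.

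To pass to infinite $V$, for each finite $F \subseteq V$ let $M_F$ denote the principal submatrix indexed by $F$, i.e., $M_F(u,v) = M(u,v)\mathbbm{1}[u,v\in F]$. Since $M$ has non-negative entries, the norm $\|M\|_{2\to 2}$ can be computed by testing against non-negative, finitely supported functions only, and the monotone convergence theorem then gives $\|M\|_{2\to 2}= \sup_F \|M_F\|_{2\to 2}$ as $F$ ranges over finite subsets of $V$. Moreover, expanding $M_F^n(v,v)$ as a sum over paths $v=x_0,x_1,\ldots,x_n=v$ constrained to lie in $F$, and using non-negativity of entries again, yields $M_F^n(v,v)\leq M^n(v,v)$ for every $v\in F$. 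Combining these two facts with the finite case, I obtain
\[
\|M\|_{2\to 2} = \sup_{F} \|M_F\|_{2\to 2} \leq \sup_F \sup_{v\in F,\, n\geq 1} \bigl[M_F^n(v,v)\bigr]^{1/n} \leq \sup_{v\in V,\, n\geq 1} \bigl[M^n(v,v)\bigr]^{1/n},
\]
completing the proof.

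There is no serious obstacle here: the argument is essentially a packaging of the spectral radius formula for a self-adjoint matrix together with the observation that for matrices with non-negative entries, only the finite principal submatrices matter. The point that deserves the most care is the equality $\|M\|_{2\to 2}= \sup_F \|M_F\|_{2\to 2}$, which requires the hypothesis of non-negative entries in an essential way (otherwise cancellation could make $\|M_F\|$ artificially small), and the observation that $M_F^n(v,v)\leq M^n(v,v)$, which likewise fails without non-negativity.
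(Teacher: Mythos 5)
Your proof is correct. The paper itself gives no argument, only pointing to \cite[Proposition~6.6]{LP:book}, where the analogous claim for a reversible Markov operator is proved by a different route: one shows that the return probabilities $p_{2n}(o,o)$ are supermultiplicative, invokes Fekete's lemma to get a limit, and then compares off-diagonal with on-diagonal entries via Cauchy--Schwarz to show this limit coincides with the $2\to 2$ norm. Your argument replaces that supermultiplicativity machinery with a truncation-to-finite-submatrices step (justified by non-negativity and monotone convergence) followed by the finite-dimensional spectral theorem and a trace bound. This has a concrete advantage here: the Lyons--Peres argument implicitly relies on irreducibility to identify the growth rates of $p_{2n}(x,x)$ for different base points $x$, whereas a general non-negative symmetric matrix $M$ need not be irreducible; your approach sidesteps this entirely because the supremum over $v$ is built in and the finite spectral theorem needs no connectivity hypothesis. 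You also handle the degenerate cases (some entry of $M$ equal to $+\infty$, or $\|M\|_{2\to2}=\infty$) cleanly via the same truncation. One cosmetic point: you are right to work with $n\geq 1$; the statement's $n\geq 0$ is a harmless slip, since $[M^0(v,v)]^{1/0}$ is not meaningful.
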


\begin{proof}
This is presumably a standard fact. 
It follows by the same proof as that of \cite[Proposition 6.6]{LP:book}, where the claim is stated in the special case that $M$ is stochastic.
 % Indeed, the proof written there applies to every self-adjoint matrix $M \in \R^{V^2}$ with $M(u,v)\geq 0$ for every $u,v\in V$.
\end{proof}

Our next goal is to prove the following; see \cite{Hutp2to2} for a sharp quantitative version.

\begin{prop}
\label{prop:2to2givesqtoq}
Let $G$ be a connected, locally finite, quasi-transitive graph. If $p_c(G) < p_{2\to 2}(G)$ then $p_c(G) <p_{q\to q}(G)$ for every $q\in (1,\infty)$.
\end{prop}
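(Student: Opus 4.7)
The plan is to reduce to the case $q\in(1,2]$ via the duality $\|T_p\|_{q\to q}=\|T_p\|_{q/(q-1)\to q/(q-1)}$, and, for such $q$, to exhibit some $p>p_c$ with $\|T_p\|_{q\to q}<\infty$. Set $\beta:=2/q-1\in[0,1)$; the case $q=2$ is immediate from the hypothesis, so assume $\beta>0$.

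Since $p_c<p_{2\to 2}$, fix $p_0\in(p_c,p_{2\to 2})$ and let $M:=\|T_{p_0}\|_{2\to 2}<\infty$, so that $\|T_p\|_{2\to 2}\leq M$ for every $p\leq p_0$ by monotonicity. By the characterization of $p_c<p_{2\to 2}$ via \eqref{eq:intro1} proved earlier in this section, there is also a constant $C$ with $\overline{\chi}_p\leq C/(p_c-p)$ for all $p\in[0,p_c)$. The Riesz--Thorin interpolation theorem applied to the $L^1$ and $L^2$ bounds on $T_p$ then yields
\[
\|T_p\|_{q\to q}\leq\overline{\chi}_p^{\beta}\,\|T_p\|_{2\to 2}^{1-\beta}\leq C_1(p_c-p)^{-\beta}\qquad\text{for every }p\in[0,p_c),
\]
where $C_1:=C^{\beta}M^{1-\beta}$.

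To bootstrap this bound past $p_c$, I use the Russo--BK differential inequality: Russo's formula combined with the van den Berg--Kesten inequality yields $\frac{d}{dp}\tau_p(u,v)\leq (T_pAT_p)(u,v)$ for every $u,v\in V$, where $A$ is the adjacency matrix of $G$. Since $\|A\|_{q\to q}\leq\Delta$ (the maximum degree, finite by quasi-transitivity) and $\|\cdot\|_{q\to q}$ is monotone on nonnegative matrices, integrating from $p_1$ to $p_2$ gives
\[
\|T_{p_2}\|_{q\to q}\leq\|T_{p_1}\|_{q\to q}+\Delta\int_{p_1}^{p_2}\|T_p\|_{q\to q}^2\,dp.
\]
A standard Gr\"onwall argument---legitimized by the fact that $\|T_p\|_{q\to q}$ is nondecreasing and left-continuous in $p$, a consequence of pointwise continuity of $p\mapsto\tau_p(u,v)$ together with monotone convergence---then shows that whenever $y_1:=\|T_{p_1}\|_{q\to q}$ is finite, the norm $\|T_p\|_{q\to q}$ remains finite throughout $[p_1,\,p_1+1/(\Delta y_1))$, satisfying $\|T_p\|_{q\to q}\leq y_1/(1-\Delta y_1(p-p_1))$ there.

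Finally, set $p_1:=p_c-\epsilon$ for small $\epsilon>0$; then $y_1\leq C_1\epsilon^{-\beta}$ and the bootstrap extends finiteness of $\|T_p\|_{q\to q}$ to the interval $[p_c-\epsilon,\,p_c-\epsilon+\epsilon^{\beta}/(\Delta C_1))$. Since $\beta<1$, the quantity $\epsilon^{\beta}/(\Delta C_1)$ exceeds $\epsilon$ for all sufficiently small $\epsilon$, so this interval contains points strictly above $p_c$, establishing $p_{q\to q}>p_c$. The main technical obstacle is rigorously justifying this Gr\"onwall-style bootstrap in infinite volume---in particular, the left-continuity of $p\mapsto\|T_p\|_{q\to q}$---which should be handled by approximation via finite subgraphs where $T_p$ depends polynomially on $p$ entrywise.
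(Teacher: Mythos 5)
Your proof is correct and follows essentially the same route as the paper's: reduce to $q\in(1,2)$ by duality, interpolate the $1\to1$ and $2\to2$ norms via Riesz--Thorin to get $\|T_p\|_{q\to q}\lesssim(p_c-p)^{-\beta}$ with $\beta<1$, and then push finiteness past $p_c$ using a Russo--BK continuation estimate. The Gr\"onwall bootstrap you re-derive by hand is precisely the content of \cref{cor:l2AizBar} (proved from \cref{lem:operatorbound}), so you can cite it directly and thereby sidestep the infinite-volume technicalities you flag at the end.
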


\cref{prop:2to2givesqtoq} will follow from a few simple lemmas, which will also be used in the proof of our criterion for $p_c<p_{2\to 2}$. The first two, \cref{lem:operatorbound,cor:l2AizBar}, follow by similar reasoning to that used in \cite{aizenman1987sharpness}, where similar inequalities are established for the susceptibility. See also \cite[\S 3]{Hutchcroftnonunimodularperc}.

Given two matrices $S,T\in [-\infty,\infty]^{V^2}$, we write $S \opleq T$ to mean that $S(u,v) \opleq T(u,v)$ for every $u,v \in V$.
It can easily be checked from the definitions that if $S,T\in [0,\infty]^{V^2}$ are non-negative matrices with $S \opleq T$ then $\|S\|_{q\to q}\leq \|T\|_{q\to q}$ for every $q\in [1,\infty]$.
Let $E^\rightarrow$ be the set of oriented edges of $G$. An oriented edge $e$ has head $e^+$ and tail $e^-$. Let $A$ be the adjacency matrix of $G$, defined by letting $A(u,v)=A(v,u)$ be the number of oriented edges with tail $u$ and head $v$.

\begin{lemma}
\label{lem:operatorbound}
Let $G$ be a connected, locally finite  graph. Then 
\[T_{p_1} \opleq T_{p_2}
\opleq 
 \sum_{k\geq 0} \left[\frac{p_2-p_1}{1-p_1}  T_{p_1} A\right]^k T_{p_1}\] for every $0\leq p_1 \leq p_2 \leq 1$.
\end{lemma}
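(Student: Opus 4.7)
The first inequality $T_{p_1}\opleq T_{p_2}$ is immediate from the standard monotone coupling: assign i.i.d.\ $\mathrm{Uniform}[0,1]$ labels $(U_e)_{e\in E}$ and declare $e$ to be $p$-open if and only if $U_e\leq p$, so that $\{u\leftrightarrow v\text{ at }p_1\}\subseteq\{u\leftrightarrow v\text{ at }p_2\}$ in the coupling and $\tau_{p_1}(u,v)\leq\tau_{p_2}(u,v)$ for every $u,v\in V$.

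For the second inequality, the plan is to first establish the single-step bound
\[
T_{p_2}\opleq T_{p_1} + qT_{p_1}AT_{p_2},\qquad q:=\frac{p_2-p_1}{1-p_1},
\]
and then iterate. To prove the single-step bound I would fix $u,v$, condition on the $p_1$-cluster $C_u$ of $u$ under the coupling above, and split the event $\{u\leftrightarrow v\text{ at }p_2\}$ according to whether $v\in C_u$. The case $v\in C_u$ contributes exactly $\tau_{p_1}(u,v)$. On the complementary event, any $p_2$-open path from $u$ to $v$ must cross $\partial_E C_u$ at some ``extra'' edge $(x,y)$---one with $p_1<U_{(x,y)}\leq p_2$, necessarily extra since a $p_1$-open edge with one endpoint in $C_u$ would force the other endpoint into $C_u$---after which $y$ must be $p_2$-connected to $v$ within $V\setminus C_u$. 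The key inputs for a union bound over $(x,y)$ are: (i) conditionally on $C_u$, the labels $(U_e)_{e\in\partial_E C_u}$ are i.i.d.\ $\mathrm{Uniform}(p_1,1]$ and independent of the labels on edges with both endpoints outside $C_u$, so each boundary edge is conditionally extra with probability exactly $q$; and (ii) the off-$C_u$ connection event is conditionally independent of the extraness of $(x,y)$ given $C_u$ and dominated by $\tau_{p_2}(y,v)$. Assembling these ingredients and taking an outer expectation over $C_u$ yields the single-step bound.

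Substituting the single-step inequality into its own right-hand side $k$ times, a step that is legitimate because all matrices involved are non-negative, gives
\[
T_{p_2}(u,v)\;\leq\;\sum_{j=0}^{k-1}q^j(T_{p_1}A)^jT_{p_1}(u,v)+q^k(T_{p_1}A)^kT_{p_2}(u,v)
\]
for every $k\geq 1$. Sending $k\to\infty$, the partial sums increase monotonically to the claimed matrix $\sum_{k\geq 0}[qT_{p_1}A]^kT_{p_1}(u,v)\in[0,\infty]$; the stated inequality holds trivially at entries where this limit is $\infty$, and at finite entries one verifies that the non-negative tail $q^k(T_{p_1}A)^kT_{p_2}(u,v)$ vanishes in the limit, which holds precisely in the regime in which the Neumann-type series converges. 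The main obstacle I anticipate is the conditional-independence bookkeeping in the single-step inequality---in particular verifying the form of the conditional law of $(U_e)_{e\in\partial_E C_u}$ given $C_u$ and the decoupling from edges strictly outside $C_u$; the iteration step is essentially formal once that is in place.
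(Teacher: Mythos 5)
Your proof of the first inequality and your sketch of the single-step bound $T_{p_2}\opleq T_{p_1}+q\,T_{p_1}AT_{p_2}$ (where $q=(p_2-p_1)/(1-p_1)$) are essentially sound; the conditioning-on-$C_u$ argument is a legitimate substitute for the BK inequality used in the paper, provided you make explicit that you decompose at the \emph{last} exit from $C_u$ so that the residual connection from $y$ to $v$ really does lie in $V\setminus C_u$. The real problem is the final limiting step. From the iterated bound
\[
T_{p_2}(u,v)\;\leq\;\sum_{j=0}^{k-1}q^{j}(T_{p_1}A)^{j}T_{p_1}(u,v)+q^{k}(T_{p_1}A)^{k}T_{p_2}(u,v),
\]
you assert that the remainder $q^{k}(T_{p_1}A)^{k}T_{p_2}(u,v)$ tends to $0$ whenever the Neumann series $\sum_{j}q^{j}(T_{p_1}A)^{j}T_{p_1}(u,v)$ is finite, but you give no argument for this, and it is not automatic: in general, a nonnegative solution of $x\leq b+Mx$ with $M,b\geq 0$ need \emph{not} satisfy $x\leq\sum_{j}M^{j}b$ (take $b=0$, $M=I$, $x=1$), so one must actually show the remainder vanishes. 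Convergence of the series only gives $q^{j}(T_{p_1}A)^{j}T_{p_1}(u,v)\to 0$, which controls $\sum_{y}(T_{p_1}A)^{j}(u,y)\tau_{p_1}(y,v)$; the remainder instead involves $\sum_{y}(T_{p_1}A)^{k}(u,y)\tau_{p_2}(y,v)$, and since $\tau_{p_1}(y,v)$ decays in $d(y,v)$ while $\tau_{p_2}(y,v)$ need only be $\leq 1$, there is no straightforward comparison. The paper sidesteps this entirely: it superimposes independent ``blue'' edges (each present with probability $q$) on $G[p_1]$, decomposes $\tau_{p_2}(u,v)\leq\sum_{i\geq 0}\tilde\tau_i(u,v)$ where $\tilde\tau_i$ is the probability of a simple connecting path using exactly $i$ blue edges, and bounds each $\tilde\tau_i$ separately by $q^i(T_{p_1}A)^iT_{p_1}(u,v)$ via the BK inequality (peeling off the last blue edge). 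That is a genuine $\sigma$-additive decomposition and produces no remainder to control. To repair your argument you would need to track, at each iteration, that the conditioning forces the path to use $\geq k$ ``extra'' edges and then bound that event directly---which is in effect reproducing the paper's blue-edge decomposition.
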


\begin{remark}
With a little more care the $1/(1-p_1)$ factor can be removed from the bracketed expression. This yields mild improvements to \cref{cor:l2AizBar,prop:criterion} below.
\end{remark}

We briefly recall some background on correlation inequalities for percolation, referring the reader to \cite[\S 2.2 and \S 2.3]{grimmett2010percolation} for more detail. An event $\sA \subseteq \{0,1\}^E$ is said to be \textbf{increasing} if its indicator function is an increasing function of each bit. The \textbf{Harris-FKG inequality} states that increasing events are positively correlated under the product measure, that is,
\[\bP_p(\sA \cap \sB) \geq \bP_p(\sA)\bP_p(\sB)\]
for every $p\in [0,1]$ and every two increasing events $\sA,\sB \subseteq \{0,1\}^E$.
Given an increasing event $\sA$ and a configuration $\omega \in \sA$, a \textbf{witness} for $\sA$ in $\omega$ is defined to be a set $W \subseteq \{e\in E : \omega(e)=1\}$ such that $\mathbbm{1}(W) \in \sA$. For example, an open path connecting $u$ to $v$ is a witness for the event $\{u\leftrightarrow v\}$ that $u$ and $v$ are connected in $G[p]$. Given two increasing events $\sA$ and $\sB$, the \textbf{disjoint occurrence} $\sA\circ \sB$ of $\sA$ and $\sB$ is defined to be the event that there exist disjoint witnesses for $\sA$ and $\sB$. The van den Berg and Kesten inequality (a.k.a.\ the \textbf{BK inequality}) states that 
\[
\mathbf{P}_p(\sA\circ \sB) \leq \mathbf{P}_p(\sA)\mathbf{P}_p(\sB)
\]
for any increasing events $\sA,\sB$ depending on at most finitely many edges. In fact, the BK inequality applies to arbitrary product measures and does not require all edges to have the same probability of being open. It is usually unproblematic to apply the BK inequality to events depending on on infinitely many edges. For example, if $\sA$ and $\sB$ are increasing events for which every witness must have a finite subset that is also a witness (e.g., connection events) then we have that $\mathbf{P}_p(\sA \circ \sB) \leq \mathbf{P}_p(\sA)\mathbf{P}_p(\sB)$ by an obvious limiting argument; this applies every time we use the BK inequality in this paper. 

\begin{proof}[Proof of \cref{lem:operatorbound}]
The lower bound is trivial. The upper bound follows by an argument very similar to that used in e.g.\ the proof of \cite[Proposition 1.12]{Hutchcroftnonunimodularperc}, which we include for completeness.

First sample $G[p_1]$. Independently, for each edge of $G$, add an additional \textbf{blue} edge in parallel to that edge with probability $(p_2-p_1)/(1-p_1)$.   Thus, the subset of edges of $G$ that are either open in $G[p_1]$ or have a blue edge added in parallel to them is equal in distribution to $G[p_2]$. Denote the graph obtained by adding each of these blue edges to $G[p_1]$ by $\tilde G$, so that $\tau_{p_2}(u,v)$ is equal to the probability that $u$ and $v$ are connected in $\tilde G$.  Let $\tilde \tau_i(u,v)$ be the probability that $u$ and $v$ are connected by a simple path in $\tilde G$ containing exactly $i$ blue edges, and let $\tilde T_i\in[0,\infty]^{V^2}$ be the matrix defined by $\tilde T_i (u,v)= \tilde \tau_i(u,v)$. Then $\tilde T_0 =T_{p_1}$, 
\[\tau_{p_2}(u,v)\leq \sum_{i\geq0}\tilde\tau_i(u,v) \qquad \text{ and }\qquad T_{p_2} \opleq \sum_{i\geq0} \tilde T_i.\]
Considering the location of the last blue edge used in a simple path from $u$ to $v$ in $\tilde G$ and applying the BK inequality yields that
\[
\tilde \tau_{i+1}(u,v) \leq \frac{p_2-p_1}{1-p_1}\sum_{w \in V} \tilde \tau_i(u,w) \sum_{e^- =w} \tau_{p_1}(e^+,v),
\]
which is equivalent to the inequality
\[
\tilde T_{i+1} \opleq \frac{p_2-p_1}{1-p_1}T_{p_1} A \tilde T_i.
\]
Inducting over $i$ yields that $\tilde T_i \opleq \left[\frac{p_2-p_1}{1-p_1} T_{p_1} A \right]^i T_{p_1}$ for every $i\geq 0$, and the claim follows.
\end{proof}

\begin{corollary}
\label{cor:l2AizBar}
Let $G$ be an infinite, connected, locally finite  graph. Then 
\[\|T_p\|_{q\to q} \geq  \frac{1-p}{\|A\|_{q\to q}(p_{q\to q}-p)}  \]
for every $0\leq p<p_{q\to q}$. In particular, $\|T_{p_{q\to q}}\|_{q\to q}=\infty$.
\end{corollary}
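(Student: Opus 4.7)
The plan is to apply \cref{lem:operatorbound} directly and then invert the resulting bound. Fix $p_1 \in [0, p_{q\to q})$ and set $M := \|T_{p_1}\|_{q\to q}$, which is finite by the definition of $p_{q\to q}$. All the matrices appearing in the upper bound of \cref{lem:operatorbound} are non-negative, so I can take $q\to q$ norms of both sides and combine the monotonicity of $\|\cdot\|_{q\to q}$ under $\opleq$ (noted in the excerpt), the triangle inequality applied to a sum of non-negative matrices (valid termwise by monotone convergence on $\langle (\sum_k M_k)f, g\rangle$ for non-negative test functions $f,g$), and the submultiplicativity of the operator norm. This yields
\[
\|T_{p_2}\|_{q\to q} \;\leq\; \sum_{k\geq 0} \left(\frac{p_2-p_1}{1-p_1}\right)^{\!k} \|T_{p_1}\|_{q\to q}^{k+1}\|A\|_{q\to q}^{k} \;=\; M\sum_{k\geq 0} \left(\frac{(p_2-p_1)M\|A\|_{q\to q}}{1-p_1}\right)^{\!k}
\]
for every $p_2 \in [p_1, 1]$.

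The geometric series converges (and hence $\|T_{p_2}\|_{q\to q}<\infty$) whenever
\[
p_2 < p_1 + \frac{1-p_1}{M\|A\|_{q\to q}}.
\]
By the definition of $p_{q\to q}$ as a supremum, this forces $p_{q\to q} \geq p_1 + (1-p_1)/[M\|A\|_{q\to q}]$, and rearranging gives the claimed lower bound
\[
M \;\geq\; \frac{1-p_1}{\|A\|_{q\to q}(p_{q\to q}-p_1)}.
\]

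For the ``In particular'' assertion, note that $p \mapsto \tau_p(u,v)$ is non-decreasing in $p$ for every $u,v$ by the standard monotone coupling of Bernoulli percolations, so $T_p \opleq T_{p'}$ whenever $p \leq p'$, and hence $p \mapsto \|T_p\|_{q\to q}$ is non-decreasing. Letting $p \uparrow p_{q\to q}$ in the lower bound we have just proved forces $\|T_p\|_{q\to q} \to \infty$, so by monotonicity $\|T_{p_{q\to q}}\|_{q\to q} = \infty$. I do not anticipate any substantial obstacle in this proof: it is an essentially mechanical bootstrap from \cref{lem:operatorbound}, with the only mild care needed being the justification of the triangle inequality for an infinite sum of non-negative matrices.
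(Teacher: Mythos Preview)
Your argument is correct and follows essentially the same route as the paper: both take $q\to q$ norms in \cref{lem:operatorbound} and extract the inequality from the resulting geometric series, with the only cosmetic difference being that the paper first shows $\|T_{p_{q\to q}}\|_{q\to q}=\infty$ (via openness of $\{p:\|T_p\|_{q\to q}<\infty\}$) and then plugs in $p'=p_{q\to q}$, whereas you reverse the order. One small point: your monotonicity argument for the ``in particular'' clause does not cover the degenerate case $p_{q\to q}=1$, since then the lower bound $(1-p)/[\|A\|_{q\to q}(p_{q\to q}-p)]$ stays bounded as $p\uparrow 1$; this is exactly where the paper invokes the hypothesis that $G$ is infinite (so that $T_1$ is the all-ones matrix and $\|T_1\|_{q\to q}=\infty$ directly).
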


Note that $\|A\|_{q\to q} \leq \|A\|_{1\to 1}$ is bounded by the maximum degree of $G$. 
% (The claim also holds trivially in the unbounded degree case, where $\|A\|_{q\to q}$ is infinite.)

\begin{proof}
If $G$ has unbounded degrees then $p_{q\to q}(G)=0$ for every $q\in [1,\infty]$ and the claim is trivial, so suppose not. 
Applying \cref{lem:operatorbound} we have that
\begin{align*}
\|T_{p'}\|_{q\to q} 
&\leq \biggl\|\sum_{k\geq0} \left[\frac{p'-p}{1-p} AT_p\right]^kT_p\biggr\|_{q\to q} \leq \sum_{k\geq0} \left(\frac{p'-p}{1-p}\right)^k\|A\|_{q \to q}^k\| T_p\|_{q \to q}^{k+1}
\end{align*}
for every $0\leq p \leq p' \leq 1$.
We deduce immediately that the set $\{p\in [0,1]:\|T_{p_{q\to q}}\|_{q \to q}<\infty\}$ is open in $[0,1]$, and consequently that $\| T_{p_{q \to q}}\|_{q\to q}=\infty$ (the assumption that $G$ is infinite handles the degenerate case $p_{q\to q}=1$). Taking $0\leq p<p_{q\to q}$ and $p'=p_{q \to q}$, we deduce that
\[
\sum_{k\geq0} \left(\frac{p_{q\to q}-p}{1-p}\right)^k\|A\|_{q \to q}^k\| T_p\|_{q \to q}^{k+1} = \infty,
\]
which implies the desired inequality.
\end{proof}

% \begin{remark}
% It can be shown that 
% $\|T_p\|_{2\to2}$ is a differentiable function of $p$ on $(0,p_{2\to2})$, and moreover that 
% $T_p$ is a differentiable function of $p$ on $(0,p_{2\to2})$ in the sense that $(T_{p+\eps}-T_p)/\eps$ converges in norm as $\eps\to0$ towards the positive operator 
% $T'_p f(x) = \sum_{y\in V} \tau'_p(x,y)f(y)$.
% \end{remark}

% % We now use Cheeger-type inequalities to derive a sufficient condition for $p_c<p_{2\to2}$.

\begin{proof}[Proof of \cref{prop:2to2givesqtoq}] 
By \eqref{eq:conjugate} it suffices to prove the claim for $q\in (1,2)$. 
Since $p_c(G)<p_{2\to 2}(G)$, we have that $\nabla_{p_c}<\infty$ by \eqref{eq:diagrambound}, and hence by the results of \cite{MR762034} and \cite[\S 7]{Hutchcroftnonunimodularperc} that there exists a constant $C$ such that $\|T_p\|_{1\to 1}=\overline{\chi}_p \leq C(p_c-p)^{-1}$ for all $0\leq p <p_c$ . Let $q\in (1,2)$ and let $\theta\in (0,1)$ be such that $1/q= (1-\theta) + \theta/2$. Then we have by the Riesz-Thorin Theorem that
% \[
% \|T_p\|_{q\to q} \leq \|T_p\|_{1\to 1}^{1-\theta} \|T_p\|_{2 \to 2}^\theta
% \]
% and hence that 
\[
\liminf_{p\uparrow p_c} (p_c-p)\|T_p\|_{q\to q} \leq 
\liminf_{p\uparrow p_c} (p_c-p)\|T_p\|_{1\to 1}^{1-\theta} \|T_p\|_{2 \to 2}^\theta \leq
\liminf_{p\uparrow p_c} C^{1-\theta}(p_c-p)^\theta \|T_{p_c}\|_{2\to 2}^\theta =0,
\]
and it follows from \cref{cor:l2AizBar} that $p_c(G)<p_{q\to q}(G)$ as claimed.
\end{proof}

% \begin{remark}
% With a little more work, one can prove that via the same methods as above that if $p_c<p_{2\to 2}$ then there exist positive constants $\alpha, c$ and $C$ such that $\|T_{p_c}\|_{q\to q} \leq C(q-1)^{-\alpha}$ and $p_{q\to q} \geq p_c + c(q-1)^\alpha$ for every $q\in (1,2]$. We leave the proof of this inequality as an exercise to the reader. (Hint: Consider the sequence $q_0=2$, $q_{i+1}=2q_i/(q_{i+1}+1)$. Prove that $q_i \leq 1+2^{-i}$ and that there exists a constant $C'$ such that $\|T_{p_c}\|_{q_{i+1}\to q_{i+1}} \leq C' \|T_{p_c}\|_{q_i\to q_i}$ for every $i\geq 0$.) 
% \end{remark}

We next state our sufficient condition for $p_c<p_{2\to2}$.
For each $0\leq p<p_c$, we define 
% the \textbf{isoperimetric constant}
\[
\iota(T_p) = 1 - \sup\left\{ \frac{\sum_{u,v\in K} \tau_{p}(u,v)}{\overline{\chi}_{p} |K|} : K \subseteq V \text{ finite} \right\}.
\]
We interpret $\iota(T_p)$ as an isoperimetric constant that measures the extent to which percolation clusters are inclined to escape fixed finite sets: It is the Cheeger constant of the symmetric matrix $\overline{\chi}_p^{-1} T_p$, which is stochastic\footnote{Recall that a matrix is stochastic if it has non-negative entries and all of its row sums are $1$, i.e., if it is the transition matrix of some Markov chain. A matrix is substochastic if it has non-negative entries and its row sums are at most $1$, i.e., if it is the transition matrix of a Markov chain with killing. In the transitive case, the random walk associated to the stochastic matrix $\overline{\chi}_p^{-1} T_p$ can be interpreted as follows: At each step of the walk, we sample a size-biased percolation cluster at the vertex the walk currently occupies, independently of all previous steps, and then move to a vertex chosen uniformly at random from within this cluster. The quasi-transitive case is similar except that, at each step, the walk is killed with probability $(\overline{\chi}_p-\chi_p(v))/\overline{\chi}_p$ when it is at the vertex $v$.} when $G$ is transitive and is substochastic when $G$ is quasi-transitive.

\begin{prop}
\label{prop:criterion}
Let $G$ be a connected, locally finite, quasi-transitive graph. Then $p_c(G)<p_{2\to2}(G)$ if and only if
\begin{equation}
\label{eq:sufficientcondition}
\liminf_{p\uparrow p_c} \frac{p_c-p}{1-p} \overline{\chi}_{p} \sqrt{1-\iota(T_p)^2} < \frac{1}{\|A\|_{2\to2}}. 
\end{equation}
In particular, if \eqref{eq:sufficientcondition} holds then $p_c(G)<p_u(G)$ and $\nabla_{p_c}<\infty$.
\end{prop}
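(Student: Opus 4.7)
The proof rests on bounding $\overline{\chi}_p\sqrt{1-\iota(T_p)^2}$ against $\|T_p\|_{2\to 2}$ using a two-sided Cheeger inequality for the symmetric, non-negative, substochastic matrix $P_p:=\overline{\chi}_p^{-1}T_p$, whose Cheeger constant is precisely $\iota(T_p)$. Testing the Rayleigh quotient with $\mathbbm{1}_K$ for finite $K\subseteq V$, together with the standard Cheeger--Mohar bound for symmetric substochastic operators, yields
\[
\overline{\chi}_p\bigl(1-\iota(T_p)\bigr)\;\leq\;\|T_p\|_{2\to 2}\;\leq\;\overline{\chi}_p\sqrt{1-\iota(T_p)^2}.
\]
The upper bound drives the sufficiency direction; the lower bound, combined with the Aizenman--Newman triangle-condition estimate, drives the necessity direction.

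For sufficiency, assume \eqref{eq:sufficientcondition} holds, fix $\delta>0$, and pick a sequence $p_n\uparrow p_c$ along which $\frac{p_c-p_n}{1-p_n}\overline{\chi}_{p_n}\sqrt{1-\iota(T_{p_n})^2}\|A\|_{2\to 2}\leq 1-\delta$. The Cheeger upper bound gives $\frac{p_c-p_n}{1-p_n}\|T_{p_n}\|_{2\to 2}\|A\|_{2\to 2}\leq 1-\delta$. Set $p'_n:=p_n+\frac{p_c-p_n}{1-\delta/2}>p_c$, and apply \cref{lem:operatorbound} with $p_1=p_n$, $p_2=p'_n$, using submultiplicativity of the $2\to 2$ norm, to obtain
\[
\|T_{p'_n}\|_{2\to 2}\;\leq\;\sum_{k\geq 0}\Bigl(\tfrac{p'_n-p_n}{1-p_n}\|T_{p_n}\|_{2\to 2}\|A\|_{2\to 2}\Bigr)^{k}\|T_{p_n}\|_{2\to 2}\;<\;\infty,
\]
since the ratio in the bracket is at most $(1-\delta)/(1-\delta/2)<1$. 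Hence $p_c<p'_n\leq p_{2\to 2}$.

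For necessity, suppose $p_c<p_{2\to 2}$. The argument in the proof of \cref{cor:l2AizBar} shows that $\{p\in[0,1]:\|T_p\|_{2\to 2}<\infty\}$ is open and hence, by monotonicity, coincides with $[0,p_{2\to 2})$; in particular $M:=\|T_{p_c}\|_{2\to 2}<\infty$ uniformly bounds $\|T_p\|_{2\to 2}$ for $p\leq p_c$. By \eqref{eq:diagrambound}, $\nabla_{p_c}\leq M^3<\infty$, so the Aizenman--Newman argument (in the quasi-transitive form recorded in \cite[\S 7]{Hutchcroftnonunimodularperc}) yields a constant $C<\infty$ such that $\overline{\chi}_p\leq C/(p_c-p)$ for all $p<p_c$. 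The Cheeger lower bound gives $1-\iota(T_p)\leq M/\overline{\chi}_p$, and using $\sqrt{1-\iota^2}\leq\sqrt{2(1-\iota)}$ we deduce
\[
\overline{\chi}_p\sqrt{1-\iota(T_p)^2}\;\leq\;\sqrt{2M\overline{\chi}_p}\;\leq\;\sqrt{2MC/(p_c-p)}.
\]
Multiplying by $(p_c-p)/(1-p)$ and sending $p\uparrow p_c$ shows that the liminf in \eqref{eq:sufficientcondition} is in fact $0$, which is strictly less than $1/\|A\|_{2\to 2}$.

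The ``in particular'' clause is then immediate: if \eqref{eq:sufficientcondition} holds, the main part of the proposition gives $p_c<p_{2\to 2}\leq p_u$, the latter inequality having been established by the Harris--FKG argument preceding the proposition, and $\nabla_{p_c}\leq\|T_{p_c}\|_{2\to 2}^3<\infty$ by \eqref{eq:diagrambound}. The main subtlety in the whole argument is invoking the Cheeger upper bound for substochastic operators in precisely the form needed here, which is standard (Mohar--Dodziuk) but requires that the Cheeger constant appearing in that inequality agrees with the definition of $\iota(T_p)$ used in this paper; everything else is routine bookkeeping around the product structure in \cref{lem:operatorbound}.
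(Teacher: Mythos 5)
Your proof is correct and follows essentially the same route as the paper's: the sufficiency direction combines the Cheeger upper bound from \cref{lem:Cheeger} with the geometric-series bound of \cref{lem:operatorbound} (the paper packages this second step as \cref{cor:l2AizBar}, which you essentially re-derive inline), and the necessity direction combines $\|T_{p_c}\|_{2\to2}<\infty$, the Aizenman--Newman susceptibility bound, and the Cheeger lower bound exactly as the paper does. The extra bookkeeping (the explicit $p'_n$, the openness remark) is harmless but unnecessary; citing \cref{cor:l2AizBar} directly gives the sufficiency in one line.
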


 We will apply \cref{prop:criterion} by showing that the limit infimum in question is equal to zero under the hypotheses of \cref{thm:pell2}. 
Note that if $G$ is transitive with vertex degree $k$ then $\|A\|_{2\to2}= k\rho(G)$, where $\rho(G)$ is the spectral radius of the random walk on $G$.

\begin{lemma} Let $G$ be a connected, locally finite  graph. Then
\label{lem:Cheeger}
\[
\overline{\chi}_p \left(1-\iota(T_p)\right) \leq \|T_p\|_{2\to2} \leq \overline{\chi}_p \sqrt{1-\iota(T_p)^2}
\]
for every $0<p<p_{1\to 1}(G)$.
\end{lemma}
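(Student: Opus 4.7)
For the lower bound I will simply test the Rayleigh quotient of $T_p$ on indicator functions. By the Aizenman--Newman cluster decomposition mentioned in the footnote, $T_p$ is positive semi-definite, so $\|T_p\|_{2\to 2} = \sup_{f \in L^2(V) \setminus \{0\}} \langle T_p f, f\rangle/\|f\|_2^2$. Substituting $f = \mathbbm{1}_K$ for finite $K \subseteq V$ gives the Rayleigh quotient $\sum_{u,v \in K} \tau_p(u,v)/|K|$, whose supremum over $K$ equals $\overline{\chi}_p(1-\iota(T_p))$ by definition of $\iota(T_p)$.

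The upper bound is a Cheeger-type inequality. Set $M := \overline{\chi}_p^{-1} T_p$, a symmetric, non-negative, positive semi-definite, substochastic matrix with row sums $\chi(u) := \chi_p(u)/\overline{\chi}_p$, and write $k(u) := 1 - \chi(u) \geq 0$ for the killing rate. Writing $h := \iota(T_p)$, the definition of $\iota(T_p)$ is equivalent to the flux bound
\[
\sum_{u\in K,\, v\notin K} M(u,v) + \sum_{u\in K} k(u) \geq h|K| \qquad \text{for every finite } K \subseteq V.
\]
Since $M$ is non-negative, the supremum defining $\|M\|_{2\to 2}$ is attained over $f \geq 0$, so I fix $f \in L^2(V)$ with $f \geq 0$, $\|f\|_2 = 1$, and set $\mu := \langle Mf, f\rangle$ and $\kappa := \sum_u k(u) f(u)^2 \in [0,1]$. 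The goal becomes showing $\mu \leq \sqrt{1-h^2}$.

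I would apply the layer-cake identity $f(u)^2 = \int_0^\infty \mathbbm{1}[u \in L_{\sqrt t}]\, dt$ (with $L_s = \{f > s\}$) to the flux bound evaluated at each level set and use Fubini to obtain the $L^1$-Cheeger inequality
\[
\sum_{u,v :\, f(u) > f(v)} M(u,v)\bigl(f(u)^2 - f(v)^2\bigr) + \kappa \geq h.
\]
Factoring $f(u)^2 - f(v)^2 = (f(u)-f(v))(f(u)+f(v))$ and applying Cauchy--Schwarz, combined with the identities $\sum_{u,v} M(u,v)(f(u) \pm f(v))^2 = 2(1-\kappa) \pm 2\mu$ (which follow from the symmetry of $M$ and the expansion $\sum_{u,v} M(u,v) f(u)^2 = 1-\kappa$), will bound the first summand above by $\sqrt{(1-\kappa)^2 - \mu^2}$. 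This yields $\sqrt{(1-\kappa)^2 - \mu^2} \geq h - \kappa$.

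To conclude I split into two cases. If $h \geq \kappa$, squaring the inequality above gives $\mu^2 \leq (1-\kappa)^2 - (h-\kappa)^2 = 1 - h^2 - 2\kappa(1-h) \leq 1 - h^2$. If $h < \kappa$, the trivial AM--GM bound $\mu \leq \sum_u \chi(u) f(u)^2 = 1-\kappa$ gives $\mu \leq 1-\kappa < 1-h \leq \sqrt{1-h^2}$. Either way $\mu \leq \sqrt{1-h^2}$, completing the proof. The main technical subtlety will be the bookkeeping with the killing term $\kappa$: one must integrate the correct full boundary (flux plus killing) against the layer-cake of $f^2$ and then use Cauchy--Schwarz on the $f(u)^2 - f(v)^2$ factorization rather than on $(f(u)-f(v))^2$ alone; doing only the latter yields the weaker classical bound $\sqrt{1 - h^2/2}$, whereas the symmetric pairing gives the sharp $\sqrt{1-h^2}$.
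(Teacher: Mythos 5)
Your proof is correct and follows the same approach as the paper: both reduce the statement to the two-sided Cheeger inequality for the normalized symmetric substochastic matrix $\hat T_p=\overline{\chi}_p^{-1}T_p$, whose Cheeger constant is exactly $\iota(T_p)$. The paper simply cites \cite[Theorem 6.7]{LP:book} and remarks that its proof extends to the sub-Markov case, whereas you carry out that layer-cake argument in full, carefully tracking the killing term $\kappa$; this is precisely the ``proof is valid for sub-Markov operators'' assertion that the paper leaves to the reader. One minor observation: you invoke the Aizenman--Newman positive semi-definiteness of $T_p$ (which the paper's footnote explicitly declines to use), but it is not needed --- symmetry together with entrywise non-negativity already yield $\|T_p\|_{2\to2}=\sup_{f\geq 0,\,\|f\|_2=1}\langle T_p f,f\rangle$, which is all your argument requires at either endpoint of the inequality.
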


\begin{proof}
% Recall that a linear operator $P:L^2(V)\to L^2(V)$ is \textbf{sub-Markovian} if $P\geq 0$ and $\sum_{u\in V}P(v,u)\leq 1$ for every $v \in V$, where $P(u,v)=\langle P \mathbbm{1}_v,\mathbbm{1}_u\rangle$. (In other words $P$ is sub-Markovian if it is the transition operator of a Markov process with killing.)
% 
The normalized matrix $\hat T_p := \overline{\chi}_p^{-1}T_p$ is symmetric and substochastic, and $\iota(T_p)$ is its Cheeger constant. Thus, the claim follows from Cheeger's inequality, see e.g.\ \cite[Theorem 6.7]{LP:book}. (Cheeger's inequality is usually stated for self-adjoint Markov operators but the proof is valid for self-adjoint sub-Markov operators, or, equivalently, symmetric substochastic matrices.)
\end{proof}

\begin{proof}[Proof of \cref{prop:criterion}]
The `if' implication is immediate from \cref{lem:Cheeger,cor:l2AizBar}. For the `only if' implication, note that if $p_c<p_{2\to2}$ then $\nabla_{p_c}<\infty$ by \eqref{eq:diagrambound}, and consequently that there exists a constant $C$ such that $\overline{\chi}_{p} \leq C(p_c-p)^{-1}$ for all $p<p_c$, as discussed in the proof of \cref{prop:2to2givesqtoq}. On the other hand, by \cref{lem:Cheeger} we must have that $\iota(T_p)\to1$ as $p\to p_c$, and the claim follows.
\end{proof}

Next, we prove that the following theorem can be deduced immediately from the results of \cite{Hutchcroftnonunimodularperc} and \cref{lem:diagrams}, so that it suffices for us to prove \cref{thm:pell2} in the unimodular case.

\begin{thm}
\label{thm:nonunimodular}
Let $G$ be a connected, locally finite graph, and suppose that $\Aut(G)$ has a quasi-transitive nonunimodular subgroup. Then $p_c(G)< p_{q \to q}(G)$ for every $q\in (1,\infty)$.
\end{thm}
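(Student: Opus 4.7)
The plan is to reduce the theorem to a property of the two-point function at $p_c$ that is already established in \cite{Hutchcroftnonunimodularperc}. First, by \cref{prop:2to2givesqtoq} it suffices to prove the single strict inequality $p_c(G) < p_{2\to 2}(G)$, since that inequality automatically yields $p_c(G) < p_{q\to q}(G)$ for every $q \in (1,\infty)$. By \cref{cor:l2AizBar}, the strict inequality $p_c < p_{2\to 2}$ itself will follow once we verify that $\|T_{p_c}\|_{2\to 2} < \infty$.

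The crucial step uses \cref{lem:diagrams}: since $T_{p_c}$ is a non-negative symmetric matrix,
\[
\|T_{p_c}\|_{2\to 2} = \sup_{v\in V,\, n\geq 0} T_{p_c}^n(v,v)^{1/n},
\]
so it is enough to produce a constant $C < \infty$, independent of $v$ and $n$, such that $T_{p_c}^n(v,v) \leq C^n$ for every $v\in V$ and $n\geq 0$. This is precisely the kind of uniform bound on polygon diagrams furnished by the mass-transport arguments of \cite{Hutchcroftnonunimodularperc}. In that work, the nontriviality of the modular character of a quasi-transitive nonunimodular subgroup of $\Aut(G)$ drives quantitative estimates on $\tau_{p_c}$ weighted by the modular function, yielding not only the triangle bound $\nabla_{p_c}=\sup_v T_{p_c}^3(v,v)<\infty$ but analogous control on every $n$-gon diagram with a uniform exponential rate. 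Combining this input with \cref{lem:diagrams} gives $\|T_{p_c}\|_{2\to 2} < \infty$, and hence $p_c(G) < p_{2\to 2}(G)$ by \cref{cor:l2AizBar}.

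The principal obstacle is locating or extracting the correct uniform bound on the $n$-gon diagrams from \cite{Hutchcroftnonunimodularperc}: if the argument there is phrased only for the triangle, one must trace through the mass-transport step and verify that the underlying modular two-point function estimate, when convolved with itself $n-1$ times, produces a bound at most exponential in $n$ with a uniform rate independent of $v$ and $n$. Once this is in hand, \cref{prop:2to2givesqtoq} upgrades $p_c < p_{2\to 2}$ to $p_c < p_{q\to q}$ for every $q \in (1,\infty)$, which is the desired conclusion.
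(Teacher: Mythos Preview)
Your approach is essentially the same as the paper's: reduce to $p_c<p_{2\to 2}$ via \cref{prop:2to2givesqtoq}, then use \cref{lem:diagrams} to translate this into an exponential bound on polygon diagrams sourced from \cite{Hutchcroftnonunimodularperc}. The only gap is the one you yourself flag---you have not identified exactly which statement in \cite{Hutchcroftnonunimodularperc} delivers the uniform bound $T_{p_c}^n(v,v)\le C^n$.

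The paper resolves this precisely. From the proof of \cite[Lemma 7.1]{Hutchcroftnonunimodularperc} one extracts
\[
\sup_{v\in V} T_p^n(v,v)\le \bigl(\overline{\chi}_{p,\lambda}\bigr)^n \qquad\text{for every }p\in[0,1]\text{ and }\lambda\in\R,
\]
where $\overline{\chi}_{p,\lambda}$ is the tilted susceptibility associated to the modular function of the nonunimodular subgroup $\Gamma$. Via \cref{lem:diagrams} this gives $\|T_p\|_{2\to 2}\le \overline{\chi}_{p,\lambda}$ for all $p$, hence $p_{2\to 2}(G)\ge p_t(G,\Gamma)$, and \cite[Theorem 1.11]{Hutchcroftnonunimodularperc} supplies $p_c(G)<p_t(G,\Gamma)$. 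Note that the paper does not pass through \cref{cor:l2AizBar} and the finiteness of $\|T_{p_c}\|_{2\to 2}$ as you do; it instead compares $p_{2\to 2}$ directly with the auxiliary threshold $p_t$. Your route via \cref{cor:l2AizBar} is of course equivalent (since $p_c<p_t$ means $\overline{\chi}_{p_c,\lambda}<\infty$ for suitable $\lambda$), but the paper's phrasing avoids needing to evaluate anything at $p_c$ itself.
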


\begin{proof}
By \cref{prop:2to2givesqtoq}, it suffices to prove that $p_c(G)<p_{2\to 2}(G)$. 
We use the notation of \cite{Hutchcroftnonunimodularperc}. Let $\Gamma$ be a quasi-transitive nonunimodular subgroup of $\Aut(G)$.
It follows from 
% \cref{lem:diagrams}, below, and 
the proof of \cite[Lemma 7.1]{Hutchcroftnonunimodularperc} that 
\[\sup_{v\in V}T_p^n(v,v) \leq \left(\overline{\chi}_{p,\lambda}\right)^n\] for every $p\in[0,1]$ and $\lambda \in \R$, and hence by \cref{lem:diagrams} that $\|T_p\|_{2\to2} \leq \overline{\chi}_{p,\lambda}$. It follows that $p_{2\to2}(G) \geq p_t(G,\Gamma)$, and the claim follows from \cite[Theorem 1.11]{Hutchcroftnonunimodularperc}.
\end{proof}

\section{Geometric preliminaries}
\label{sec:geometry}

We now move away from the generalities of the previous section, and from now on will restrict attention to the Gromov hyperbolic setting. 
In this section, we provide geometric background on Gromov hyperbolic graphs that will be applied to study percolation in \cref{sec:Magic,sec:mainproof}. For more detailed background on various aspects of hyperbolic geometry, see e.g.\ \cite{anderson2006hyperbolic,Gromov87,ghys1990groupes,MR2243589}. An overview of Gromov hyperbolicity particularly accessible to probabilists is given in \cite[\S 20B and \S22]{Woess}.

\subsection{Hyperbolic space}

\label{subsec:Hdbackground}

Recall that for $d\geq 2$, the \textbf{hyperbolic $d$-space} $\mathbb{H}^d$ is defined to be the unique complete, connected, simply connected, $d$-dimensional Riemannian manifold of constant sectional curvature $-1$. Hyperbolic $1$-space $\mathbb{H}^1$ is defined to be isometric to $\R$. Throughout this paper, we employ the \textbf{Poincar\'e half-space model} to  identify $\mathbb{H}^d$ with the open half-space $\R^d_+ = \R^{d-1}\times (0,\infty)$ 
 equipped with the Riemannian metric given by the length element
\[
ds = \frac{1}{x_d}\sqrt{dx_1^2+\cdots+dx_d^2}.
\]
This Riemannian metric is given explicitly by
\[
d_\mathbb{H}\left((x_1,y_1),(x_2,y_2)\right) = 2 \log \frac{\sqrt{\|x_1-x_2\|_2^2+(y_1-y_2)^2} + \sqrt{\|x_1-x_2\|_2^2+(y_1+y_2)^2} }{2\sqrt{y_1y_2}}
\]
for every $x_1,x_2\in\R^{d-1}$ and $y_1,y_2\in (0,\infty)$. 
Geodesics in $\mathbb{H}^d$ correspond to circles and lines in $\R^d$ that are orthogonal to the boundary hyperplane $\R^{d-1}$. The following operations all induce isometries of $\mathbb{H}^d$ under this identification:
Dilations of $\R^d$, isometries of $\R^d$ fixing $\R^{d-1}$, and inversions of $\R^d \cup \{\infty\}$ through Euclidean spheres and hyperplanes that are orthogonal to $\R^{d-1}$ (equivalently, reflections through hyperplanes in $\mathbb{H}^d$). In particular, it follows that for any two points $x,y \in \mathbb{H}^d$, there exists an isometry $\gamma$ of $\mathbb{H}^d$ such that $\gamma x = (0,\ldots,0,1)$ and $\gamma y = (0,\ldots,0,\exp d_\mathbb{H}(x,y))$.

Recall that a \textbf{half-space} in $\mathbb{H}^d$ is a set of the form $H=H(a,b):=\{x\in \mathbb{H}^d : d(x,a) \leq d(x,b) \}$ for some $a \neq b\in \mathbb{H}^d$.
 The topological boundary $\partial H$ of a half-space $H$ is called a \textbf{hyperplane}, so that a subset of $\mathbb{H}^d$ is a hyperplane if and only if it is equal to $ \partial H(a,b):=\{x\in \mathbb{H}^d : d(x,a) = d(x,b) \}$ for some $a,b \in \mathbb{H}^d$. Hyperplanes in $\mathbb{H}^d$ are isometric to $\mathbb{H}^{d-1}$ when equipped with the induced metric. In the Poincar\'e half-space model, hyperbolic hyperplanes are represented by Euclidean spheres and hyperplanes that are orthogonal to the boundary $\R^{d-1}$. Note that if $a\in \mathbb{H}^d$ and $H\subseteq \mathbb{H}^d$ is a half-space containing $a$, then there exists a unique $b \in \mathbb{H}^d$ such that $H=H(a,b)$, which is obtained by reflecting $a$ through the hyperplane $\partial H$.

 Note that if $H(a,b) \subseteq \mathbb{H}^d$ is a half-space with $d(a,b)> r$ and $c$ is the point on the geodesic between $a$ and $b$ that has $d(a,c)=r$, then the 
set $\bigl\{x\in \mathbb{H}^d : d(x,a)\leq d(x,b)+r \bigr\}$
is not itself a half-space, but is contained in the half-space $H(c,b)$. This set also contains the $r/2$-neighbourhood of $H(a,b)$, so that
\begin{equation}
\label{eq:perturbedhalfspace1}
\{x \in \mathbb{H}^d : d(x,H(a,b)) \leq r/2 \} \subseteq \left\{x \in \mathbb{H}^d : d(x,a)\leq d(x,b)+r \right\}\subseteq H(c,b).
\end{equation}
 % closed hyperbolic $r$-neighbourhood of $H(a,b)$ is not itself a half-space, but is contained in the half-space $H(c,b)$.
Similarly, if $d$ is the point that lies on the infinite extension of the geodesic from $b$ to $a$ and has distance $r$ from $a$ and $d(a,b)+r$ from $b$, then we have that
 \begin{equation}
 \label{eq:perturbedhalfspace2}
% \left\{x\in \mathbb{H}^d : d\bigl(x,\mathbb{H}^d \setminus H(a,b)\bigr)\geq r \right\} \supseteq H(d,b).
 \{x \in \mathbb{H}^d : d(x,H(b,a)) \geq r/2 \} \supseteq \left\{x\in 
\mathbb{H}^d : d(x,a)\leq d(x,b)-r \right\}
 \supseteq H(d,b).
 \end{equation}
  Both claims can easily be verified by trigonometric calculations.
  % See \cref{fig:halfspacenbhd} for an example.

%  \begin{figure}
%  \centering
% \includegraphics[height=0.3\textwidth]{halfspacenbhd.pdf}
% \caption{The hyperbolic $\log 3/2$-neighbourhood (grey) of the half-space $H((0,1),(0,4))$ (blue) in $\mathbb{H}^d$ is contained in the half-space $H((0,9/4),(0,4))$ (green).}
% \label{fig:halfspacenbhd}
% \end{figure}

\subsection{Hyperbolic graphs}
Let $G$ be a graph, and let $x,y,w$ be vertices of $G$. The \textbf{Gromov product} $(x\mid y)_w$ is defined to be
\[
(x\mid y)_w = \frac{1}{2}\left(d(w,x)+d(w,y)-d(x,y)\right).
\]
 Let $\delta\geq 0$. We say that $G$ is $\delta$\textbf{-hyperbolic} if the inequality 
\[
(x\mid z)_w \geq (x\mid y)_w \wedge (y\mid z)_w - \delta
\]
holds for every $w,x,y,z\in V$, and that $G$ is \textbf{Gromov hyperbolic} if it is $\delta$-hyperbolic for some $\delta \geq 0$. (Note that while the use of the letter $\delta$ to describe this parameter is traditional, it should not generally be thought of as being small.)
Roughly speaking, in a Gromov hyperbolic graph, the Gromov product $(x \mid y)_w$ measures the distance from $w$ at which the geodesics from $w$ to $x$ and from $w$ to $y$ begin to diverge from each other.
As mentioned in the introduction, Gromov hyperbolicity can be defined equivalently by the Rips thin triangle property.

% An equivalent and more intuitive definition of hyperbolicity was given by Rips: We say that $G$ has \textbf{$\delta$-thin triangles} if whenever $x,y,z\in V$ are vertices of $G$ and $[x,y],[x,z],$ and $[x,z]$ are geodesics between them (which might not be unique), every point in $[y,z]$ has distance at most $\delta$ from the union $[x,y] \cup [x,z]$. Rips proved that there exists a constant $C$ such that if $G$ has $\delta$-thin triangles then it is $C\delta$-hyperbolic, and conversely that if $G$ is $\delta$-hyperbolic then it has $C\delta$-thin triangles. For example, trees are $0$-hyperbolic and have $0$-thin triangles. 
% In particular, taking $y=z$, it follows if $G$ is $\delta$-hyperbolic and $x$ and $y$ are vertices of $G$, then  any two distinct geodesics between  $x$ and $y$ must each be contained in the $C\delta$-neighbourhood of the other.

\subsection{The Bonk-Schramm Theorem}

\label{subsec:BonkSchramm}

The following theorem of Bonk and Schramm \cite{MR1771428} relates the abstract notion of Gromov hyperbolicity with the geometry of the concrete spaces $\mathbb{H}^d$. It will be the main tool by which we reason about the geometry of hyperbolic graphs in this paper. 

We must first introduce some definitions. 
Let  $f:(X,d_X)\to (Y,d_Y)$ be a function between metric spaces. Given $k\in [0,\infty)$, we say that $f$ is $k$\textbf{-cobounded} if $d_Y(y, f(X))\leq k$ for every $y\in Y$. Given $k\in [0,\infty)$ and $\lambda \in (0,\infty)$, we say that $f$ is a $(\lambda,k)$\textbf{-rough similarity} if it is $k$-cobounded and
\[
\bigl| \lambda d_X(x_1,x_2) - d_Y(f(x_1),f(x_2)) \bigr|\leq k
\]
for every $x_1,x_2\in X$. Note that this is a much stronger condition than being a rough isometry in the usual sense, and is particularly useful when discussing half-spaces.

Note that every closed convex subset $X \subseteq \mathbb{H}^d$ is a Gromov hyperbolic, geodesic metric space when equipped with the restriction of the metric on $\mathbb{H}^d$.

\begin{thm}[Bonk and Schramm]
Let $G$ be a bounded degree, connected, Gromov hyperbolic graph. Then there exists $d\geq 1$ such that $G$ is roughly similar to a closed convex subset $X\subseteq \mathbb{H}^d$. 
\end{thm}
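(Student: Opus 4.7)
The plan is to follow Bonk and Schramm's approach of embedding $G$ into $\mathbb{H}^{n+1}$ via its Gromov boundary $\partial_\infty G$, using Assouad's snowflake embedding theorem to realise the boundary as a subset of Euclidean space and then lifting this to $G$ through the half-space model of \cref{subsec:Hdbackground}. Fix a basepoint $o \in V(G)$ and a visual parameter $\varepsilon > 0$ chosen small in comparison to the hyperbolicity constant $\delta$. Standard boundary theory constructs $\partial_\infty G$ as equivalence classes of sequences $(x_n) \subseteq V(G)$ with $(x_n \mid x_m)_o \to \infty$, together with a visual metric satisfying $\rho_\varepsilon(\xi,\eta) \asymp e^{-\varepsilon(\xi\mid\eta)_o}$ that extends to a quasi-metric on $V(G) \cup \partial_\infty G$. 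The bounded-degree bound $|B_G(o,r)| \leq D^r$, combined with visibility in hyperbolic graphs, translates into the statement that $(\partial_\infty G, \rho_\varepsilon)$ is doubling, with doubling constant depending only on $D$ and $\delta$.

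Next I would invoke Assouad's snowflake embedding theorem: for some $\alpha \in (0,1)$ depending only on the doubling constant, there is a bi-Lipschitz embedding $\phi : (\partial_\infty G, \rho_\varepsilon^{\alpha}) \to \R^n$ with image inside the unit ball. Identifying $\R^n$ with the ideal boundary of the half-space model $\mathbb{H}^{n+1} = \R^n \times (0,\infty)$, define
\[
\Phi : V(G) \to \mathbb{H}^{n+1}, \qquad \Phi(v) = \bigl(\phi(\xi_v),\, e^{-\alpha\varepsilon\, d_G(o,v)}\bigr),
\]
where $\xi_v \in \partial_\infty G$ is chosen so that some geodesic ray from $o$ to $\xi_v$ passes within $O(\delta)$ of $v$; existence of such points follows from the existence of geodesic rays approximately extending any finite geodesic in a bounded-degree hyperbolic graph. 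Feeding the visual estimate $\|\phi(\xi_v)-\phi(\xi_w)\|_2 \asymp e^{-\alpha\varepsilon(\xi_v\mid\xi_w)_o}$ and the approximate identity $(\xi_v\mid\xi_w)_o = (v\mid w)_o + O(\delta)$ into the explicit formula for $d_\mathbb{H}$ recorded in \cref{subsec:Hdbackground}, a short case analysis in the `near' and `far' regimes yields a uniform bound
\[
\bigl|\, d_\mathbb{H}\bigl(\Phi(v),\Phi(w)\bigr) - \alpha\varepsilon\, d_G(v,w)\, \bigr| \leq k_0,
\]
so that $\Phi$ is a $(\alpha\varepsilon,\, k_0)$-rough similarity.

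Finally, take $X$ to be the closed convex hull of $\Phi(V(G))$ in $\mathbb{H}^{n+1}$. The Rips thin triangle property in $\mathbb{H}^{n+1}$ forces convex hulls of subsets of hyperbolic space to lie at bounded Hausdorff distance from their generating set, so $X$ is cobounded over $\Phi(V(G))$ and $\Phi : G \to X$ remains a rough similarity after enlarging $k_0$. The main obstacle is the compatibility of the two parameters $\varepsilon$ and $\alpha$: one must arrange that the vertical contribution to $d_\mathbb{H}(\Phi(v),\Phi(w))$ from the exponential height coordinate, and the horizontal contribution from the Euclidean distance between $\phi(\xi_v)$ and $\phi(\xi_w)$, combine with the \emph{same} multiplicative constant across all regimes. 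It is precisely this calibration --- achieved by using the snowflaked metric $\rho_\varepsilon^{\alpha}$ rather than a quasi-symmetric embedding of $\rho_\varepsilon$ --- that upgrades the conclusion from a rough-isometric embedding to a rough \emph{similarity} onto a convex subset.
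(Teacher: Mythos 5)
The paper does not prove this theorem; it is cited directly from Bonk and Schramm \cite{MR1771428}, so there is no internal proof to compare against, only whether your argument is sound.

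Your outline correctly identifies the ingredients Bonk and Schramm use (doubling boundary, Assouad's theorem, half-space model, convexification), but the central construction has a genuine gap. You define $\Phi(v) = (\phi(\xi_v),\, e^{-\alpha\varepsilon\, d_G(o,v)})$ where $\xi_v$ is a boundary point such that some geodesic ray from $o$ to $\xi_v$ passes within $O(\delta)$ of $v$, and you assert that such $\xi_v$ always exists by ``geodesic rays approximately extending any finite geodesic.'' This is false for general bounded degree, connected, hyperbolic graphs. Consider $G$ built from $\Z$ by attaching a pendant path of length $n$ at each vertex $n\geq 1$: this is a tree (hence $0$-hyperbolic) of maximum degree $3$, but $\delta G$ consists of just the two ends of the line $\Z$, the only geodesic rays from $0$ run along $\Z_+$ or $\Z_-$, and the endpoint of the $n$th pendant path is at distance $n$ from both rays. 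For this graph your $\Phi$ would necessarily send both $(n,n)$ and the vertex $2n\in\Z_+$ to the same point, destroying the rough-similarity estimate. This is exactly why Bonk and Schramm cannot work naively with $\delta G$; they need the intermediate step of replacing $G$ with a \emph{visual} hyperbolic space (their hyperbolic cone / truncation machinery) before passing to a boundary embedding, and that step cannot be skipped. The property you are implicitly assuming is precisely the ``visible from infinity'' condition the paper introduces later, which holds for infinite quasi-transitive or nonamenable hyperbolic graphs but not in the generality claimed here.

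A secondary, more easily repaired issue: the assertion that ``the Rips thin triangle property forces convex hulls of subsets of hyperbolic space to lie at bounded Hausdorff distance from their generating set'' is not true for arbitrary subsets (the convex hull of a horocircle is a horoball). What is true, and what you need, is that $\Phi(V(G))$ is \emph{quasi-convex} --- because $G$ is a geodesic space and $\Phi$ is a rough-similarity embedding, images of $G$-geodesics are quasi-geodesics, and by the Morse Lemma every $\mathbb{H}^d$-geodesic between image points is uniformly close to such an image --- and the convex hull of a quasi-convex subset of $\mathbb{H}^d$ does lie at bounded Hausdorff distance from the set. You should state and use the quasi-convexity explicitly rather than the false general claim.
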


\subsection{The Gromov boundary}

We now review the definition of the Gromov boundary. Proofs of the facts in this section can be found in \cite{MR1771428} and references therein. 
Let $G$ be a connected, locally finite, Gromov hyperbolic graph, and let $w$ be a fixed vertex. A sequence of vertices $\langle v_i \rangle_{i\geq }$ in $G$ is said to \textbf{converge at infinity} if 
$\lim_{n\to\infty}\inf_{i,j\geq n}(v_i\mid v_j)_w=\infty$.
If $\langle u_i\rangle_{i\geq 1}$ and $\langle v_i \rangle_{i\geq }$ are two sequences of vertices that both converge at infinity, we say that the two sequences are \textbf{equivalent} if $(u_i\mid v_i)_w \to \infty$ as $i\to\infty$. This defines an equivalence relation on the set of sequences that converge at infinity. We define the \textbf{Gromov boundary} $\delta G$ of $G$ to be the set of equivalence classes of this equivalence relation. Neither convergence at infinity or equivalence of convergent sequences depends on the choice of $w$. Given $\xi \in \delta G$ and a sequence $\langle v_i \rangle_{i\geq1}$ in $V$, we write $v_i\to \xi$ if $\langle v_i \rangle_{i\geq1}$ converges at infinity and is an element of the equivalence class $\xi$.
 Given two points $\xi,\zeta\in \delta G$ and $v\in V$ we define 
\[
(\xi \mid \zeta)_w := \sup \Bigl\{ \liminf_{i\to\infty} ( \xi_i \mid \zeta_i) : \xi_i\to \xi,\, \zeta_i\to \zeta\Bigr\} \quad \text{and} \quad
% \]
% Similarly, if $\xi\in \delta G$ and $v\in V$ we define the Gromov product
% \[
(\xi \mid v)_w := \sup \Bigl\{ \liminf_{i\to\infty} ( \xi_i \mid v) : \xi_i\to \xi\Bigr\}.
\]

We now define the topology on $\delta G$. 
% We will give the classical definition in terms of the metric defined from the Gromov product. Later we will give an equivalent and for us more useful definition in terms of half-spaces.
If $\eps>0$ is sufficiently small, then there exists a metric $d_{w,\eps}$ on $V\cup\delta G$ with the property that
\[
\frac{1}{2}e^{-\eps(x\mid y)_w} \leq d_w(x,y) \leq e^{-\eps (x \mid y)_w}
\] 
for every $x,y\in V \cup \delta G$. We equip $V \cup \delta G$ with the topology induced by this metric, which does not depend on the choice of $w$ or $\eps$ provided that $\eps$ is sufficiently small. The resulting topological space is compact \cite[Corollary 22.13]{Woess}.

The definition of the Gromov boundary given above extends naturally to any Gromov hyperbolic metric space. 
If we identify $\mathbb{H}^d$ with the half-space $\R^d_+$ via the Poincar\'e half-space model, then the Gromov boundary $\delta \mathbb{H}^d$ of $\mathbb{H}^d$ can be identified with $\R^{d-1}\cup\{\infty\}$. 
More generally, if $X \subseteq \mathbb{H}^d$ is closed and convex, then the Gromov boundary $\delta X$ of $X$ can be identified with the intersection of the closure of $X$ in $\R^d_{\geq 0} \cup \{\infty\}$ with $\R^{d-1} \cup \{\infty\}$.
If $G$ has bounded degrees and $\phi:V\to X$ is a rough similarity between $G$ and a convex set $X\subseteq \mathbb{H}^d$, then $\phi$ extends to a unique continuous function $\hat \phi:V\cup\delta G \to X \cup \delta X$, and the restriction $\delta \phi$ of $\hat \phi$ to $\delta G$ is a homeomorphism $\delta \phi:\delta G\to\delta X$. 
% \cite[Theorem 6.5]{MR1771428}. 

We say that $G$ is \textbf{visible from infinity} if there exists a constant $C$ such that every vertex of $G$ is at distance at most $C$ from some doubly-infinite geodesic of $G$. Every nonamenable Gromov hyperbolic graph is visible from infinity, as is every infinite, quasi-transitive Gromov hyperbolic graph. If $G$ is visible from infinity, then the space $X$ in the Bonk-Schramm theorem is also visible from infinity (with the obvious extension of the definition), and is therefore easily seen to be roughly similar (with $\lambda=1$) to the hyperbolic convex hull of its boundary $\delta X \subseteq \R^{d-1}\cup\{\infty\}$. Thus, if $G$ is visible from infinity, we can take the space $X$ in the Bonk-Schramm Theorem to be equal to the convex hull of its boundary $\delta X \subseteq \R^{d-1}\cup\{\infty\}$.

\subsection{The action of automorphisms on the boundary}

Now suppose that $G=(V,E)$ is a Gromov hyperbolic graph, and consider the automorphism group $\Aut(G)$ of $G$. The the action of $\Aut(G)$ on $V$ extends uniquely to a continuous action of $\Aut(G)$ on $V\cup\delta G$ \cite[Theorem 22.14]{Woess}. The elements of $\Aut(G)$ can be classified as \emph{elliptic, parabolic, and hyperbolic} as follows.
\begin{enumerate}
\item We say that $\gamma\in \Aut(G)$ is \textbf{elliptic} if it fixes some finite set of vertices $K \subseteq V$.
\item We say that $\gamma\in \Aut(G)$ is \textbf{parabolic} if it fixes a unique boundary point $\xi\in \delta G$ and
\[\lim_{n\to\infty}\gamma^n x \to \xi \qquad \text{ and } \qquad \lim_{n\to\infty}\gamma^{-n} x \to \xi\]
for every $x \in V \cup \delta G$, uniformly on compact subsets of $V \cup \delta G \setminus \{\xi\}$.
\item We say that $\gamma \in \Aut(G)$ is \textbf{hyperbolic} if it fixes exactly two points $\xi,\eta$ of $\delta G$ and 
\[\lim_{n\to\infty}\gamma^n x \to \xi \qquad \text{ and } \qquad \lim_{n\to\infty}\gamma^{-n} x \to \eta\]
for every $x \in V \cup \delta G \setminus \{\xi,\eta\}$, uniformly on compact subsets of $V \cup \delta G \setminus \{\eta\}$ and $V \cup \delta G \setminus \{\xi\}$ respectively. We call $\xi$ and $\eta$ the \textbf{forward} and \textbf{backward} fixed points of $\gamma$ respectively.
\end{enumerate}

It is clear that these classes are mutually exclusive, and in fact we have the following. 

\begin{prop}
\label{prop:structure}
Let $G$ be a Gromov hyperbolic graph. Then every $\gamma \in \Aut(G)$ is either elliptic, parabolic, or hyperbolic.
\end{prop}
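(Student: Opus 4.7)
The plan is to classify $\gamma$ by the asymptotic behaviour of the orbit $(\gamma^n w)_{n \in \Z}$ for a fixed base vertex $w$. Let $f(n) = d(w, \gamma^n w)$. Since $\gamma$ is an isometry, $f$ is subadditive and satisfies $f(-n) = f(n)$, so by Fekete's lemma the translation length $\ell(\gamma) := \lim_{n\to\infty} f(n)/n = \inf_{n\geq 1} f(n)/n$ exists in $[0, f(1)]$. The three asymptotic regimes ($f$ bounded; $\ell(\gamma) > 0$; and $\ell(\gamma) = 0$ with $f$ unbounded) will correspond respectively to the elliptic, hyperbolic, and parabolic cases.

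If $f$ is bounded, then by local finiteness of $G$ the orbit is a finite set which $\gamma$ permutes, giving the elliptic case immediately. If $\ell(\gamma) > 0$, then the orbit map $n \mapsto \gamma^n w$ is a quasi-isometric embedding of $\Z$ into $V$ (with lower bound $f(m-n) \geq (m-n)\ell(\gamma)$ and upper bound $f(m-n) \leq (m-n)f(1)$), so its image is a bi-infinite quasi-geodesic. By the quasi-geodesic stability theorem (Morse lemma) for Gromov hyperbolic spaces, this quasi-geodesic shadows a genuine bi-infinite geodesic at bounded distance, whose two endpoints define distinct boundary points $\xi, \eta \in \delta G$. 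The convergences $\gamma^n w \to \xi$ and $\gamma^{-n} w \to \eta$ can also be verified directly from the identity
\[
(\gamma^n w \mid \gamma^m w)_w = \tfrac{1}{2}\bigl(f(n)+f(m)-f(m-n)\bigr) \to \infty \qquad \text{as } n,m \to \infty,
\]
using $f(k)/k \to \ell(\gamma)$; distinctness of $\xi$ and $\eta$ follows from the Morse lemma (or from bounding $(\gamma^n w \mid \gamma^{-n}w)_w = f(n) - \tfrac12 f(2n)$ via $\delta$-thin triangles applied to $w, \gamma^n w, \gamma^{-n}w$). The equalities $\gamma\xi = \xi$ and $\gamma\eta = \eta$ are immediate from continuity of the $\Aut(G)$-action on $V \cup \delta G$.

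The parabolic case ($f$ unbounded, $\ell(\gamma)=0$) is the principal obstacle. I would proceed in three steps. First, extract any boundary accumulation point $\xi = \lim_k \gamma^{n_k} w$ by compactness of $V \cup \delta G$. Second, verify $\gamma \xi = \xi$: since $d(\gamma^{n_k}w, \gamma^{n_k+1}w) = f(1)$ is uniformly bounded while $d(w, \gamma^{n_k}w) \to \infty$, the Gromov product $(\gamma^{n_k}w \mid \gamma^{n_k+1}w)_w \to \infty$, so $\gamma^{n_k+1}w \to \xi$ as well, and by continuity of the $\gamma$-action $\gamma \xi = \xi$. Third, show $\xi$ is the unique accumulation point of the full two-sided orbit in $\delta G$: if $\eta \neq \xi$ were a second accumulation point, the same argument would give $\gamma\eta = \eta$, and I would then use the Bonk--Schramm theorem to transfer to a convex subset of $\mathbb{H}^d$ and argue that two distinct boundary fixed points force $\gamma$ to translate along a quasi-geodesic axis connecting them, producing $\ell(\gamma)>0$ and contradicting the parabolic hypothesis. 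Uniqueness then gives $\gamma^n w \to \xi$ and $\gamma^{-n} w \to \xi$.

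Finally, uniform convergence on the prescribed compact subsets follows in each case from a standard four-point Gromov-product estimate: for any $x$ with $(x \mid \eta)_w \leq R$ (resp.\ $(x \mid \xi)_w \leq R$ in the parabolic case), the $\delta$-hyperbolicity inequality gives $(\gamma^n x \mid \xi)_w \geq (\gamma^n w \mid \xi)_w \wedge (\gamma^n w \mid \gamma^n x)_w - O(\delta + R)$, and both terms on the right tend to infinity using the already-established convergence $\gamma^n w \to \xi$ together with $d(\gamma^n w, \gamma^n x) = d(w,x)$. I expect the third step of the parabolic case (uniqueness of the boundary fixed point) to require the most care, since it is the only place where the interplay between the boundary geometry and the absence of a quasi-geodesic orbit must be exploited; the elliptic and hyperbolic cases reduce cleanly to local finiteness and classical quasi-geodesic stability.
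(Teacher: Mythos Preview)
The paper does not actually prove this proposition: immediately after the statement it says that the result is classical and refers the reader to \cite{MR1921706} for the Cayley-graph case and to \cite{MR1245204} for the general case. So there is no ``paper's own proof'' to compare against; you have supplied a proof where the paper supplies a citation.

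Your sketch is the standard translation-length trichotomy (bounded orbit; positive drift; unbounded orbit with zero drift) and is essentially correct. One point deserves tightening. In the parabolic case you propose to exclude a second boundary fixed point by invoking the Bonk--Schramm embedding into $\mathbb{H}^d$. That theorem requires bounded degree, which the proposition does not assume, and in any case the detour is unnecessary. The direct argument is cleaner: if $\gamma$ fixes two distinct boundary points $\xi,\eta$, let $L$ be a bi-infinite geodesic joining them; then $\gamma L$ is another such geodesic and hence lies within bounded Hausdorff distance of $L$ by thinness of ideal triangles. This forces $\gamma$ to act on $L$ (up to bounded error) as a translation by some amount $t$. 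If $t\neq 0$ then $\ell(\gamma)>0$ and $\gamma$ is hyperbolic; if $t=0$ then every orbit stays within bounded distance of a single point of $L$, so the orbit is bounded and $\gamma$ is elliptic. Either way the parabolic hypothesis is contradicted. With this replacement your argument goes through without the extraneous bounded-degree assumption.
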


\cref{prop:structure} is a direct analogue of the corresponding statement for isometries of $\mathbb{H}^d$, which is classical. See \cite[\S 4]{MR1921706} and references therein for a proof in the case that $G$ is a Cayley graph of the hyperbolic group $\Gamma$, and \cite[Theorem 1 and Corollary 4]{MR1245204} for a proof in full generality.

The following proposition extends this structure theory to \emph{groups} of automorphisms. Let $G$ be a Gromov hyperbolic graph and let $\Gamma$ be a subgroup of $\Aut(G)$. The \textbf{limit set} $L(\Gamma)$ of $\Gamma$ is defined to be the set of accumulation points in $\delta G$ of the orbit $\{\gamma v : \gamma \in \Gamma\}$ for some vertex $v$ of $G$. The set of accumulation points does not depend on the choice of $v$. If $\Gamma$ is quasi-transitive then $L(\Gamma)=\delta G$, and if $G$ is infinite and quasi-transitive then  $|\delta G| \in \{2,\infty\}$.  
A pair of boundary points $(\xi,\eta) \in \delta G^2$ is said to be  a \textbf{pole pair} of $\Gamma$ if there exists a hyperbolic element $\gamma \in \Gamma$ with forward fixed point $\xi$ and backward fixed point $\eta$.

\begin{prop}
\label{prop:density}
Let $G$ be a Gromov hyperbolic graph, and let $\Gamma$ be a subgroup of $\Aut(G)$. Then precisely one of the following holds:
\begin{enumerate}
	\item[\emph{(a)}] $L(\Gamma)=\emptyset$, and $\Gamma$ fixes some finite non-empty set of vertices in $G$.
\item[\emph{(b)}] 
% If $\Gamma$ is a subgroup of $\Aut(G)$, then either the action of $\Gamma$ on  $\delta G$ has a fixed point or its set of pole pairs is dense in $\delta G^2$.
$|L(\Gamma)|=2$, $\Gamma$ fixes a unique pair of elements of $\delta G$, and $L(\Gamma)$ is equal to this pair.
\item[\emph{(c)}] 
$|L(\Gamma)|\in \{1,\infty\}$, $\Gamma$ fixes a unique element $\xi_0$ of $\delta G$, and does not fix any compact non-empty proper subset of $L(\Gamma) \setminus \{\xi_0\}$.
\item[\emph{(d)}] $|L(\Gamma)| = \infty$, $\Gamma$ does not fix any compact non-empty proper subset of $L(\Gamma)$, and the set of pole pairs of $\Gamma$ is dense in $L(\Gamma)^2$.
\end{enumerate}
\end{prop}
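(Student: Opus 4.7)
The plan is to run a case analysis on what types of elements $\Gamma$ contains, using \cref{prop:structure} to classify each $\gamma \in \Gamma$ as elliptic, parabolic, or hyperbolic. The four alternatives (a)--(d) will correspond respectively to: $\Gamma$ has only elliptic elements; $\Gamma$ has a hyperbolic element and preserves its fixed-point pair setwise; $\Gamma$ has a parabolic element but no hyperbolic element; $\Gamma$ has a hyperbolic element whose fixed-point pair is not $\Gamma$-invariant. When delicate boundary estimates are required I would transport arguments to $\mathbb{H}^d$ via the Bonk--Schramm embedding of \cref{subsec:BonkSchramm}, at the cost of tracking rough-similarity constants.

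If every element of $\Gamma$ is elliptic, then since $G$ is locally finite each element generates a precompact cyclic subgroup of $\Aut(G)$, and a standard argument (two elliptic isometries with sufficiently separated fixed sets compose to a hyperbolic isometry of the ambient hyperbolic space) forces $\Gamma$ to have a common bounded orbit. The intersection of all fixed sets is then a finite non-empty $\Gamma$-invariant set and $L(\Gamma) = \emptyset$, yielding case (a). Suppose instead that $\Gamma$ contains no hyperbolic element but does contain a parabolic $\gamma$ with fixed point $\xi_0$. I would first show that every $\gamma' \in \Gamma$ fixes $\xi_0$: if $\gamma' \xi_0 = \xi_0' \neq \xi_0$, then $\gamma' \gamma \gamma'^{-1}$ is parabolic at $\xi_0'$, and a ping-pong argument in disjoint neighbourhoods of $\xi_0$ and $\xi_0'$ in $V \cup \delta G$ (using that high powers of a parabolic contract the complement of any neighbourhood of its fixed point) produces a hyperbolic element in $\Gamma$, a contradiction. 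Hence $\xi_0$ is a global fixed point of $\Gamma$ and lies in $L(\Gamma)$. Moreover, for any compact $K \subseteq L(\Gamma) \setminus \{\xi_0\}$ the iterates $\gamma^n K$ converge to $\{\xi_0\}$ in the Hausdorff topology on $\delta G$, so $K$ cannot be $\Gamma$-invariant. This gives case (c), with $|L(\Gamma)| = 1$ if $\Gamma$ is generated by parabolics with the common fixed point $\xi_0$ and $|L(\Gamma)| = \infty$ otherwise.

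It remains to treat the case that $\Gamma$ contains a hyperbolic element $\gamma_0$ with fixed points $\xi, \eta$. If $\Gamma$ preserves $\{\xi, \eta\}$ setwise, the subgroup $\Gamma_0$ of index at most $2$ that fixes both points coarsely stabilizes the quasi-axis of $\gamma_0$, and the combination of local finiteness of $G$ with compactness of vertex stabilizers in $\Aut(G)$ forces $\Gamma_0$ to be virtually cyclic. This gives $L(\Gamma) = \{\xi, \eta\}$ and case (b). Otherwise pick $\gamma' \in \Gamma$ with $\gamma' \{\xi, \eta\} \neq \{\xi, \eta\}$. For all sufficiently large $n$, the elements $\gamma_0^n$ and $\gamma' \gamma_0^n \gamma'^{-1}$ are hyperbolic with disjoint attracting/repelling neighbourhoods, so the ping-pong lemma yields a free rank-two subgroup all of whose non-trivial elements are hyperbolic. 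From this one deduces that $L(\Gamma)$ is perfect and infinite, that the $\Gamma$-action on $L(\Gamma)$ is minimal (so no non-empty proper compact subset is fixed), and that the $\Gamma$-invariant set $\{(\gamma \xi, \gamma \eta) : \gamma \in \Gamma\}$ of pole pairs is dense in $L(\Gamma)^2$ by minimality combined with perfectness of $L(\Gamma)$. This gives case (d).

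The main obstacle will be the two boundary ping-pong steps: the one used in case (c) to rule out non-global parabolic fixed points, and the one used in case (d) to produce the free subgroup together with minimality and density of pole pairs. Both require quantitative control of the Gromov product under long products of automorphisms, and while these estimates are classical for isometries of $\mathbb{H}^d$, in our setting $\Aut(G)$ acts only roughly similarly on the ambient $\mathbb{H}^d$, so additive error terms from \cref{subsec:BonkSchramm} must be absorbed by taking powers large enough in the ping-pong construction. Verifying that $\Gamma_0$ is virtually cyclic in case (b) is comparatively routine given the compactness of vertex stabilizers in $\Aut(G)$.
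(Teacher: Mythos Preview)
The paper does not prove this proposition itself; it simply cites Woess and Hamann for the various parts at the stated level of generality. Your sketch follows the classical strategy used in those references, but the case analysis has a genuine gap.

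You split the situation ``$\Gamma$ contains a hyperbolic element $\gamma_0$ with fixed points $\xi,\eta$'' into two subcases according to whether $\Gamma$ preserves $\{\xi,\eta\}$ setwise, and assert that if it does not then ping-pong between $\gamma_0^n$ and $\gamma'\gamma_0^n\gamma'^{-1}$ produces case~(d). This breaks down when $\Gamma$ fixes exactly one of the two points: if $\gamma'\xi=\xi$ but $\gamma'\eta\neq\eta$, then $\gamma'\{\xi,\eta\}\neq\{\xi,\eta\}$ so you enter your ``otherwise'' branch, yet $\gamma'\gamma_0\gamma'^{-1}$ still has $\xi$ as a fixed point, the attracting/repelling neighbourhoods of the two hyperbolic elements are never disjoint, and the ping-pong cannot be set up. Groups of this kind exist---think of the affine group $z\mapsto az+b$, $a>0$, acting on $\mathbb{H}^2$: it contains the hyperbolic elements $z\mapsto az$, globally fixes the boundary point $\infty$, and has the whole circle as its limit set---and they belong to case~(c) with $|L(\Gamma)|=\infty$, not to case~(d). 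The correct organisation is first to decide whether $\Gamma$ has a global fixed point in $\delta G$; if it does you are in case~(c) regardless of whether hyperbolic elements are present, and only after ruling out both a global fixed point and a global fixed pair does the free-subgroup ping-pong go through to give minimality and density of pole pairs.
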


$\Gamma$ is said to have the \textbf{fixed set property}  if one of (a), (b), or (c) holds above. 

% See \cite[Proposition 4.2]{MR1921706} and references therein for the special case in which $G$ is a Cayley graph of the hyperbolic group $\Gamma$.

 % (in which case one of (a), (b) or (d) holds according to whether $\Gamma$ is finite, virtually Abelian, or nonamenable).

See \cite[\S 4]{MR1921706} and references therein for a proof of \cref{prop:density} in the case that $G$ is a Cayley graph of the hyperbolic group $\Gamma$. At the stated level of generality,  Woess established everything other than the density of pole pairs in the case that $\Gamma$ does not have the fixed set property; see \cite[Theorems 2 and 3 and Corollary 4]{MR1245204} and \cite[Proposition 20.10]{Woess}. A proof that the pole pairs are dense when $\Gamma$ does not have the fixed set property is given in \cite[Theorem 2.9]{Hamann2017}.

\subsection{Unimodularity}

\label{subsec:unimodularity}
We now briefly introduce unimodularity and the mass-transport principle, referring the reader to  \cite[Chapter 8]{LP:book} and \cite[\S 2.1]{Hutchcroftnonunimodularperc} for further background.

Let $G=(V,E)$ be a connected, locally finite graph, and let $\Gamma$ be a subgroup of $\Aut(G)$. We say that $\Gamma$ is \textbf{unimodular} if $|\stab_u v|= |\stab_v u|$ for every $u,v \in V$ in the same orbit of $\Gamma$, where $\stab_u = \{ \gamma \in \Gamma : \gamma u = u\}$ is the stabilizer of $u$ in $\Gamma$ and $\stab_u v = \{ \gamma v : \gamma \in \stab_u \}$ is the orbit of $v$ under $\stab_u$. We say that $G$ is unimodular if $\Aut(G)$ is unimodular. Every Cayley graph of a finitely generated group is unimodular.

If $\Gamma$ is unimodular and quasi-transitive then it satisfies the \textbf{mass-transport principle}. 
Given a graph $G$ and a subgroup $\Gamma$ of $\Aut(G)$, we write $[v]$ for the orbit of $v$ under $\Gamma$, and let $\cO$ be a set of orbit representatives of $\Gamma$, i.e., a subset of $V$ with the property that for each $u\in V$ there exists a unique $v\in \cO$ with $[u]=[v]$. The mass-transport principle  states that there exists a unique probability measure $\mu$ on $\cO$ such that whenever $\rho$ is a random root vertex on $G$ with law $\mu$ and $F:V^2\to[0,\infty]$ is invariant under the diagonal action of $\Gamma$ in the sense that $F(\gamma u, \gamma v)=F(u,v)$ for every $u,v \in V$, then
\[
\E\left[\sum_{v\in V} F(\rho,v)\right]= \E\left[\sum_{v\in V} F(v,\rho)\right].
\]
The measure $\mu$ clearly assigns a positive mass to each element of $\cO$.
For the remainder of the paper, we will use 
$\P$ and $\E$ to denote probabilities and expectations taken with respect to the law of a random root $\rho$ drawn from this measure, use 
$\P_p$ and $\E_p$ to denote probabilities and expectations taken with respect to the joint law of the random root $\rho$ and an independent percolation configuration $G[p]$, and use $\mathbf{P}_p$ and $\mathbf{E}_p$ to denote probabilities and expectations taken with respect to the marginal law of the percolation configuration $G[p]$.

% Although we will use the mass-transport principle in this paper, this use is somewhat inessential and could be replaced with the nonunimodular form of the mass-transport principle as in \cite[\S 7]{Hutchcroftnonunimodularperc}.
A further important consequence of unimodularity is the following, which follows from \cite[Proposition 22.16]{Woess} and \cref{prop:density}.

\begin{prop}
\label{prop:unimodularnofixed}
Let $G$ be a Gromov hyperbolic graph and let $\Gamma$ be a unimodular quasi-transitive subgroup of $\Aut(G)$. Then either
\begin{enumerate}
\item $G$ is amenable and $|\delta G|=2$, or
\item $G$ is nonamenable, $|\delta G| =\infty$, $\Gamma$ does not have the fixed set property, and its set of pole pairs is dense in $\delta G^2$.
\end{enumerate}
\end{prop}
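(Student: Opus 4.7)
The proof plan is to deduce the proposition by combining the classification of quasi-transitive subgroups of $\Aut(G)$ given in Proposition~\ref{prop:density} with the cited Woess result \cite[Proposition 22.16]{Woess}, and to dispose of the amenable possibility via the structural dichotomy mentioned earlier in the excerpt.

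I would begin by splitting into the two cases allowed by the fact that $|\delta G|\in \{2,\infty\}$ for any infinite, quasi-transitive, Gromov hyperbolic graph. In the case $|\delta G|=2$, I would invoke the statement (already recalled in \cref{sec:geometry}) that any such graph is either nonamenable or rough-isometric to $\Z$, combined with the observation that a nonamenable Gromov hyperbolic graph must have infinite Gromov boundary (this is immediate from visibility from infinity together with the abundance of geodesics provided by nonamenability, and is part of why $|\delta G|\in\{2,\infty\}$ in the first place). Hence $G$ is rough-isometric to $\Z$ and therefore amenable, which gives conclusion (1).

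In the case $|\delta G|=\infty$, the same dichotomy forces $G$ to be nonamenable. Since $\Gamma$ is quasi-transitive, the limit set satisfies $L(\Gamma)=\delta G$, so $|L(\Gamma)|=\infty$. Applying \cref{prop:density} to $\Gamma$, cases (a) and (b) are ruled out on cardinality grounds, leaving cases (c) and (d). These are distinguished precisely by whether $\Gamma$ fixes a point of $\delta G$: case (c) produces a unique fixed boundary point, while case (d) asserts exactly the two remaining conclusions we need, namely that $\Gamma$ has no fixed set property and that its set of pole pairs is dense in $\delta G^2$.

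The work is therefore to exclude case (c), and this is where unimodularity enters. I would apply \cite[Proposition 22.16]{Woess}, which (as cited in the statement of the proposition) rules out the existence of a $\Gamma$-fixed boundary point when $\Gamma$ is a unimodular, quasi-transitive subgroup of the automorphism group of a Gromov hyperbolic graph with infinite boundary; intuitively, fixing a boundary point forces a nontrivial contracting/expanding action along geodesics ending at that point, and the modular function computed via the mass-transport principle detects this asymmetry and must be nontrivial. Once case (c) is excluded we are in case (d), giving conclusion (2). The only potential obstacle is to confirm that the hypotheses of Woess's proposition are met in our setting, but this is immediate from the assumptions: $\Gamma$ is unimodular and quasi-transitive, $G$ is Gromov hyperbolic, and $|\delta G|=\infty$ in the case under consideration.
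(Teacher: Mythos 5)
Your proof is correct and takes essentially the same route as the paper, which simply cites \cite[Proposition 22.16]{Woess} and \cref{prop:density}: the dichotomy on $|\delta G|\in\{2,\infty\}$ gives conclusion (1) in the amenable/rough-$\Z$ case, and in the nonamenable case the identity $L(\Gamma)=\delta G$ plus $|L(\Gamma)|=\infty$ reduces \cref{prop:density} to cases (c) and (d), with Woess's unimodularity result ruling out (c). You fill in the background facts the paper leaves implicit, but the ingredients and logic are the same.
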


In fact, it is very unusual for $\Aut(G)$ to have the fixed set property when $G$ is Gromov hyperbolic and quasi-transitive: This can occur only if $G$ is rough-isometric to a tree \cite{MR2948665,MR3420526}.

\subsection{Discrete half-spaces in hyperbolic graphs}

Let $G=(V,E)$ be a connected, locally finite, Gromov hyperbolic graph. We say that a subset $H\subseteq V$ is a \textbf{discrete half-space} if it is of the form $H=H_G(a,b)=\{v\in V : d(v,a)\leq d(v,b)\}$ for some $a\neq b\in V$.
We say that a discrete half-space $H_G(a,b)$ is \textbf{proper} if there exist disjoint, non-empty open subsets $U_1$ and $U_2$ of $\delta G$ such that $U_1$ is disjoint from the closure of $H_G(a,b)$ in $V \cup\delta G$ and $U_2$ is disjoint from the closure of $H_G(b,a)$ in $V \cup \delta G$. See \cref{fig:proper} for examples of proper and non-proper discrete half-spaces.
% We say that a hyperplane $\delta H$ is proper if it is equal to $\delta H_G(a,b)$ for some $a,b \in V$ such that $H_G(a,b)$ is proper. 

\begin{figure}
\centering
\includegraphics[width=0.8\textwidth]{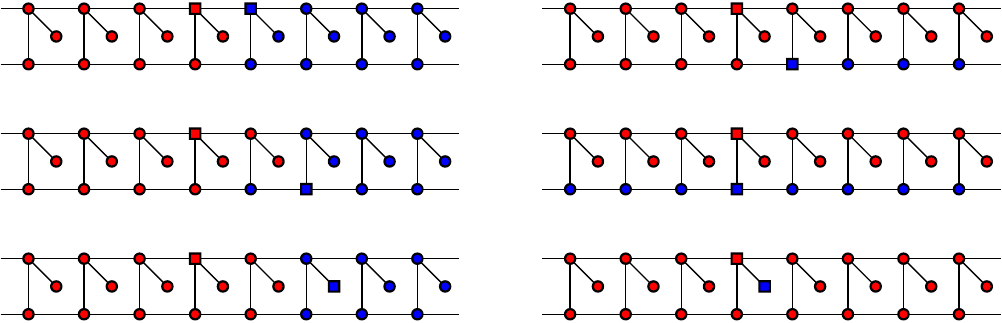}
\caption{Examples of discrete half-spaces in an infinite quasi-transitive Gromov hyperbolic graph. The red vertices represent the half-space $H_G(a,b)$, where $a$ is the red square and $b$ is the blue square, and the blue vertices represent the complement of $H_G(a,b)$. The three left-hand half-spaces are proper whereas the three right-hand half-spaces are not. }
% In this example, $H_G(a,b)$ is proper for every $a,b$ with $d(a,b)\geq 3$.}
\label{fig:proper}
\end{figure}

% Similarly, if $X$ is a closed convex subset of $\mathbb{H}^d$, we say that a subset $H$ of $X$ is a half-space if it is of the form $H(a,b)=\{x\in X : d(x,a) \leq d(x,b)\}$ for some $a,b\in X$, and say that a half-space $H=H(a,b)$ of $X$ is proper if there exist disjoint, non-empty open subsets $U_1$ and $U_2$ of $\delta X$ such that $U_1$ is disjoint from the closure of $H(a,b)$ in $V \cup\delta G$ and $U_2$ is disjoint from the closure of $H(b,a)$ in $V \cup \delta X$.

Now suppose that $G$ is a bounded degree Gromov hyperbolic graph, and let $\Phi:V\to X$ be a $(\lambda,k)$-rough similarity from $G$ to a closed convex set $X \subseteq \mathbb{H}^d$. 
Then for each $a,b\in V$, the image $\Phi H_G(a,b)$ of the discrete half-space $H_G(a,b)$ is contained in the set
\[
\Bigl\{ x \in \mathbb{H}^d : d(x,\Phi(a)) \leq d(x,\Phi(b))+2k\Bigr\} 
\]
and contains the set
\[
\Bigl\{ x \in \mathbb{H}^d : d(x,\Phi(a)) \leq d(x,\Phi(b))-2k\Bigr\}.
\]
% \tom{[Check numbers in the following paragraph.]} 
If $k>0$ these sets are not half-spaces in $\mathbb{H}^d$.  However, it follows from the discussion in \cref{subsec:Hdbackground} that if $d(\Phi(a),\Phi(b)) \geq 2k$ and we consider the infinite geodesic in $\mathbb{H}^d$ passing through $\Phi(a)$ and $\Phi(b)$, consider the pair of points on this geodesic at distance $2k$ from $\Phi(b)$, and take $\Phi(b)^-$ to be the point of this pair closer to $\Phi(a)$ and $\Phi(b)^+$ to be the point of the pair further from $\Phi(a)$, then we have that
\begin{equation}
\label{eq:halfspacecomparison}
\Phi ^{-1} \left[H\bigl(\Phi(a),\Phi(b)^-\bigr)\right] \subseteq  H_G(a,b) \subseteq \Phi^{-1} \left[H\bigl(\Phi(a),\Phi(b)^+\bigr)\right].
\end{equation}
Together with \cref{lem:concretevisual} below, which implies the corresponding statement for convex subsets of $\mathbb{H}^d$ that are visible from infinity,  this leads straightforwardly to the following basic fact about half-spaces in $G$.

\begin{lemma}
\label{lem:distantimpliesproper}
Let $G$ be a bounded degree, Gromov hyperbolic graph that is visible from infinity. Then there exists a constant $C$ such that if $a,b\in V$ have $d(a,b)\geq C$ then the discrete half-space $H_G(a,b)$ is proper.
\end{lemma}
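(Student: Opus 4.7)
The plan is to use the Bonk-Schramm Theorem to reduce to the concrete setting of true half-spaces in $\mathbb{H}^d$, where all relevant quantities can be computed explicitly in the Poincar\'e half-space model. Let $\Phi \colon V \to X$ be a $(\lambda,k)$-rough similarity to a closed convex subset $X \subseteq \mathbb{H}^d$ furnished by the Bonk-Schramm Theorem. Since $G$ is visible from infinity, we may take $X$ to be equal to the hyperbolic convex hull of its boundary $\delta X \subseteq \R^{d-1}\cup\{\infty\}$, so that $X$ is also visible from infinity; let $\delta \Phi \colon \delta G \to \delta X$ denote the induced boundary homeomorphism.

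Assume $d(a,b) \geq C$ for a constant $C$ to be chosen large, so that $D := d_\mathbb{H}(\Phi(a),\Phi(b)) \geq \lambda C - k$ is also large. By applying an isometry of $\mathbb{H}^d$ we may arrange $\Phi(a) = (0,\ldots,0,1)$ and $\Phi(b) = (0,\ldots,0,e^D)$. Since the perpendicular bisector of two points on the vertical axis at heights $y_1 < y_2$ is the Euclidean hemisphere of radius $\sqrt{y_1 y_2}$ centred at the origin, the closures in $\delta \mathbb{H}^d = \R^{d-1}\cup\{\infty\}$ of the two true half-spaces appearing in \eqref{eq:halfspacecomparison} are the closed Euclidean ball $\overline B(0, e^{D/2+k})$ and the set $(\R^{d-1} \setminus B(0,e^{D/2-k})) \cup \{\infty\}$ respectively. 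Their complements
\[
V_1 := \bigl\{\xi \in \R^{d-1} : \|\xi\| > e^{D/2+k}\bigr\} \cup \{\infty\} \quad\text{and}\quad V_2 := \bigl\{\xi \in \R^{d-1} : \|\xi\| < e^{D/2-k}\bigr\}
\]
are disjoint open subsets of $\delta \mathbb{H}^d$, so it will suffice to show that $V_1 \cap \delta X$ and $V_2 \cap \delta X$ are both non-empty: the pullbacks $U_i := (\delta\Phi)^{-1}(V_i \cap \delta X)$ then furnish the disjoint non-empty open subsets of $\delta G$ witnessing that $H_G(a,b)$ is proper.

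The non-emptiness step is the main content of the proof, and I would verify it using visibility from infinity. Let $C_1$ be such that every point of $X$ lies within hyperbolic distance $C_1$ of some doubly-infinite geodesic in $X$, and let $\gamma_a, \gamma_b$ be such geodesics for $\Phi(a), \Phi(b)$ respectively (vertical-line geodesics are handled directly: if $\gamma_b$ is vertical then $\infty \in V_1 \cap \delta X$ immediately, while if $\gamma_a$ is vertical its finite endpoint has bounded Euclidean norm and hence lies in $V_2 \cap \delta X$ for $D$ large). Each semicircular geodesic is determined by a centre $c \in \R^{d-1}$, radius $r>0$, and unit direction $\hat v \in \R^{d-1}$, with endpoints $c \pm r\hat v$. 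Applying the half-space distance formula to the closest point of $\gamma_b$ to $\Phi(b)$ shows that this closest point has height in $[e^{D-C_1}, e^{D+C_1}]$, which forces $r \geq e^{D-C_1}$; combined with the triangle inequality this gives $\max(\|c+r\hat v\|, \|c-r\hat v\|) \geq r \geq e^{D-C_1}$, so the farther endpoint of $\gamma_b$ lies in $V_1 \cap \delta X$ provided $D > 2(C_1+k)$. A similar but slightly more involved distance-formula calculation shows that the closest point of $\gamma_a$ to $\Phi(a)$ has both coordinates of size $O_{C_1}(1)$, from which a short manipulation of $(\mu + s^*)^2 + \nu = O_{C_1}(1)$ (with $\mu := \langle c, \hat v\rangle$, $\nu := \|c\|^2-\mu^2$, and $(s^*)^2 + (y^*)^2 = r^2$) yields $\min(\|c+r\hat v\|, \|c-r\hat v\|) = O_{C_1}(1)$, placing the closer endpoint of $\gamma_a$ in $V_2 \cap \delta X$ once $D$ is large. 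Choosing $C$ so that $D$ exceeds all of these thresholds (and at least $2k$, so the perturbations of $\Phi(a), \Phi(b)$ in \eqref{eq:halfspacecomparison} are defined) completes the proof. The main obstacle is juggling the two exponentially-separated scales at $\Phi(a)$ and $\Phi(b)$ in the non-emptiness step, keeping the fixed visibility constant $C_1$ correctly positioned against these scales to force the geodesic endpoints into the respective regions $V_1$ and $V_2$.
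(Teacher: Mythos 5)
Your proof is correct, and follows the same overall strategy the paper intends: reduce to $\mathbb{H}^d$ via Bonk--Schramm, use the half-space comparison \eqref{eq:halfspacecomparison} to trap the boundary-at-infinity of $H_G(a,b)$ and $H_G(b,a)$ inside complementary Euclidean balls in $\R^{d-1}\cup\{\infty\}$, and then exhibit points of $\delta X$ in the two complementary open regions $V_1$, $V_2$. Where you diverge from the paper's sketch: the text explicitly flags \cref{lem:concretevisual} as the tool for producing these boundary points --- applying it with $x=\Phi(a)$, $y=\Phi(b)$ gives a geodesic ray in $X$ from $\Phi(a)$ passing within $\log(1+\sqrt{2})$ of $\Phi(b)$, and its ideal endpoint lies in $V_1$ almost by inspection of the picture; the symmetric application handles $V_2$. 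You instead invoke visibility from infinity of $X$ directly (which is available, and your justification that the convex hull of $\delta X$ inherits visibility is right, since the convex hull contains every geodesic between points of $\delta X$) and then estimate the ideal endpoints of the resulting bi-infinite geodesics $\gamma_a,\gamma_b$ by hand in the Poincar\'e model. I verified those estimates: for $\gamma_b$ a semicircle of radius $r$, the closest point to $\Phi(b)$ has height $\geq e^{D-C_1}$ so $r\geq e^{D-C_1}$ and the far endpoint has norm $\geq r$; for $\gamma_a$, the $(\mu+s^*)^2+\nu=O_{C_1}(1)$ manipulation does give $|r-|\mu||=O_{C_1}(1)$ (using $y^*\geq e^{-C_1}$ to keep $r$ bounded below) and hence the near endpoint has norm $O_{C_1}(1)$. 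So the argument works; it is just more laborious than applying \cref{lem:concretevisual} as the paper suggests.

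One imprecision worth fixing: you write that the closures in $\delta\mathbb{H}^d$ of ``the two true half-spaces appearing in \eqref{eq:halfspacecomparison}'' are $\overline B(0,e^{D/2+k})$ and $(\R^{d-1}\setminus B(0,e^{D/2-k}))\cup\{\infty\}$. As literally stated, \eqref{eq:halfspacecomparison} gives $H(\Phi(a),\Phi(b)^-)$ and $H(\Phi(a),\Phi(b)^+)$, both of which contain $\Phi(a)$ and hence have boundaries-at-infinity $\overline B(0,e^{D/2-k})$ and $\overline B(0,e^{D/2+k})$ respectively --- two concentric balls, not a ball and its complement. The set $(\R^{d-1}\setminus B(0,e^{D/2-k}))\cup\{\infty\}$ is the boundary-at-infinity of $H(\Phi(b),\Phi(a)^+)$, which is the outer bound for $H_G(b,a)$ obtained by applying \eqref{eq:halfspacecomparison} with the roles of $a$ and $b$ swapped (equivalently, via \eqref{eq:perturbedhalfspace1}). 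Your choices of $V_1,V_2$ and everything downstream are correct; only that sentence misidentifies where the second region comes from.
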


A further basic fact about half-spaces that will be important to us is given by the following lemma, which is a simple corollary of \cref{prop:density}.

\begin{lemma}
\label{lem:movinghalfspaces}
Let $G$ be a Gromov hyperbolic graph and let $\Gamma$ be a quasi-transitive subgroup of $\Aut(G)$ that does not have the fixed set property. Then the following hold:
\begin{enumerate}
	\item For every proper discrete half-space $H$ of $G$, there exists an automorphism $\gamma \in \Gamma$ such that  $H$ and $\gamma H$ are disjoint.
	\item
For every pair of proper discrete half-spaces $H_1,H_2$ of $G$ and every finite set of vertices $K$ of $G$, there exists an automorphism $\gamma \in \Gamma$ such that $\gamma (H_1 \cup K)  \subseteq  H_2$.

\end{enumerate}
\end{lemma}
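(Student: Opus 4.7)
The plan is to invoke \cref{prop:density} and then use the north--south dynamics of hyperbolic automorphisms to push proper half-spaces around. Under the hypotheses of the lemma, $\Gamma$ is quasi-transitive on an infinite graph and does not have the fixed set property, so by \cref{prop:density} we are in case~(d); in particular, the set of pole pairs of $\Gamma$ is dense in $\delta G^2$, and this is the only input from the classification that I will need.

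For part~(1), I would let $H$ be a proper discrete half-space with associated nonempty open set $U_1\subseteq\delta G$ disjoint from the closure $\overline{H}$ of $H$ in $V\cup\delta G$. Since $U_1\times U_1$ is nonempty and open in $\delta G^2$, density of pole pairs yields a hyperbolic element $\gamma\in\Gamma$ whose forward and backward fixed points $\xi$ and $\eta$ both lie in $U_1$. Because $\eta\notin\overline{H}$, there is an open neighbourhood $N$ of $\eta$ in the compact space $V\cup\delta G$ that is disjoint from $H$, so $(V\cup\delta G)\setminus N$ is compact, contains $H$, and misses $\eta$. Combining uniform convergence $\gamma^n\to\xi$ on this compact set with the observation that $(V\cup\delta G)\setminus\overline{H}$ is an open neighbourhood of $\xi$ should give $\gamma^n H\subseteq V\setminus H$ for all sufficiently large $n$, which is what we want.

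For part~(2), I would choose nonempty open sets $U^{(1)},U^{(2)}\subseteq\delta G$ witnessing properness of $H_1$ and $H_2$ respectively, with $U^{(1)}$ disjoint from $\overline{H_1}$ and $U^{(2)}$ disjoint from $\overline{H_2^c}$. Density of pole pairs in the nonempty open set $U^{(2)}\times U^{(1)}\subseteq\delta G^2$ then produces a hyperbolic $\gamma\in\Gamma$ with forward fixed point $\xi\in U^{(2)}$ and backward fixed point $\eta\in U^{(1)}$, automatically distinct since pole pairs consist of two distinct boundary points. Now $\eta\notin\overline{H_1}$ and $\eta\in\delta G$ is disjoint from the finite vertex set $K\subseteq V$, so there is an open neighbourhood $N$ of $\eta$ in $V\cup\delta G$ disjoint from $H_1\cup K$; meanwhile $(V\cup\delta G)\setminus\overline{H_2^c}$ is an open neighbourhood of $\xi$ whose intersection with $V$ is precisely $H_2$. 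Uniform convergence of $\gamma^n$ to $\xi$ on the compact set $(V\cup\delta G)\setminus N\supseteq H_1\cup K$ should therefore force $\gamma^n(H_1\cup K)\subseteq H_2$ for all sufficiently large $n$.

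I expect the main (minor) obstacle to be the topological bookkeeping around $V\cup\delta G$: one must check that the open sets witnessing properness really do allow us to place the attracting fixed point of the hyperbolic element strictly on the $H_2$-side of $\partial H_2$ and the repelling fixed point strictly off of $H_1\cup K$. Once this is in place, \cref{prop:density} supplies a pole pair with the required fixed points, and the conclusion is immediate from the north--south dynamics together with compactness of $V\cup\delta G$.
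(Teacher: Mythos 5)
Your proposal is correct and matches the paper's argument: both parts rely on Proposition~\ref{prop:density}(d) to produce a hyperbolic element whose fixed points lie in the open sets witnessing properness, followed by north--south dynamics and compactness of $V\cup\delta G$. The paper only writes out part~(1), noting part~(2) is similar; your write-up of part~(2) is a correct elaboration of that omitted step.
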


\begin{proof}
We prove the first item, the second being similar.
Let $H$ be a proper discrete half-space of $G$, so that there exists a non-empty open set $U$ in $V \cup \delta G$ that is disjoint from the closure of $H$ in $V \cup \delta G$.
By \cref{prop:density}, there exists a hyperbolic element $\gamma \in \Gamma$ with forward and backward fixed points $\xi$ and $\eta$ both in $U$. Since $\gamma^n x \to \xi$ as $n\to\infty$ uniformly on compact subsets of $V \cup \delta G \setminus \{\eta\}$, it follows that $\gamma^n H \subseteq U$ for sufficiently large $n$, and hence that $\gamma^n H$ and  $H$ are disjoint for all sufficiently large $n$.
\end{proof}

\subsection{Comparing continuum and discrete half-spaces}

In this section, we prove the following comparison between discrete and continuum half-spaces. 

\begin{lemma}
\label{lem:halspacesmaincomparison}
Let $G=(V,E)$ be a bounded degree, Gromov hyperbolic graph that is visible from infinity, and let $\Phi:V\to X$ be a $(\lambda,k)$-rough similarity from $G$ to some closed convex set $X \subseteq \mathbb{H}^d$ that is equal to the convex hull of its boundary. Then there exists a constant $C$ such that for every half-space $H$ in $\mathbb{H}^d$ with $H \cap X \notin\{\emptyset,X\}$ and every $v\in V$ with $d(\Phi(v),H \cap X)\geq C$, there exists $u\in V$ such that the discrete half-space $H_G(u,v)$ is proper, contains $\Phi^{-1} H$, and has
\[\lambda^{-1} d\bigl(\Phi(v),H \cap X\bigr) - C \leq d\bigl(v,H_G(u,v)\bigr) \leq \lambda^{-1} d\bigl(\Phi(v),H \cap X\bigr) + C.\]
\end{lemma}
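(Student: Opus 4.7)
The plan is to construct $u\in V$ so that $\Phi(u)$ approximates the hyperbolic reflection of $\Phi(v)$ through the hyperplane $\partial H$, but pushed slightly closer to $\Phi(v)$ to absorb the error inherent in the rough similarity $\Phi$. Setting $r = d(\Phi(v), H\cap X)$ and letting $p^*$ denote the nearest point in $H\cap X$ to $\Phi(v)$, a short convexity argument shows $p^*\in\partial H$ (otherwise the geodesic from $p^*$ toward $\Phi(v)$ would exit $H$ only after crossing $\partial H$, and that crossing point would be a closer point of $H\cap X$).

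The main geometric construction is to produce a point $y^*\in X$ at hyperbolic distance $2r-Mk$ from $\Phi(v)$ (for a large absolute constant $M$ to be fixed later) such that $y^*$ lies on a geodesic from $\Phi(v)$ whose direction at $p^*$ is close to perpendicular to $\partial H$. If the full perpendicular geodesic from $\Phi(v)$ through $p^*$ stays in $X$, this is immediate. In general, I would use the hypotheses that $X$ is visible from infinity and is the convex hull of its Gromov boundary $\delta X$ to find $\xi\in\delta X$ on the $H$-side of $\partial H$ such that the geodesic $\gamma$ from $\Phi(v)$ to $\xi$ lies in $X$ (by convexity), and then, via the thin triangles property applied in $X$, is Hausdorff-close to the perpendicular ray on the ball of radius $\sim 2r$ about $\Phi(v)$, with error bounded by a constant depending only on the hyperbolicity constants of $X$. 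Setting $y^* = \gamma(2r-Mk)$ and using the $k$-coboundedness of $\Phi$, we then select $u\in V$ with $d_\mathbb{H}(\Phi(u),y^*)\leq k$.

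To verify the three required properties, I would invoke the sandwich of $H_G(u,v)$ between two continuum half-spaces given in \eqref{eq:halfspacecomparison}. Containment $H_G(u,v)\supseteq\Phi^{-1}H$ reduces, via \eqref{eq:perturbedhalfspace2}, to showing that $H$ is contained in the continuum half-space whose base point is $\Phi(u)$ and whose opposite point is obtained by extending $[\Phi(v),\Phi(u)]$ by $2k$ past $\Phi(u)$; the hyperbolic bisector of this pair is a hyperplane sitting within $O(k)$ of $\partial H$ but shifted onto the $\Phi(v)$-side by $\Omega(Mk)$ when $M$ is chosen large enough relative to the geometry, yielding the inclusion by a routine hyperbolic-trigonometric computation in the Poincar\'e half-space model. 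The distance bound follows from the rough-similarity inequality $|\lambda d_G(v,\cdot)-d_\mathbb{H}(\Phi(v),\Phi(\cdot))|\leq k$ applied to the boundaries of the two sandwiching continuum half-spaces, which lie at hyperbolic distance $r\pm O(k)$ from $\Phi(v)$. Properness is then immediate from \cref{lem:distantimpliesproper} since $d_G(u,v)\approx 2r/\lambda$ is large once $C$ is chosen sufficiently large.

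The main obstacle is the geometric construction of $y^*$ in the case where the perpendicular geodesic exits $X$, which requires a careful combination of visibility from infinity, the convex-hull characterization of $X$, and thin triangles in $X$ to guarantee the existence of a suitable $\xi\in\delta X$ whose geodesic from $\Phi(v)$ shadows the perpendicular ray closely over the relevant scale $\sim 2r$. Once this $y^*$ is in hand, the remaining verifications are either direct applications of the rough-similarity inequalities or routine hyperbolic trigonometry, with the constants $M$ and $C$ chosen to dominate the accumulated $O(k)$ errors.
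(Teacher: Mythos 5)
Your overall strategy matches the paper's: use the visibility/convex-hull hypothesis to produce a geodesic $\gamma$ in $X$ from $\Phi(v)$ passing near the closest point of $\partial H\cap X$, walk out along $\gamma$ to a point at distance roughly $2d(\Phi(v),H\cap X)$ minus a constant, pick $u$ with $\Phi(u)$ near that point, and then feed this into the sandwich \eqref{eq:halfspacecomparison}--\eqref{eq:perturbedhalfspace2}. The paper packages exactly this into its \cref{lem:hspacesHtoX}, which you are implicitly re-deriving. The distance estimate and properness (via \cref{lem:distantimpliesproper}) are handled the same way in both arguments.

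However, the central geometric step is not justified by what you cite. Two geodesic rays from $\Phi(v)$ that pass within $O(1)$ of each other at distance $r$ are \emph{not} Hausdorff-close on the ball of radius $\sim 2r$: in $\mathbb{H}^d$ they diverge exponentially, so at distance $2r$ they may be $\sim e^{r}$ apart, and thin triangles gives no control past the Gromov product. Likewise, ``the bisector sits within $O(k)$ of $\partial H$'' is not a meaningful claim about two distinct infinite hyperplanes. What actually makes the argument work, and what the paper's proof of \cref{lem:hspacesHtoX} does, is different: it first shows $H\cap X\subseteq H(y,x)$ where $y$ is the reflection of $x=\Phi(v)$ through the nearest point $z$ of $\partial H\cap X$, and then shows $H(y,x)\subseteq H(y',x)$ for a point $y'\in X$ along the visible geodesic by an explicit Poincar\'e-model computation---the key being that, once one normalizes $z'=(0,\ldots,0,1)$, the Euclidean ball representing $\partial H(y,x)$ has bounded radius and hence is swallowed by the (much larger) ball representing $\partial H(y',x)$. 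This is not a fellow-traveling estimate; it is a boundedness-in-normalized-coordinates estimate, and it is precisely the non-trivial content you defer as ``routine hyperbolic trigonometry.'' You would also need to justify that a suitable $\xi\in\delta X$ (equivalently, a suitable ray $\gamma$) exists at all, which in the paper is \cref{lem:concretevisual}. So the plan is right, but the load-bearing lemma is misidentified and its proof is not actually supplied.
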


We begin with the following simple geometric lemma.

\begin{lemma}
\label{lem:concretevisual}
Let $d\geq 2$ and let $X$ be a closed convex subset of $\mathbb{H}^d$ that is equal to the convex hull of its boundary. Then for every $x,y\in X$, there exists an infinite geodesic $\gamma$ in $X$ starting at $x$ such that $d(y,\gamma) \leq \log(1+\sqrt{2})$.
\end{lemma}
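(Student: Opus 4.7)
The plan is to reduce the desired bound to Rips' thin-triangle inequality for $\mathbb{H}^2$, whose sharp constant for ideal triangles is precisely $\log(1+\sqrt{2})$. The key geometric step will be to find two ideal boundary points $\xi_1, \xi_2 \in \delta X$ such that $y$ lies in the closed semi-ideal triangle $T$ with vertices $x, \xi_1, \xi_2$ (one finite vertex and two at infinity); once this is achieved, all three sides of $T$ are contained in $X$ by convexity, and it suffices to show that one of the two infinite geodesic rays $[x,\xi_1], [x,\xi_2]$ starting at $x$ passes within $\log(1+\sqrt{2})$ of $y$.

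To produce $\xi_1$ and $\xi_2$, I would first reduce to the two-dimensional case by slicing $X$ with a totally geodesic $2$-plane containing $x$ and $y$. Working in $\mathbb{H}^2$, since $X$ is the convex hull of its ideal boundary, a Carath\'eodory-style argument for hyperbolic convex hulls of ideal points produces a finite subset $F \subseteq \delta X$ whose hyperbolic convex hull --- an ideal polygon with vertices in $\delta X$ --- contains both $x$ and $y$. Drawing the infinite rays from $x$ to each vertex of $F$ partitions this polygon into semi-ideal sub-triangles, and $y$ lies in one of them; its two ideal vertices $\xi_1, \xi_2 \in F \subseteq \delta X$ are then the required boundary points.

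For the final distance estimate, Rips' thin-triangle inequality gives $d(y', [x,\xi_1] \cup [x,\xi_2]) \leq \log(1+\sqrt{2})$ for every $y'$ on the side $[\xi_1, \xi_2]$ of $T$; the sharpness can be verified directly in the upper half-plane model with $\xi_1 = -1$, $\xi_2 = 1$, $x \to \infty$, and $y' = (0,1)$, where one computes $\sinh d = \mathrm{arcsinh}(1)^{-1}$-style to obtain exactly $\log(1+\sqrt 2)$. To extend this bound from the side $[\xi_1,\xi_2]$ to all of $T$, I would observe that the function $z \mapsto \min(d(z,[x,\xi_1]), d(z,[x,\xi_2]))$ attains its maximum over $T$ on the side $[\xi_1,\xi_2]$: it is monotone increasing along the equidistant locus of the two rays (the curve in $T$ along which an inscribed circle tangent to both rays has its centre), while moving off this locus strictly reduces the minimum of the two distances. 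The main obstacle will be the Carath\'eodory-style step together with the reduction to a $2$-plane when $d \geq 3$; both are standard in spirit but require some care in the formulation of convex hulls of points at infinity.
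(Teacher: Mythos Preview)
Your overall strategy---find $\xi_1,\xi_2\in\delta X$ so that $y$ lies in the semi-ideal triangle $T=[x,\xi_1,\xi_2]$, then show that $\min_i d(\cdot,[x,\xi_i])$ is maximized over $T$ on the opposite side and is bounded there by $\log(1+\sqrt2)$---is sound once $\xi_1,\xi_2$ are in hand. (Your maximum-principle step can be justified cleanly: the distance to each ray $[x,\xi_i]$ is convex and vanishes at $x$, hence is nondecreasing along every geodesic from $x$, and $T$ is star-shaped from $x$.) The genuine gap is in producing $\xi_1,\xi_2$.

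Your dimension reduction does not work. If $P$ is a totally geodesic $2$-plane through $x$ and $y$, there is no reason for $X\cap P$ to be the convex hull of its ideal boundary in $P$; in fact $X\cap P$ can be bounded. Concretely, let $X$ be an ideal tetrahedron in $\mathbb H^3$ and take $x,y$ in its interior: a generic $2$-plane through $x,y$ meets the four faces in a compact quadrilateral with \emph{no} ideal boundary points at all, so your Carath\'eodory-in-the-plane step has nothing to work with. This is not merely ``care in formulation''---the slice can genuinely lose all contact with $\delta X$, and there is no obvious way to choose $P$ to avoid this.

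For comparison, the paper proceeds quite differently and never slices. Working in the half-space model, it normalizes so that $x$ and $y$ lie on the vertical axis with $y$ at height $1$, and takes $\xi,\zeta\in\delta X$ so that the geodesic $[\xi,\zeta]$ passes through $y$. An elementary computation then forces one endpoint, say $\xi$, to lie in the closed unit ball of $\R^{d-1}$; the ray $[x,\xi]$ stays in the vertical cylinder over that ball, which is tangent to the Euclidean ball of radius $1$ centred at $(0,\dots,0,\sqrt2)$---and this Euclidean ball is exactly the hyperbolic $\log(1+\sqrt2)$-ball about $y$. So the paper needs only a \emph{single} boundary point $\xi$ located via a direct Euclidean observation, rather than a pair obtained by a Carath\'eodory decomposition.
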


It is not hard to see by considering the case that $X$ is an ideal triangle in $\mathbb{H}^2$ that the constant $\log(1+\sqrt{2})$ cannot be improved.

\begin{figure}
\centering
 \includegraphics[height=0.22\textwidth]{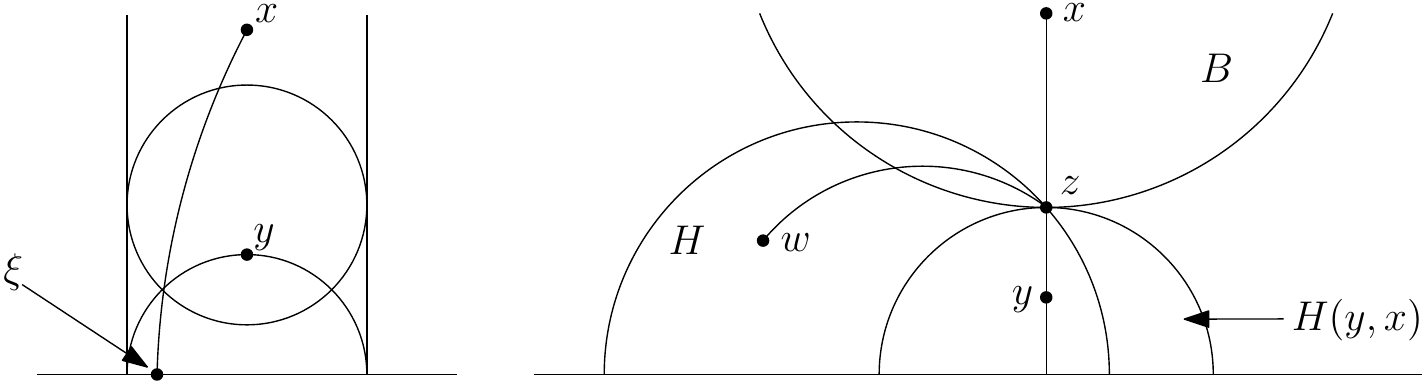}
\caption{Illustrations of the proof of \cref{lem:concretevisual} (left) and of the first part of the proof of \cref{lem:hspacesHtoX} (right).}
\label{fig:comparisonproof}
\end{figure}

\begin{proof}
We work in the Poincar\'e half-space model. 
By applying an isometry of $\mathbb{H}^d$ if necessary, we may assume that $x=(0,\ldots,0,x_d)$ and $y=(0,\ldots,0,1)$ for some $x_d>1$. Since $X$ is the convex hull of its boundary, there exist $\xi,\zeta \in \R^{d-1} \cup \{\infty\}$ such that the hyperbolic geodesic between $\xi$ and $\zeta$ passes through $y$. At least one of these points, say $\xi$, must lie in the closed unit disc in $\R^{d-1}$, and the geodesic from $x$ to $\xi$ is necessarily contained in the closed cylinder lying above this unit disc. The Euclidean ball of radius $1$ centred at $(0,\ldots,0,\sqrt{2})$ is tangent to this cylinder, and coincides with the hyperbolic ball of radius $\log(1+\sqrt{2})$ about $y$. The geodesic from $x$ to $\xi$ must pass through this ball, and the claim follows. See \cref{fig:comparisonproof} for an illustration. 
\end{proof}

\begin{lemma}
\label{lem:hspacesHtoX}
There exists a universal constant $C$ such that the following holds.
Let $d\geq 2$ and let $X$ be a closed convex subset of $\mathbb{H}^d$ that is equal to the convex hull of its boundary.  For every half-space $H$ in $\mathbb{H}^d$ with $H \cap X \notin\{\emptyset,X\}$ and every $x\in X \setminus H$ with $d(x,H\cap X) \geq C$ there exists $y\in X$ such that $H \cap X \subseteq H(y,x)$ and $d(x,H(y,x)) = d(x,y)/2 \geq d(x,H \cap X)-C$.
\end{lemma}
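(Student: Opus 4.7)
The plan is to exploit the supporting hyperplane inequality arising from $z$ being the nearest point to $x$ on the convex set $K := H \cap X$. Let $z \in K$ attain the infimum $d(x, K)$. Since $K$ is geodesically convex, the first variation formula implies $\angle xzw \geq \pi/2$ for every $w \in K$, and applying the hyperbolic law of cosines to the triangle $xzw$ therefore gives
\[
\cosh d(x, w) \geq \cosh d(x, z) \cosh d(z, w) \qquad \text{for every } w \in K.
\]
If the geodesic from $x$ through $z$ could be extended within $X$ a further distance $d(x, z)$ past $z$, then its endpoint $y_0$ would be the reflection of $x$ across the supporting hyperplane at $z$ and would satisfy $K \subseteq H(y_0, x)$ with $d(x, y_0)/2 = d(x, z)$ exactly; in general, however, $y_0 \notin X$.

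To produce an honest $y \in X$, I would apply \cref{lem:concretevisual} to the pair $x, z \in X$ to obtain an infinite geodesic $\gamma : [0, \infty) \to X$ with $\gamma(0) = x$ and $\inf_{t \geq 0} d(z, \gamma(t)) \leq \epsilon := \log(1+\sqrt{2})$. Let $z_0 = \gamma(t_0)$ realise this infimum, so that $|t_0 - d(x, z)| \leq \epsilon$ by the triangle inequality, and set
\[
T := t_0 + d(x, z) - \log 2 - \epsilon, \qquad y := \gamma(T).
\]
Taking $C := \log 2 + \epsilon$ ensures that $T \geq t_0 \geq 0$, so $y \in \gamma \subseteq X$ is well-defined, and that $d(x, y)/2 = T/2 \geq d(x, z) - C$, which gives the required lower bound on $d(x, H(y, x))$.

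It then remains to verify the containment $K \subseteq H(y, x)$. The triangle inequality combined with $d(z_0, y) = T - t_0$ gives $d(z, y) \leq \epsilon + (T - t_0) = d(x, z) - \log 2$. For each $w \in K$, the triangle inequality yields
\[
\cosh d(w, y) \leq \cosh d(w, z) \cosh d(z, y) + \sinh d(w, z) \sinh d(z, y),
\]
so dividing by $\cosh d(w, z) > 0$ and combining with the displayed law of cosines inequality reduces the desired bound $d(w, y) \leq d(w, x)$ to
\[
\cosh d(z, y) + \tanh d(w, z) \sinh d(z, y) \leq \cosh d(x, z).
\]
The left-hand side is increasing in $d(w, z) \in [0, \infty)$ with supremum $e^{d(z, y)}$, so it suffices to verify $e^{d(z, y)} \leq \cosh d(x, z)$, which follows from $d(z, y) \leq d(x, z) - \log 2$ and $\cosh R \geq e^R/2$. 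The most delicate point is the choice of $T$: the shift by $\log 2 + \epsilon$ is forced by two independent sources of slack — the $\epsilon$-thickening from \cref{lem:concretevisual} and the $\log 2$ gap between the law of cosines estimate for $d(w, x)$ and the cruder triangle inequality bound on $d(w, y)$ — but this shift is a universal constant, yielding the lemma.
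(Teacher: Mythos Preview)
Your argument is correct and is in fact cleaner than the paper's. Both proofs have the same skeleton: take $z$ to be the nearest point of $H\cap X$ to $x$, observe that the ``ideal'' choice $y_0$ at distance $2d(x,z)$ along the geodesic through $z$ may fall outside $X$, and then invoke \cref{lem:concretevisual} to bend the geodesic slightly so that it stays in $X$, at the cost of a universal additive constant. The difference lies entirely in how the containment $K\subseteq H(y,x)$ is verified after the bending. The paper works in the Poincar\'e half-space model and checks the containment by an explicit computation with Euclidean spheres and tangent circles (see the two paragraphs surrounding Figure~5); this yields $C=\log(17+12\sqrt2)$. You instead stay intrinsic: the nearest-point property gives the obtuse-angle inequality $\cosh d(x,w)\ge \cosh d(x,z)\cosh d(z,w)$ via the hyperbolic law of cosines, and you then compare this against the crude bound $\cosh d(w,y)\le e^{d(z,y)}\cosh d(w,z)$ coming from the triangle inequality. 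The resulting sufficient condition $e^{d(z,y)}\le\cosh d(x,z)$ is exactly what your shift $T=t_0+d(x,z)-\log 2-\epsilon$ is engineered to guarantee. This avoids all coordinate computations and gives the smaller constant $C=\log 2+\log(1+\sqrt2)$; the paper's approach, on the other hand, makes the geometric picture (half-spaces as Euclidean balls) more visible and connects directly to the figures used elsewhere.
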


We will prove the claim with the constant  $C=\log(17+12\sqrt{2})$, which is not optimal.

\begin{proof}
% \begin{figure}
% \centering
% \includegraphics[height=0.3\textwidth]{halfspaceintersection.pdf}
% \caption{Illustration of the proof of \cref{lem:hspacesHtoX}.}
% \end{figure}
Let $z$ be the point of $\partial H \cap X$ closest to $x$, and let $y \in \mathbb{H}^d$ be the unique point with $d(x,y)=2d(x,z)=2d(z,y)$. Note however that $y$ need not be in $X$.  
It is clear that $d(x,H(y,x))=d(x,H \cap X)$. We claim that $H\cap X \subseteq H(y,x)$. 
Indeed, by applying an isometry of $\mathbb{H}^d$ if necessary, it suffices to consider the case that $x=(0,\ldots,0,x_d), y=(0,\ldots,0,y_d),$ and $z=(0,\ldots,0,z_d)$ with $x_d>z_d>y_d$, so that $\partial H(y,x)$ is represented by the Euclidean sphere that is orthogonal to $\R^{d-1}$ and has highest point $z$. In this case, the hyperbolic ball of radius $d(x,z)$ around $x$ is equal to a Euclidean ball $B$ whose boundary sphere passes through $z$ and has its center on the vertical axis, and hence is tangent to the sphere representing $\partial H(y,x)$. 
 % We can compute the gradient of the distance function $d(a,\cdot)$ at $c$ to be
% \[
% \nabla d(a,\cdot)\!\mid_c = \bigl(0,\ldots,0,c_d^{-1}\bigr).
% \]
 If $w\in H \setminus H(y,x)$, then the infinite geodesic passing through $z$ and $w$ has its highest point strictly higher than $z$. Thus, the tangent to the circle representing this geodesic has a positive vertical component at $z$, so that a point a small way along this geodesic from $w$ to $z$ is contained in the interior of the ball $B$. Since $X \cap H$ is convex and $z$ was defined to be the closest point to $x$ in $H$, we deduce that $H \cap X \setminus H(y,x)=\emptyset$ as claimed. See \cref{fig:comparisonproof} for an illustration.

Unfortunately, we do not necessarily have that $y\in X$, and consequently are not yet done. 
 Suppose that $d(x,X \cap H)\geq \log(17+12\sqrt{2})$ and hence that $d(x,H(y,x)) =d(x,z) \geq \log(17+12\sqrt{2})$. 
By \cref{lem:concretevisual}, there exists $z'\in X$ such that $z'$ lies on an infinite geodesic in $X$ starting at $x$, and $d(z,z') \leq \log(1+\sqrt{2})$, so that $d(x,z') \geq \log(17+12\sqrt{2})-\log(1+\sqrt{2})=\log(7+5\sqrt{2})$. Let $y'$ be the unique point in $\mathbb{H}^d$ that has $d(x,y')=2d(x,z')-2\log(2+\sqrt{2})$ and $d(z',y')=d(x,z')-2\log(2+\sqrt{2})$. The point $y'$ lies on the infinite geodesic from $x$ passing through $z'$, and so is in $X$ by choice of $z'$.

\begin{figure}
\centering
\includegraphics[height=0.385\textwidth]{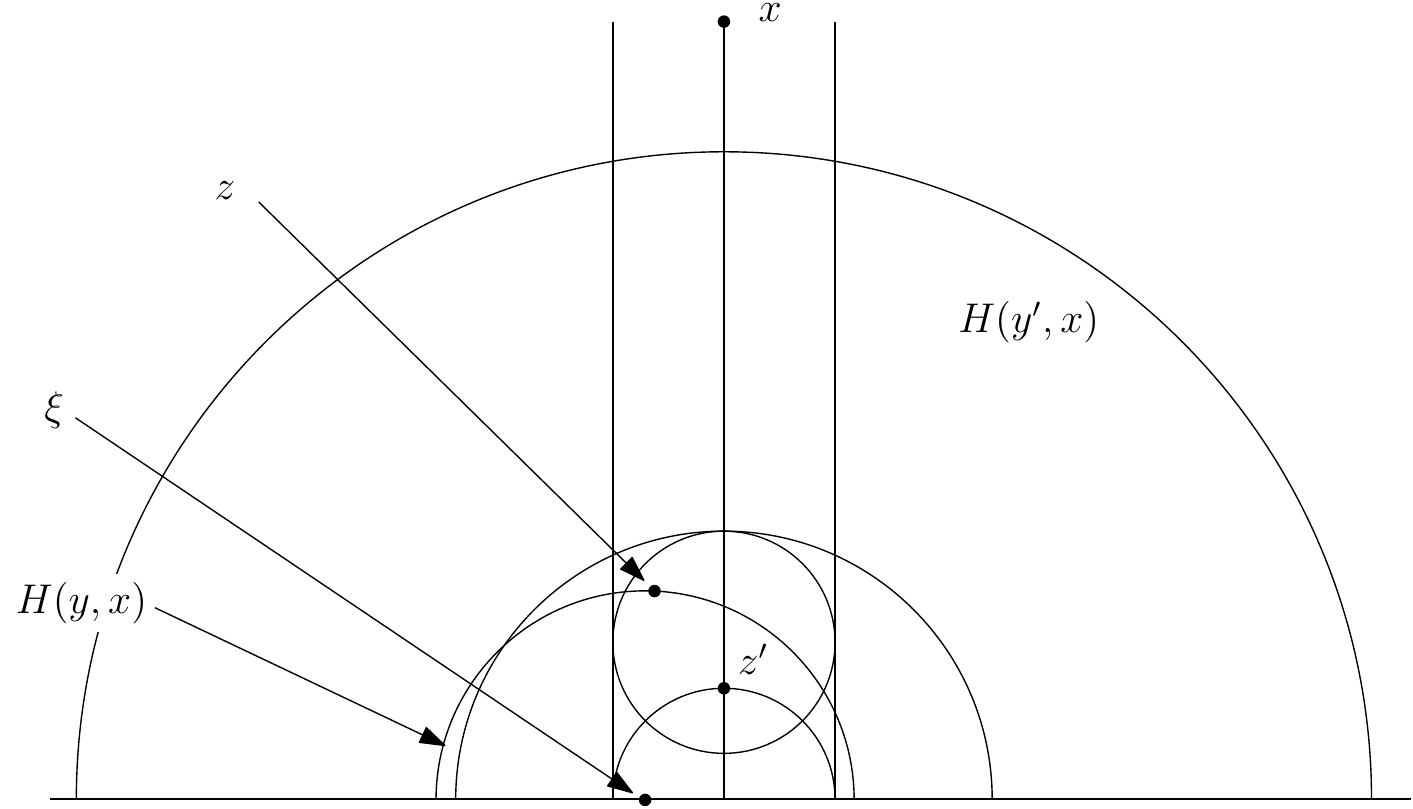}
\caption{Illustration of the second part of the proof of \cref{lem:hspacesHtoX}.}
\vspace{-0.1em}
\end{figure}

Thus, to complete the proof, it suffices to establish that the half-space $H(y',x)$ contains the half-space $H(y,x)$ and that $d(x,H(y',x))\geq d(x,H(y,x))-\log(7+5\sqrt{2})$.
 To see this, apply an isometry of $\mathbb{H}^d$ so that $x,y',z'$ all lie on the vertical axis and $z'=(0,\ldots,0,1)$, so that $z$ lies in the Euclidean ball of radius $1$ centred at $(0,\ldots,0,\sqrt{2})$ and $x_d \geq 2$. Let $\xi \in \R^{d-1}$ be the endpoint of the  geodesic in $\mathbb{H}^d$ starting at $x$ and passing through $z$. Since $x_d>1$, $\xi$ is in the ball of radius $1+\sqrt{2}$ about the origin in $\R^{d-1}$. 
The half-space $H(y,x)$ is represented by the Euclidean ball centred at $\xi$ and whose boundary contains $z$. This ball has radius at most $(1+\sqrt{2})\sqrt{2}=2+\sqrt{2}$, and we deduce that $H(y,x)$ is contained in the Euclidean ball of radius $3+2\sqrt{2}$ about the origin in $\R^d$, which represents the half-space $H(y',x)$ by choice of $y'$. The distance $d(x,H(y',x))$ is equal to $d(x,z')-\log(3+2\sqrt{2})$, which is at least $d(x,z)-\log(3+\sqrt{2})-\log(1+\sqrt{2})=d(x,X \cap H)-\log(7+5\sqrt{2})$.  \qedhere

% Since $X$ is visible from infinity and hence 
\end{proof}

\begin{proof}[Proof of \cref{lem:halspacesmaincomparison}]
Let $C'$ be the constant from \cref{lem:hspacesHtoX}. Suppose that $H$ is a half-space in $\mathbb{H}^d$ with $H \cap X \neq \emptyset$ and that $v\in V$ is such that $d(\Phi(v),X \cap H) \geq C'$. Then it follows by \cref{lem:hspacesHtoX} that there exists $y\in X$ such that $d(\Phi(v),H(y,\Phi(v))) = d(\Phi(v),y)/2 \geq d(x,H \cap X) - C'$.  Let $C''$ be a large constant to be chosen and let $y'$ lie on the geodesic from $\Phi(v)$ to $y$ and satisfy $d(\Phi(v),y')=d(\Phi(v),y)-C''$.  If $C''$ is sufficiently large, then whenever $d(\Phi(v),X \cap H)\geq 2C''$ and $y''$ is within distance $k$ of $y'$, the set
\[ \{z \in \mathbb{H}^d : d(z,y') \leq d(z,x) -2k \}\]
contains the half-space $H(y,x)$. Let $u$ be a vertex of $G$ such that $d(\Phi(u),y') \leq k$. If $d(\Phi(u),H)\geq C' \vee C''$, then 
\[
% \lambda^{-1}d(\Phi(v), y)-2k-C'' \leq \lambda^{-1}d(\Phi(v),\Phi(u))-k  \leq
 |\lambda d(v,u) - d(\Phi(v),y)| \leq 
|\lambda d(v,u) - d(\Phi(v),\Phi(u))| + |d(\Phi(v),\Phi(u)) - d(\Phi(v),y)| \leq 2k + C'',
 % \lambda^{-1}d(\Phi(v),\Phi(u)) +k \leq \lambda^{-1}d(\Phi(v), y) + 2k + C''.
\]
and it follows by choice of $C''$ that $H_G(u,v) \supseteq \Phi^{-1} H=\Phi^{-1} H(y,x)$.
\end{proof}

\subsection{Non-degeneracy}

In our analysis of percolation, it will be convenient for us to place an additional geometric constraint of \emph{non-degeneracy} on the space $X$ in the Bonk-Schramm Theorem. In this subsection we introduce this property, show that is may always be assumed in the quasi-transitive setting, and then give an alternative characterisation of the property in \cref{lem:nondegenerate2}.

Let $X$ be a convex subset of $\mathbb{H}^d$. We write $B_X(x,r)$ for the ball of radius $r$ around $x$ in $X$, and write $B_{\mathbb{H}^d}(x,r)$ for the ball of radius $r$ around $x$ in $\mathbb{H}^d$, so that $B_X(x,r)=B_{\mathbb{H}^d}(x,r) \cap X$. We say that $X$ is \textbf{non-degenerate} if for every $r<\infty$ there exists $R<\infty$ such that for every $x\in X$ and every hyperplane $\partial H$ in $\mathbb{H}^d$, we have that
\[B_X(x,R) \nsubseteq \bigcup_{y\in \partial H} B_{\mathbb{H}^d}(y,r).\]
In other words, $X$ is non-degenerate if it does not contain arbitrarily large balls that are uniformly well-approximated by subsets of hyperplanes. If $X$ is not non-degenerate we say that it is degenerate. Simple examples of degenerate $X$ include geodesics between boundary points, and sets of the form $\partial H \cup K$ where $\partial H \subseteq \mathbb{H}^d$ is a hyperplane and $K \subseteq \mathbb{H}^d$ is compact.

\begin{lemma}
\label{lem:nondegenerate}
Let $G$ be an infinite, quasi-transitive, Gromov hyperbolic graph. Then there exists a natural number $d$ such that $G$ is roughly similar to a non-degenerate closed convex subset $X$ of $\mathbb{H}^d$ that is the convex hull of its boundary.
\end{lemma}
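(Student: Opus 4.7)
The plan is to combine the Bonk--Schramm Theorem with a dimension-reduction argument to force non-degeneracy by choosing the ambient dimension as small as possible. Since $G$ is infinite and quasi-transitive, $G$ is visible from infinity, so the discussion of the Bonk--Schramm Theorem given above provides some $d\geq 1$, a closed convex $X_0\subseteq \mathbb{H}^d$ equal to the convex hull of its boundary, and a $(\lambda,k)$-rough similarity $\Phi:V\to X_0$. I would take $d$ to be the \emph{minimal} natural number for which such data exist, and claim that the corresponding $X_0$ is automatically non-degenerate. Suppose otherwise for contradiction.

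If $X_0$ is degenerate, there exist $r>0$, an increasing sequence $R_n\to\infty$, points $x_n\in X_0$, and hyperplanes $\partial H_n\subseteq \mathbb{H}^d$ with $B_{X_0}(x_n,R_n)\subseteq \{y\in \mathbb{H}^d : d(y,\partial H_n)\leq r\}$. Using $k$-coboundedness of $\Phi$, choose $v_n\in V$ with $d(\Phi(v_n),x_n)\leq k$; by quasi-transitivity, pass to a subsequence so that there are automorphisms $\gamma_n\in \Aut(G)$ with $\gamma_n v_n = v_\star$ for a fixed orbit representative $v_\star$. Define $\Phi_n:=\Phi\circ \gamma_n^{-1}$; these are again $(\lambda,k)$-rough similarities $V\to X_0$, with $\Phi_n(v_\star)=\Phi(v_n)$ within distance $k$ of $x_n$, and with $\Phi_n$ sending the ball of radius $R'_n=\lambda^{-1}(R_n-k)$ around $v_\star$ into the $r$-neighborhood of $\partial H_n$. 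Pick isometries $\phi_n$ of $\mathbb{H}^d$ sending $x_n\mapsto o$ for a fixed basepoint $o$ and $\partial H_n\mapsto \partial H_\star$ for a fixed hyperplane through $o$ (first translating, then applying a rotation fixing $o$), and set $\tilde\Phi_n:=\phi_n\circ \Phi_n$. Since $d(\tilde\Phi_n(v),o)\leq \lambda d_G(v_\star,v)+2k$ for every $v\in V$, a diagonal subsequential extraction produces a pointwise limit $\Psi:V\to \mathbb{H}^d$. The rough-similarity inequality $|\lambda d_G(u,v)-d(\tilde\Phi_n(u),\tilde\Phi_n(v))|\leq k$ passes to the limit, so $\Psi$ is a $(\lambda,k)$-rough similarity onto $\Psi(V)$, and the inclusion $\Psi(V)\subseteq \{y:d(y,\partial H_\star)\leq r\}$ passes to the limit as well (since for each fixed $v$, the point $\tilde\Phi_n(v)$ eventually lies in this closed neighborhood). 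Composing with nearest-point projection $\pi$ onto $\partial H_\star\cong \mathbb{H}^{d-1}$, which distorts distances by at most $2r$, yields a $(\lambda,k+2r)$-rough similarity $\pi\circ\Psi:V\to \mathbb{H}^{d-1}$. Finally, using visibility of $G$ from infinity, one replaces the image by the hyperbolic convex hull in $\mathbb{H}^{d-1}$ of its limit set, contradicting the minimality of $d$.

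The main obstacle is the compactness/convergence step and its interplay with the convex-hull requirement on the target. Coboundedness of the limit $\Psi$ onto $\Psi(V)$ is automatic from the uniform $k$-coboundedness of the $\tilde\Phi_n$'s, but to end with a rough similarity onto a closed convex subset of $\mathbb{H}^{d-1}$ that is equal to the convex hull of its boundary, one must invoke visibility from infinity of $G$ to ensure $\pi(\Psi(V))$ lies within bounded Hausdorff distance of the convex hull of its limit set in $\mathbb{H}^{d-1}$. A secondary technicality is verifying that the simultaneous normalization $\phi_n(x_n)=o$ and $\phi_n(\partial H_n)=\partial H_\star$ is achievable; since $x_n\in X_0$ may be taken within distance $r$ of $\partial H_n$ (otherwise the degeneracy hypothesis yields a stronger statement after a small perturbation of $x_n$), this is arranged by first translating $x_n$ to $o$ and then rotating $\mathbb{H}^d$ about $o$ to move the image of $\partial H_n$ onto $\partial H_\star$.
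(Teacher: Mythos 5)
Your argument follows the same route as the paper's: take $d$ minimal for which $G$ is roughly similar to a closed convex subset of $\mathbb{H}^d$, suppose for contradiction that the resulting $X_0$ is degenerate, normalize by automorphisms of $G$ and isometries of $\mathbb{H}^d$, extract a limiting rough similarity by compactness whose image lies in a bounded neighborhood of a single hyperplane, and project onto that hyperplane to contradict the minimality of $d$.

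There is one step that, as written, fails. You claim to choose isometries $\phi_n$ of $\mathbb{H}^d$ with $\phi_n(x_n)=o$ and simultaneously $\phi_n(\partial H_n)=\partial H_\star$ for a fixed hyperplane $\partial H_\star$ through $o$, by ``first translating, then applying a rotation fixing $o$.'' But you only know $d(x_n,\partial H_n)\leq r$, not $x_n\in\partial H_n$; after translating $x_n\mapsto o$, the image of $\partial H_n$ is a hyperplane at positive distance from $o$, and no rotation fixing $o$ can carry a hyperplane that misses $o$ onto one that passes through $o$. The remark at the end of your proof doesn't repair this, since it still proposes the translate-then-rotate construction about $o$. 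The cleanest fix is the one the paper uses: normalize only the basepoint, and observe that the normalized hyperplanes all pass within distance $r+O(1)$ of $o$, hence lie in a compact family and subconverge automatically to a limit hyperplane $\partial H$, which then plays the role of your $\partial H_\star$. (Alternatively, translate the nearest point of $\partial H_n$ to $x_n$, rather than $x_n$ itself, onto $o$; then $\phi_n(x_n)$ stays within distance $r$ of $o$ and the boundedness needed for compactness is retained.) The rest of your argument, including the projection to $\partial H\cong\mathbb{H}^{d-1}$ and the final appeal to visibility from infinity to pass to the convex hull of the limit set, matches the paper's endgame (the paper simply takes the closed convex hull of the projected image).
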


Note that if $G$ is nonamenable then we must have $d\geq 2$ in this lemma.

\begin{proof}
Let $d\geq 1$ be minimal such that there exists a rough similarity from $G$ to some closed convex subset of $\mathbb{H}^d$, and let $\Phi:V\to X$ be a $(\lambda,k)$-rough similarity from $G$ to some closed convex subset $X$ of $\mathbb{H}^d$ for some $\lambda\in (0,\infty)$ and $k\in [0,\infty)$. As discussed in \cref{subsec:BonkSchramm}, we may assume that $X$ is equal to the convex hull of its boundary. We claim that $X$ must be non-degenerate. If $d=1$ this holds trivially  since a convex subset of $\mathbb{H}^1$ is degenerate if and only if it is bounded. 

Suppose then that $d\geq 2$. If $X$ is degenerate, then there exists $r<\infty$ such that for every $R<\infty$ there exists $x_R \in X$ and a hyperplane $\partial H_R $ in $\mathbb{H}^d$ such that $B_X(x_R,R) \subseteq \bigcup_{y\in \partial H_R} B_{\mathbb{H}^d}(y,r)$. Let $v_0$ be a fixed vertex of $G$.  
 Since $G$ is quasi-transitive, there exists a constant $C$ such that for each $x \in X$, there exists $\gamma_x \in \Aut(G)$ such that $d(\Phi (\gamma_x (v_0)),x)\leq C$. Let $\gamma'_x$ be an isometry of $\mathbb{H}^d$ mapping $\Phi( \gamma_x(v_0))$ to $(0,\ldots,0,1)$.
The set of functions $\phi: V \to \mathbb{H}^d$ satisfying $\phi(v_0)=(0,\ldots,0,1)$ and 
\begin{equation}
\label{eq:similaritycompactness}
\Bigl|d\bigl(\phi (u), \phi(v)\bigr )-\lambda d(u,v)\Bigr|\leq k \qquad \text{ for all $u,v \in V$}
\end{equation}
is compact, and so we may take a subsequential limit of the rough similarities $\gamma_{x_R}' \circ \Phi \circ \gamma_{x_R}$ (all of which lie in this set) to obtain a function $\Phi' : V \to \mathbb{H}^d$ satisfying \eqref{eq:similaritycompactness} 
and for which there exists a hyperplane $\partial H$ in $\mathbb{H}^d$ such that the entire set $\Phi' V$ is contained in the $(r+C)$-neighbourhood of $\partial H$. Let $\Psi(v)$ be the closest point in $\partial H$ to $\Phi'(v)$ for each $v\in V$. Then $\Psi$ satisfies 
\[
\Bigl|d\bigl(\Psi (u), \Psi(v)\bigr )-\lambda d(u,v)\Bigr|\leq k + 2r+2C
\]
for every $u,v\in V$. If we identify $\partial H$ with $\mathbb{H}^{d-1}$ and let $Y$ be the closed convex hull of $\Psi(v)$ in $\mathbb{H}^{d-1}$, then $\Psi$ is a rough similarity from $G$ to $Y$. This contradicts the minimality of $d$.
\end{proof}

The following characterisation of non-degeneracy will be particularly useful.

\begin{lemma}
\label{lem:nondegenerate2}
Let $d\geq 2$, identify $\mathbb{H}^d$ with $\R^d_+$ via the Poincar\'e half-space model, and let $X \subseteq \mathbb{H}^d$ be a closed, convex, non-degenerate subset of $\mathbb{H}^d$ that is equal to the convex hull of its boundary.
There exists a constant $C$ such that for every hyperplane $\partial H \subseteq \mathbb{H}^d$ and every $x\in \partial H \cap X$ 
there exists $\xi \in \R^{d-1}$ such that $\xi \in \delta X$, $\| x - \xi \|\leq C x_d$ and the Euclidean distance between $\xi$ and $\partial H$ is at least $C^{-1}x_d$. 
\end{lemma}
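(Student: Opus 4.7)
My plan is to combine the non-degeneracy of $X$ with Lemma~\ref{lem:concretevisual} to construct $\xi$ explicitly. First, by applying a hyperbolic isometry consisting of a dilation $y \mapsto y/x_d$ and a horizontal translation in $\R^{d-1}$, we normalize $x = (0, \ldots, 0, 1)$; this reduction is valid because both sides of the required inequalities scale uniformly like $x_d$. Fixing a constant $r > 0$ to be chosen later, we apply the non-degeneracy of $X$ to obtain $R_0 = R(r)$ (depending only on $X$) and a point $y \in B_X(x, R_0)$ with $d_\mathbb{H}(y, \partial H) \geq r$; since $x \in \partial H$, the triangle inequality forces $d_\mathbb{H}(x, y) \geq r$ as well. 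We then apply Lemma~\ref{lem:concretevisual} to find an infinite geodesic $\gamma \subseteq X$ starting at $x$ with $d_\mathbb{H}(y, \gamma) \leq \log(1+\sqrt{2})$, and let $\xi \in \delta X$ denote its endpoint.

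To verify that $\xi$ satisfies the required Euclidean bounds, we compare it with the endpoint $\xi_y \in \partial\mathbb{H}^d$ of the hyperbolic geodesic $\gamma_y$ from $x$ through $y$ (which need not lie in $X$ past $y$). In the Poincar\'e half-space model, $\gamma_y$ is a Euclidean semicircle lying in the 2-plane through $x$ and $y$ perpendicular to $\R^{d-1}$, so $\xi_y \in \R^{d-1}$ is given explicitly by $\xi_y = (c + \sqrt{c^2+1})\, y'/\|y'\|$, where $y' = (y_1, \ldots, y_{d-1})$ and $c = (\|y'\|^2 + y_d^2 - 1)/(2\|y'\|)$. The hyperbolic upper bound $d_\mathbb{H}(x, y) \leq R_0$ controls $\|y\|_{\mathrm{Eucl}}$, while the hyperbolic lower bound $d_\mathbb{H}(y, \partial H) \geq r$ gives an explicit Euclidean separation of $y$ from $\partial H$ (for vertical $\partial H = \{y_1 = 0\} \times \R_{>0}^{d-1}$ this is $|y_1|/y_d \geq \sinh r$, and for hemispherical $\partial H$ with center $c_0$ and radius $\rho$ this is $|\log(\|y - c_0\|_{\mathrm{Eucl}}/\rho)| \geq r$, which also bounds $\rho$ in terms of $R_0$); in either case one deduces that $\|\xi_y\|_{\mathrm{Eucl}} \leq C_0$ and $\mathrm{dist}_{\mathrm{Eucl}}(\xi_y, \partial H) \geq c_0$ for constants $C_0, c_0 > 0$ depending only on $r$ and $R_0$.

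Finally, since $\gamma$ and $\gamma_y$ are two geodesic rays from $x$ coming within hyperbolic distance $\log(1+\sqrt{2})$ of each other at a point at distance at least $r$ from $x$, standard hyperbolic geometry gives that the endpoints $\xi, \xi_y$ lie at visual distance $O(e^{-r})$ as seen from $x$; via the chordal-to-visual metric correspondence on $\R^{d-1}$ based at $x$, this translates to $|\xi - \xi_y|_{\mathrm{Eucl}} = O(e^{-r}(1 + C_0^2))$. The main obstacle is the quantitative matching: one must choose $r$ large enough, depending on the non-degeneracy function $R(\cdot)$ of $X$, so that this Euclidean error is strictly dominated by $c_0$; this guarantees that $\xi$ inherits the good position of $\xi_y$. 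Since the error $e^{-r}$ decays to zero as $r$ grows while $C_0, c_0$ are determined once $r$ and $R_0 = R(r)$ are fixed, such a choice of $r$ exists, yielding a final constant $C$ depending only on $X$.
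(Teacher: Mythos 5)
The construction is on the right track — you correctly combine non-degeneracy with Lemma~\ref{lem:concretevisual} — but the closing ``quantitative matching'' step, which you yourself flag as the main obstacle, does not actually go through, and the argument has a genuine gap there.

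The problem is that in the non-degeneracy condition the radius $R_0=R(r)$ is \emph{an output of $r$}, and there is no control on how fast $R(r)$ grows. Your lower bound $c_0$ on the Euclidean separation of $\xi_y$ from $\partial H$ is obtained from the angle at $x$ of the geodesic $\gamma_y$ through $y$. That angle is only bounded below by roughly $\sinh r/\sinh R_0 \approx e^{r-R_0}$ (the worst case being $y$ at hyperbolic distance exactly $R_0$ from $x$ and exactly $r$ from $\partial H$), so $c_0$ can be as small as order $e^{r-R(r)}$, and your visual-error bound $O(e^{-r}(1+C_0^2))$ is only as small as $O(e^{-r})$ times a constant that \emph{grows} with $R_0$ (since $C_0$, the Euclidean bound on $\|\xi_y\|$, is of order $e^{R_0}$). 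The claim that ``the error $e^{-r}$ decays to zero as $r$ grows while $C_0,c_0$ are determined once $r$ and $R_0=R(r)$ are fixed, such a choice of $r$ exists'' is therefore unjustified: it would require something like $R(r)\ll r$, which is never true (one always has $R(r)>r$), let alone guaranteed by the hypotheses. So the error is never forced to be dominated by $c_0$, and increasing $r$ does not help.

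The paper sidesteps this entirely by never introducing the auxiliary geodesic $\gamma_y$ or its endpoint $\xi_y$. It fixes $r = 1+\log(1+\sqrt 2)$ once and for all, applies non-degeneracy to get a point $y'$ at hyperbolic distance $>1+\log(1+\sqrt 2)$ from $\partial H$ and $\le R=R(r)$ from $x$, then uses Lemma~\ref{lem:concretevisual} to get a geodesic ray $\gamma\subseteq X$ from $x$ passing within $\log(1+\sqrt 2)$ of $y'$, and — crucially — replaces $y'$ by the point $y$ \emph{on $\gamma$ itself} nearest to $y'$. This $y\in\gamma$ still satisfies $d(y,\partial H)>1$ and $d(x,y)\le R+\log(1+\sqrt 2)$, so the angle bound can be applied directly to $\gamma$, and $\xi$ is the endpoint of $\gamma$ itself, not of some other geodesic. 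This removes the need for any endpoint comparison, and with it the problematic interplay between $r$, $R(r)$, and the visual metric. If you replace your comparison step with this ``project $y'$ onto $\gamma$'' move and then run the angle argument on $\gamma$, the proof closes.
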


\begin{proof}
\begin{figure}
\centering
\includegraphics[trim ={1.5cm 0 1.5cm 0}, clip, height=0.255\textwidth]{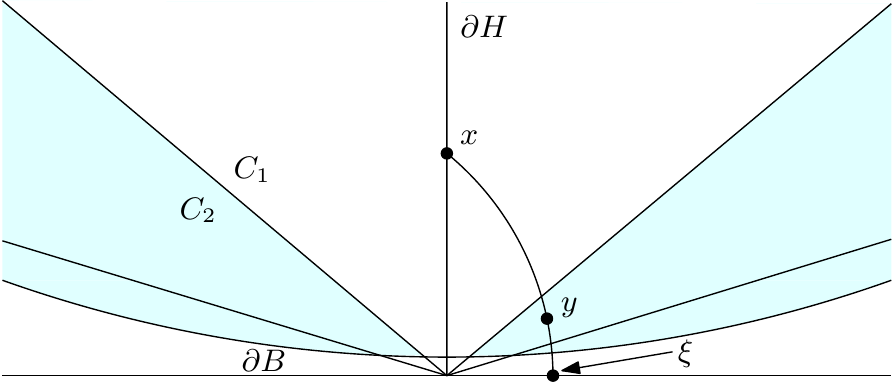}
\hspace{1cm}
\includegraphics[height=0.255\textwidth]{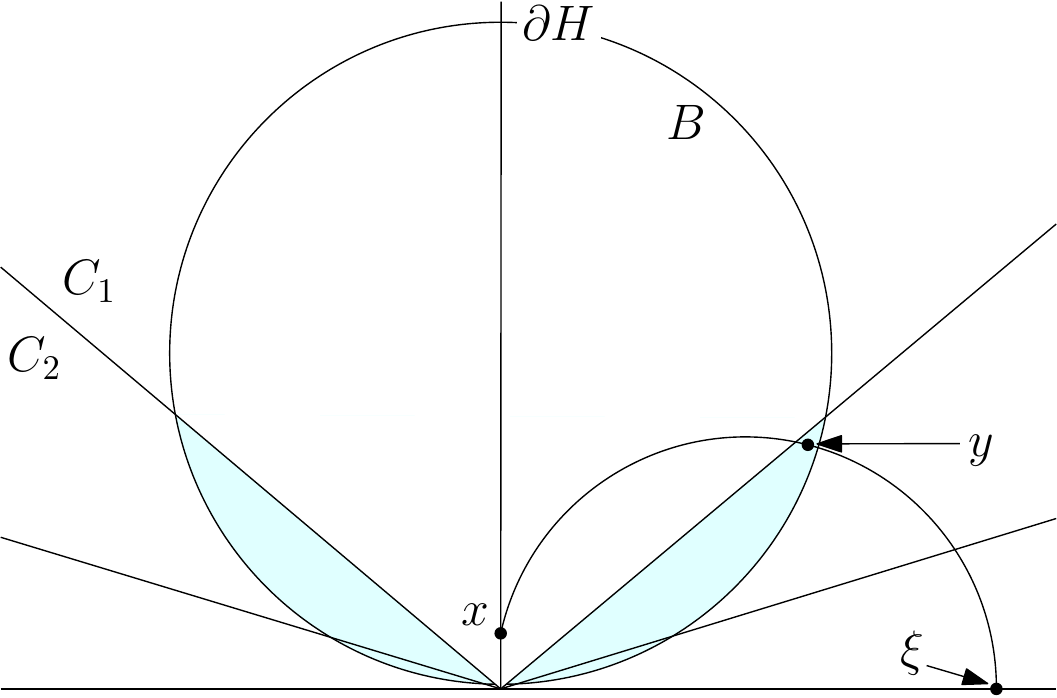}
\caption{Schematic illustration of the proof of \cref{lem:nondegenerate2}. The point $y$ must lie in the blue shaded region, which is equal to $B \setminus C_1$. The limit point of the geodesic from $x$ to $y$ is bounded away from $\partial H$ (left), and is within a bounded distance of $x$ (right).}
\end{figure}
% Let $H\subseteq \mathbb{H}^d$ be a half-space and let $x \in H \cap X$ have $x_d\leq \frac{1}{2}\sup\{y_d:y\in H\}$. If $\sup\{y_d:y\in H\}<\infty$
% let $\zeta$ be a point in the boundary of the half-space $\partial H$ such that the Euclidean distance between $\zeta$ and the projection of $x$ to $\R^{d-1}$ is at least $\sqrt{3/4}$, and apply the isometry of $\mathbb{H}^d$ corresponding to inversion in the Euclidean sphere with centre $\zeta$ and radius $2 \sup\{y_d:y\in H\}$. This map is Lipschitz with respect to Euclidean distance on compact subsets of $\R^d \setminus \{\zeta\}$.

% We will prove the lemma on in the case that $H$ has $\infty$ in its boundary, i.e., is represented by a Euclidean hyperplane orthogonal to $\R^{d-1}$. The general case can be deduced by reflecting through a suitably chosen Euclidean sphere with centre in the boundary of $H$, so that $H$ is made to have $\infty$ in its boundary and so that Euclidean distances are distorted by at most a constant  within Euclidean balls around $x$ 
 % The proof of the case stated in the lemma is similar but has a few more tedious details.
We will prove the lemma in the case that $\infty$ is in the boundary of $\partial H$, so that $\partial H$ is represented by a Euclidean hyperplane orthogonal to $\R^{d-1}$. The proof of the general case is similar but the details are slightly more involved. 
  In this case, the set of points of hyperbolic distance at most $r$ from $\partial H$ is an infinite conical prism (i.e., the product of a cone in $\R^2$ with $\R^{d-2}$) for each $r>0$.
Let $C_1$ be the closed hyperbolic $1$-neighbourhood of $\partial H$ and let $C_2$ be the closed hyperbolic $(1+\log(1+\sqrt{2}))$-neighbourhood of $\partial H$. 
   Since $X$ is non-degenerate, there exists a constant $R$ such that for every $x\in X \cap \partial H$, the hyperbolic ball of radius $R$ around $x$ contains some point $y'$ that is not in $C_2$. Thus, by \cref{lem:concretevisual}, there exists a point $y$ that lies on an infinite geodesic in $X$ starting from $x$, that is in the hyperbolic ball $B$ of radius $R'=R+\log(1+\sqrt{2})$ around $x$, and that is not in $C_1$. Let $\xi$ be the endpoint of this geodesic. 
  The hyperbolic ball $B$ is represented by the Euclidean ball that has its lowest point at the point $(x_1,x_2,\ldots,e^{-R'}x_d )$ and its highest point at the point $(x_1,x_2,\ldots,e^{R'} x_d )$.
The geodesic from $x$ to $y$ is represented by a circle in $\R^d$ that is orthogonal to $\R^{d-1}$ and intersects $\partial H$ at an angle bounded away from $0$ and $\pi$ by a positive $R$-dependent constant. It follows that there exists an $R$-dependent constant $C$ such that $\|x-\xi\| \leq C x_d$ and the Euclidean distance between $\xi$ and $ \partial H$ is at least $C^{-1} x_d$. \qedhere
\end{proof}

\section{A Hyperbolic Magic Lemma}
% The view of a finite set from a uniform random point}

\label{sec:Magic}
 
% \begin{figure}
% \centering
% \includegraphics[height=0.4\textwidth]{slice.pdf}
% \caption{Three examples of slices (green) of a convex subset of $\mathbb{H}^2$ (grey). }
% \end{figure}

The goal of this section is to prove the following proposition, which will be of central importance to our analysis of percolation in \cref{sec:mainproof}. 
Intuitively, the proposition states that for every finite set of vertices $A$ in a Gromov hyperbolic graph, from the perspective of a typical point of $A$, most of $A$ is contained in either one or two distant half-spaces.

\begin{prop}
\label{prop:GromovMagic}
Let $G=(V,E)$ be a Gromov hyperbolic graph with degrees bounded by a constant $M$, and suppose that $\Phi: V \to X$ is a $(\lambda,k)$-rough similarity from $G$ to some closed convex set $X \subseteq \mathbb{H}^d$ for some $d\geq 1$, $\lambda \in (0,\infty)$ and $k \in [0,\infty)$. Then for every $\eps>0$ there exists a constant $N(\eps)=N_{M,\lambda,k,d}(\eps)$ such that for every finite set $A \subseteq V$ there exists a subset $A' \subseteq A$ with the following properties:
\begin{enumerate}
\item $|A'| \geq (1-\eps) |A|$.
\item For every $v\in A'$, there either exists a half-space $H_1 \subseteq \mathbb{H}^d$ or a pair of half-spaces $H_1,H_2 \subseteq \mathbb{H}^d$   
% ,H_2 \subseteq V$ 
such that $d(\Phi (v),\bigcup H_i) \geq \eps^{-1}$ and $|A \setminus \Phi^{-1} \bigcup H_i|\leq N(\eps)$.
\end{enumerate} 
\end{prop}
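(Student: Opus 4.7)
The plan is to reduce, via the Bonk--Schramm rough similarity $\Phi:V\to X\subseteq\mathbb{H}^d$, to a statement about the finite point set $\Phi(A)$ in $\mathbb{H}^d$, and then to apply the Benjamini--Schramm Magic Lemma (Lemma 2.3 of [BeSc]) in a suitable form on the visual boundary of $\mathbb{H}^d$. The $(\lambda,k)$-rough similarity hypothesis, together with the comparison \eqref{eq:halfspacecomparison}, means that half-spaces in $\mathbb{H}^d$ at hyperbolic distance at least $R$ from $\Phi(v)$ correspond (after adjusting constants by $O(k)$) to discrete half-spaces at graph distance at least $\lambda^{-1}R-O(k)$ from $v$; so passing to $\Phi(A)$ is essentially free, provided we absorb the $\lambda,k,M,d$ dependence into $N(\eps)$.

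Next, I would introduce the visual shadow map $u\mapsto \xi_{v,u}\in S^{d-1}$ sending each $u\in A\setminus\{v\}$ to the endpoint of the hyperbolic geodesic ray from $\Phi(v)$ through $\Phi(u)$, and equip $S^{d-1}$ with the visual metric $d_v$ based at $\Phi(v)$. A key calculation is that the visual diameter of a half-space $H$ at hyperbolic distance $R$ from $\Phi(v)$ decays like $e^{-R}$. This lets me transfer the problem to the following assertion about shadows: for most $v\in A$, all but $N(\eps)$ of the shadow points $\{\xi_{v,u}:u\in A\setminus\{v\}\}$ lie in at most two visual balls of radius $O(e^{-\eps^{-1}})$. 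I would then apply the Magic Lemma to the shadow points in $S^{d-1}$ (which is Ahlfors-regular and hence doubling), exactly as in the planar setting of [BeSc]. The Magic Lemma will yield, for $(1-\eps/2)|A|$ of the vertices $v$, a decomposition of the shadow set into $N_0(\eps)$ ``isolated" visual balls, together with a bounded number of further points.

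The main obstacle is the reduction from a bounded number $N_0(\eps)$ of visual balls down to at most \emph{one or two}. My plan here is to exploit the exponential shrinking of visual angles together with a global counting argument: enumerating the isolated visual balls in decreasing order of how many shadow points they contain, keep only the top one or two; all other isolated balls together contain at most some number of shadows bounded only in terms of $\eps$ (independent of $|A|$), which can be absorbed into the $N(\eps)$ allowed exceptions. To verify this cap, I would use a mass-transport style double count: if too many triples $(v;B_1,B_2,B_3)$ of ``co-dominant" visual balls existed across different $v\in A$, a pigeonhole argument using the fact that a triple of far half-spaces from $v$ forces $\Phi(v)$ to lie near the hyperbolic center of mass of the three associated boundary regions would overcount the set $A$; throwing the resulting $\eps|A|/2$ bad $v$'s into $A\setminus A'$ leaves the desired set $A'$. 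The hardest piece is making this tripod/center-of-mass counting argument quantitative in a way that respects the scales $\eps^{-1}$ and $N(\eps)$, and doing so in general $d$ rather than just $d=2$.

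Finally, once each chosen visual ball (at most two of them) is lifted back to a half-space $H_i\subseteq\mathbb{H}^d$ at distance $\geq\eps^{-1}$ from $\Phi(v)$ using the correspondence between visual caps and half-spaces, and the remaining points are subsumed into the $N(\eps)$ error term, the statement of the proposition is exactly what we obtain; pulling back through $\Phi$ as in Step~1 completes the proof.
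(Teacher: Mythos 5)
Your plan correctly identifies the two main tools — the Bonk–Schramm rough similarity and the Benjamini–Schramm Magic Lemma — and the first reduction (passing from $A$ to $\Phi(A)$ at the cost of $(\lambda,k,M)$-dependent constants) is sound. However, there are two genuine problems with what follows, and they are the heart of the matter.

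First, you have misremembered what the Magic Lemma says. A point $x$ of a finite set $B\subseteq\R^d$ is \emph{not} $(\delta,s)$-supported precisely when there is a single point $y$ such that all but at most $s-1$ points of $B$ lie in $B(y,\delta\rho_x)\cup\bigl(\R^d\setminus B(x,\delta^{-1}\rho_x)\bigr)$: one small ball and one ``far-away'' region, plus a bounded number of exceptions. There is no decomposition into $N_0(\eps)$ isolated balls, so the ``main obstacle'' you describe — bringing $N_0(\eps)$ visual balls down to one or two — is a phantom problem. This matters, because the centre-of-mass/mass-transport counting argument you propose to close that (non-existent) gap is neither carried out nor obviously correct, and you yourself flag it as the hardest part. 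The actual proof needs nothing like it: the paper applies the Magic Lemma directly to $\Phi(A)$ in the Poincar\'e half-space model $\R^d_+$, uses that the Euclidean isolation radius of a point at height $x_d$ is at least $(1-e^{-c})x_d$ (because $\Phi(A)$ is $c$-separated hyperbolically), and then turns the ``far-from-$x$'' region and the small ball around $y$ each into a half-space (or observes the ball around $y$ contains only boundedly many points if $y$ is far from the ideal boundary). The ``one or two half-spaces'' dichotomy is built into the Magic Lemma itself, not obtained by pruning.

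Second, even setting aside the miscount, the shadow-map reduction does not fit the hypotheses of the Magic Lemma. You propose to apply it, for each fixed $v\in A$, to the shadow set $\{\xi_{v,u}:u\in A\setminus\{v\}\}$ on $S^{d-1}$. But the Magic Lemma concludes that \emph{most points of the set it is applied to} are not supported — here, most shadows $\xi_{v,u}$ — whereas you need a conclusion about most $v\in A$. Since the shadow set is a different set for each choice of basepoint $v$, there is no direct way to extract ``for $(1-\eps)|A|$ of the $v\in A$'' from a per-$v$ application. The paper avoids this by applying the Magic Lemma once, to $\Phi(A)$ itself in $\R^d$, so that ``typical $x\in\Phi(A)$'' already is the quantity being counted. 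I would recommend abandoning the visual-boundary formulation and running the argument directly in the Poincar\'e half-space model, where the correspondence between Euclidean balls and hyperbolic half-spaces (as spelled out in \cref{subsec:Hdbackground}) makes the translation from the Magic Lemma's two regions to the statement's one or two half-spaces essentially mechanical.
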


% Note that if $G$ is visible from infinity, then it follows from \cref{lem:distantimpliesproper} that there exists $\eps_0=\eps_0(G)>0$ such that the half-spaces $H_i$ are proper whenever $0<\eps<\eps_0$.

\begin{remark}
 It is possible to deduce an intrinsic version of this result in which there is no embedding specified and the half-spaces are discrete. 
\end{remark}

 The following corollary of \cref{prop:GromovMagic} is very closely related to the fact concerning convex hulls of finite sets of vertices in nonamenable Gromov hyperbolic graphs discussed in \cref{subsec:overview}. The two statements can be deduced from each other (in the connected case) by applying a suitable version of the Supporting Hyperplane Theorem.

\begin{figure}
\centering
\includegraphics[width=0.8\textwidth]{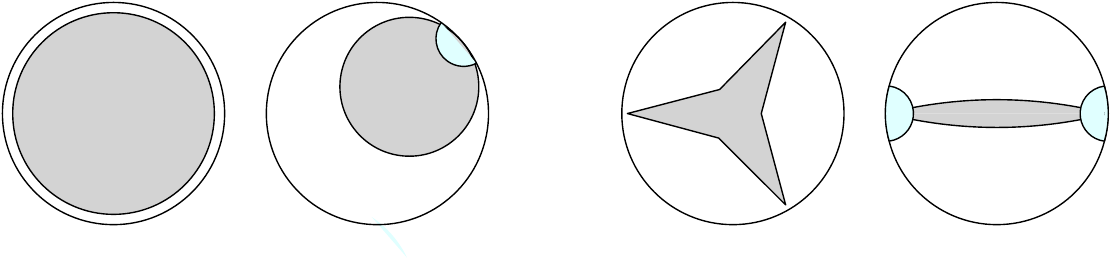}
\caption{Examples of the phenomenon discussed in \cref{sec:Magic}, as depicted in the Poincar\'e disc model. Left: From the perspective of most of its points, a large ball in $\mathbb{H}^d$ looks like a horoball, and most of its volume is contained in a single distant half-space. Right: From the perspective of most of its points, the tripod-like object depicted looks like a thickened line, and most of its volume is contained in two distant half-spaces.}
\end{figure}

\begin{corollary}
\label{cor:emptyhalfspace}
Let $G$ be a bounded degree, nonamenable, Gromov hyperbolic graph. Then there exists $R<\infty$ such that for every finite connected set $A \subseteq V$, there exists a subset $A'\subseteq A$ with $|A'| \geq |A|/2$ such that for every $u \in A'$, there exists $v \in V$ with $d(u,v)\leq R$ such that $H_G(u,v)$ is a proper discrete half-space with $A \subseteq H_G(u,v)$.
\end{corollary}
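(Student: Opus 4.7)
By \cref{lem:nondegenerate} we may identify $G$ via a $(\lambda,k)$-rough similarity $\Phi: V \to X$ with a non-degenerate closed convex set $X \subseteq \mathbb{H}^d$ equal to the convex hull of its boundary, where $d \geq 2$ since $G$ is nonamenable. Apply \cref{prop:GromovMagic} with a sufficiently small $\eps>0$ (to be chosen in terms of the universal constants $\lambda,k,d$) to obtain $\tilde A\subseteq A$ with $|\tilde A|\geq (1-\eps)|A|$; for each $u\in \tilde A$ we get one or two half-spaces $H_i\subseteq \mathbb{H}^d$ with $d(\Phi(u),\bigcup_i H_i)\geq \eps^{-1}$ and $|A\setminus \Phi^{-1}(\bigcup_i H_i)|\leq N(\eps)$.

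For each $u\in \tilde A$ we assign an outward unit direction $\vec{n}$ at $\Phi(u)$ as follows: in Case 1 (single $H_1$), $\vec{n}$ is the outward unit normal of $H_1$ pointing toward $\Phi(u)$; in Case 2 (two half-spaces $H_1,H_2$), if the outward normals $\vec{n}_1,\vec{n}_2$ pointing toward $\Phi(u)$ are not antipodal we take $\vec{n}=(\vec{n}_1+\vec{n}_2)/\|\vec{n}_1+\vec{n}_2\|$, and otherwise we take any $\vec{n}$ perpendicular to $\vec{n}_1$ (which exists since $d\geq 2$---this is the only point where we genuinely use nonamenability beyond having $d\geq 2$). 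In all cases $\vec{n}\cdot \vec{n}_i\geq 0$, so each $H_i$ lies on the $-\vec{n}$-side of the hyperplane through $\Phi(u)$ orthogonal to $\vec{n}$. Using the non-degeneracy of $X$ (via \cref{lem:nondegenerate2}) together with the Bonk-Schramm embedding, we locate a vertex $v\in V$ whose image $\Phi(v)$ lies approximately at hyperbolic distance $r$ in direction $\vec{n}$ from $\Phi(u)$ within $X$, where $r$ is a large universal constant. Then $d(u,v)\leq R$ for some universal $R=R(\lambda,k,d,r)$, and $H_G(u,v)$ is proper by \cref{lem:distantimpliesproper} provided $r$ is chosen large enough.

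To verify $A\subseteq H_G(u,v)$, observe that the perpendicular bisector of $\Phi(u)$ and $\Phi(v)$ in $\mathbb{H}^d$ is the hyperplane orthogonal to $\vec{n}$ at hyperbolic distance $r/2$ from $\Phi(u)$ in direction $+\vec{n}$. For $w\in A$ with $\Phi(w)\in \bigcup_i H_i$, our choice of $\vec{n}$ puts $\Phi(w)$ on the $\Phi(u)$-side of this bisector with margin at least $\eps^{-1}$; choosing $r$ sufficiently large (depending on $\lambda,k$), this geometric margin dominates the slack in the rough similarity and we obtain the discrete inequality $d_G(w,u)\leq d_G(w,v)$. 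The main difficulty is handling the at most $N(\eps)$ outlier vertices $w\in A\setminus \Phi^{-1}(\bigcup_i H_i)$: we remove from $\tilde A$ any $u$ for which some outlier violates the inequality, and must show, by a double-counting or averaging argument that exploits the connectedness of $A$ to keep outliers localized relative to admissible choices of $\vec{n}$, that at most $\eps|A|$ vertices are removed. For $\eps$ small enough this yields $|A'|\geq (1-2\eps)|A|\geq |A|/2$. The outlier bookkeeping is the main obstacle I expect to encounter; the Case 2 construction using a perpendicular direction in $d\geq 2$ dimensions is the other essential geometric ingredient.
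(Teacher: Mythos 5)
Your overall strategy (apply \cref{prop:GromovMagic}, then shoot a geodesic in a carefully chosen direction to produce a discrete half-space that swallows $A$) is in the right spirit, but there are two genuine gaps, and both are places where the paper does something different.

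First, the outlier bookkeeping: you explicitly leave this as ``the main obstacle'' and propose a double-counting/averaging argument to discard the $u\in\tilde A$ spoiled by outliers. No such argument is needed, and it is not clear one would work, since the $N(\eps)$ outliers could in principle be adversarially placed relative to every admissible direction. The paper instead keeps every $u\in A'$ and absorbs the outliers by pushing the candidate half-space outward: writing $H(z(v,r),\Phi(v))$ for the half-space at distance $\sim r/2$ from $\Phi(v)$ along the chosen geodesic, connectedness of $A$ together with $\Phi$ being a rough similarity gives a fixed $r_1$ such that each increment $r\mapsto r+r_1$ strictly decreases $|A\cap H(z(v,r),\Phi(v))|$ until it hits zero. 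Since at most $N(\eps)$ outliers can lie in the initial half-space, taking $r\geq r_0+N(\eps)r_1$ (still a constant, hence $R<\infty$) already makes $H(z(v,r),\Phi(v))$ disjoint from $A$; nothing has to be removed from $A'$.

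Second, the direction you pick may not point into $X$. Taking $\vec{n}$ to be a Euclidean-style ``average'' or perpendicular of the outward normals at $\Phi(u)$ does not by itself produce a point of $X$ a bounded distance away in that direction, and \cref{lem:nondegenerate2} only supplies a boundary point of $X$ transverse to one prescribed hyperplane, not a boundary point in an arbitrarily prescribed tangent direction. The paper avoids prescribing a tangent direction entirely: nonamenability implies (as a quantitative consequence of the uniform perfectness of $\delta G$) that for some fixed $\eps_0>0$, any two half-spaces at distance $\geq\eps_0^{-1}$ from $\Phi(v)$ fail to cover $\delta X$, so one can pick an actual ideal point $\xi\in\delta X$ outside suitable buffer half-spaces $H_1',H_2'$. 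The point $z(v,r)$ then lies in $X$ automatically because $X$ is convex and $\xi\in\delta X$, and the buffers give the uniform $r_0$ for which $H(z(v,r),\Phi(v))$ is disjoint from $H_1\cup H_2$. Note also that your antipodal Case 2 with $\vec{n}\perp\vec{n}_1$ does not literally place $H_1,H_2$ on the $-\vec{n}$-side of the hyperplane through $\Phi(u)$ orthogonal to $\vec{n}$; what saves the argument is that a half-space at distance $\geq\eps^{-1}$ lies in a narrow geodesic cone, but then the relevant comparison is with the bisector at distance $r/2$, not with a hyperplane through $\Phi(u)$ itself, and this comparison requires $r$ to be taken large relative to the cone opening. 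These quantitative dependencies are exactly what the paper's buffer construction is packaging cleanly.
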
 

We remark that \cref{cor:emptyhalfspace} still holds without the hypothesis that $A$ is connected, but with a longer proof.
% , see \cref{rmk:disconnectedMagic}.
The proof of \cref{cor:emptyhalfspace} is given at the end of this section.

We will deduce \cref{prop:GromovMagic} from the following proposition, which establishes a similar statement for $\mathbb{H}^d$. 
We say that a set of points $A \subseteq \mathbb{H}^d$ is $c$-\textbf{separated} if $d(x,y)\geq c$ for every two distinct $x,y\in A$. 

\begin{prop}
\label{prop:HdMagic}
Let $d\geq 1$ and let $c>0$. 
 Then for every $\eps>0$ there exists a constant $N(\eps)=N_{d,c}(\eps)$ such that for every finite $c$-separated set $A \subseteq \mathbb{H}^d$ there exists a subset $A' \subseteq A$ with the following properties:
\begin{enumerate}
\item $|A'| \geq (1-\eps) |A|$.
\item For every $x\in A'$, there either exists a half-space $H_1$ or a pair of half-spaces $H_1,H_2$ such that $d(x,\bigcup H_i) \geq \eps^{-1}$ and $|A \setminus \bigcup H_i|\leq N(\eps)$.
\end{enumerate} 
\end{prop}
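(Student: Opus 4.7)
\emph{Plan.} I would argue by induction on $d$, using the Magic Lemma of Benjamini and Schramm \cite[Lemma~2.3]{BeSc} as the key combinatorial tool. For $d=1$ the claim is immediate: take $A'=A$ and, for each $x\in A$, use the two half-lines $H_1=(-\infty,x-\eps^{-1}]$ and $H_2=[x+\eps^{-1},\infty)$. The $c$-separation of $A$ gives $|A\setminus(H_1\cup H_2)|\leq N(\eps)$ with $N(\eps)=O(c^{-1}\eps^{-1})$.

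For $d\geq 2$, I would work in the upper half-space model $\mathbb{H}^d=\R^{d-1}\times(0,\infty)$. Recall that the Magic Lemma asserts: for every finite $S\subseteq\R^n$ with radii $r:S\to(0,\infty)$ and every $\delta>0$, all but a $\delta$-fraction of points $s\in S$ have at most $N_0=N_0(\delta,n)$ \emph{heavy neighbours}, i.e.\ points $s'$ with $r(s')\geq r(s)$ and $\|s-s'\|\leq r(s)+r(s')$. I would apply this to the Euclidean projections $\{x':x\in A\}\subseteq\R^{d-1}$ with radii $r(x):=x_d$ equal to the heights. This yields $A^\flat\subseteq A$ with $|A^\flat|\geq(1-\eps/2)|A|$ such that every $x\in A^\flat$ has at most $N_0$ heavy neighbours. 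Consequently, for such $x$, all but $N_0$ of the other points $y\in A$ fall into one of two regimes: (i) \emph{high-and-far} points with $y_d\geq x_d$ and $|y'-x'|_\R>x_d+y_d$, or (ii) \emph{low} points with $y_d<x_d$.

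Regime (i) is contained in a single hyperbolic half-space whose bounding hyperplane is a Euclidean hemisphere of large Euclidean radius $R$ centered near $x'$; this half-space lies at hyperbolic distance $\sim\log(R/x_d)$ from $x$, which exceeds $\eps^{-1}$ for $R$ chosen sufficiently large depending on $\eps$. Regime (ii) is handled by induction: after sending $x$ to the basepoint $(0,\ldots,0,1)$ by a hyperbolic isometry and projecting the low points onto a hyperbolic hyperplane at distance $\sim\log(x_d/y_d)$ below $x$, one identifies the resulting configuration with a $c'$-separated finite subset of $\mathbb{H}^{d-1}$ (for some $c'=c'(c,d)>0$) and applies the inductive hypothesis to obtain at most two further half-spaces covering these points up to $O_\eps(1)$ exceptions. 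Lifting back to $\mathbb{H}^d$ gives hyperbolic half-spaces in $\mathbb{H}^d$; the final count of at most two (rather than three) half-spaces is achieved by arranging that one of the half-spaces produced by the induction can be chosen to absorb regime (i), since both correspond to directions on $\partial\mathbb{H}^d$ pointing away from the core of $A$.

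The main technical obstacle is this absorption step---ensuring that the inductive half-spaces can be selected to also cover regime (i) without loss. Secondary difficulties include maintaining $c$-separation under the projection so that induction applies, and carefully matching Euclidean with hyperbolic distances in translating the Magic Lemma's horoball-style conclusion into a genuine hyperbolic half-space statement.
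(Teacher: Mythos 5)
There is a genuine gap in your plan, concentrated in regime (i). The ``heavy neighbours'' formulation of the Magic Lemma that you invoke has \emph{fixed} geometric thresholds: two points interact when $\|s-s'\|\leq r(s)+r(s')$, with no tunable scale parameter. Consequently, the set of ``high-and-far'' points $\{y: y_d\geq x_d,\ |y'-x'|>x_d+y_d\}$ is only guaranteed to lie outside the Euclidean ball $B((x',0),\sqrt{5}\,x_d)$ (the extremal case is $y_d\to x_d$, $|y'-x'|\to 2x_d$), so the smallest hyperbolic half-space containing all of regime (i) sits at hyperbolic distance only about $\log\sqrt{5}$ from $x$ --- a fixed constant, not $\eps^{-1}$. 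You cannot simultaneously inflate the Euclidean radius $R$ to $e^{\eps^{-1}}x_d$ (to achieve hyperbolic distance $\eps^{-1}$) and keep all of regime (i) inside the half-space: the two requirements are incompatible, and the missing annulus can contain $\Theta(\eps^{-1}/c)$ or more points rather than $O_\eps(1)$.

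What makes the paper's argument work is precisely the $\delta$-parameter in the version of the Magic Lemma it quotes: a point $x$ that is not $(\delta,s)$-supported has all but $s-1$ points of $A$ inside a small ball $B(y,\delta\rho_x)$ or outside a large ball $B(x,\delta^{-1}\rho_x)$, and taking $\delta$ small (depending on $\eps$) is what pushes both induced half-spaces to hyperbolic distance $\geq\eps^{-1}$ from $x$. The outer half-space is handled via the conversion ``smallest half-space containing $\mathbb{H}^d\setminus B(x,\delta^{-1}\rho_x)$,'' which has distance $\log\sqrt{c'^2\delta^{-2}-1}$ from $x$; the inner ball is handled by a dichotomy on $y_d$: if $y_d\leq 2\delta\rho_x$ the small ball fits in a half-space at distance $\geq\log(3^{-1/2}\delta^{-1/4})$, while if $y_d>2\delta\rho_x$ it has bounded hyperbolic volume and so contains $O_c(1)$ points of the $c$-separated set $A$. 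No induction on $d$ is used, the Magic Lemma is applied directly to $A\subseteq\R^d$ (not to its $\R^{d-1}$-projections), and the two half-spaces arise directly from the small ball and the large ball, so there is never a third half-space to ``absorb.'' Your inductive step also introduces secondary issues you acknowledge (separation loss under projection, lifting from $\mathbb{H}^{d-1}$ back to $\mathbb{H}^d$, and the $3$-vs-$2$ half-space count), but the critical missing ingredient is the $\delta$-dependent scales in the Magic Lemma.
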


We will deduce \cref{prop:HdMagic} from the so-called \emph{Magic Lemma} of Benjamini and Schramm \cite[Lemma 2.3]{BeSc}, which is a related statement for sets of points in Euclidean space. 
Benjamini and Schramm stated their lemma for $\R^2$, but the proof applies to $\R^d$ for every $d\geq 1$. (In fact, Gill \cite{MR3266996} proved that a version of the lemma holds for any doubling metric space.) 

Let $A$ be a finite set of points in $\R^d$ for some $d\geq 1$. For each $x\in A$, the \textbf{isolation radius} $\rho_x$ of $x$ is defined to be $\rho_x=\inf\{\|x-y\| : y \in A \setminus \{x\}\}$. Given $x\in A$, $\delta\in (0,1)$ and $s\geq 2$, we say that $x$ is \textbf{$(\delta,s)$-supported}\footnote{To understand how the Magic Lemma is typically used, the reader may find it helpful to think of $(\delta,s)$-supported points as \textbf{bad} and points that are \emph{not} $(\delta,s)$-supported as \textbf{good}.} if 
\[
\inf_{y\in \R^d} \Bigl| A \cap \bigl(B(x,\delta^{-1} \rho_x) \setminus B(y,\delta \rho_x)\bigr)\Bigr| \geq s.\]
In other words, $x$ is \emph{not} $(\delta,s)$-supported if there exists $y\in \R^d$ such that all but at most $s-1$ points of $A$ are contained in either $B(y,\delta \rho_x)$ or $\R^d \setminus B(x,\delta^{-1} \rho_x)$. 

\begin{prop}[Benjamini-Schramm Magic Lemma] 
\label{prop:OriginalMagic}
Let $d\geq 1$. Then for every $\delta\in (0,1)$ there exists a constant $C=C_d(\delta)$ such that for every finite set $A\subseteq \R^d$, at most $C|A|/s$ of the points of $A$ are $(\delta,s)$-supported.
\end{prop}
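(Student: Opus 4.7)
The plan is to prove Proposition \ref{prop:OriginalMagic} by a double-counting / packing argument in $\R^d$, following Benjamini and Schramm's original argument for $\R^2$ and observing that the proof generalizes uniformly to higher dimensions.

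The key observation is that a $(\delta,s)$-supported point $x\in A$ has a ``spread-out'' neighbourhood: by definition, no ball of radius $\delta\rho_x$ contains more than $|A\cap B(x,\delta^{-1}\rho_x)|-s$ points of $A\cap B(x,\delta^{-1}\rho_x)$. Using this condition iteratively, I will extract a witness set $W_x\subseteq A\cap B(x,\delta^{-1}\rho_x)$ of cardinality at least $c_d\cdot s$ whose points are pairwise $(2\delta\rho_x)$-separated, where $c_d>0$ depends only on the doubling constant of $\R^d$. The construction is a greedy one: repeatedly select a point $w\in A\cap B(x,\delta^{-1}\rho_x)$ that is at distance $\geq 2\delta\rho_x$ from all previously chosen points, stopping only when the remaining set is covered by the union of balls $B(w,2\delta\rho_x)$ around the chosen points. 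By the doubling property of $\R^d$, each such ball can be covered by $O_d(1)$ balls of radius $\delta\rho_x$, and by the support condition each ball of radius $\delta\rho_x$ misses at least $s$ points of $A\cap B(x,\delta^{-1}\rho_x)$; iterating yields the lower bound $|W_x|\geq c_d\cdot s$.

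The second step is a double-counting argument applied to the pairs $(x,w)$ with $x$ a $(\delta,s)$-supported point and $w\in W_x$. The lower bound from the first step gives a count of at least $c_d s\cdot |\{\text{supported } x\}|$. For the upper bound, I fix $w\in A$ and bound $\#\{x\in A:x\text{ supported and }w\in W_x\}$: the condition $w\in W_x$ forces $\rho_x\leq |x-w|\leq \delta^{-1}\rho_x$, so the scale $\rho_x$ is determined by $|x-w|$ up to a factor of $\delta^{-1}$. Decomposing dyadically over scales $\rho_x\in[2^k,2^{k+1})$, the supported $x$'s with $w\in W_x$ at this scale lie in the annulus $\{x:|x-w|\in[2^k,2^{k+1}/\delta]\}$, and any two such $x_1\neq x_2$ satisfy $|x_1-x_2|\geq\max(\rho_{x_1},\rho_{x_2})\geq 2^k$ (since $x_2\in A$ is a candidate for the nearest neighbour of $x_1$). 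A volume estimate in $\R^d$ then gives $O_d(\delta^{-d})$ supported $x$'s per dyadic scale. Summing across the $O(\log(1/\delta))$ dyadic scales compatible with $w\in W_x$ for each pair $(x,w)$ yields a bound of $C_d(\delta)$ on the number of supported $x$ with a fixed $w\in W_x$, so the total count of pairs is at most $C_d(\delta)|A|$.

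Combining the two bounds gives $c_d s\cdot |\{\text{supported}\}|\leq C_d(\delta)|A|$, yielding $|\{\text{supported}\}|\leq (C_d(\delta)/c_d)\cdot|A|/s$ as required. The main obstacle is the extraction of the witness set of size $\geq c_d s$ in the first step, which requires carefully accounting for how the support condition's ``at least $s$ points outside any small ball'' translates to separated cluster-structure via the doubling property of $\R^d$. The resulting constant $C_d(\delta)$ depends polynomially on $\delta^{-1}$ with dimension-dependent exponents arising from the volume counting in the packing step.
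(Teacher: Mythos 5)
The paper does not reprove \cref{prop:OriginalMagic}; it simply cites Benjamini--Schramm \cite{BeSc} (and Gill \cite{MR3266996} for the doubling-space generalisation), so there is no in-paper proof to compare against. Evaluating your argument on its own, there is a genuine gap in the first step. You claim that a maximal $2\delta\rho_x$-separated subset $W_x$ of $A\cap B(x,\delta^{-1}\rho_x)$ has size at least $c_d s$, but the support condition does not imply this. If $n_x=|A\cap B(x,\delta^{-1}\rho_x)|$, your covering argument only yields $|W_x|\geq n_x/\bigl(c_d(n_x-s)\bigr)$, which degenerates to a dimensional constant when $n_x\gg s$. Concretely, take $x$ isolated at scale $\rho_x\approx 1$, with a nearest neighbour at distance $1$ and a large number of points spaced $\eps$ apart along a circle of radius $5$ inside $B(x,\delta^{-1}\rho_x)$ (with $\delta=1/10$, say). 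Any ball $B(y,\delta\rho_x)$ misses all but a tiny arc of the circle, so $x$ is $(\delta,s)$-supported with $s\sim\eps^{-1}$; yet a maximal $2\delta\rho_x$-separated subset of $A\cap B(x,\delta^{-1}\rho_x)$ has size $O(1)$ as $\eps\downarrow0$, independently of $s$. So the witness set you need simply does not exist, and the $1/s$ factor in the statement cannot be recovered by counting the pairs $(x,w)$ with $w\in W_x$.

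There is a secondary problem in the double count as written. When you fix $w$ and decompose dyadically in $\rho_x$, each scale contributes $O_d(\delta^{-d})$ supported points, but you then assert that only ``$O(\log(1/\delta))$ dyadic scales'' are compatible with $w\in W_x$. That count applies only once $|x-w|$ is fixed; as $x$ ranges over $A$ with $w\in W_x$ the distance $|x-w|$ (and hence the relevant scale $\rho_x\in[\delta|x-w|,|x-w|]$) is not constrained, so nothing in your argument bounds the number of scales that contribute. Some additional structure is needed to localise the charging -- this is part of why the original Benjamini--Schramm argument is organised differently (their $\R^2$ proof works via a hierarchical/grid-type decomposition rather than by charging each supported point to a large separated witness set). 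You may find it more productive to look at how Gill \cite{MR3266996} carries out the doubling-space version: the proof there makes precise which points get charged to which scale, which is exactly the bookkeeping that is missing here.
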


Intuitively, this lemma states that for any finite set $A \subseteq \R^d$, from the perspective of a typical point of $A$, most of the points of $A$ are either far away from the point or are contained in a single small, nearby, high-density area. 

\begin{proof}[Proof of \cref{prop:HdMagic}]
Identify $\mathbb{H}^d$ with the open half-space in $\R^d$ using the Poincar\'e half-space model. We begin with some simple  preliminary geometric calculations. 
For each $x\in \mathbb{H}^d$ and $r>1$, we define $H_1(x,r x_d)$ to be the hyperbolic half-space whose complementary half-space $H^c_1(x,r x_d)$ is represented by the Euclidean ball that is orthogonal to $R^{d-1}$ and has its highest point at $(x_1,\ldots,x_{d-1},\sqrt{r^2-1} x_d)$. In other words, $H_1(x,r x_d)$ is the smallest half-space in $\mathbb{H}^d$ that contains the complement $\mathbb{H}^d \setminus B(x,rx_d)$ of the Euclidean ball $B(x,r x_d)$. If $r\geq \sqrt{2}$ then the hyperbolic distance from $x$ to $H_1(x,r x_d)$ is $\log \sqrt{r^2-1}$.
 Similarly, for each $x,y \in \mathbb{H}^d$ and $\delta>0$, we define 
$H_2(y,\delta x_d)$ to be the hyperbolic half-space
 % the the Euclidean ball $B(y,\delta x_d)$ is contained in the hyperbolic half-space 
  that is defined to be represented by the Euclidean ball that is orthogonal to $\R^{d-1}$ and has its highest point at $(y_1,\ldots,y_{d-1},y_d+\delta x_d)$, so that $H_2(y, \delta x_d)$ is the smallest half-space in $\mathbb{H}^d$ that contains $B(y,\delta x_d) \cap \mathbb{H}^d$. Observe that if $y_d \leq 2 \delta x_d $ and $\delta\leq 1/3$ then the hyperbolic distance between $x$ and $H_2(y,\delta x_d)$ is at least $\log 1/\sqrt{3\delta}$. See \cref{fig:magicproof} for an illustration of these two half-spaces. 

Note also that for every $c$-separated set $A\subseteq \mathbb{H}^d$, the number of points of $A$ in a set $B$ is bounded above by the ratio of the hyperbolic volume of the hyperbolic $c/2$-neighbourhood of $B$ to the hyperbolic volume of a hyperbolic ball of radius $c/2$. Let $C_1$ be the hyperbolic volume of the hyperbolic $c/2$-neighbourhood of the Euclidean ball $B(x,x_d/2)$, which does not depend on the choice of $x\in \mathbb{H}^d$, and let $C_2$ be the ratio of $C_1$ to the volume of a hyperbolic ball of radius $c/2$. 

\begin{figure}
\centering
\includegraphics[height=0.35\textwidth]{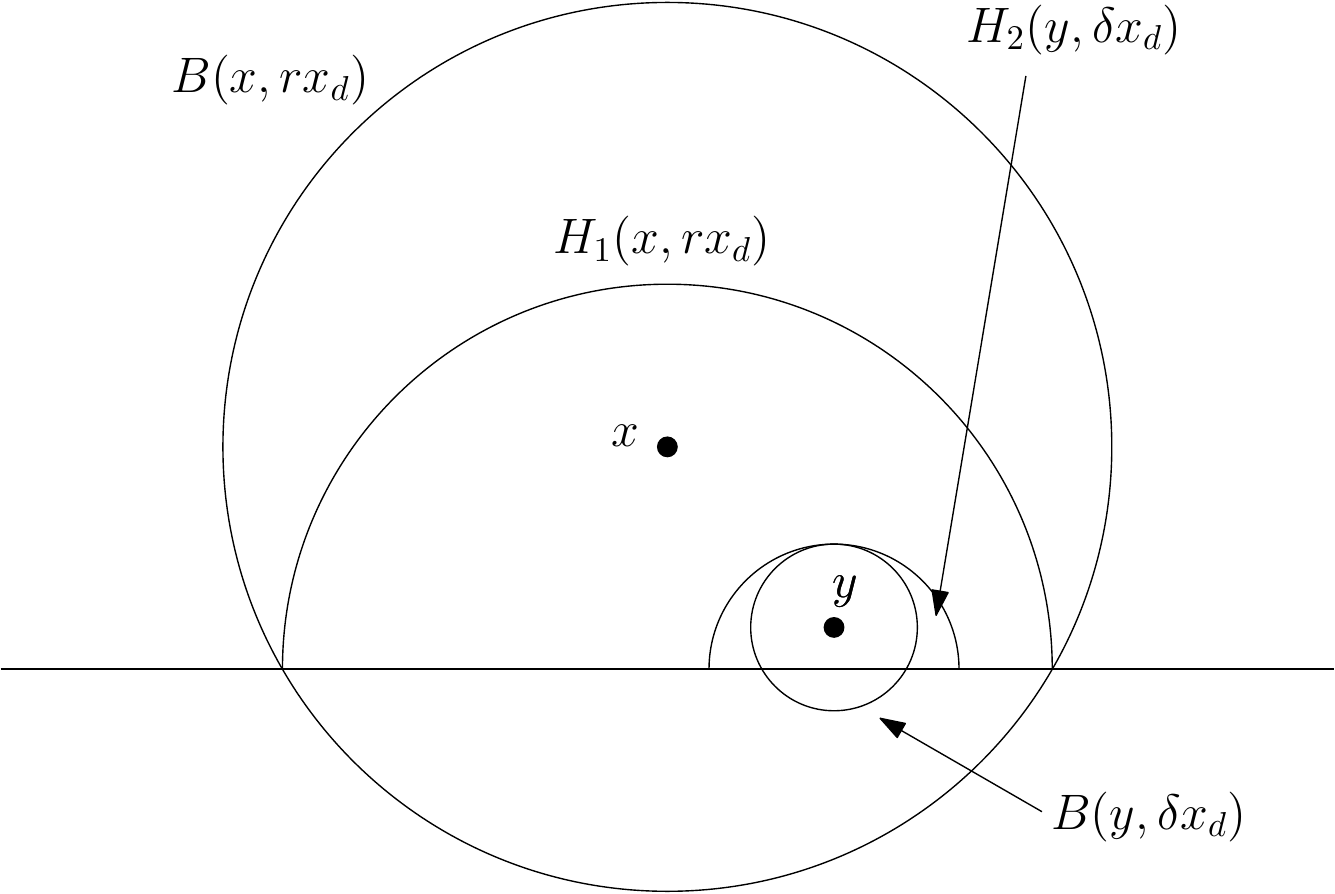}
\caption{Illustration of the Euclidean balls and hyperbolic half-spaces involved in the proof of \cref{prop:HdMagic}.}
\label{fig:magicproof}
\end{figure}

Let $c,\eps>0$ and let $A$ be a finite $c$-separated subset of $\mathbb{H}^d$. Since $A$ is $c$-separated with respect to the hyperbolic metric, the Euclidean isolation radius $\rho_x$ is at least $c' x_d$ for every $x\in A$, where $0<c'=1-e^{-c} \leq 1$. 
Take $\delta=\delta(\eps,c)>0$ so that
\[\min\Bigl\{\log 3^{-1/2}\delta^{-1/4},\log \sqrt{c'^2 \delta^{-2}-1}\Bigr\} \geq \eps^{-1}.
\]
 Let $C_d(\delta)$ be the Magic Lemma constant, let $s=\lceil C_d(\delta)\eps^{-1}\rceil$, and let $N(\eps)=N_{d,c}(\eps)= s+1+C_2$.
Let $A'$ be the set of elements of $A$ that are \emph{not} $(\delta,s)$-supported (with respect to the Euclidean metric), so that, by the Magic Lemma, 
$|A'|\geq(1-\eps)|A|$.
Let $x\in A'$. We claim that the following holds:
\begin{equation}
\label{eq:claim}
\begin{array}{l}\text{There exists either a half-space $H_1$ or a pair of half-spaces $H_1,H_2$}\\ \text{such that $d(x,\bigcup H_i) \geq \eps^{-1}$ and $|A \setminus \bigcup H_i| \leq N(\eps)$. }
\end{array}
\end{equation}
If $\rho_x\geq \delta^{-1/2} x_d$ then we may simply take the single half-space $H_1=H_1(x,\rho_x)$, since for this half-space we have $d(x,H_1)\geq \eps^{-1}$ and $|A\setminus H_1|=1$, and hence that \eqref{eq:claim} holds with this choice of $H_1$. Thus, it suffices to assume that $c'x_d\leq \rho_x \leq \delta^{-1/2}x_d$. 
 Let $y\in \mathbb{R}^d$ be such that $|A \cap (B,x\delta^{-1} \rho_x)\setminus B(y,\delta \rho_x)| \leq s$. Then the half-space $H_1=H_1(x,\delta^{-1}\rho_x)$ has 
$d(x,H_1) \geq \log \sqrt{c'^2\delta^{-2}-1} \geq \eps^{-1}$, and one of the following holds:
\begin{enumerate}
	\item If $y_d\geq 2 \delta \rho_x$, then the hyperbolic $c/2$-neighbourhood of the Euclidean ball $B(y,\delta\rho_x)$ has volume at most $C_1$, and hence $|A\cap B(y,\delta\rho_x)|$ is bounded above by $C_2$.
	\item If $y_d\leq 2 \delta \rho_x \leq 2\delta^{1/2} x_d$, then the half-space $H_2=H_2(y,\delta \rho_x)$ has hyperbolic distance at least $\log 3^{-1/2} \delta^{-1/4}$ from $x$. 
\end{enumerate}
In the first case we take only the first half-space $H_1$, whereas in the second case we take both $H_1$ and $H_2$. In both cases we have that $d(x,\bigcup H_i)\geq \eps^{-1}$ and $|A\setminus \bigcup H_i| \leq N(\eps)$ as claimed.
\end{proof}

\begin{proof}[Proof of \cref{prop:GromovMagic}]
Let $\eps>0$. Let the vertex degrees of $G$ be bounded by $M$.
Since balls of radius $r$ in $G$ contain at most $M^{r+1}$ vertices, it follows that, since $\Phi$ is a rough-similarity, there exists a constant $C$ such that every unit ball in $\mathbb{H}^d$ contains at most $C$ points of $\Phi(V)$. 
Let 
\[\delta = \min\left\{\frac{\eps}{3+\eps}, \frac{\eps}{C}\right\} 
\qquad \text{and let} \qquad
N(\eps) = CN_{d,1}(\delta).\]
Let $A \subset V$ be finite, and let $K\subseteq A$ be maximal such that $\Phi$ is injective on $K$ and $\Phi(K)$ is $1$-separated. That is, $K$ is such that $\Phi$ is injective on $K$, $\Phi(K)$ is $1$-separated, and every point of $\Phi(A)$ is contained in the open $1$-neighbourhood of $\Phi(K)$. For each $v\in A$, let $v_K(v) \in K$ be a vertex in $K$ such that $d(v_K(v),v) < 1$. Applying \cref{prop:HdMagic} to $\Phi(K)$, we deduce that there exists a subset $K'$ of $K$ such that $|K'| \geq (1-\delta) |K|$ and for every $v \in K'$, there either exists a half-space $H_{1,v}$ or a pair of half-spaces $H_{1,v},H_{2,v}$ in $\mathbb{H}^d$ such that $d(\Phi(v),\bigcup H_i) \geq \delta^{-1}$ and $|\Phi(K) \setminus \bigcup H_i| \leq N_{d,1}(\delta)$. It follows from the discussion in \cref{subsec:Hdbackground} that for each $i$, there exists a half-space $H_{i,v}'$ in $\mathbb{H}^d$ such that $d(\Phi(v),H_{i,v}')=d(\Phi(v),H_{i,v})-2$ and $H_{i,v}'$ contains the closed $1$-neighbourhood of $H_{i,v}$ in $\mathbb{H}^d$, so that
\[
\Bigl|A \setminus \bigcup \Phi^{-1} H_{i,v}'\Bigr| \leq \left|\left\{v\in A : v_k(v) \in K \setminus \bigcup \Phi^{-1} H_{i,v} \right\}\right| \leq C N_{d,1}(\delta) = N(\eps).
\]
Letting $A'=\{v\in A : v_K(v) \in K'\}$, we have that \[|A\setminus A'| \leq C |K \setminus K'| \leq C \delta |K| \leq C \delta |A| \leq \eps |A|\]
and that for every $v\in A'$, there exists either a half-space $H_1=H_{1,v_K(v)}'$ or a pair of half-spaces $H_1=H_{1,v_K(v)}', H_2=H_{2,v_K(v)}'$ such that $|A \setminus \bigcup \Phi^{-1} H_{i}| \leq C N_{d,1}(\delta)$ and $d(\Phi(v),H_i) \geq d(\Phi(v_K(v)),H_i) -2 \geq \delta^{-1}-3 \geq \eps^{-1}$. This completes the proof. \qedhere
% Let $\gamma_i$ be a geodesic in $x$ from $x$ to $\Phi^{-1} H_i$, let $y_i$ be a vertex on the middle third of this path, and let $\tilde H_i=H(y_i,x)$ be the associated half-space in $G$. 

\end{proof}

% It remains to deduce \cref{cor:emptyhalfspace} from \cref{prop:GromovMagic}.

\begin{proof}[Proof of \cref{cor:emptyhalfspace}]

Let $G=(V,E)$ be a bounded degree, nonamenable, Gromov hyperbolic graph, and, applying the Bonk-Schramm Theorem, let $\Phi: V \to X$ be a $(\lambda,k)$-rough similarity from $G$ to some closed convex set $X \subseteq \mathbb{H}^d$ that is equal to the convex hull of its boundary for some $d\geq 1, \lambda \in(0,\infty)$, and $k\in [0,\infty)$. 

\begin{figure}
\centering
\includegraphics[height=0.18\textwidth]{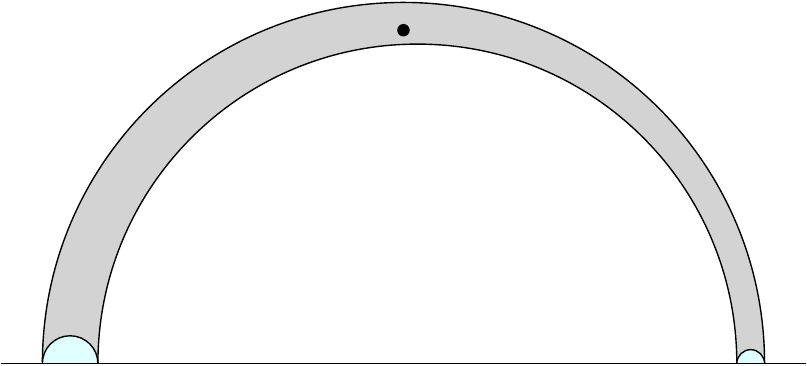}
\hspace{1.275cm}
\includegraphics[height=0.18\textwidth]{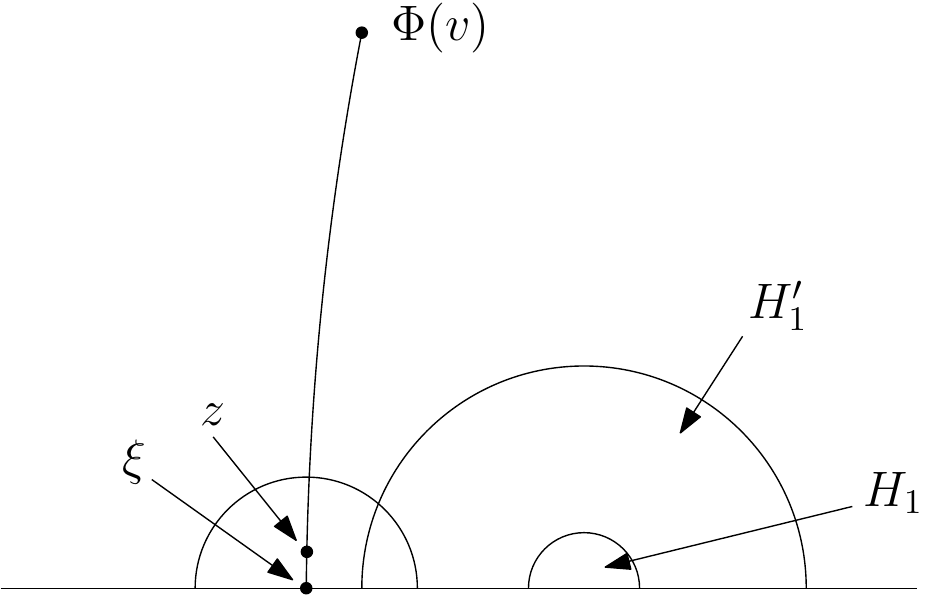}
\caption{Illustration of the proof of \cref{cor:emptyhalfspace}. Left: If the boundary of $X$ is contained in the union of two half-spaces that are both far away from $\Phi(v)$, then $G$ has a large ball around $\Phi(v)$ that is rough-isometric to a line segment. Right: Illustration of the half-spaces that are discussed.}
\label{fig:magicproof2}
\vspace{-0.5em}
\end{figure}

First note that there exists a constant $\eps_0 \leq 1$ such that whenever $v\in V$ and $H_1,H_2$ are any two half-spaces in $\mathbb{H}^d$ with $d(\Phi(v),\bigcup H_i) \geq \eps_0^{-1}$, then there exists $\xi\in \delta X$ that is not in the closure of $\bigcup H_i$ in $\R^{d-1} \cup \{\infty\}$.  Indeed, if this were not the case then $G$ would contain arbitrarily large regions roughly similar to line segments (with uniform constants), contradicting the nonamenability of $G$. See \cref{fig:magicproof2} for an illustration. (This claim also follows, and is in fact equivalent to, the fact that the Gromov boundaries of nonamenable Gromov hyperbolic graphs are uniformly perfect \cite{martinez2017cheeger}.)

Now suppose that $H_1=H(y_1,\Phi(v))$ and $H_2=H(y_2,\Phi(v))$ are half-spaces in $\mathbb{H}^d$ such that $d(\Phi(v),\bigcup H_i) \geq 2\eps_0^{-1}$. Let $y_i'$ be the point that lies on the geodesic from $\Phi(v)$ to $y_i$ and has distance $2\eps^{-1}_0$ from $\Phi(v)$. Let $H_1'=H(y_1',\Phi(v))$ and $H_2'=H(y_2',\Phi(v))$, so that $d(\Phi(v),\bigcup H_i') \geq \eps_0^{-1}$. By the above discussion, there exists $\xi \in \delta X$ that is not in the closure of $\bigcup H'_i$ in $\R^{d-1} \cup \{\infty\}$. Let $r_0$ be a large constant to be chosen and let $z(v,r)$ be the point that lies at distance $r\geq r_0$ from $\Phi(v)$ along the geodesic from $r$ to $\xi$. If $r_0$ is chosen to be  sufficiently large (depending on the choice of $\eps_0$), then the half-space $H(z(v,r),\Phi(v))$ is disjoint from $\bigcup H_i$. (The constant $r_0$ can be chosen independently of $\xi$ thanks to the `buffer regions' $H_1'$ and $H_2'$. Indeed, the worst case occurs when $\xi$ lies in the boundary of $H_1'$ or $H_2'$.)

Now, let $A \subseteq V$ be finite and connected, and apply \cref{prop:GromovMagic} with $\eps=\eps_0/2 \leq 1/2$. Let $A'\subseteq A$ be as in the statement of that proposition. Then for every $v\in A'$ there exists either a half-space $H_1\subseteq \mathbb{H}^d$ or a pair of half-spaces $H_1,H_2\subseteq \mathbb{H}^d$ such that $d(\Phi(v),\bigcup H_i) \geq \eps^{-1}$ and $|A \setminus \Phi^{-1} \bigcup H_i|\leq N(\eps)$. Thus, applying the above discussion with this choice of half-spaces, there exists $r_0$ such that if $r\geq r_0$ then the half-space $H(z(v,r),\Phi(v))$ is disjoint from $\bigcup H_i$. On the other hand, since 
$A$ is connected and $\Phi$ is a rough similarity, there exists a constant $r_1$ such that if $A \cap H(z(v,r+r_1),\Phi(v)) \neq \emptyset$ then $A \cap [H(z(v,r),\Phi(v)) \setminus H(z(v,r+r_1),\Phi(v))] \neq \emptyset$ and consequently that
\vspace{-0.3em}
\[
|A \cap H(z(v,r + r_1),\Phi(v))| \leq \max\left\{ 0,\, |A \cap H(z(v,r),\Phi(v))|-1\right\}
\]
for every $r\geq r_0$. Since $|A \setminus \Phi^{-1}\bigcup H_i|\leq N(\eps)$, it follows that $A$ is disjoint from $H(z(v,r),\Phi(v))$ for every $r\geq  r_0 + N(\eps)r_1$. Since $z(v,r) \in X$, the proof may easily be concluded by an application of \cref{lem:halspacesmaincomparison}. \qedhere

% Applying \cref{lem:halspacesmaincomparison}, we deduce that  there exist constants $r_0<\infty$ and $0<\eps =\eps_0/2 \leq 1/2$ such that if $v\in V$ and $H_1,H_2$ are any two half-spaces in $\mathbb{H}^d$ with $d(\Phi(v),\bigcup H_i) \geq \eps^{-1}$, then there exists $u\in V$ with $d(u,v)\leq r$ such that $H_G(u,v)$ is proper and is disjoint from $\Phi^{-1} \bigcup H_i$. We conclude by applying \cref{prop:GromovMagic} with this choice of $\eps$. \qedhere

\end{proof}

% \begin{remark}
% \label{rmk:disconnectedMagic}
% The stronger version of \cref{cor:emptyhalfspace} in which $A$ is not assumed to be connected can be proven by first proving that the boundary of a bounded degree, nonamenable, Gromov hyperbolic graph has positive packing dimension in the following strong, uniform sense: There exist positive constants $c$, $r_0$, and $d$ such that for every $v \in V$, $r\geq 1$, and $u_1,u_2 \in V$ with $d(v,u_1),d(v,u_2)\geq r_0$, there exists $k\geq cr^d$ and $w_1,\ldots,w_k\in V$ such that $H_G(v,w_i) \cap H_G(v,w_j) =\emptyset$ for every $i\neq j$ and $H_G(v,w_i) \cap H_G(v,u_j)$ for all $1\leq i \leq k$ and $1\leq j \leq 2$. Once this is proven, the stronger version of \cref{cor:emptyhalfspace} can easily be deduced from \cref{prop:GromovMagic} and \cref{lem:halspacesmaincomparison}.
% \end{remark}

\begin{remark}
\cref{prop:HdMagic} can also be used to prove that if $\langle G_n \rangle_{n\geq 1}$ is a sequence of finite, $\delta$-hyperbolic graphs with degrees bounded by $M$ for some $\delta\geq 0$ and $M<\infty$ independent of $n$, then any subsequential Benjamini-Schramm limit of $\langle G_n \rangle_{n\geq 1}$ has at most two points in its Gromov boundary almost surely.  This application is similar in spirit to the original use of the Magic Lemma to study circle packings of Benjamini-Schramm limits of finite planar graphs \cite{BeSc}.
\end{remark}

\section{Analysis of percolation}
\label{sec:mainproof}

In this section, we complete the proofs of our main results, namely \cref{thm:pcpu,thm:triangle,thm:pell2}. Using \cref{prop:criterion}, it suffices to prove the following two propositions, which are proven in \cref{subsec:susceptibility} and \cref{subsec:isoperimetriccondition} respectively.

\begin{prop}
\label{thm:susceptibility}
Let $G$ be a unimodular, quasi-transitive, Gromov hyperbolic, nonamenable graph. Then
\[
\limsup_{p\uparrow p_c} (p_c-p)\overline{\chi}_p <\infty.
\]
\end{prop}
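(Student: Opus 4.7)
The plan is to establish a differential inequality of the form $\frac{d}{dp}\chi_p(v) \geq c\,\chi_p(v)^2$ uniformly in $v$ throughout the subcritical regime. Since $\chi_p(v) \uparrow \infty$ as $p \uparrow p_c$ by sharpness of the phase transition, integrating this ODE will yield $\chi_p(v) \leq 1/(c(p_c - p))$, and quasi-transitivity then completes the proof. The geometric input driving the inequality is \cref{cor:emptyhalfspace}: in any finite connected $A \subseteq V$, at least $|A|/2$ of its vertices $u$ are \emph{half-space vertices}, meaning there exists $w\in V$ with $d(u,w)\leq R$ such that $A \subseteq H_G(u,w)$ and $H_G(u,w)$ is a proper discrete half-space.

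The first step is to combine \cref{cor:emptyhalfspace} with the mass-transport principle (which applies by unimodularity) using the diagonally-invariant function $F_p(u,v) = \mathbf{P}_p\bigl(v \in K_u,\ v \text{ is a half-space vertex of } K_u\bigr)$. This gives $\mathbf{E}_p[|K_\rho|\mathbf{1}(\rho \text{ is a h.s.v.\ of } K_\rho)] \geq \tfrac{1}{2}\chi_p(\rho)$ in expectation over the random root $\rho$. By pigeonhole over the finitely many possible witness directions within distance $R$, this can be upgraded to the following statement: for each orbit representative $v$ there is a specific witness $w_v$, with $H_v := H_G(v,w_v)$ a proper half-space, such that $\mathbf{E}_p[|K_v|\mathbf{1}(K_v \subseteq H_v)] \geq c\,\chi_p(v)$.

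Next, I would apply Russo's formula to obtain $\chi_p'(v) \geq \sum_{(a,b)} \mathbf{E}_p[|K_b|\mathbf{1}(a\in K_v,\ b\notin K_v)]$, summed over directed edges, and restrict attention to boundary edges $(a,b)$ with $b$ lying in the empty complementary half-space $H_v^c = H_G(w_v,v)$ on the event $\{K_v \subseteq H_v\}$; such edges must exist since any geodesic from $v$ to $w_v$ has to cross $\partial K_v$. The decorrelation step --- namely, showing that $\mathbf{E}_p[|K_b|\mathbf{1}(K_v\subseteq H_v,\ a\in K_v,\ b\notin K_v)]$ is bounded below by a constant times $\chi_p(v)\cdot\mathbf{P}_p(\text{half-space event})$ --- is the hard part, since conditioning on the half-space event both forces many boundary edges closed and restricts $K_b$ to lie in $H_v^c$. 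To handle this, I would use \cref{lem:movinghalfspaces} together with the non-degeneracy provided by \cref{lem:nondegenerate} to find translates of the full graph sitting deep inside $H_v^c$, and dominate $K_b$ from below by an unconditional cluster in such a translate, controlling the influence of the boundary by hyperbolic geometry. Combining these ingredients will give $\chi_p'(v) \geq c\,\chi_p(v)^2$, which integrates to the claim.
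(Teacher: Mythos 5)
Your overall plan --- establish $\lrDini\chi_p \gtrsim \chi_p^2$ via \cref{cor:emptyhalfspace}, the mass-transport principle, and Russo's formula, then integrate --- is exactly the structure of the paper's argument, so this is not a different route. But there is a concrete gap in the decoupling step, which you correctly flag as the hard part. Your claim that, on the event $\{K_v\subseteq H_v\}$, boundary edges $(a,b)$ of $K_v$ with $b\in H_v^c$ ``must exist since any geodesic from $v$ to $w_v$ has to cross $\partial K_v$'' is false: the geodesic does cross $\partial K_v$, but that exit edge may lie entirely inside $H_v$, since nothing forces $K_v$ to touch $\partial_E H_v$ at all (e.g.\ when $K_v=\{v\}$). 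So the collection of edges you propose to restrict to can be empty, and this is a positive-probability defect that conditioning cannot repair. Relatedly, the target inequality $\bE_p[|K_b|\mathbbm{1}(\cdots)]\geq c\,\chi_p(v)\,\mathbf{P}_p(\text{h.s.\ event})$ does not close dimensionally: $\chi_p(v)\,\mathbf{P}_p(\text{h.s.\ event})\leq\chi_p(v)$, so you must sum over many such $(a,b)$, but the number of boundary edges of $K_v$ that also cross $\partial H_v$ need not scale with $|K_v|$ on $\{K_v\subseteq H_v\}$.

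The ingredient you are missing is a finite-energy argument that sidesteps both issues. The paper's route is: after Russo's formula, average over $v$ and mass-transport once more so the derivative lower bound becomes $\lrDini\chi_p\gtrsim\sum_{e:e^-=\rho}\bE_p\bigl[|K_\rho|\,|K_{e^+}|\,\mathbbm{1}(\rho\nleftrightarrow e^+)\bigr]$, putting both susceptibility factors in as a product. Then, with $v,u$ from the half-space lemma and $H=H_G(v,u)$, take $\gamma$ from \cref{lem:movinghalfspaces} with $\gamma H\cap H=\emptyset$ and $d(v,\gamma v)\leq R$; the events $\{K_v\subseteq H\}$ and $\{K_{\gamma v}\subseteq\gamma H\}$ have disjoint edge-supports, hence are \emph{independent}, giving $\bE_p[|K_v||K_{\gamma v}|\mathbbm{1}(\cdots)]\gtrsim\chi_p^2$ with no conditioning argument at all. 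Finally, a finite-energy modification along a fixed geodesic $\eta$ from $v$ to $\gamma v$ --- opening $\eta$ on both sides of the first $\partial_E H$-edge $e$ it crosses, closing $e$ and a bounded set of nearby edges --- converts that product event into $\{v\leftrightarrow e^-,\ e^-\nleftrightarrow e^+,\ \gamma v\leftrightarrow e^+\}$ at a cost bounded on compact subsets of $(0,1)$. Because the edge $e$ is chosen deterministically, no control over where $\partial K_v$ sits is ever required, which is precisely what your proposal lacks.
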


% \vspace{0.25em}
\begin{prop}
\label{prop:hypiTp}
Let $G$ be a unimodular, quasi-transitive, Gromov hyperbolic, nonamenable graph. Then
\[
\lim_{p \uparrow p_c} \sup\left\{ \frac{\sum_{u,v\in K}\tau_p(u,v)}{\overline{\chi}_p |K|} : K \subseteq V \text{ finite}\right\} =0.
\]
\end{prop}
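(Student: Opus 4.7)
The plan is to combine Proposition~4.1 (the hyperbolic Magic Lemma) with a half-space estimate showing that a distant proper half-space contributes only a small fraction of the susceptibility. First I would use the Bonk--Schramm theorem and Lemma~3.11 to convert the conclusion of Proposition~4.1 into a purely combinatorial statement: for every $\eps>0$ there exists $N'(\eps)$ such that every finite $K\subseteq V$ contains a subset $K'$ with $|K'|\geq (1-\eps)|K|$ such that for every $v\in K'$ there is one or a pair of proper discrete half-spaces $\tilde H_i$ with $d(v,\bigcup_i \tilde H_i)\geq \eps^{-1}$ and $|K\setminus \bigcup_i \tilde H_i|\leq N'(\eps)$.

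The key technical claim is the following \emph{half-space estimate}: for every $R>0$,
\[
\lim_{p\uparrow p_c}\sup\Bigl\{\overline{\chi}_p^{-1}\sum_{u\in H}\tau_p(v,u) : v\in V,\ H\text{ proper discrete half-space},\ d(v,H)\geq R\Bigr\}=0.
\]
Granting this, the proposition follows by the decomposition
\begin{align*}
\sum_{u,v\in K}\tau_p(u,v) &\leq |K\setminus K'|\overline{\chi}_p + \sum_{v\in K'}\Bigl[\sum_{u\in K\cap \bigcup_i\tilde H_i}\tau_p(u,v)+\sum_{u\in K\setminus\bigcup_i\tilde H_i}\tau_p(u,v)\Bigr]\\
&\leq \eps|K|\overline{\chi}_p + 2\eps|K'|\overline{\chi}_p + N'(\eps)|K'|,
\end{align*}
where the middle term uses the half-space estimate with $R=\eps^{-1}$ for $p$ close to $p_c$, and the last term contributes $N'(\eps)/\overline{\chi}_p=o(1)$ as $p\uparrow p_c$ since $\overline{\chi}_p\to\infty$ by sharpness. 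Dividing by $\overline{\chi}_p|K|$ and letting $\eps\to 0$ yields the proposition.

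The hard part is the half-space estimate itself, and this is where the mixture of probabilistic and geometric techniques mentioned in Section~1.1 must enter. A first approach is via BK: any open path from $v\in H^c$ to $u\in H$ splits (with edge-disjoint witnesses) at its first entry point into $H$, yielding
\[
\sum_{u\in H}\tau_p(v,u)\leq \overline{\chi}_p\sum_{w\in \partial_I H}\tau_p(v,w),
\]
so it suffices to show that $\sum_{w\in\partial_I H}\tau_p(v,w)$ is small. This is not immediately clear, because $\partial_I H$ can be large. To overcome this I would exploit unimodularity: by Proposition~3.8 (which uses Proposition~3.4) together with Lemma~3.7, for any $N$ one can find automorphisms $\gamma_1,\ldots,\gamma_N\in\Gamma$ such that the translates $\gamma_j H$ are pairwise disjoint, so $\sum_{j}\sum_{w\in\partial_I \gamma_j H}\tau_p(v,w)\leq\overline{\chi}_p$. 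The crux is then to use the mass-transport principle (Section~3.7) with an appropriate $\Gamma$-invariant mass function to \emph{equidistribute} the contributions over the translates, converting this average bound into a worst-case bound $\overline{\chi}_p/N$.

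The main obstacle in this equidistribution step is that the quantities $\sum_{w\in\partial_I\gamma_j H}\tau_p(v,w)$ involve a fixed base vertex $v$ rather than a moving base vertex $\gamma_j^{-1}v$, so naive $\Gamma$-invariance does not identify them. I expect the fix to combine the hyperbolic geometry of Section~3 -- in particular Lemmas~3.11 (continuous/discrete half-space comparison) and~3.13 (non-degeneracy) -- with a careful choice of mass function that records both the base vertex and the ambient half-space direction, so that a single mass transport simultaneously averages over $v$ and over the orbit of $H$. This geometric-probabilistic equidistribution argument, rather than the Magic Lemma reduction, is where I would expect to spend most of the effort.
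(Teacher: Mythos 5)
Your decomposition of $\sum_{u,v\in K}\tau_p(u,v)$ via the Magic Lemma into a small-boundary-set term, a near-point term, and a distant-half-space term is exactly the paper's decomposition (in the proof of \cref{prop:hypiTp} given \cref{prop:hyperplanedecay}). The place where your proposal diverges, and where it has a genuine gap, is the ``half-space estimate.''

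First, the statement of your half-space estimate is false as written. For a fixed $R>0$, if $H$ is a proper discrete half-space with $d(v,H)=R$ and $w\in H$ is at distance $R$ from $v$, then by Harris--FKG $\sum_{u\in H}\tau_p(v,u)\geq p^{R}\sum_{u\in H}\tau_p(w,u)$, and the latter is a positive fraction of $\chi_p(w)\asymp\overline{\chi}_p$ since a half-space whose boundary is near $w$ catches a uniformly positive fraction of $w$'s cluster. So the supremum in your display stays bounded away from $0$ as $p\uparrow p_c$ for fixed $R$. What one actually needs (and what the paper's \cref{prop:hyperplanedecay} supplies) is a bound $\mathbf{E}_{p}|K_v\cap\Phi^{-1}H|\leq C\chi_p/d(\Phi(v),H)$ that is uniform over $p<p_c$ and decays as the distance to $H$ grows; the $\eps_p$'s in the paper are then chosen so that $\eps_p\downarrow 0$ slowly enough that $N(2\eps_p)\leq\overline{\chi}_p^{1/2}$.

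Second, and more importantly, the mechanism you propose for the half-space estimate---BK at first entry plus mass-transport ``equidistribution'' over $N$ disjoint translates $\gamma_j H$---does not resolve the difficulty, and you correctly identify why: the quantities $\sum_{w\in\partial_I\gamma_j H}\tau_p(v,w)$ have a fixed base vertex $v$, so they are not related by $\Gamma$-equivariance, and no natural mass transport identifies them. You flag this as ``where I would expect to spend most of the effort,'' but you do not close the gap, and I don't see a way to make the mass-transport averaging work here. The paper sidesteps this entirely with a different idea: it introduces the notion of an $r$-\emph{roughly branching} set and proves (\cref{lem:branchingbound}) via Harris--FKG and the supermultiplicativity $\chi_{p}(v_0)\geq\bigl[\sum_{u\in K'}\tau_p(v_0,\gamma_u v_0)\bigr]^{k}$ that $\sum_{v\in K}\tau_{p}(v_0,v)$ is bounded by a constant \emph{independent of $p\leq p_c$} for any such $K$. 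One then takes $K$ to be a union of $n$ nested discrete hyperplane-slabs separating $v_0$ from $H$ (\cref{lem:hyperplanesbranching}), concludes that some single slab $K_i$ has $\mathbf{E}_{p_c}|K_{v_0}\cap K_i|=O(1/n)$, and finishes with a BK splitting across that slab. This is an FKG/convexity argument, not a mass-transport averaging argument; it needs unimodularity only indirectly (to ensure via \cref{prop:unimodularnofixed} and \cref{lem:movinghalfspaces} that one can move half-spaces into one another by automorphisms), not for any averaging over a random root. If you want to salvage your plan, you should replace the mass-transport equidistribution step with something like the roughly-branching argument, because the asymmetry obstacle you note is a real obstruction to the route you sketch.
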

% \vspace{0.75em}

Let us now briefly indicate why these two propositions imply our main theorems.

\begin{proof}[Proof of \cref{thm:pcpu,thm:triangle,thm:pell2} given \cref{thm:susceptibility,prop:hypiTp}]
If $G$ is nonunimodular then we have that $p_c(G)<p_{q\to q}(G)$ for every $q\in (1,\infty)$ by \cref{thm:nonunimodular}. If $G$ is unimodular, then combining \cref{thm:susceptibility,prop:hypiTp} yields that, since $p_c(G)<1$, 
\[\liminf_{p\uparrow p_c} \frac{p_c-p}{1-p}\overline{\chi}_p \sqrt{1-\iota(T_p)^2}=0.\] Applying \cref{prop:criterion} we deduce that $p_c(G)<p_{2\to2}(G)$.  Applying \cref{lem:diagrams,prop:2to2givesqtoq}, \cref{thm:pcpu,thm:triangle} follow immediately.
\end{proof}

Let us note that the proof  of \cref{prop:hypiTp} uses the full power of \cref{prop:GromovMagic}, whereas the proof
of \cref{thm:susceptibility} uses only \cref{cor:emptyhalfspace}.

\subsection{The susceptibility exponent}

\label{subsec:susceptibility}

In this section we apply \cref{cor:emptyhalfspace} to prove \cref{thm:susceptibility}. The argument is similar to that appearing in \cite[\S 5]{madras2010}. 
The key input is the following lemma, which is an easy consequence of \cref{cor:emptyhalfspace}. We write $\chi_p=\E_p\left[ |K_\rho|\right]$ for the expected value of the cluster of the random root $\rho$ as discussed in \cref{subsec:unimodularity}. 

\begin{lemma}
\label{lem:susceptibilityhalfspace}
Let $G$ be a unimodular, quasi-transitive, Gromov hyperbolic, nonamenable graph. Then there exist positive constants $c$ and $r$ such that for each $0\leq p<p_c$ there exists 
% for each $p<p_c$ there exists
 $v,u\in V$ with $d(v,u)\leq r$ such that $H_G(v,u)$ is proper and
\[
\mathbf{E}_p\Bigl[ |K_v| \mathbbm{1}\bigl(K_v \subseteq H_G(v,u)\bigr) \Bigr] \geq c\chi_p.
\]
\end{lemma}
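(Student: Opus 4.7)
The plan is to convert the deterministic geometric fact in \cref{cor:emptyhalfspace} into the stated probabilistic statement by applying it to percolation clusters (which are a.s.\ finite and connected for $p<p_c$) and then running the mass-transport principle. Let $R$ be the constant from \cref{cor:emptyhalfspace}, let $D$ bound the degree of $G$, and let $\cO$ be a finite set of $\Aut(G)$-orbit representatives on $V$. Applied to $A=K_y$, \cref{cor:emptyhalfspace} says that for any vertex $y$, at least $|K_y|/2$ vertices $x\in K_y$ admit some $w$ with $d(x,w)\leq R$ such that $H_G(x,w)$ is proper and $K_y\subseteq H_G(x,w)$.

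I would set up the mass-transport
\[
F(x,y) = \mathbf{E}_p\!\left[\mathbbm{1}(y\in K_x)\sum_{w\in V:\, d(x,w)\leq R}\mathbbm{1}\bigl(H_G(x,w)\text{ proper},\ K_x\subseteq H_G(x,w)\bigr)\right],
\]
which is $\Aut(G)$-diagonally invariant. Summing in $x$, using that $x\in K_y$ forces $K_x=K_y$ and invoking \cref{cor:emptyhalfspace}, one gets $\sum_{x\in V}F(x,y)\geq \tfrac{1}{2}\chi_p(y)$ for every $y$. The mass-transport principle then yields
\[
\mathbf{E}\!\left[\sum_{w\in N_R(\rho)}\mathbf{E}_p\bigl[|K_\rho|\mathbbm{1}(H_G(\rho,w)\text{ proper},\, K_\rho\subseteq H_G(\rho,w))\bigr]\right] = \mathbf{E}\!\left[\sum_{y\in V}F(\rho,y)\right] \geq \frac{\chi_p}{2},
\]
where $\chi_p=\mathbf{E}_p[|K_\rho|]=\mathbf{E}[\chi_p(\rho)]$.

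To finish, I would pigeonhole: the outer expectation is over $\rho\in\cO$, which is a finite set carrying a probability measure $\mu$ of positive mass on each orbit, and the inner sum has at most $D^R$ terms. Therefore there exist $v\in\cO$ and $u\in V$ with $d(v,u)\leq R$, with $H_G(v,u)$ proper, such that $\mathbf{E}_p[|K_v|\mathbbm{1}(K_v\subseteq H_G(v,u))]\geq c\chi_p$ for a constant $c>0$ depending only on $G$ (through $D^R$, $|\cO|$ and $\min_{v\in\cO}\mu(v)$). Setting $r=R$ yields the lemma. The only non-routine point is the verification that the transport is diagonally invariant and finite (the finiteness of the inner sum is immediate from bounded degree, and a.s.\ finiteness of $K_x$ at subcritical $p$ makes the expectations well-defined); once these are in place the argument is a clean combination of \cref{cor:emptyhalfspace} with the mass-transport principle of \cref{subsec:unimodularity}, and no further hyperbolic input is needed.
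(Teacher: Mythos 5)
Your proof is correct and follows essentially the same route as the paper: convert \cref{cor:emptyhalfspace} into a statement about percolation clusters, run the mass-transport principle to move from the "sum over cluster" form to the "rooted" form, then pigeonhole over the finitely many choices of orbit representative and nearby vertex. The only cosmetic difference is that the paper encodes \cref{cor:emptyhalfspace} via a single indicator $I(v,A,r)$ while you sum over all candidate $w$'s within radius $R$, which gives a (harmless) overcount; the pigeonhole constant you obtain can in fact be taken to be $1/(2\sup_v|B(v,R)|)$ without the $|\cO|$ and $\min\mu$ factors, since the average over $\rho$ is a probability measure.
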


% Before proving this lemma, let us see how it implies \cref{thm:susceptibility}.

\begin{proof}[Proof of \cref{lem:susceptibilityhalfspace}]
For each $v\in V$, $A\subseteq V$, and $r\geq 1$, define
\vspace{-0.5em}
\begin{multline*}I(v,A,r)=\\\mathbbm{1}\Bigl( \text{there exists $u$ with $d(u,v)\leq r$ such that $H_G(v,u)$ is proper and $A \subseteq H_G(v,u)$}\Bigr).
\end{multline*}
\cref{cor:emptyhalfspace} is equivalent to the statement that there exists a constant $r$ such that 
\[
\sum_{v\in A} I(v,A,r) \geq \frac{1}{2}|A|
\]
for every finite $A\subseteq V$.
% be the in that there exists $u$ with $d(u,v)\leq r$ such that $K_v \subseteq H_G(v,u)$. 
It follows from the mass-transport principle that
\[
\E_p\left[ |K_\rho| I(\rho,K_\rho,r\bigr) \right] = \E_p\left[ \sum_{v\in K_\rho} I(v,K_\rho,r) \right] \geq \frac{1}{2} \chi_p
\]
for every $p<p_c$. It follows trivially that there exists a vertex $v\in V$ such that
\[
\sum_{d(u,v)\leq r} \mathbf{E}_p\Bigl[ |K_v| \mathbbm{1}\bigl(H_G(v,u) \text{ proper, } K_v \subseteq H_G(v,u)\bigr) \Bigr] \geq 
\mathbf{E}_p\left[ |K_v| I(v,K_v,r\bigr) \right] \geq \frac{1}{2}\chi_p,
\]
and hence that there exists $u$ with $d(u,v)\leq r$ such that $H_G(v,u)$ is proper and
\[
\mathbf{E}_p\left[ |K_v| \mathbbm{1}\bigl(K_v \subseteq H_G(v,u)\bigr)\right] \geq \frac{1}{2\sup_{v\in V}|B_G(v,r)|} \chi_p. \qedhere
\]
% as claimed.
\end{proof}

Before beginning the proof of \cref{thm:susceptibility}, let us recall the notion of \emph{Dini derivatives} and some elementary facts about their calculus, and then state Russo's formula. Let $f$ be a measurable function defined on $[0,1]$. The \textbf{lower right and upper right Dini derivatives} of $f$ are defined respectively to be
\[
\lrDini f(p) : = \liminf_{\eps\downarrow 0} \frac{f(p+\eps)-f(p)}{\eps} \quad \text{ and } \quad \urDini f(p):=\limsup_{\eps\downarrow 0} \frac{f(p+\eps)-f(p)}{\eps}.
\]
% If $f$ is 
% increasing then
% \[
% f(p_2)-f(p_1) \geq \int_{p_1}^{p_2} \urDini f(p) \dif p \geq \int_{p_1}^{p_2} \lrDini f(p) \dif p.
% \]
% Similarly, 
If $f$ is decreasing then
\[
f(p_2)-f(p_1) \leq \int_{p_1}^{p_2} \lrDini f(p) \dif p \leq \int_{p_1}^{p_2} \urDini f(p) \dif p
\]
for every $p_1<p_2$. Finally, we have that, similarly to the usual chain rule,
\[
\urDini \frac{1}{f(p)} = -\frac{1}{f(p)^2}\lrDini f(p).
\]
See e.g.\ \cite{MR1390758} for further background on Dini derivatives.

We now recall Russo's formula. Recall that, given an increasing event $A\subseteq \{0,1\}^E$ and a percolation configuration $G[p]$, an edge $e$ of $G$ is said to be \textbf{pivotal} for the event $A$  if $A$ occurs in the configuration obtained from $G[p]$ by changing the status of $e$ to be open, but $A$ does not occur in the configuration
obtained from $G[p]$ by changing the status of $e$ to be closed.
Suppose that $A$ is an increasing event depending on only finitely many edges. 
\textbf{Russo's formula} \cite[Theorem 2.25]{grimmett2010percolation} states that $\bP_p(A)$ is differentiable and that
\[
\frac{d}{dp} \bP_p(A)=\sum_{e\in E}\bP_p(e \text{ is pivotal for $A$}) = \frac{1}{1-p}\sum_{e\in E}\bP_p(e \text{ is closed and pivotal for $A$}).
\] 
On the other hand, if $A$ is an increasing event depending on \emph{infinitely} many edges, then Russo's formula ceases to be an equality in general, and we obtain instead the lower right Dini derivative lower bound
% $\bP_p(A)$ need not be differentiable
% However, we always have the bound 
\[
\lrDini \bP_p(A) \geq \sum_{e\in E}\bP_p(e \text{ is pivotal for $A$})
=\frac{1}{1-p}\sum_{e\in E}\bP_p(e \text{ is closed and pivotal for $A$}).
\]
See e.g.\ \cite[eq.\ 2.28]{grimmett2010percolation}.

\begin{proof}[Proof of \cref{thm:susceptibility}]
 % given \cref{lem:susceptibilityhalfspace}]
It suffices to show that there exists a positive constant $C$ such that
\begin{equation}
\label{eq:susceptibility_desired}
\lrDini \chi_p \geq C^{-1} \chi_p^2
\end{equation}
for all $p_c/2 \leq p <p_c$. The conclusion of the theorem will then follow 
since we can integrate this differential inequality to obtain that
\[
\chi_p^{-1} = -\left[\chi_{p_c}^{-1}-\chi_{p}^{-1}\right] \geq -\int_p^{p_c} \urDini \chi_p^{-1} dp = \int_p^{p_c} \chi_p^{-2} \lrDini \chi_p dp \geq C^{-1}(p_c-p)
\]
for every $p_c/2<p<p_c$. We conclude since 
\[
\overline{\chi}_p \leq  \sup_{v\in V} \left[\P([\rho]=[v])^{-1}\right]\chi_p
\] 
for every $p\in [0,1]$. 

We now begin to work towards a proof of a differential inequality of the form \eqref{eq:susceptibility_desired}. 
It follows from Russo's formula that
\[
\lrDini\chi_p(v) \geq \frac{1}{1-p}\sum_{e\in E^\rightarrow} \sum_{u\in V} \bP_p\Bigl(\{v \leftrightarrow e^-\}\cup\{ e^- \nleftrightarrow e^+\}\cup\{ e^+ \leftrightarrow u\}\Bigr)
\]
for every $0<p<p_c$ and $v\in V$. (In fact it is not difficult to show that $\chi_p(v)$ is differentiable on $(0,p_c)$ and that this inequality is an equality, but we will not need this.) Averaging over $v$ and applying the mass-transport principle, we deduce that
\begin{equation}
\label{eq:derivative}
\lrDini \chi_p \geq \frac{1}{1-p}\sum_{e^-=\rho} \sum_{u,v\in V} \bP_p\Bigl(\{v \leftrightarrow \rho\}\cup\{ \rho \nleftrightarrow e^+\}\cup\{ e^+ \leftrightarrow u\}\Bigr).
\end{equation}
% an expression which can be written more succinctly as
% \begin{equation}
% \label{eq:dchidp}
% \lrDini \chi_p \geq \frac{1}{1-p}\E_p\Bigl[ \deg(\rho) |K_\rho| |K_{X_1}| \mathbbm{1}\left(\rho \nleftrightarrow X_1\right)\Bigr].
% \end{equation}

Let the constants $r$ and $c$ be as in \cref{lem:susceptibilityhalfspace}. 
% For each $v,u\in V$ such that $H_G(v,u)$ is proper, it follows from \cref{lem:movinghalfspaces} that there exists an automorphism $\gamma_u \in \Aut(G)$ such that $\gamma_u H_G(v,u) \cap H_G(v,u)=\emptyset$.
 Since $G$ is quasi-transitive, there are only finitely many isomorphism classes of pairs $(v,u)$ with $d(u,v) \leq r$. Thus, by \cref{lem:movinghalfspaces}, there exists a constant $R$ such that for every $v,u$ such that $H_G(v,u)$ is proper and $d(u,v)\leq r$, there exists an automorphism $\gamma_{v,u}\in \Aut(G)$ such that $\gamma_{u,v} H_G(v,u) \cap H_G(v,u) = \emptyset$ and $d(v,\gamma_{v,u} v) \leq R$.
% we may take $\gamma_u$ such  $d(v,\gamma_u v)\leq R$ for every $u$. 

Now let $0<p<p_c$ and let $v,u$ be as in \cref{lem:susceptibilityhalfspace}. Write $H=H_G(v,u)$ and $\gamma=\gamma_{v,u}$. 
Noting that the events $\{|K_v|=n, K_v\subseteq H\}$ and $\{|K_{\gamma v}|=m, K_{\gamma v} \subseteq \gamma H\}$ depend on disjoint sets of edges and are therefore independent, we have that
\begin{equation}
\label{eq:independence}
% \E_p \left[ |K_v| \cdot |K_{\gamma v}| \cdot \mathbbm{1}( v \nleftrightarrow \gamma v)\right] \\\geq 
\bE_p \left[ |K_v| \cdot |K_{\gamma v}| \cdot \mathbbm{1}\bigl( K_v \subseteq H, K_{\gamma_v u} \subseteq \gamma H \bigr)\right] \geq c \chi_p^2.
\end{equation}
Let $\eta$ be the geodesic from $v$ to $\gamma v$, so that $\eta$ has length at most $R$.
Let $e$ be the first oriented edge of $G$ that is 
 crossed by $\eta$ and is in $\partial_E H$.
We claim that there exists a $p$-dependent constant $c_p$, bounded on compact subsets of $(0,1)$, such that
\begin{equation}
\label{eq:finite_energy}
\bE_p \left[ |K_{e^-}| \cdot |K_{e^+}| \cdot \mathbbm{1}( e^- \nleftrightarrow e^+)\right] \geq c_p
\bE_p \left[ |K_v| \cdot |K_{\gamma v}| \cdot \mathbbm{1}\bigl( K_v \subseteq H, K_{\gamma_v u} \subseteq \gamma H \bigr)\right]
\end{equation}

% \begin{figure}
% \centering
% \includegraphics[height=0.2\textwidth]{finite_energy.pdf}
% \caption{Schematic illustration of the finite modification used in the proof of \cref{thm:susceptibility}.}
% \end{figure}

\noindent
This follows by a standard finite energy argument: Given any percolation configuration in which $K_v\subseteq H$ and $K_{\gamma v} \subseteq \gamma H$, we can modify the configuration, changing the status only of edges that are either contained in or incident to $\eta$, in such a way that the following hold:
\begin{enumerate}
	\item The clusters of $v$ and $\gamma v$ are both at least as large after the modification as they were originally.
	\item In the modified configuration, $v$ is connected to $e^-$ and $\gamma v$ is connected to $e^+$, but $e^-$ is not connected to $e^+$.
\end{enumerate}
Indeed, to perform such a modification, decompose $\eta=\eta_1 \circ e \circ \eta_2$ into the pieces before and after crossing $e$, and make the following changes:
\begin{enumerate}
	\item Open every edge of $\eta_1$ and every edge of $\eta_2$, and close $e$.
	\item Close every edge incident to (but not contained in) $\eta_1$ that does not already have both endpoints connected to $v$ in the original configuration. 
	\item Close every edge incident to (but not contained in) $\eta_2$ that does not already have both endpoints connected to $\gamma v$ in the original configuration.
\end{enumerate}
It is easily verified that this modification has the required properties, and we deduce the claimed inequality \eqref{eq:finite_energy}. 
% Let $e$ be an edge in $\partial_E H$.
Putting together \eqref{eq:derivative}, \eqref{eq:independence}, and \eqref{eq:finite_energy}, and using the fact that $p_c(G)<1$, we deduce that there exists a constant $C$ such that
\[
\lrDini \chi_p \geq c c_p \P([\rho]=[e^-])  \chi_p^2 \geq C^{-1} \chi_p^2
\]
for all $p_c/2 \leq p \leq p_c$ as desired.   \qedhere

\end{proof}

\subsection{The expected number of connections to a far hyperplane at criticality}
\label{subsec:isoperimetriccondition}

In this section we prove \cref{prop:hypiTp}. The
rough idea of the proof is that, by \cref{prop:GromovMagic}, for any finite set $A$, most of $A$ is contained in one or two far-away half-spaces, and these half-spaces do not contribute much to the susceptibility. In order to carry out this idea, we will prove the following estimate. The exact form of the bound in \cref{prop:hyperplanedecay} is not important; we only need that the contribution to the susceptibility from a  distant half-space is uniformly small.

\begin{lemma}
\label{prop:hyperplanedecay}
Let $G=(V,E)$ be a unimodular, quasi-transitive Gromov hyperbolic graph, and suppose that $\Phi: V \to X$ is a $(\lambda,k)$-rough similarity from $G$ to some closed convex set $X \subseteq \mathbb{H}^d$ for some $d\geq 2$, $\lambda \in (0,\infty)$ and $k \in [0,\infty)$. Suppose further that $X$ is non-degenerate and is equal to the convex hull of its boundary. Then there exists a constant $C$ such that
 % positive constants $c$ and $C$ such that
\[
\mathbf{E}_{p} | K_v \cap \Phi^{-1} H | \leq \frac{C \chi_p}{d(\Phi(v), H)} 
\]
for every vertex $v$, every half-space $H\subseteq \mathbb{H}^d$, and every $0\leq p <p_c$.
\end{lemma}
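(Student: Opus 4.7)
The plan is to bound $\mathbf{E}_p|K_v \cap \Phi^{-1} H|$ by finding many pairwise disjoint automorphic images of $\Phi^{-1} H$, all at hyperbolic distance roughly $D=d(\Phi(v),H)$ from the same reference vertex, and then using the $\Aut(G)$-invariance of percolation together with disjointness. The geometric input is that in $\mathbb{H}^d$ a half-space at hyperbolic distance $D$ from a given point subtends visual angle of order $e^{-D}$, so by rotating $H$ about $\Phi(v)$ there is plenty of room (exponentially much) to produce a family of pairwise disjoint half-spaces $H^{(1)},\ldots,H^{(N)}\subseteq\mathbb{H}^d$ with $N \gtrsim D$, each at distance at least $D/2$ from $\Phi(v)$ and with a separation of at least $3(\lambda+k)$ between them. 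Using the non-degeneracy of $X$ (\cref{lem:nondegenerate2}) together with the hypothesis that $X$ is the convex hull of its boundary, I would ensure that each $H^{(i)}$ meets $X$ substantially so $\Phi^{-1}H^{(i)}$ is nonempty, and that the preimages $\Phi^{-1}H^{(i)}$ are pairwise disjoint in $V$ (using the rough similarity constants $\lambda,k$ to fix the separation).

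Next I would turn the geometric picture into a statement about graph automorphisms. Using \cref{lem:movinghalfspaces} together with the density of pole pairs of $\Aut(G)$ in $\delta G \times \delta G$ (\cref{prop:unimodularnofixed}) and the correspondence between discrete and continuous half-spaces supplied by \cref{lem:halspacesmaincomparison}, I would construct automorphisms $\gamma_1,\ldots,\gamma_N \in \Aut(G)$ with $\gamma_i(\Phi^{-1}H) \subseteq \Phi^{-1}H^{(i)}$ and $\gamma_i v$ lying in a ball of bounded radius around $v$. Each $\gamma_i$ should essentially act as (a power of) a hyperbolic isometry whose attracting fixed point sits deep inside $H^{(i)}$ on $\delta G$, the remaining freedom being used to control the displacement of $v$. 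Since $G$ is quasi-transitive, the bounded ball around $v$ meets only finitely many $\Aut(G)$-orbits, so by pigeonhole there is $I \subseteq \{1,\ldots,N\}$ with $|I| \gtrsim D$ and a single vertex $v_0$ such that $\gamma_i v = v_0$ for every $i \in I$.

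Finally, by the $\Aut$-invariance of percolation, $\mathbf{E}_p|K_{v_0} \cap \gamma_i(\Phi^{-1}H)| = \mathbf{E}_p|K_v \cap \Phi^{-1}H|$ for every $i \in I$. Since the sets $\gamma_i(\Phi^{-1}H) \subseteq \Phi^{-1}H^{(i)}$ are pairwise disjoint,
\[
|I| \cdot \mathbf{E}_p|K_v \cap \Phi^{-1}H| = \sum_{i \in I} \mathbf{E}_p|K_{v_0} \cap \gamma_i(\Phi^{-1}H)| \leq \chi_p(v_0) \leq \overline{\chi}_p,
\]
and since $|I| \gtrsim D$ and $\overline{\chi}_p \asymp \chi_p$ by quasi-transitivity and the mass transport principle, the claimed bound $\mathbf{E}_p|K_v \cap \Phi^{-1}H| \leq C \chi_p/D$ follows. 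The main obstacle is the construction in the second paragraph: arranging that each $\gamma_i$ both moves $H$ into the prescribed disjoint target $H^{(i)}$ and keeps $\gamma_i v$ in a bounded neighbourhood of $v$. This is where the interplay between the action of $\Aut(G)$ on $\delta G$ (via the density of pole pairs) and the coarse geometry of $G$ (via the rough similarity $\Phi$ and the comparison of discrete with continuous half-spaces) enters essentially, and non-degeneracy of $X$ is crucial for ensuring enough room on the boundary to realise all these rotated targets.
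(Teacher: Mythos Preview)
Your approach has a genuine gap in the second paragraph, and it is not a matter of missing details: the construction of the automorphisms $\gamma_i$ cannot be carried out in general. You need $N \gtrsim D$ automorphisms of $G$ that each map $\Phi^{-1}H$ into a prescribed disjoint target while all keeping $\gamma_i v$ inside a ball of radius $R$ around $v$ with $R$ independent of $D$. But the set $\{\gamma \in \Aut(G) : d(\gamma v, v) \le R\}$ is, up to the (finite) stabiliser of $v$, in bijection with the vertices of $B_G(v,R)$, so it has cardinality bounded by a constant depending only on $R$ and the degree bound. In particular, for a Cayley graph of a torsion-free hyperbolic group (where the action is free and $\Aut(G)$ is discrete, cf.\ the remark following \cref{thm:nonunimodular}), there are only $|B_G(v,R)|$ such automorphisms in total, independent of $D$. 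So you cannot produce $\gtrsim D$ of them, and the pigeonhole in the third paragraph has nothing to work with. The density of pole pairs does not help: a hyperbolic element with prescribed attracting and repelling fixed points still has positive translation length along its axis, so any power large enough to push $\Phi^{-1}H$ into the target $\Phi^{-1}H^{(i)}$ will also displace $v$ by a comparable amount. What you are implicitly asking for is a rich family of ``rotations about $v$'' in $\Aut(G)$, and a general quasi-transitive hyperbolic graph simply does not have those.

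The paper's proof takes a completely different route and does not attempt to move $H$ at all. Instead it looks at $n \asymp D$ hyperplanes $\partial H(x_i,x_{i+1})$ transverse to the geodesic from $\Phi(v)$ to $H$, and shows (\cref{lem:hyperplanesbranching}) that the union of their thickened preimages in $G$ is $R$-\emph{roughly branching}: one can attach to each point of this union an automorphism sending a fixed proper discrete half-space disjointly into it, in a tree-like fashion. A supermultiplicativity argument (\cref{lem:branchingbound}) then forces $\sum_{i=1}^n \mathbf{E}_{p_c}|K_v \cap K_i| \le C'$, so some layer $K_i$ satisfies $\mathbf{E}_{p_c}|K_v \cap K_i| \le C'/n$. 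Since every path from $v$ to $\Phi^{-1}H$ crosses $K_i$, the BK inequality gives $\mathbf{E}_p|K_v \cap \Phi^{-1}H| \le \mathbf{E}_{p_c}|K_v \cap K_i| \cdot \overline{\chi}_p \lesssim \chi_p/D$. The key difference is that the automorphisms used in the roughly-branching construction are allowed to move the basepoint arbitrarily far; the bound comes not from packing images of $H$ but from the tree-like branching structure that forces the two-point function to be summable over $K$.
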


Before proving \cref{prop:hyperplanedecay}, let us see how it implies \cref{prop:hypiTp}.

\begin{proof}[Proof of \cref{prop:hypiTp} given \cref{prop:hyperplanedecay}]
By \cref{lem:nondegenerate} there exists $d\geq 2$, $\lambda \in (0,\infty)$, $k\in [0,\infty)$ and a $(\lambda,k)$-rough similarity $\Phi:V\to X$ from $G$ to some closed, convex set $X \subseteq \mathbb{H}^d$ that is non-degenerate and equal to the convex hull of its boundary.
For each $\eps>0$, let $N(\eps)$ be as in \cref{prop:GromovMagic}. For each $0 < p < p_c$, let $\eps_p > 0$ be infimal such that $N(\eps) \leq (\overline{\chi}_p)^{1/2}$, so that $\eps_p\downarrow 0$ as $p\uparrow p_c$. Thus, for each $0< p <p_c$ and each finite set $A \subseteq V$ there exists a set $A' \subseteq A$ such that $|A'|\geq (1-2\eps_p)|A|$ and for each $u \in A'$ there exists either a half-space $H_{1,u}$ or a pair of half-spaces $H_{1,u},H_{2,u}$ in $\mathbb{H}^d$ such that $d(\Phi(u),H_{i,u}) \geq (2\eps_p)^{-1}$ and $|A \setminus \Phi^{-1} \bigcup H_{i,u} | \leq N(2\eps_p) \leq (\overline{\chi}_p)^{1/2}$. Thus, considering the various contributions to $\sum_{u,v \in A} \tau_p(u,v)$ we have that
\begin{align*}
\sum_{u,v\in A} \tau_p(u,v) &\leq \sum_{u\in A \setminus A', v \in V} \tau_p(u,v) + \sum_{u\in A', v\in A \setminus \Phi^{-1}  \bigcup H_{i,u}}\tau_p(u,v) + \sum_{u\in A', v \in \Phi^{-1}  \bigcup H_{i,u}} \tau_p(u,v)\\ 
&\leq |A \setminus A'| \overline{\chi}_p + |A'| N(2\eps_p) + \sum_{v\in A'} \bE_p \bigl| K_v \cap \Phi^{-1} \bigcup H_{i,u} \bigr|,
\end{align*}
and using \cref{prop:hyperplanedecay} to bound the third term we deduce that
\[
\sum_{u,v\in A} \tau_p(u,v) \leq 2\eps_p |A| \overline{\chi}_p + |A| \overline{\chi}_p^{1/2} + 4C\eps_p |A|\overline{\chi}_p,
\]
where $C$ is the constant from \cref{prop:hyperplanedecay}. It follows that
\[
\sup\left\{\frac{\sum_{u,v \in A}\tau_p(u,v)}{\overline{\chi}_p |A|} : A \subset V \text{ finite}\right\} \leq (\overline{\chi}_p)^{-1/2} + (2+4C)\eps_p,
\]
which tends to zero as $p\uparrow p_c$.
\end{proof}

We now begin to work towards the proof of \cref{prop:hyperplanedecay}, beginning with some general considerations.
Let $G$ be a quasi-transitive graph, and let $v_0$ be a fixed root vertex of $G$.  We say that a (not necessarily finite) set $K \subseteq V$ is \textbf{$r$-roughly branching} if it is empty or if there exists a subset $K' \subseteq K$ and a family of automorphisms $\{\gamma_v : v \in K'\} \subseteq \Aut(G)$ with the following properties:
\begin{enumerate}
	\item $K$ is contained in the $r$-neighbourhood of $K'$ in $G$.
	\item
	$d(\gamma_v v_0, v) \leq r$ for every $v\in K'$.
	\item For every $k\geq 1$, if $v_1,\ldots,v_k$ and $u_1,\ldots,u_k$ are two distinct sequences of elements of $K'$,  then $\gamma_{v_k} \circ \gamma_{v_{k-1}} \circ \cdots \circ \gamma_{v_1} v_0 \neq \gamma_{u_k} \circ \gamma_{u_{k-1}} \circ \cdots \circ \gamma_{u_1} v_0$.
\end{enumerate}
% To avoid trivialities, we also declare the empty set to be $r$-roughly branching for every $r\geq 0$.

For example, suppose that $T$ is a $k$-regular tree. Fix an orientation of $T$ so that every vertex has one parent and $k-1$ children and, for each vertex, fix an ordering of the children. For each two vertices $u,v$ of $T$, there is a unique automorphism $\gamma_{u,v}$ of $T$ that maps $u$ to $v$ and preserves all of the information regarding the orientation and the ordering of the children. If $v_0$ is a fixed root vertex of $T$ and $K$ is the set of descendants of $v_0$ that are $n$ generations below $v_0$, then $K$ is $0$-roughly branching, as can be seen by taking the automorphisms $\gamma_v=\gamma_{v_0,v}$. 

\begin{lemma}
\label{lem:branchingbound}
Let $G$ be a quasi-transitive graph of maximum degree $M$ and let $v_0$ be a fixed root vertex of $G$. If $K \subseteq V$ is $r$-roughly branching for some $r\geq 0$  then
\[
\sum_{v\in K} \tau_p(v_0,v) \leq \left(\frac{M}{p^2} \right)^r
\]
for every $0< p \leq p_c$.
\end{lemma}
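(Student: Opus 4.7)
The plan is to prove the bound in two steps. First I would establish that $S := \sum_{w \in K'} \tau_p(v_0, \gamma_w v_0) \leq 1$ for every $0 < p \leq p_c$, and then transfer this bound to $\sum_{v \in K} \tau_p(v_0, v)$ via a finite-energy argument that loses only a factor of $M^r/p^{2r}$.

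For the first step, set $\beta_i := \gamma_{v_1} \circ \cdots \circ \gamma_{v_i}$ (with $\beta_0$ the identity) and $b_i := \beta_i v_0$. Reversing the order of the sequence $\vec v = (v_1, \ldots, v_k)$ leaves condition~3 intact, so the map $\vec v \mapsto b_k = \gamma_{v_1} \circ \cdots \circ \gamma_{v_k} v_0$ is still an injection from $K'^k$ into $V$. Since $b_i = \beta_{i-1}(\gamma_{v_i} v_0)$ and $b_{i-1} = \beta_{i-1} v_0$, applying the automorphism $\beta_{i-1}^{-1}$ yields the key translation identity $\tau_p(b_{i-1}, b_i) = \tau_p(v_0, \gamma_{v_i} v_0)$. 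The Harris--FKG inequality applied to the increasing connection events $\{b_{i-1} \leftrightarrow b_i\}$, combined with $\bigcap_i \{b_{i-1} \leftrightarrow b_i\} \subseteq \{v_0 \leftrightarrow b_k\}$, gives $\tau_p(v_0, b_k) \geq \prod_{i=1}^k \tau_p(v_0, \gamma_{v_i} v_0)$. Summing over $\vec v \in K'^k$ and using the injectivity of $\vec v \mapsto b_k$ then gives
\[
S^k = \sum_{\vec v \in K'^k} \prod_{i=1}^k \tau_p(v_0, \gamma_{v_i} v_0) \leq \sum_{\vec v} \tau_p\bigl(v_0, b_k(\vec v)\bigr) \leq \chi_p(v_0) \leq \overline{\chi}_p.
\]
For $p < p_c$, sharpness of the phase transition gives $\overline{\chi}_p < \infty$, so taking $k \to \infty$ yields $S \leq 1$, and monotone convergence along $p_n \uparrow p_c$ handles the endpoint $p = p_c$.

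For the second step, I would use the standard finite-energy bound $\tau_p(u, v) \leq p^{-d(u,u')} \tau_p(u', v)$, obtained by forcing all edges along a geodesic between $u$ and $u'$ to be open and applying Harris--FKG to the (increasing) connection event. Applying this twice---once to move from each $v \in K$ to a chosen $w(v) \in K'$ with $d(v, w(v)) \leq r$ (property~1), and once more from each $w \in K'$ to $\gamma_w v_0$ at distance at most $r$ (property~2)---and grouping the resulting double sum by $w \in K'$ using the volume bound $|\{v \in K : w(v) = w\}| \leq |B_G(w, r)| \leq M^r$, I obtain
\[
\sum_{v \in K} \tau_p(v_0, v) \leq \frac{M^r}{p^r} \sum_{w \in K'} \tau_p(v_0, w) \leq \frac{M^r}{p^{2r}} \sum_{w \in K'} \tau_p(v_0, \gamma_w v_0) \leq \left(\frac{M}{p^2}\right)^r.
\]

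The hard part is the first step, and in particular the composition-ordering subtlety: the naive choice $b_i := \gamma_{v_i}(b_{i-1})$ (with $\gamma_{v_1}$ innermost, as written in the lemma) would produce conjugates $\beta_{i-1}^{-1} \gamma_{v_i} \beta_{i-1}$ in the translation identity rather than $\gamma_{v_i}$ itself, which breaks the factorization $\sum_{\vec v} \prod_i \tau_p(b_{i-1},b_i) = S^k$. Reversing the indexing---legitimate because condition~3 is invariant under reversal of the tuple---restores the clean identity $\tau_p(b_{i-1}, b_i) = \tau_p(v_0, \gamma_{v_i} v_0)$, after which FKG plus the injectivity of the composition map closes the loop.
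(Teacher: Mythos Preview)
Your proof is correct and follows essentially the same approach as the paper: establish $\sum_{w\in K'} \tau_p(v_0,\gamma_w v_0)\leq 1$ by supermultiplicativity and Harris--FKG, then transfer to $K$ at cost $(M/p^2)^r$. Your attention to the composition-ordering subtlety is well-placed and in fact clarifies a point the paper glosses over: with the paper's ordering $\gamma_i(\mathbf{u}) = \gamma_{u_i}\circ\cdots\circ\gamma_{u_1}$, the asserted identity $\tau_p(\gamma_{i-1}(\mathbf{u}) v_0,\, \gamma_{u_i}\gamma_{i-1}(\mathbf{u}) v_0) = \tau_p(v_0,\gamma_{u_i} v_0)$ would require a conjugation-invariance that need not hold, whereas your reversed ordering $\beta_i = \gamma_{v_1}\circ\cdots\circ\gamma_{v_i}$ gives $b_i=\beta_{i-1}(\gamma_{v_i} v_0)$ and makes the translation identity immediate.
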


\begin{proof}
 Let $K'$ and $\{\gamma_v : v\in K'\}$ be as in the definition of $K$ being $r$-roughly branching. 
 Let $k\geq 1$ and let $(K')^k$ be the set of sequences of elements of $K'$ of length $k$. 
 For each $\mathbf{u}=(u_1,\ldots,u_k) \in (K')^k$, let $\gamma(\mathbf{u}) = \gamma_{u_k} \circ \gamma_{u_{k-1}}\circ \cdots \circ \gamma_{u_1}$. For each $1\leq i \leq k$, let $\gamma_i(\mathbf{u}) = \gamma_{u_i} \circ \cdots \circ \gamma_{u_1}$, and let $\gamma_0(\mathbf{u})$ be the identity automorphism. Then, by the Harris-FKG inequality,
\[
\tau_p(v_0,\gamma(\mathbf{u}) v_0) \geq \prod_{i=1}^{k} \tau_p(\gamma_{i-1}(\mathbf{u}) v_0, \gamma_{i}(\mathbf{u}) v_0) =
\prod_{i=1}^{k} \tau_p(\gamma_{i-1}(\mathbf{u}) v_0, \gamma_{u_i} \gamma_{i-1}(\mathbf{u}) v_0) = 
\prod_{i=1}^{k} \tau_p(v_0, \gamma_{u_i} v_0).
\]
Since $\gamma(\mathbf{u}) v_0 \neq \gamma(\mathbf{u}') v_0$ for distinct $\mathbf{u},\mathbf{u}'\in (K')^k$, it follows that
\[
\chi_p(v_0) \geq \sum_{\mathbf{u}\in (K')^k} \tau_p(v_0,\gamma v_0) \geq  \sum_{\mathbf{u}\in (K')^k} \prod_{i=1}^k\tau_p(v_0,\gamma_{u_i} v_0) =  \left[ \sum_{u\in K'} \tau_p(v_0,\gamma_u v_0)\right]^k.
\]
When $p<p_c$, the left-hand side is finite and does not depend on $k$. Thus, we deduce that the right-hand side must be bounded in $k$, which implies that
\[
\sum_{u\in K'} \tau_p(v_0,\gamma_u v_0) \leq 1
\]
for every $0<p<p_c$. Since $\tau_p(u,v)$ is non-negative and left-continuous in $p$ for every $u,v\in V$ (see e.g.\ \cite[Lemma 5]{Hutchcroft2016944}) the same inequality also holds at $p=p_c$.
% Now, the definitions ensure that $K$ is contained in the $2r$-neighbourhood of $\{\gamma_u v_0 : u \in K'\}$. 

For each $v\in K$, let $u(v)\in K'$ be such that $d(v,u(v))\leq r$. Then $d(v,\gamma_{u(v)}v_0)\leq 2r$ and hence, by the Harris-FKG inequality,
\[
\tau_p(v_0,v)\leq \tau_p(v,\gamma_{u(v)}v_0))^{-1}\tau_p(v_0,\gamma_{u(v)}v_0)\leq p^{-2r}\tau_p(v_0,\gamma_{u(v)}v_0).
\]
% A straightforward Harris-FKG argument therefore implies that there exists a constant $C=C_G(r)$ such that
It follows that there exists a constant $C$ such that
\begin{equation*}
\sum_{u\in K} \tau_p(v_0,u) \leq p^{-2r} \sum_{u\in K'} \tau_p(v_0,\gamma_u v_0)|\{v\in K : u(v)=u\}| \leq M^r p^{-2r}
\end{equation*}
for every $0< p \leq p_c$.
\end{proof}

To apply \cref{lem:branchingbound} in our setting, we will show that certain sets arising as discrete approximations to hyperplanes in quasi-transitive Gromov hyperbolic graphs are roughly branching.

For the remainder of this section, we fix a unimodular, quasi-transitive, nonamenable, Gromov hyperbolic graph $G$ with fixed root vertex $v_0$, and take $\Phi: V \to X$ be a $(\lambda,k)$-rough similarity from $G$ to a closed, convex, non-degenerate set $X \subseteq \mathbb{H}^d$ that is equal to the convex hull of its boundary for some $\lambda \in (0,\infty)$, $k\in [0,\infty)$ and $d\geq 2$. Our next goal is to prove the following.

\begin{lemma}
\label{lem:hyperplanesbranching}
There exist positive constants $r$, $R$, and $C \geq k$ such that whenever $n\geq 1$ and $\Phi (v_0)=x_0,x_1,\ldots,x_n$ all lie on a geodesic in $\mathbb{H}^d$ and have $d(x_{0},x_{i})=4ri$ for every $1\leq i \leq n$, the set
\vspace{-0.2em}
\[
\vspace{-0.4em}
K=\bigcup_{i=1}^{n} \left\{ v \in V : d\left(\Phi(v), \partial H\left( x_i , x_{i+1} \right)\right) \leq C  \right\}
\]
 is $R$-roughly branching.
\end{lemma}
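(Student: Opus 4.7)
My plan is to construct explicitly the subset $K' \subseteq K$ and the family of automorphisms $\{\gamma_v : v \in K'\}$ witnessing the roughly branching property. Take $K' := K \cap [v_0]$, where $[v_0]$ is the orbit of $v_0$ under $\Aut(G)$; by quasi-transitivity there is a constant $r_0$ depending only on $G$ such that $K$ is contained in the $r_0$-neighbourhood of $K'$, which verifies condition~(1) when $R \geq r_0$. For each $v \in K'$ pick $\gamma_v \in \Aut(G)$ with $\gamma_v v_0 = v$, verifying condition~(2). The core of the proof is to establish condition~(3): distinct sequences in $(K')^k$ must give distinct images of $v_0$ under the corresponding compositions.

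To establish condition~(3), the plan is to push the compositions through the rough similarity $\Phi$ and analyse them in $\mathbb{H}^d$. For $v \in K'$ in the $j$-th slab, $\Phi(v)$ lies within $C$ of the bisector hyperplane $\partial H(x_j, x_{j+1})$, at hyperbolic distance $4rj + 2r$ from $x_0 \approx \Phi(v_0)$ along the geodesic $\ell$. Each $\gamma_v$ induces via $\Phi$ a rough isometry of $\Phi V \subseteq X$ taking $x_0$ to $\Phi(v)$, which can be approximated by a genuine isometry of $\mathbb{H}^d$ that essentially translates $x_0$ by $\approx 4rj + 2r$ along $\ell$. The trajectory $x_0, \Phi(\gamma_{v_1} v_0), \Phi(\gamma_{v_2} \gamma_{v_1} v_0), \ldots$ is then a piecewise-geodesic-like path whose successive legs are aligned with (translates of) $\ell$ and each have length at least $\approx 4r$. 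Choosing $r$ large enough that each leg dominates the per-step rough-similarity error, the thin-triangle property forces the endpoint to determine the final leg, and induction on sequence length $k$ peels off one leg at a time to recover the whole sequence.

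The main obstacle is controlling error accumulation across long compositions: each individual $\gamma_v$ introduces a bounded error when transported through $\Phi$, and over a composition of length $k$ these errors could in principle grow to blur the distinctness of compositions of distinct sequences. The remedy is to fix $r$ uniformly (depending on the rough-similarity scaling $\lambda$ and additive error $k$, the dimension $d$, and the Gromov hyperbolicity constant of $G$) so that the geometric gap of $4r$ per leg dominates the per-step error; the inductive peeling then incurs error only on a single leg per step. Non-degeneracy of $X$ (Lemma~\ref{lem:nondegenerate2}) enters crucially to distinguish compositions differing only in the specific vertex chosen within a given slab: non-degeneracy guarantees each hyperplane $\partial H(x_j, x_{j+1}) \cap X$ has genuinely $(d-1)$-dimensional local structure, so different vertex choices within a slab correspond to geometrically distinct perpendicular displacements from $\ell$ which are preserved, up to controlled rough-similarity error, by further compositions.
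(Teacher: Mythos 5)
The main gap in your proposal is the choice of $\gamma_v$. You pick an \emph{arbitrary} automorphism with $\gamma_v v_0 = v$, and then try to verify condition (3) by tracking the trajectory $\Phi(\gamma_{v_1}v_0), \Phi(\gamma_{v_2}\gamma_{v_1}v_0), \ldots$ as a piecewise-geodesic path in $\mathbb{H}^d$ whose legs are all roughly aligned with (translates of) $\ell$. But an automorphism of $G$ taking $v_0$ to $v$ is in no way constrained to behave like a translation along $\ell$ under $\Phi$: it may carry a large ``rotational'' component, so that successive legs of your trajectory point in wildly different directions. The thin-triangle/error-control argument you sketch requires the legs to be aligned, and with arbitrary $\gamma_v$ they need not be. Moreover, even were the legs aligned, the error accumulation across a composition of length $k$ is not obviously dominated by a fixed per-leg gap $4r$; your remedy ``the inductive peeling incurs error only on a single leg per step'' is asserted but not justified, and is exactly the sort of delicacy the paper's proof is engineered to avoid.

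The paper's proof evades all of this by being purely combinatorial once the setup is in place. It does not choose $\gamma_v$ arbitrarily: it factors $\gamma_v = \gamma_{2,v}\gamma_{1,v}$, where $\gamma_{1,v}v_0 = v$, and $\gamma_{2,v}$ (supplied by \cref{lem:movinghalfspaces}) is chosen so that $\gamma_v\bigl[H_G(u_0,v_0)\cup\{v_0\}\bigr] \subseteq H_G(u(v),v)$. Here $H_G(u_0,v_0)$ is a proper discrete half-space constructed at the start, and for distinct $v,v'\in K'$ the half-spaces $H_G(u(v),v)$ and $H_G(u(v'),v')$ are pairwise disjoint (and each is nested inside $H_G(u_0,v_0)$). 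With that nesting in hand, condition (3) follows by a clean induction on $k$: the images of $H_G(u_0,v_0)\cup\{v_0\}$ under distinct length-$k$ words are disjoint, with no error accumulation whatsoever. Also note that $K'$ is not simply $K\cap[v_0]$: the paper further prunes it to a $C_7$-separated subset of each slab, precisely so that the half-spaces $H_G(u(v),v)$ within a slab are disjoint, and the slab ``buffers'' $B_{4i\pm1}$, $B_{4i\pm2}$ keep half-spaces in different slabs disjoint. Non-degeneracy (\cref{lem:nondegenerate2}) is used not to ``distinguish perpendicular displacements'' directly, but to produce, for each $x$ near a bisector hyperplane, a boundary point $\xi(x)\in\delta X$ offset from the hyperplane, so that the geodesic from $x$ to $\xi(x)$ stays in $X$ and yields a genuine sub-half-space $H(y(x),x)\subseteq X$ contained in the slab. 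Without such a boundary point one cannot escape into a half-space at all, which is what goes wrong when $X$ is degenerate.
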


% \cref{lem:hyperplanesbranching} can be thought of as an elaboration of the usual proof via the Ping Pong Lemma that every nonelementary hyperbolic group has a nonabelian free subgroup.

The details regarding the choices of constants in the following proof can be dealt with via similar explicit computations to those performed in the proof of \cref{lem:hspacesHtoX}.

\begin{proof}

Since $G$ is quasi-transitive, there exists $C_1$ such that the distance from $u$ to the $\Aut(G)$-orbit of $v_0$ is at most $C_1$ for every $u\in V$. Since $\Phi$ is a rough similarity, it follows that there exists a constant $C_2$ such that for every $x\in X$, the distance in $\mathbb{H}^d$ between $x$ and the image under $\Phi$ of the orbit of $v_0$  is at most $C_2$. We will take the constant $C$ in \cref{lem:hyperplanesbranching} to be this $C_2$.

\begin{figure}
\centering
\includegraphics[width=0.7\textwidth]{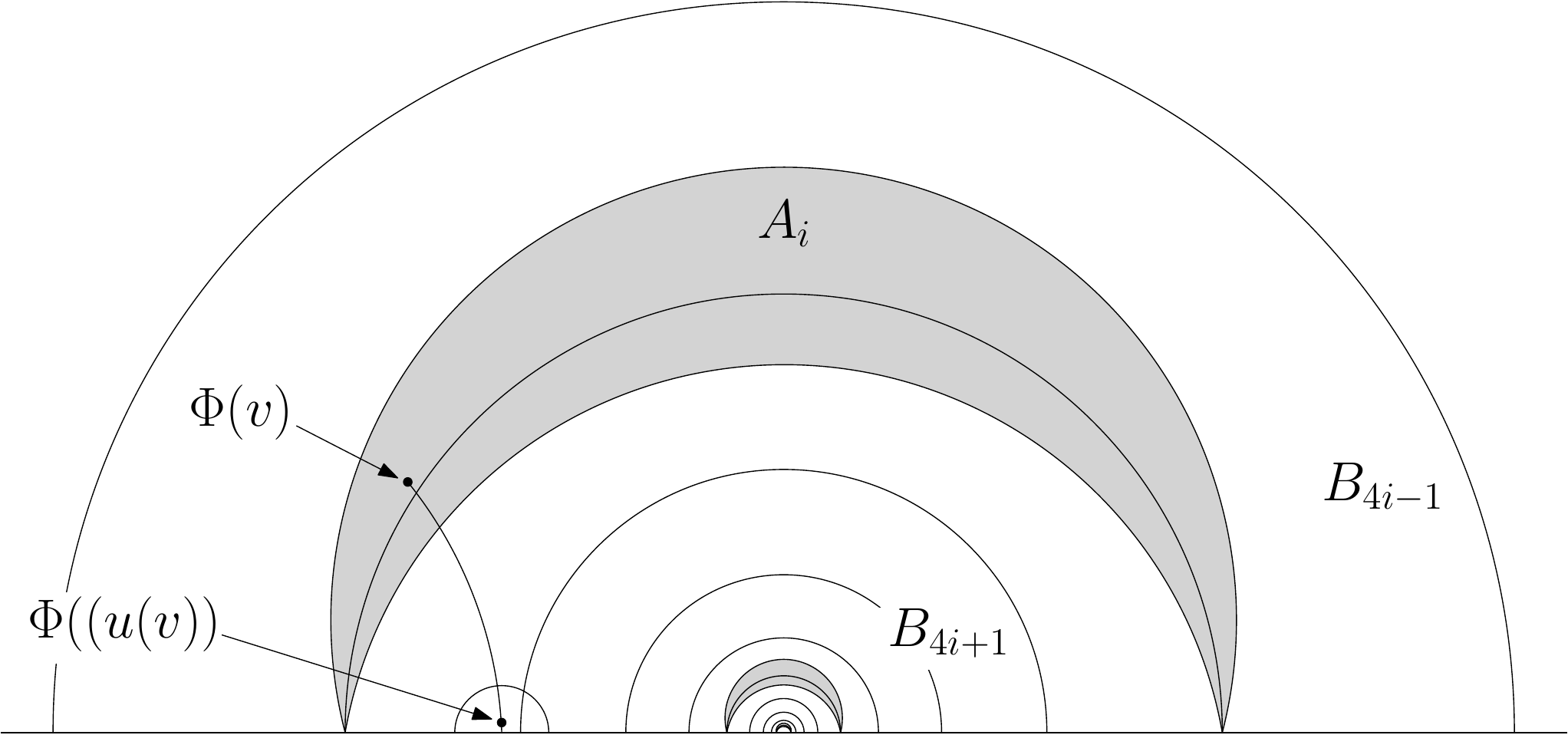}
\caption{Schematic illustration of the some of the sets arising in the proof of \cref{lem:hyperplanesbranching}.}
\label{fig:branching}
\end{figure}
% \medskip

 Let $A_i$ be the hyperbolic $C_2$-neighbourhood in $X$ of $\partial H(x_i,x_{i+1}) \cap X$ for each $1 \leq i \leq n$, and let $K_i=\Phi^{-1} A_i$, so that $K=\bigcup_{i=1}^n K_i$. If $K_i$ is empty then $K_{i+1}$ is also by choice of $C_2$ and by convexity, so we may assume without loss of generality that each of the sets $K_i$ is non-empty, reducing to a smaller value of $n$ or to a triviality otherwise. 
Using the Poincar\'e half-space model and applying an isometry of $\mathbb{H}^d$ if necessary, we may assume that $x_0=\Phi (v_0)=(0,\ldots,0,1)$ and $x_i=(0,\ldots,0,e^{-4ri})$ for each $i=1,\ldots,n$, so that the hyperplane $\partial H(x_i,x_{i+1})$ is represented by the Euclidean sphere orthogonal to $\R^{d-1}$ that has its highest point at $(0,\ldots,0,e^{-4ri-2})$. Let $B_i$ be the half-space in $\mathbb{H}^d$ that has its highest point at $(0,\ldots,0,e^{-ir-2r})$. There exists a constant $r_0$ such that if $r\geq r_0$ then 
$A_i \subseteq B_{4i-1} \setminus B_{4i+1}$ for each $1\leq i \leq n$, so that in particular the sets $A_i$ are disjoint. Since $\Phi^{-1} B_1$ contains $\Phi^{-1} A_1$ is therefore non-empty, we can apply \cref{lem:halspacesmaincomparison} to $B_1$ to deduce that there exist constants $r_1$ and $C_3$ such that if $r\geq r_1$ then there exists a proper discrete half-space $H_G(u_0,v_0)$ such that $H_G(u_0,v_0)$ contains $\Phi^{-1} B_1$ and $d(v_0,u_0)\leq C_3 r$. 

% \medskip

 Since $X$ is non-degenerate and equal to the convex hull of its boundary, we can apply \cref{lem:nondegenerate2} to deduce that there exists a constant $C_4$, that does \emph{not} depend on $r$, such that for each $x \in A_i$ there exists $\xi(x) \in \R^{d-1} \cap \delta X$ such that the Euclidean distance between $x$ and $\xi(x)$ is at most $C_4 x_d$ and the Euclidean distance between $\xi(x)$ and $A_i$ is at least $C_4^{-1}x_d$. It follows by a straightforward trigonometric calculation that there  exist constants $r_2$ and $C_{5}$ such that if $r \geq r_2$ and $x\in A_i$ satisfies 
$x_d\leq C_5^{-1} e^{-4ri-2r}$ 
 then the geodesic from $x$ to $\xi(x)$ is contained in $B_{4i-1} \setminus B_{4i+1}$ for every $1\leq i \leq n$ and $x\in A_i'$. We now fix $r=r_0 \vee r_1 \vee r_2$, and let $A_i'$ be the set of $x \in A_i$ that are in the orbit of $v_0$ and satisfy $x_d \leq C^{-1}_5 e^{-4ri-2r}$.

% \medskip

Let $C_{6}$ be a large constant to be chosen and let $y(x) \in X$ be the point that lies on the geodesic from $x$ to $\xi(x)$ and has $d_\mathbb{H}(x,y(x)) = C_{6}$. If $C_{6}$ is sufficiently large, then the half-space $H(y(x),x)$ is contained in $B_{4i-2} \setminus B_{4i+2}$ and has Euclidean distance at least $C_4^{-1} x_d/2$ from $A_i$ for each $1\leq i \leq n$ and $x\in A'_i$. Taking a suitably large constant $r_3$, we can find for each $1\leq i \leq n$ and $v\in \Phi^{-1} A_i'$ a vertex $u(v)$ such that $H_G(u(v),v)$ is proper, $d(v,u(v))\leq r_3$, and $H_G(u(v),v) \subseteq \Phi^{-1} H(y(\Phi(v)),\Phi(v)) \subseteq \Phi^{-1}(B_{4i+2}\setminus B_{4i-2})$. In particular, if $v_1\in A_i'$ and $v_2\in A_j'$ for $i\neq j$ then the discrete half-spaces $H_G(u(v_1),v_1)$ and $H_G(u(v_2),v_2)$ are disjoint. See \cref{fig:branching} for an illustration. 

% \medskip

If the constant $C_{7}$ is chosen to be sufficiently large then for every pair of distinct points  $x_1,x_2$ in $A_i'$ with $d(x_1,x_2)\geq C_{7}$, the  hyperbolic half-spaces $H(y(x_1),x_1)$ and $H(y(x_2),x_2)$ are disjoint. We take $K_i'$ to be a maximal subset of $K_i$ with the property that $\Phi v \in A_i'$ for every $v\in K_i'$ and such that $\Phi K_i'$ is $C_{7}$-separated, and define $K'=\bigcup_{i=1}^n K_i'$. 
 Thus, we have that there exists a constant $C_{8}$ such that
$K$ is contained in the $C_{8}$ neighbourhood of $K'$ in $G$, and for any two $v_1,v_2 \in K'$, the half-spaces $H_G(u(v_1),v_1)$ and $H_G(u(v_2),v_2)$ are disjoint.

% \medskip

Recall that $u_0$ was defined so that $d(v_0,u_0)\leq C_3 r$ and that $H_G(u_0,v_0)$ is a proper discrete half-space containing  $\Phi^{-1} B_1$. 
For each $v \in V$ that is in the same orbit of $\Aut(G)$ as $v_0$, let $\gamma_{1,v} \in \Aut(G)$ be such that $\gamma_{1,v} v_0=v$. Applying \cref{lem:movinghalfspaces}, we have that for every $v\in K'$ there exists an automorphism $\gamma_{2,v} \in \Aut(G)$ such that $\gamma_{2,v} \gamma_{1,v} (H_G(u_0,v_0) \cup \{v_0\})$ is contained in $H_G(u(v),v)$. Set $\gamma_v = \gamma_{2,v}\gamma_{1,v}$. 
Since there are only finitely many possibilities for the isomorphism class of the tuple $(v,u(v),\gamma_{1,v} v_0, \gamma_{1,v} u_0)$, we can choose the automorphisms $\gamma_{2,v}$ in such a way that there exists a constant $r_4$ such that $d(\gamma_v v_0,v) = d(\gamma_{2,v} v,v)\leq r_4$ for each $v\in  K'$. 

For each $k\geq 1$ and each sequence $\mathbf{v}=(v_1,v_2,\ldots,v_k) \in (K')^k$, let $\gamma(\mathbf{v})=\gamma_{v_k}\circ \cdots \circ \gamma_{v_1}$. 
To complete the proof, it suffices to prove that $\gamma(\mathbf{v}) v_0 \neq \gamma(\mathbf{v}') v_0$ for every $k\geq 1$ and every two distinct $\mathbf{v},\mathbf{v}'\in (K')^n$. 
To do this, we will prove the following claim by induction on $k\geq 1$:
\begin{multline}
\label{eq:bad_typesetting}
 \text{The sets $\gamma(\mathbf{v})\left[H_G(u_0,v_0) \cup \{v_0\} \right]$ and $\gamma(\mathbf{v}')\left[H_G(u_0,v_0) \cup \{v_0\} \right]$}\\\text{are disjoint whenever $\mathbf{v},\mathbf{v}'$ are distinct elements of $(K')^k$.}
\end{multline}
First observe that, by definition of $\gamma_v$, we have that $\gamma_v \left[H_G(u_0,v_0) \cup \{v_0\} \right] \subseteq H_G(u(v),v)$ for every $v\in K'$, and since $H_G(u(v),v) \subseteq H_G(u_0,v_0)$, it follows by induction that
\begin{equation}
\label{eq:induction_middle_step}
\gamma(\mathbf{v}) \left[H_G(u_0,v_0) \cup \{v_0\} \right] \subseteq H_G(u(v_k),v_k)) \subseteq H_G(u_0,v_0) 
\end{equation}
for every $k\geq 1$ and $\mathbf{v}\in (K')^k$. 
Since $H_G(u(v),v)$ and $H_G(u(v'),v')$ are disjoint whenever $v,v'\in K'$ are distinct, this immediately establishes \eqref{eq:bad_typesetting} in the base case $k=1$. Now suppose that \eqref{eq:bad_typesetting} has been established for sequences of length $k$, and suppose that $\mathbf{v},\mathbf{v'} \in (K')^{k+1}$ are distinct.
 If $v_{k+1} \neq v'_{k+1}$, then it follows from \eqref{eq:induction_middle_step} that $\gamma(\mathbf{v}) \left[H_G(u_0,v_0) \cup \{v_0\} \right]  \subseteq H_G(u(v_{k+1}),v_{k+1})$ and $\gamma(\mathbf{v}') \left[H_G(u_0,v_0) \cup \{v_0\} \right]  \subseteq H_G(u(v'_{k+1}),v'_{k+1})$ are disjoint as desired. 
 Otherwise, $v_{k+1}=v'_{k+1}$ and the sequences $\hat{\mathbf{v}}=(v_1,\ldots,v_{k})$ and $\hat{\mathbf{v}} ' = (v'_1,\ldots,v'_{k})$ are distinct. In this case, it follows by the induction hypothesis that 
$\gamma( \hat{\mathbf{v}}) \left[H_G(u_0,v_0) \cup \{v_0\} \right]$ and $\gamma(\hat{\mathbf{v}}') \left[H_G(u_0,v_0) \cup \{v_0\} \right]$ are disjoint, and hence that the sets
$
\gamma( \mathbf{v}) \left[H_G(u_0,v_0) \cup \{v_0\} \right]=\gamma_{v_{k+1}}
\gamma( \hat{\mathbf{v}}) \left[H_G(u_0,v_0) \cup \{v_0\} \right]$ and $\gamma( \mathbf{v}') \left[H_G(u_0,v_0) \cup \{v_0\} \right]=\gamma_{v_{k+1}}
\gamma( \hat{\mathbf{v}}') \left[H_G(u_0,v_0) \cup \{v_0\} \right]$ are disjoint also. This completes the induction.
\end{proof}

\begin{proof}[Proof of \cref{prop:hyperplanedecay}]
Let $C$ and $r$ be as in \cref{lem:hyperplanesbranching}.  Let $v_0$ be a fixed root vertex of $G$. Since $G$ is quasi-transitive, it suffices to prove the claim with constants that may a priori depend on the choice of $v_0$. Let $H$ be a half-space in $\mathbb{H}^d$, and let $n=\lfloor d(\Phi(v_0),H)/4r\rfloor$. Let $\Phi(v_0)=x_0,x_1,\ldots,x_n \in \mathbb{H}^d$ lie on the geodesic from $\Phi(v_0)$ to $H$ and have $d(x_0,x_{i})=4ri$ for each $1\leq i \leq n$. Let
$K_i = \{ u\in V : d(u,\partial H(x_i,x_{i+1})) \leq C \}$.
 By \cref{lem:hyperplanesbranching} and \cref{lem:branchingbound}, there exists a constant $C'$ such that
\[
\sum_{i=1}^n \mathbf{E}_{p_c} |K_{v_0} \cap K_i|  \leq C',
\]
and hence that there exists some $1\leq i \leq n$ such that
$\mathbf{E}_{p_c} |K_{v_0} \cap K_i|  \leq C'n^{-1}$.
Since any path from $v_0$ to $\Phi^{-1} H$ must pass through the set $K_i$ for every $1\leq i \leq n$, it follows by the BK inequality that
\[
\mathbf{E}_{p} |K_{v_0} \cap \Phi^{-1} H |  \leq \mathbf{E}_{p_c} |K_{v_0} \cap K_i|  \overline{\chi}_p \leq C \overline{\chi}_p d(\Phi(v_0),H)^{-1}
\]
as desired.
\end{proof}

\label{subsec:criticaldecay}

\section{Closing discussion and open problems}

\label{sec:closing}

It is natural to conjecture that the following strengthening of \cref{conj:pcpu,conj:triangle} also holds. 

\begin{conjecture}
\label{conj:pell2}
Let $G$ be a connected, locally finite, quasi-transitive, nonamenable graph. Then $p_c(G)<p_{q\to q}(G)$ for every $q\in (1,\infty)$.
\end{conjecture}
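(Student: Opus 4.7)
The plan is to establish the stronger statement $p_c(G)<p_{2\to 2}(G)$, which by \cref{prop:2to2givesqtoq} yields $p_c(G)<p_{q\to q}(G)$ for every $q\in(1,\infty)$. By \cref{prop:criterion}, this reduces to proving
\[
\liminf_{p\uparrow p_c}\frac{p_c-p}{1-p}\,\overline{\chi}_p\sqrt{1-\iota(T_p)^2}<\frac{1}{\|A\|_{2\to 2}},
\]
and since $p_c(G)<1$ and $\|A\|_{2\to 2}<\infty$ on any connected, locally finite, nonamenable, quasi-transitive graph, it suffices to verify the two estimates that were the crux of the hyperbolic case: the mean-field susceptibility bound $(p_c-p)\overline{\chi}_p=O(1)$, and the isoperimetric-type statement $\iota(T_p)\to 1$ as $p\uparrow p_c$.

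For the susceptibility bound I would try to generalise the differential-inequality argument of \cref{subsec:susceptibility}. The hyperbolic argument's geometric input was \cref{cor:emptyhalfspace}, which produced, for a typical vertex $v$ of a finite connected set $A$, a proper discrete half-space $H_G(v,u)$ containing $A$ and admitting, via \cref{lem:movinghalfspaces}, an automorphism shifting it disjointly from itself. In the general nonamenable setting the natural substitute is to use the positive Cheeger constant: by a mass-transport argument, a typical vertex $v$ of a large finite connected $A$ has many edges of $\partial_E A$ in its vicinity, and one would then want to pair $K_v$ with a decorrelated copy of itself on the far side of such an edge. Making this uniform will require a replacement for the ``disjoint shift'' step; plausible sources include the Indistinguishability Theorem of Lyons and Schramm, an invariant percolation with good subadditivity properties, or a direct appeal to ergodic ingredients such as cost.

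For the isoperimetric statement $\iota(T_p)\to 1$, the two-step strategy of \cref{subsec:isoperimetriccondition} would need to be adapted. First, one would need an abstract ``Magic Lemma for nonamenable quasi-transitive graphs'' asserting that for every finite $A\subseteq V$, most vertices of $A$ see all but a bounded number of elements of $A$ inside a bounded number of ``coarse cuts'' located at arbitrarily large distance. Second, one would need a decay estimate of the form $\mathbf{E}_p|K_v\cap H|\le C\overline{\chi}_p/d(v,\partial H)$ for such a coarse cut $H$. The latter should follow from an analog of \cref{lem:hyperplanesbranching} showing that the coarse cuts in question support roughly branching subsets in the sense introduced before that lemma; in the quasi-transitive setting a natural source of such branching is the free subgroup structure guaranteed by nonamenability, for example via Gaboriau--Lyons in the unimodular case.

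The main obstacle is the absence of any concrete large-scale geometric model analogous to the Bonk--Schramm embedding into $\mathbb{H}^d$. In the hyperbolic case the embedding allowed us to pull back the Magic Lemma of Benjamini and Schramm from Euclidean space, after which \cref{lem:halspacesmaincomparison} converted continuum half-spaces into usable discrete ones. Without such an embedding, the required Magic-Lemma-type statement -- a purely combinatorial assertion that typical finite subsets of a nonamenable quasi-transitive graph are well covered by a bounded number of well-separated coarse cuts -- appears genuinely hard and will likely require a new ingredient, perhaps ergodic-theoretic in nature or drawing on Furstenberg-type boundary theory for the underlying group. I expect this step, rather than the differential inequality for $\chi_p$, to be the essential difficulty in any full resolution of \cref{conj:pell2}.
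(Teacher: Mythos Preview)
This statement is \cref{conj:pell2}, which is stated in the paper as an \emph{open conjecture} in the closing discussion; the paper does not prove it. The paper proves the conclusion only under the additional hypothesis of Gromov hyperbolicity (\cref{thm:pell2}), and the nonunimodular case separately via \cref{thm:nonunimodular}. There is therefore no ``paper's own proof'' to compare against.

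Your proposal is not a proof but a research outline, and you are candid about this. You correctly identify the reduction via \cref{prop:2to2givesqtoq} and \cref{prop:criterion} to the two estimates \eqref{eq:intro1} and \eqref{eq:intro2}, and you correctly isolate the central obstacle: the hyperbolic argument relies essentially on the Bonk--Schramm embedding to import the Benjamini--Schramm Magic Lemma and to produce the half-space structure underlying both \cref{cor:emptyhalfspace} and \cref{lem:hyperplanesbranching}. No substitute for this geometric input is known in the general nonamenable quasi-transitive setting, and the paper explicitly leaves \cref{conj:pell2} open. Your suggestions (cost, Gaboriau--Lyons, boundary theory) are reasonable directions but none is known to yield the required combinatorial Magic-Lemma analogue or the roughly-branching structure for coarse cuts. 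In short, your assessment of where the difficulty lies matches the paper's, but the proposal does not close the gap and should not be presented as a proof.
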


By \cref{prop:2to2givesqtoq}, this conjecture is equivalent to the statement that $p_c<p_{2\to 2}$ under the same hypotheses.
We remark that the perturbative proofs of $p_c<p_u$ in \cite{bperc96,MR3005730,MR1756965,MR1833805} also implictly establish the stronger result $p_c<p_{2\to2}$, so that in particular every nonamenable finitely generated group has a Cayley graph with $p_c<p_{2\to2}$. By \cref{thm:nonunimodular}, to prove \cref{conj:pell2} it would suffice to consider the unimodular case. The proofs of \cite{MR1614583,BS00,MR2221157,MR1757952,MR3009109} establish $p_c<p_u$ without establishing $p_c<p_{2\to2}$. Further consequences of \cref{conj:pell2} are investigated in \cite{Hutp2to2}.

The following conjecture would further strengthen \cref{conj:pell2}. The difficult part of the conjecture should be to prove that $p_{q\to q}<p_{2\to 2}$ for every $q\in [1,2)$. It can be deduced from the methods of \cite{1712.04911} that \cref{conj:pqtoq} holds for the product of finitely many trees each of which is regular of some degree $\geq 3$. 

\begin{conjecture}
\label{conj:pqtoq}
Let $G$ be a connected, locally finite, quasi-transitive, nonamenable graph. Then $p_{q\to q}(G)$ is a continuous, strictly increasing function of $q$ on $[1,2]$.
\end{conjecture}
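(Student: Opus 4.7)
The plan is to combine the log-convexity of $\|T_p\|_{q\to q}$ in $1/q$ (from the Riesz--Thorin theorem, noted after \eqref{eq:conjugate}) with the Aizenman--Barsky-type lower bound of \cref{cor:l2AizBar} and \cref{conj:pell2} taken as a hypothesis, in the spirit of the proof of \cref{prop:2to2givesqtoq}.

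First I would address continuity. For each fixed $p$, the function $s\mapsto\log\|T_p\|_{1/s\to 1/s}$ is convex on $[0,1]$ and, by \eqref{eq:conjugate}, symmetric about $s=1/2$. Convexity implies it is continuous on the interior of its effective domain, and it is jointly left-continuous in $p$ (from monotone convergence and the fact that the entries of $T_p$ are left-continuous in $p$). This joint control should suffice to deduce continuity of $q\mapsto p_{q\to q}$ on $[1,2]$: upper semicontinuity follows from interpolating at any $p$ with $\|T_p\|_{q_0\to q_0}<\infty$ between $q_0$ and a nearby $q$, while lower semicontinuity follows from the fact that $\|T_{p_{q\to q}}\|_{q\to q}=\infty$ (\cref{cor:l2AizBar}) combined with the same interpolation estimate.

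For strict monotonicity, the approach is to mimic the argument in \cref{prop:2to2givesqtoq}. Fix $1\leq q_1<q_2\leq 2$. Riesz--Thorin interpolation between $q_1$ and $2$ yields
\[
\|T_p\|_{q\to q}\leq\|T_p\|_{q_1\to q_1}^{1-\theta}\|T_p\|_{2\to 2}^\theta,\qquad \tfrac{1}{q}=\tfrac{1-\theta}{q_1}+\tfrac{\theta}{2},
\]
for each $q\in(q_1,2]$. Suppose one could establish (i) a mean-field upper bound $\|T_p\|_{q_1\to q_1}\leq C(p_{q_1\to q_1}-p)^{-1}$ as $p\uparrow p_{q_1\to q_1}$, and (ii) the strict inequality $p_{q_1\to q_1}<p_{2\to 2}$ (so that $\|T_{p_{q_1\to q_1}}\|_{2\to 2}<\infty$). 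Then, exactly as in the proof of \cref{prop:2to2givesqtoq},
\[
\liminf_{p\uparrow p_{q_1\to q_1}}(p_{q_1\to q_1}-p)\|T_p\|_{q\to q}\leq C^{1-\theta}\|T_{p_{q_1\to q_1}}\|_{2\to 2}^\theta\liminf_{p\uparrow p_{q_1\to q_1}}(p_{q_1\to q_1}-p)^{\theta}=0,
\]
and \cref{cor:l2AizBar} would give $p_{q\to q}>p_{q_1\to q_1}$ for every $q\in(q_1,2]$, yielding strict monotonicity on $[1,2]$.

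The main obstacle is ingredient (i), a mean-field bound on $\|T_p\|_{q_1\to q_1}$ at its own critical value for $q_1\in(1,2)$. For $q_1=1$ this is the classical triangle-condition bound $\overline\chi_p\leq C(p_c-p)^{-1}$ obtained by the Aizenman--Newman differential-inequality argument, and ingredient (ii) for $q_1=1$ is exactly \cref{conj:pell2}. For $q_1>1$, however, $\|T_p\|_{q_1\to q_1}$ lacks a direct probabilistic interpretation as natural as $\|T_p\|_{1\to 1}=\overline\chi_p$, and no analogous differential inequality is known. One natural route would be to adapt the Aizenman--Newman argument working directly with the sub-stochastic, symmetric Markov structure of $\overline\chi_p^{-1}T_p$ discussed around \cref{prop:criterion}, trying to produce differential inequalities satisfied by suitable $L^q$-type functionals of this operator. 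A complementary route would be to use the polygon-diagram characterization of $\|T_p\|_{2\to 2}$ in \cref{lem:diagrams} and estimates on the iterated kernels $T_p^n$ to bootstrap from $p_c<p_{2\to 2}$ (Conjecture \ref{conj:pell2}) to $p_{q\to q}<p_{2\to 2}$ for all $q\in[1,2)$, deducing strict monotonicity on the open interval first and then extending to the endpoints. Either path appears to require substantial new ideas beyond the techniques developed here.
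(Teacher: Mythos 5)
This is stated as a \emph{conjecture} in the paper, not a theorem: the paper gives no proof, and immediately after the statement remarks that ``the difficult part of the conjecture should be to prove that $p_{q\to q}<p_{2\to 2}$ for every $q\in[1,2)$,'' adding only that the methods of \cite{1712.04911} settle it for products of regular trees. You have essentially rediscovered this assessment: your sketch of strict monotonicity reduces to exactly the two ingredients (i) a mean-field bound on $\|T_p\|_{q_1\to q_1}$ near $p_{q_1\to q_1}$ and (ii) the strict separation $p_{q_1\to q_1}<p_{2\to 2}$, and you correctly note that for $q_1>1$ neither is currently available, so the argument stalls at the same place the paper identifies. That the ``proof'' does not close is not a defect here; there is nothing in the paper to reproduce.

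One concrete gap worth flagging, though, is the continuity sketch. Log-convexity of $s\mapsto\log\|T_p\|_{1/s\to 1/s}$ combined with finiteness at $q_0$ only gives finiteness of $\|T_p\|_{q\to q}$ on the interval between $q_0$ and its conjugate exponent $q_0/(q_0-1)$; it gives no control to the left of $q_0$ on $[1,2]$. A convex function on an interval can perfectly well be finite on $[q_0,2]$ and identically $+\infty$ on $[1,q_0)$, so ``interpolating between $q_0$ and a nearby $q$'' does not rule out a jump of $p_{q\to q}$ from the left at $q_0$, and $\|T_{p_{q\to q}}\|_{q\to q}=\infty$ (\cref{cor:l2AizBar}) does not by itself repair this. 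So even the continuity half of the conjecture does not follow from what you have written, and you should not present it as if it were a soft consequence of Riesz--Thorin; it appears to require a genuine quantitative input (for instance, something like the mean-field bound you posit in ingredient (i)) just as the monotonicity half does.

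Finally, a small orientation remark: the reduction you use in the monotonicity step is precisely the proof of \cref{prop:2to2givesqtoq} with $p_c=p_{1\to 1}$ replaced by $p_{q_1\to q_1}$, which is a sensible way to phrase the open problem, but \cref{conj:pell2} itself (which you invoke as a hypothesis) only handles the $q_1=1$ case. So what your proposal really shows is that \cref{conj:pqtoq} would follow from a ``$q_1$-analogue'' of \cref{conj:pell2} together with a $q_1$-analogue of the Aizenman--Newman bound; recording that implication cleanly would be a modest but legitimate contribution, whereas claiming a proof would not.
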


\subsection{Non-uniqueness at $p_{2\to2}$.}

We remark that several estimates concerning critical percolation that are proven using supermultiplicativity arguments as in \cite{Hutchcroft2016944,1712.04911} can easily be adapted to establish related estimates at $p_{2\to2}$ and $p_{q\to q}$. 
For example, the methods of \cite{Hutchcroft2016944} can easily be generalized to establish the following.

\begin{prop}
\label{prop:p2to2kappa}
Let $G$ be a quasi-transitive graph, let $\operatorname{gr}(G) :=\limsup_{n\to\infty} |B(v,n)|^{1/n}$, and let $q\in [2,\infty]$. Then
\[\kappa_p(n):=\inf\left\{\tau_p(u,v) : d(u,v) \leq n\right\} \leq \operatorname{gr}(G)^{-(q-1)n/q}\]
for every $0\leq p \leq p_{q\to q}(G)$.
\end{prop}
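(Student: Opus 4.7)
The proof follows the general strategy of \cite{Hutchcroft2016944}, combining the supermultiplicativity of $\kappa_p$ coming from Harris--FKG with a duality bound relating $\kappa_p$ to $\|T_p\|_{q\to q}$, and eliminating the constant by a Fekete argument.

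First, by the Harris--FKG inequality, $\tau_p(u,w)\geq \tau_p(u,v)\tau_p(v,w)$ for every $u,v,w\in V$. Choosing $v$ at the midpoint of a geodesic of length $n+m$ between a pair of vertices nearly achieving $\kappa_p(n+m)$ yields the supermultiplicativity $\kappa_p(n+m)\geq \kappa_p(n)\kappa_p(m)$.

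Next, fix $p<p_{q\to q}(G)$, so that $\|T_p\|_{q\to q}<\infty$, let $v$ be any vertex, and let $q'=q/(q-1)$ be the conjugate exponent. Testing the operator norm with $f=\mathbbm{1}_{B(v,n)}$ and $g=\mathbbm{1}_{\{v\}}$, H\"older's inequality gives
\[
|B(v,n)|\,\kappa_p(n) \leq \sum_{u\in B(v,n)}\tau_p(v,u) = \langle T_p f,g\rangle \leq \|T_p\|_{q\to q}\,\|f\|_q \|g\|_{q'} = \|T_p\|_{q\to q}|B(v,n)|^{1/q},
\]
so that $\kappa_p(n)\leq \|T_p\|_{q\to q}|B(v,n)|^{-(q-1)/q}$. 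Note that it is crucial to put the ball on the $L^q$ side so that the ball volume appears with the larger exponent $(q-1)/q$; swapping the roles of $f$ and $g$ would give a weaker bound when $q>2$. Applying this inequality at scale $kn$ and invoking supermultiplicativity $\kappa_p(n)^k\leq \kappa_p(kn)$, I obtain
\[
\kappa_p(n) \leq \|T_p\|_{q\to q}^{1/k}\,|B(v,kn)|^{-(q-1)/(qk)}
\]
for every $k\geq 1$. In the quasi-transitive setting $|B(v,m)|$ is submultiplicative up to a multiplicative constant absorbing the finitely many orbit types, so by Fekete's lemma $|B(v,m)|^{1/m}$ converges (and not merely as a $\limsup$) to $\operatorname{gr}(G)$. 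Sending $k\to\infty$ in the above yields the desired inequality $\kappa_p(n)\leq \operatorname{gr}(G)^{-(q-1)n/q}$.

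Finally, to extend the bound to $p=p_{q\to q}$, I would invoke the left-continuity of $\tau_p$ in $p$ (used already in the proof of \cref{lem:branchingbound}), which implies left-continuity of $\kappa_p(n)$ in $p$ for each fixed $n$, so that the bound passes to the left limit. The proof is overall quite direct; the main conceptual point is the choice of duality pairing yielding the ball-volume exponent $(q-1)/q$, while the one mildly delicate calculation is the verification that $|B(v,m)|^{1/m}$ genuinely converges in the quasi-transitive case.
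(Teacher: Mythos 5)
Your proof is correct and follows essentially the same route as the paper: the same H\"older duality bound $\kappa_p(n)|B(v,n)| \leq \|T_p\|_{q\to q}|B(v,n)|^{1/q}$, the same supermultiplicativity of $\kappa_p$ via FKG, a Fekete-type passage to eliminate the constant $\|T_p\|_{q\to q}$, and left-continuity of $\tau_p$ to reach $p=p_{q\to q}$. The one small difference is that you invoke Fekete for the ball growth $|B(v,m)|^{1/m}$ (requiring the standard fact that ball volume is submultiplicative up to constants in the quasi-transitive setting), whereas the paper applies Fekete directly to $\kappa_p(n)$, writing $\sup_n \kappa_p(n)^{1/n} = \lim_n \kappa_p(n)^{1/n} \leq \liminf_n \|T_p\|_{q\to q}^{1/n}|B(v,n)|^{-(q-1)/(qn)}$; that version needs only the $\limsup$ in the definition of $\operatorname{gr}(G)$ and avoids having to verify genuine convergence of the growth rate. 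Both arguments are valid; the paper's is marginally more self-contained.
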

The corresponding estimate at $p_c$ states that $\kappa_{p_c}(n)\leq \operatorname{gr}(G)^{-n}$ \cite[Theorem 1.2]{Hutchcroft2016944}.

\begin{proof} By H\"older's inequality we have that
\[
\kappa_p(n) |B(v,n)| \leq \langle T_p \mathbbm{1}_{B(v,n)}, \mathbbm{1}_v \rangle
\leq \|T_p \mathbbm{1}_{B(v,n)}\|_q \|\mathbbm{1}_v\|_{\frac{q}{q-1}}
 \leq \|T_p\|_{q\to q} |B(v,n)|^{1/q},
\]
where the second inequality is by Cauchy-Schwarz. By \cite[Lemma 4]{Hutchcroft2016944} the sequence $\kappa_p(n)$ is supermultiplicative for each $p\in [0,1]$, and it follows by Fekete's Lemma that
\[
\sup_{n\geq 1} \kappa_p(n)^{1/n} = \lim_{n\to\infty} \kappa_p(n)^{1/n} \leq \liminf_{n\to\infty} \|T_p\|_{q \to q}^{1/n} |B(v,n)|^{-(q-1)/qn} \leq \operatorname{gr}(G)^{-(q-1)/q}
\]
for every $0\leq p<p_{q \to q}$. It follows from \cite[Lemma 5]{Hutchcroft2016944} that this estimate still holds at $p_{q \to q}$.
\end{proof}

Note that \cref{prop:p2to2kappa} also implies that $p_{q \to q}(G)\leq \operatorname{gr}(G)^{-(q-1)/q}<1$ for every quasi-transitive nonamenable graph $G$ and every $q\in [2,\infty]$. This bound and that of \cref{prop:p2to2kappa} are both attained for all $q\in [2,\infty]$ when $G$ is a $k$-regular tree for some $k\geq 3$.

Finally, we remark that the methods of \cite{1712.04911} can be used to show that the estimate known as \emph{Schramm's Lemma} continues to apply at $p_{2\to2}$.  The fact that this bound holds at $p_c$ was originally proven for unimodular transitive graphs by Schramm, see \cite{kozma2011percolation}.

\begin{prop}
\label{prop:nonuniquenessatpell2}
Let $G$ be a connected, locally finite, transitive, nonamenable graph, let $X$ be simple random walk on $G$, and let $\rho(G)<1$ be the spectral radius of $G$. Then
\[
\E\left[ \tau_p(X_0,X_n) \right] \leq \rho(G)^{n}
\]
for every $0\leq p \leq p_{2\to2}$.
\end{prop}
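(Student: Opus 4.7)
My plan is to establish supermultiplicativity of the sequence $a_n(p):=\E[\tau_p(X_0,X_n)]$, apply Fekete's lemma, and then control the limit via a Cauchy–Schwarz bound on the $L^2(V)$ inner product between $P^n\delta_o$ and $T_p\delta_o$.

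First, I would fix $X_0=o$ and show that $a_{m+n}(p) \geq a_m(p)a_n(p)$ for every $m,n\geq 0$. For this, note that conditional on the random walk $X$, the increasing connection events $\{X_0 \leftrightarrow X_m\}$ and $\{X_m \leftrightarrow X_{m+n}\}$ are both increasing in the percolation configuration, so the Harris–FKG inequality yields
\[
\tau_p(X_0,X_{m+n}) \geq \tau_p(X_0,X_m)\,\tau_p(X_m,X_{m+n}).
\]
Taking expectations and using the Markov property at time $m$ together with transitivity to conclude that $\E[\tau_p(X_m,X_{m+n})\mid X_0,\ldots,X_m]=a_n(p)$, gives the claimed supermultiplicativity. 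Fekete's lemma then guarantees that $\lim_{n\to\infty} a_n(p)^{1/n} = \sup_{n\geq 1} a_n(p)^{1/n}$.

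Next, I would produce the upper bound on $a_n(p)$. Since $G$ is transitive the degree is constant, so $P$ is self-adjoint on $\ell^2(V)$ and $(T_p\delta_o)(v)=\tau_p(o,v)$, giving the identity
\[
a_n(p) = \sum_v P^n(o,v)\tau_p(o,v) = \langle P^n\delta_o,\, T_p\delta_o\rangle.
\]
An application of Cauchy–Schwarz together with $\|P\|_{2\to 2}=\rho(G)$ (Kesten's theorem, valid on any transitive graph) yields
\[
a_n(p) \leq \|P^n\delta_o\|_2\, \|T_p\delta_o\|_2 \leq \|T_p\|_{2\to 2}\,\rho(G)^n.
\]
For any $p<p_{2\to 2}$ we have $\|T_p\|_{2\to 2}<\infty$, so taking $n$-th roots and letting $n\to\infty$ gives $\lim_n a_n(p)^{1/n}\leq \rho(G)$; combined with the Fekete identity this gives $a_n(p) \leq \rho(G)^n$ for every $n\geq 1$ and every $p<p_{2\to 2}$.

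Finally, I would upgrade the bound to $p=p_{2\to 2}$, where the Cauchy–Schwarz step alone fails since $\|T_{p_{2\to 2}}\|_{2\to 2}=\infty$ by \cref{cor:l2AizBar}. Since $\tau_p(u,v)$ is monotone and left-continuous in $p$, the monotone convergence theorem applied term-by-term in the sum defining $a_n(p)$ gives $a_n(p_{2\to 2})=\lim_{p\uparrow p_{2\to 2}} a_n(p)\leq \rho(G)^n$, completing the proof. The main (and essentially only) conceptual step is the setup of supermultiplicativity combined with Fekete's lemma — this is the standard trick from \cite{1712.04911}, and the real content is that it converts a finite-$\|T_p\|_{2\to 2}$ hypothesis, which is only useful for large $n$, into a uniform bound valid at every $n$.
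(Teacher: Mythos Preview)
Your proof is correct and follows essentially the same route as the paper's: supermultiplicativity via Harris--FKG and the Markov property, a Cauchy--Schwarz bound $a_n(p)\leq \|T_p\|_{2\to 2}\,\rho(G)^n$ (the paper writes this as $\sqrt{\|T_p^2\|_{2\to 2}\|P^{2n}\|_{2\to 2}}$, which is the same quantity), Fekete's lemma to convert the asymptotic bound into a uniform one, and left-continuity of $\tau_p$ to pass to $p=p_{2\to 2}$. The only cosmetic difference is that you bound $\|P^n\delta_o\|_2$ and $\|T_p\delta_o\|_2$ separately rather than invoking $\|T_p^2\|_{2\to 2}$ and $\|P^{2n}\|_{2\to 2}$.
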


\begin{proof} 
We omit some details since the proof is very similar to that of \cite[Theorem 3.1]{1712.04911}.
First suppose that $0\leq p < p_{2\to2}$. 
Let $P$ be the Markov operator for simple random walk on $G$.
We have by Cauchy-Schwarz that
\[
\E\left[ \tau_p(X_0,X_n) \right] = \langle T_p \mathbbm{1}_v, P^n \mathbbm{1}_v\rangle \leq \sqrt{\|T^2_p\|_{2\to2} \|P^{2n}\|_{2\to2}}.
\]
On the other hand, the sequence $\E\left[ \tau_p(X_0,X_n) \right]$ is supermultiplicative and hence, by Fekete's Lemma, satisfies
\[
\sup_{n\geq 0} \E\left[ \tau_p(X_0,X_n) \right] = \lim_{n\to\infty}\E\left[ \tau_p(X_0,X_n) \right]^{1/n} \leq \lim_{n\to\infty} \|T^2_p\|_{2\to2}^{1/2n} \|P^{2n}\|^{1/2n}_{2\to2} = \rho(G).
\]
This completes the proof in the case $0\leq p<p_{2\to2}$. The case $p=p_{2\to2}$ follows by the same left-continuity argument used in \cite{1712.04911}.
\end{proof}

% Several other estimates that are established at $p_c$ using supermultiplicativity also admit straightforward generalisations that hold at $p_{2\to2}$. 

\subsection{Some speculation}

\cref{prop:p2to2kappa} implies in particular that there cannot be a unique infinite cluster at $p_{2\to2}$ on any quasi-transitive nonamenable graph, and hence that any quasi-transitive graph $G$ that has a unique infinite cluster at $p_u$ must have $p_{2\to2}<p_u$. 
This motivates the following problem. 

\begin{question}
Let $G$ be a connected, locally finite, quasi-transitive, nonamenable graph. Under what conditions is $p_u(G)=p_{2\to2}(G)$? 
\end{question}

This is related to the question of which nonunimodular transitive graphs have $p_t=p_h$. 
Several classes of graphs are known to have infinitely many infinite clusters at $p_u$, including nonamenable products \cite{MR1770624} and Cayley graphs of Kazhdan groups \cite{LS99}. Do some of these classes of graphs also have $p_u=p_{2\to2}$? 
% What about those nonamenable quasi-transitive graphs 
% for which 
% We strongly suspect that $p_u=p_{2\to 2}$ on those nonamenable transitive graphs 
% for which branching random walk (BRW) at the recurrence/transience transition satisfies the triangle condition? (This is equivalent to the condition that $\sum_{n\geq 1}n^2 \|P\|^{-n}_{2\to2}P^n(v,v)<\infty$, where $P$ is the transition matrix of the random walk.) 

What about lattices in $\mathbb{H}^d$?
It is not unreasonable to expect that when $d$ is large, the uniqueness transition for percolation on lattices in $\mathbb{H}^d$ has a mean-field character, that is, behaves similarly to the recurrence/transience transition for branching random walk (BRW) on the same lattices. This transition is now quite well understood following the work of Lalley and Sellke \cite{MR1452555}, Karpelevich, Pechersky, and Suhov \cite{MR1641015}, Lalley and Gou\"ezel \cite{MR3087391}, and Gou\"ezel  \cite{MR3194496}. If this intuition is correct, then we should expect that $p_u=p_{2\to 2}$ and that there is exponential decay of the two-point function at $p_u$. 

\subsection*{Acknowledgments} 
We thank Omer Angel and Jonathan Hermon for helpful discussions. We also thank Itai Benjamini, Elisabetta Candellero, Jonathan Hermon, Gady Kozma, Russ Lyons, Asaf Nachmias, Vincent Tassion, and Henry Wilton for useful comments on earlier versions of the manuscript. We thank Asaf Nachmias in particular for his careful reading of the technical parts of the paper.

  \setstretch{1}
  \bibliographystyle{abbrv}
{\footnotesize{
  \bibliography{unimodularthesis.bib}

\begin{thebibliography}{10}

\bibitem{aizenman1987sharpness}
M.~Aizenman and D.~J. Barsky.
\newblock Sharpness of the phase transition in percolation models.
\newblock {\em Communications in Mathematical Physics}, 108(3):489--526, 1987.

\bibitem{MR901151}
M.~Aizenman, H.~Kesten, and C.~M. Newman.
\newblock Uniqueness of the infinite cluster and continuity of connectivity
  functions for short and long range percolation.
\newblock {\em Comm. Math. Phys.}, 111(4):505--531, 1987.

\bibitem{MR762034}
M.~Aizenman and C.~M. Newman.
\newblock Tree graph inequalities and critical behavior in percolation models.
\newblock {\em J. Statist. Phys.}, 36(1-2):107--143, 1984.

\bibitem{anderson2006hyperbolic}
J.~Anderson.
\newblock {\em Hyperbolic geometry}.
\newblock Springer Science \& Business Media, 2006.

\bibitem{1710.03003}
O.~Angel and T.~Hutchcroft.
\newblock Counterexamples for percolation on unimodular random graphs.
\newblock 2017.
\newblock Preprint.

\bibitem{unimodular2}
O.~Angel, T.~Hutchcroft, A.~Nachmias, and G.~Ray.
\newblock Hyperbolic and parabolic unimodular random maps.
\newblock {\em Geom. Funct. Anal.}, 2018.
\newblock To appear.

\bibitem{MR3692901}
S.~Antoniuk, E.~Friedgut, and T.~{\L}uczak.
\newblock A sharp threshold for collapse of the random triangular group.
\newblock {\em Groups Geom. Dyn.}, 11(3):879--890, 2017.

\bibitem{MR3239613}
S.~Antoniuk, T.~{\L}uczak, and J.~\'Swipolhk~atkowski.
\newblock Collapse of random triangular groups: a closer look.
\newblock {\em Bull. Lond. Math. Soc.}, 46(4):761--764, 2014.

\bibitem{antunovic2008sharpness}
T.~Antunovi{\'c} and I.~Veseli{\'c}.
\newblock Sharpness of the phase transition and exponential decay of the
  subcritical cluster size for percolation on quasi-transitive graphs.
\newblock {\em Journal of Statistical Physics}, 130(5):983--1009, 2008.

\bibitem{MR1622785}
E.~Babson and I.~Benjamini.
\newblock Cut sets and normed cohomology with applications to percolation.
\newblock {\em Proc. Amer. Math. Soc.}, 127(2):589--597, 1999.

\bibitem{MR1127713}
D.~J. Barsky and M.~Aizenman.
\newblock Percolation critical exponents under the triangle condition.
\newblock {\em Ann. Probab.}, 19(4):1520--1536, 1991.

\bibitem{benjamini2016self}
I.~Benjamini.
\newblock Self avoiding walk on the seven regular triangulation.
\newblock {\em arXiv preprint arXiv:1612.04169}, 2016.

\bibitem{MR2970060}
I.~Benjamini and R.~Eldan.
\newblock Convex hulls in the hyperbolic space.
\newblock {\em Geom. Dedicata}, 160:365--371, 2012.

\bibitem{BLPS99b}
I.~Benjamini, R.~Lyons, Y.~Peres, and O.~Schramm.
\newblock Critical percolation on any nonamenable group has no infinite
  clusters.
\newblock {\em Ann. Probab.}, 27(3):1347--1356, 1999.

\bibitem{bperc96}
I.~Benjamini and O.~Schramm.
\newblock Percolation beyond {$\bold Z^d$}, many questions and a few answers.
\newblock {\em Electron. Comm. Probab.}, 1:no.\ 8, 71--82, 1996.

\bibitem{BS00}
I.~Benjamini and O.~Schramm.
\newblock Percolation in the hyperbolic plane.
\newblock {\em J. Amer. Math. Soc.}, 14(2):487--507, 2001.

\bibitem{BeSc}
I.~Benjamini and O.~Schramm.
\newblock Recurrence of distributional limits of finite planar graphs.
\newblock {\em Electron. J. Probab.}, 6:no. 23, 13 pp. (electronic), 2001.

\bibitem{MR1771428}
M.~Bonk and O.~Schramm.
\newblock Embeddings of {G}romov hyperbolic spaces.
\newblock {\em Geom. Funct. Anal.}, 10(2):266--306, 2000.

\bibitem{MR2243589}
B.~H. Bowditch.
\newblock {\em A course on geometric group theory}, volume~16 of {\em MSJ
  Memoirs}.
\newblock Mathematical Society of Japan, Tokyo, 2006.

\bibitem{burton1989density}
R.~M. Burton and M.~Keane.
\newblock Density and uniqueness in percolation.
\newblock {\em Communications in mathematical physics}, 121(3):501--505, 1989.

\bibitem{MR3420526}
P.-E. Caprace, Y.~Cornulier, N.~Monod, and R.~Tessera.
\newblock Amenable hyperbolic groups.
\newblock {\em J. Eur. Math. Soc. (JEMS)}, 17(11):2903--2947, 2015.

\bibitem{MR2986821}
J.~Czajkowski.
\newblock Clusters in middle-phase percolation on hyperbolic plane.
\newblock In {\em Noncommutative harmonic analysis with applications to
  probability {III}}, volume~96 of {\em Banach Center Publ.}, pages 99--113.
  Polish Acad. Sci. Inst. Math., Warsaw, 2012.

\bibitem{1303.5624}
J.~Czajkowski.
\newblock Non-uniqueness phase of bernoulli percolation on reflection groups
  for some polyhedra in $\mathbb{H}^3$.
\newblock 2013.
\newblock arXiv:1303.5624.

\bibitem{1804.05948}
J.~Czajkowski.
\newblock One-point boundaries of ends of clusters in percolation in
  $\mathbb{H}^d$.
\newblock 2018.
\newblock arXiv:1804.05948.

\bibitem{de2015characterizing}
M.~De~La~Salle and R.~Tessera.
\newblock Characterizing a vertex-transitive graph by a large ball.
\newblock {\em arXiv preprint arXiv:1508.02247}, 2015.

\bibitem{MR3551187}
M.~Duchin, K.~Jankiewicz, S.~C. Kilmer, S.~Leli\`evre, J.~M. Mackay, and A.~P.
  S\'anchez.
\newblock A sharper threshold for random groups at density one-half.
\newblock {\em Groups Geom. Dyn.}, 10(3):985--1005, 2016.

\bibitem{1707.00520}
H.~Duminil-Copin.
\newblock Lectures on the ising and potts models on the hypercubic lattice.
\newblock 2017.

\bibitem{duminil2015new}
H.~Duminil-Copin and V.~Tassion.
\newblock A new proof of the sharpness of the phase transition for bernoulli
  percolation and the ising model.
\newblock {\em Communications in Mathematical Physics}, pages 1--21, 2015.

\bibitem{MR3658330}
B.~Federici and A.~Georgakopoulos.
\newblock Hyperbolicity vs. amenability for planar graphs.
\newblock {\em Discrete Comput. Geom.}, 58(1):67--79, 2017.

\bibitem{fitzner2015nearest}
R.~Fitzner and R.~van~der Hofstad.
\newblock Mean-field behavior for nearest-neighbor percolation in {$d>10$}.
\newblock {\em Electron. J. Probab.}, 22:Paper No. 43, 65, 2017.

\bibitem{MR2221157}
D.~Gaboriau.
\newblock Invariant percolation and harmonic {D}irichlet functions.
\newblock {\em Geom. Funct. Anal.}, 15(5):1004--1051, 2005.

\bibitem{gandolfi1992uniqueness}
A.~Gandolfi, M.~Keane, and C.~Newman.
\newblock Uniqueness of the infinite component in a random graph with
  applications to percolation and spin glasses.
\newblock {\em Probability Theory and Related Fields}, 92(4):511--527, 1992.

\bibitem{MR2948665}
A.~Georgakopoulos and M.~Hamann.
\newblock On fixing boundary points of transitive hyperbolic graphs.
\newblock {\em Arch. Math. (Basel)}, 99(1):91--99, 2012.

\bibitem{ghys1990groupes}
E.~Ghys and P.~de~la Harpe, editors.
\newblock {\em Sur les groupes hyperboliques d'apr\`es {M}ikhael {G}romov},
  volume~83 of {\em Progress in Mathematics}.
\newblock Birkh\"auser Boston, Inc., Boston, MA, 1990.
\newblock Papers from the Swiss Seminar on Hyperbolic Groups held in Bern,
  1988. English translation by W.E.\ Grosso available at
  \url{http://perso.ens-lyon.fr/ghys/articles/groupeshyperboliques-english.pdf}.

\bibitem{MR3266996}
J.~T. Gill.
\newblock Doubling metric spaces are characterized by a lemma of {B}enjamini
  and {S}chramm.
\newblock {\em Proc. Amer. Math. Soc.}, 142(12):4291--4295, 2014.

\bibitem{MR3194496}
S.~Gou\"ezel.
\newblock Local limit theorem for symmetric random walks in {G}romov-hyperbolic
  groups.
\newblock {\em J. Amer. Math. Soc.}, 27(3):893--928, 2014.

\bibitem{MR3087391}
S.~Gou\"ezel and S.~P. Lalley.
\newblock Random walks on co-compact {F}uchsian groups.
\newblock {\em Ann. Sci. \'Ec. Norm. Sup\'er. (4)}, 46(1):129--173 (2013),
  2013.

\bibitem{grimmett2010percolation}
G.~R. Grimmett.
\newblock Percolation (grundlehren der mathematischen wissenschaften).
\newblock 2010.

\bibitem{Gromov81}
M.~Gromov.
\newblock Hyperbolic manifolds, groups and actions.
\newblock In {\em Riemann surfaces and related topics: {P}roceedings of the
  1978 {S}tony {B}rook {C}onference ({S}tate {U}niv. {N}ew {Y}ork, {S}tony
  {B}rook, {N}.{Y}., 1978)}, volume~97 of {\em Ann. of Math. Stud.}, pages
  183--213. Princeton Univ. Press, Princeton, N.J., 1981.

\bibitem{Gromov87}
M.~Gromov.
\newblock Hyperbolic groups.
\newblock In {\em Essays in group theory}, volume~8 of {\em Math. Sci. Res.
  Inst. Publ.}, pages 75--263. Springer, New York, 1987.

\bibitem{Haggstrom06}
O.~H\"aggstr\"om and J.~Jonasson.
\newblock Uniqueness and non-uniqueness in percolation theory.
\newblock {\em Probab. Surv.}, 3:289--344, 2006.

\bibitem{MR1676835}
O.~H\"aggstr\"om and Y.~Peres.
\newblock Monotonicity of uniqueness for percolation on {C}ayley graphs: all
  infinite clusters are born simultaneously.
\newblock {\em Probab. Theory Related Fields}, 113(2):273--285, 1999.

\bibitem{HPS99}
O.~H{\"a}ggstr{\"o}m, Y.~Peres, and R.~H. Schonmann.
\newblock Percolation on transitive graphs as a coalescent process: relentless
  merging followed by simultaneous uniqueness.
\newblock In {\em Perplexing problems in probability}, volume~44 of {\em Progr.
  Probab.}, pages 69--90. Birkh\"auser Boston, Boston, MA, 1999.

\bibitem{Hamann2017}
M.~Hamann.
\newblock Group actions on metric spaces: fixed points and free subgroups.
\newblock {\em Abhandlungen aus dem Mathematischen Seminar der Universit{\"a}t
  Hamburg}, 87(2):245--263, Oct 2017.

\bibitem{MR1043524}
T.~Hara and G.~Slade.
\newblock Mean-field critical behaviour for percolation in high dimensions.
\newblock {\em Comm. Math. Phys.}, 128(2):333--391, 1990.

\bibitem{heydenreich2015progress}
M.~Heydenreich and R.~van~der Hofstad.
\newblock {\em Progress in high-dimensional percolation and random graphs}.
\newblock CRM Short Courses. Springer, Cham; Centre de Recherches
  Math\'ematiques, Montreal, QC, 2017.

\bibitem{Hutchcroft2016944}
T.~Hutchcroft.
\newblock Critical percolation on any quasi-transitive graph of exponential
  growth has no infinite clusters.
\newblock {\em Comptes Rendus Mathematique}, 354(9):944 -- 947, 2016.

\bibitem{Hutchcroftnonunimodularperc}
T.~Hutchcroft.
\newblock Non-uniqueness and mean-field criticality for percolation on
  nonunimodular transitive graphs.
\newblock 2017.

\bibitem{Hutp2to2}
T.~Hutchcroft.
\newblock The ${L}^2$ boundedness condition in nonamenable percolation.
\newblock 2018.
\newblock In preparation.

\bibitem{1712.04911}
T.~Hutchcroft.
\newblock Statistical physics on a product of trees.
\newblock {\em Ann. Inst. H. Poincaré Probab. Statist.}, 2018.
\newblock To appear.

\bibitem{MR1390758}
R.~Kannan and C.~K. Krueger.
\newblock {\em Advanced analysis on the real line}.
\newblock Universitext. Springer-Verlag, New York, 1996.

\bibitem{MR1921706}
I.~Kapovich and N.~Benakli.
\newblock Boundaries of hyperbolic groups.
\newblock In {\em Combinatorial and geometric group theory ({N}ew {Y}ork,
  2000/{H}oboken, {NJ}, 2001)}, volume 296 of {\em Contemp. Math.}, pages
  39--93. Amer. Math. Soc., Providence, RI, 2002.

\bibitem{MR1641015}
F.~I. Karpelevich, E.~A. Pechersky, and Y.~M. Suhov.
\newblock A phase transition for hyperbolic branching processes.
\newblock {\em Comm. Math. Phys.}, 195(3):627--642, 1998.

\bibitem{kozma2011percolation}
G.~Kozma.
\newblock Percolation on a product of two trees.
\newblock {\em The Annals of Probability}, pages 1864--1895, 2011.

\bibitem{MR2779397}
G.~Kozma.
\newblock The triangle and the open triangle.
\newblock {\em Ann. Inst. Henri Poincar\'e Probab. Stat.}, 47(1):75--79, 2011.

\bibitem{MR2551766}
G.~Kozma and A.~Nachmias.
\newblock The {A}lexander-{O}rbach conjecture holds in high dimensions.
\newblock {\em Invent. Math.}, 178(3):635--654, 2009.

\bibitem{MR2748397}
G.~Kozma and A.~Nachmias.
\newblock Arm exponents in high dimensional percolation.
\newblock {\em J. Amer. Math. Soc.}, 24(2):375--409, 2011.

\bibitem{MR1614583}
S.~P. Lalley.
\newblock Percolation on {F}uchsian groups.
\newblock {\em Ann. Inst. H. Poincar\'e Probab. Statist.}, 34(2):151--177,
  1998.

\bibitem{MR1873136}
S.~P. Lalley.
\newblock Percolation clusters in hyperbolic tessellations.
\newblock {\em Geom. Funct. Anal.}, 11(5):971--1030, 2001.

\bibitem{MR1452555}
S.~P. Lalley and T.~Sellke.
\newblock Hyperbolic branching {B}rownian motion.
\newblock {\em Probab. Theory Related Fields}, 108(2):171--192, 1997.

\bibitem{MR1757952}
R.~Lyons.
\newblock Phase transitions on nonamenable graphs.
\newblock {\em J. Math. Phys.}, 41(3):1099--1126, 2000.
\newblock Probabilistic techniques in equilibrium and nonequilibrium
  statistical physics.

\bibitem{MR3009109}
R.~Lyons.
\newblock Fixed price of groups and percolation.
\newblock {\em Ergodic Theory Dynam. Systems}, 33(1):183--185, 2013.

\bibitem{LP:book}
R.~Lyons and Y.~Peres.
\newblock {\em Probability on Trees and Networks}.
\newblock Cambridge University Press, New York, 2016.
\newblock Available at \url{http://pages.iu.edu/~rdlyons/}.

\bibitem{LPS06}
R.~Lyons, Y.~Peres, and O.~Schramm.
\newblock Minimal spanning forests.
\newblock {\em Ann. Probab.}, 34(5):1665--1692, 2006.

\bibitem{LS99}
R.~Lyons and O.~Schramm.
\newblock Indistinguishability of percolation clusters.
\newblock {\em Ann. Probab.}, 27(4):1809--1836, 1999.

\bibitem{madras2010}
N.~Madras and C.~Wu.
\newblock Trees, animals, and percolation on hyperbolic lattices.
\newblock {\em Electron. J. Probab.}, 15:2019--2040, 2010.

\bibitem{martinez2017cheeger}
{\'A}.~Mart{\'\i}nez-P{\'e}rez and J.~M. Rodr{\'\i}guez.
\newblock Cheeger isoperimetric constant of gromov hyperbolic manifolds and
  graphs.
\newblock {\em Communications in Contemporary Mathematics}, page 1750050.

\bibitem{MR3005730}
A.~Nachmias and Y.~Peres.
\newblock Non-amenable {C}ayley graphs of high girth have {$p_c<p_u$} and
  mean-field exponents.
\newblock {\em Electron. Commun. Probab.}, 17:no. 57, 8, 2012.

\bibitem{MR923855}
B.~G. Nguyen.
\newblock Gap exponents for percolation processes with triangle condition.
\newblock {\em J. Statist. Phys.}, 49(1-2):235--243, 1987.

\bibitem{MR2205306}
Y.~Ollivier.
\newblock {\em A {J}anuary 2005 invitation to random groups}, volume~10 of {\em
  Ensaios Matem\'aticos [Mathematical Surveys]}.
\newblock Sociedade Brasileira de Matem\'atica, Rio de Janeiro, 2005.

\bibitem{MR1756965}
I.~Pak and T.~Smirnova-Nagnibeda.
\newblock On non-uniqueness of percolation on nonamenable {C}ayley graphs.
\newblock {\em C. R. Acad. Sci. Paris S\'er. I Math.}, 330(6):495--500, 2000.

\bibitem{MR1770624}
Y.~Peres.
\newblock Percolation on nonamenable products at the uniqueness threshold.
\newblock {\em Ann. Inst. H. Poincar\'e Probab. Statist.}, 36(3):395--406,
  2000.

\bibitem{MR1676831}
R.~H. Schonmann.
\newblock Stability of infinite clusters in supercritical percolation.
\newblock {\em Probab. Theory Related Fields}, 113(2):287--300, 1999.

\bibitem{MR1833805}
R.~H. Schonmann.
\newblock Multiplicity of phase transitions and mean-field criticality on
  highly non-amenable graphs.
\newblock {\em Comm. Math. Phys.}, 219(2):271--322, 2001.

\bibitem{MR1888869}
R.~H. Schonmann.
\newblock Mean-field criticality for percolation on planar non-amenable graphs.
\newblock {\em Comm. Math. Phys.}, 225(3):453--463, 2002.

\bibitem{MR3352259}
A.~Thom.
\newblock A remark about the spectral radius.
\newblock {\em Int. Math. Res. Not. IMRN}, (10):2856--2864, 2015.

\bibitem{MR3572426}
A.~Thom.
\newblock The expected degree of minimal spanning forests.
\newblock {\em Combinatorica}, 36(5):591--600, 2016.

\bibitem{timar2006percolation}
{\'A}.~Tim{\'a}r.
\newblock Percolation on nonunimodular transitive graphs.
\newblock {\em The Annals of Probability}, pages 2344--2364, 2006.

\bibitem{tykesson2007number}
J.~Tykesson.
\newblock The number of unbounded components in the poisson boolean model of
  continuum percolation in hyperbolic space.
\newblock {\em Electronic Journal of Probability}, 12:1379--1401, 2007.

\bibitem{MR1245204}
W.~Woess.
\newblock Fixed sets and free subgroups of groups acting on metric spaces.
\newblock {\em Math. Z.}, 214(3):425--439, 1993.

\bibitem{Woess}
W.~Woess.
\newblock {\em Random walks on infinite graphs and groups}, volume 138 of {\em
  Cambridge Tracts in Mathematics}.
\newblock Cambridge University Press, Cambridge, 2000.

\bibitem{yamamoto2017upper}
K.~Yamamoto.
\newblock An upper bound for the critical probability on the cartesian product
  graph of a regular tree and a line.
\newblock {\em arXiv preprint arXiv:1705.06873}, 2017.

\bibitem{MR1995802}
A.~\.Zuk.
\newblock Property ({T}) and {K}azhdan constants for discrete groups.
\newblock {\em Geom. Funct. Anal.}, 13(3):643--670, 2003.

\end{thebibliography}
}
\end{document}